\numberwithin{equation}{section}
\numberwithin{figure}{section}
\theoremstyle{plain}
\newtheorem{thm}{\protect\theoremname}[section]
\theoremstyle{definition}
\newtheorem{defn}[thm]{\protect\definitionname}
\theoremstyle{plain}
\newtheorem{prop}[thm]{\protect\propositionname}
\theoremstyle{plain}
\newtheorem{cor}[thm]{\protect\corollaryname}
\theoremstyle{plain}
\newtheorem{lem}[thm]{\protect\lemmaname}
\theoremstyle{remark}
\newtheorem{notation}[thm]{\protect\notationname}
\theoremstyle{remark}
\newtheorem{rem}[thm]{\protect\remarkname}
\theoremstyle{remark}
\newtheorem*{claim*}{\protect\claimname}
\newcommand{\mix}{{\rm mix}}
\definecolor{purple}{rgb}{0.9,0,0.8}
\definecolor{gray}{rgb}{0.7,0.7,0.7}
\DeclareFontFamily{OMX}{MnSymbolE}{}
\DeclareFontShape{OMX}{MnSymbolE}{m}{n}{
    <-6>  MnSymbolE5
   <6-7>  MnSymbolE6
   <7-8>  MnSymbolE7
   <8-9>  MnSymbolE8
   <9-10> MnSymbolE9
  <10-12> MnSymbolE10
  <12->   MnSymbolE12}{}
\DeclareSymbolFont{mnlargesymbols}{OMX}{MnSymbolE}{m}{n}
\DeclareMathDelimiter{\llangle}{\mathopen}{mnlargesymbols}{'164}{mnlargesymbols}{'164}
\DeclareMathDelimiter{\rrangle}{\mathclose}{mnlargesymbols}{'171}{mnlargesymbols}{'171}
\newcommand{\e}{\varepsilon}
\newcommand{\bP}{\mathbb{P}}
\newcommand{\Z}{\mathbb{Z}}
\newcommand{\upd}{\textup{Upd}}
\newcommand{\ful}{\mathscr{F}} 
\newcommand{\emp}{\mathscr{E}} 
\newcommand{\eful}{\overline{\mathscr{F}}} 
\newcommand{\eemp}{\overline{\mathscr{E}}} 
\newcommand{\per}{\textup{Perc}_n}
\newcommand{\perr}{\textup{Perc}_r}
\newcommand{\tini}{t_{\textrm{init}}} 
\newcommand{\pini}{p_{\textrm{init}}}
\newcommand{\mui}{\mu_{p,\,q}^n} 
\newcommand{\nup}{\mathscr{N}} 
\newcommand{\env}{\Xi} 
\newcommand{\conn}{\textup{Conn}} 
\newcommand{\smax}{t_{\textrm{max}}}
\newcommand{\spar}{\textrm{Spa}_n}
\newcommand{\sh}{\mathscr{H}}
\newcommand{\us}{\textrm{U}_s}
\newcommand{\pstar}{\overline{p}}
\newcommand{\hatl}{\hat{\lambda}}
\providecommand{\claimname}{Claim}
\providecommand{\corollaryname}{Corollary}
\providecommand{\definitionname}{Definition}
\providecommand{\lemmaname}{Lemma}
\providecommand{\notationname}{Notation}
\providecommand{\propositionname}{Proposition}
\providecommand{\remarkname}{Remark}
\providecommand{\theoremname}{Theorem}
\title[Cutoff for RCM]{Information Percolation and Cutoff for the Random-Cluster model}
\author[Ganguly]{Shirshendu Ganguly}
\address{S. Ganguly \hfill\break
	Department of Statistics\\ UC Berkeley \\
	Berkeley, California, CA 94720, USA.}
\email{sganguly@berkeley.edu}
\author[Seo]{Insuk Seo}
\address{I. Seo \hfill\break
Department of Mathematical Science and RIMS\\ Seoul National University\\
Seoul, South Korea. }
\email{insuk.seo@snu.ac.kr}
\begin{document}

\maketitle

\begin{abstract}
We consider the Random-Cluster model on $(\Z/n\Z)^d$  with parameters $p \in (0,1)$ and $q\ge 1$. This is a generalization of the standard bond percolation (with edges open independently with probability $p$) which is biased by a factor $q$ raised to the number of connected components.  We study the well known FK-dynamics on this model where the update at an edge depends on the global geometry of the system unlike the Glauber Heat-Bath dynamics for spin systems, and prove that for all small enough $p$ (depending on the dimension)
and any $q>1$, the
FK-dynamics  exhibits the cutoff phenomenon
at $\lambda_{\infty}^{-1}\log n$ with a window size  $O(\log\log n)$,
where $\lambda_{\infty}$ is the large $n$ limit of the spectral gap of the process. Our proof extends the \textit{Information Percolation} framework of Lubetzky and Sly \cite{LS4} to the Random-Cluster model and also relies on the arguments of Blanca and Sinclair \cite{BS} who proved a sharp $O(\log n)$ mixing time bound for the planar version. A key aspect of our proof is the analysis of the effect of a sequence of dependent (across time) Bernoulli percolations extracted from the graphfical construction of the dynamics, on how information propagates.
\end{abstract}

\maketitle
\tableofcontents
\section{Introduction and main result }
The random-cluster (Fortuin-Kasteleyn/FK) model is an extensively studied model in statistical physics,
generalizing electrical networks, percolation, and spin systems like the Ising and Potts models, under a single framework.
In this work, we study the so called heat-bath Glauber dynamics or FK-dynamics for the model on the $d$-dimensional torus. The main result of this paper establishes a sharp convergence to equilibrium for this Markov chain also known as the {\it{cutoff phenomenon}}.

\subsection{\label{sec11}Random-cluster model (RCM)}

For $d\ge2$, denote by $\Lambda_{n}=\mathbb{Z}_{n}^{d},$ the $d$-dimensional
discrete torus and by $E_{n}=E(\Lambda_{n}),$ the set of edges
in $\Lambda_{n}$.
{We will fix the dimension to be $d$ throughout the entire paper}. The random-cluster measure $\mui$
on the graph $(\Lambda_{n},\,E_{n})$ with parameters $p\in(0,\,1)$ and $q>0$
is a probability measure on the space of subsets of $E_{n}$ defined
by
\[
\mui(S)=\frac{1}{Z_{p,\,q}^{n}}p^{|S|}(1-p)^{|E_{n}\setminus S|}q^{c(S)}\;\;;\;S\subset E_{n}\;,
\]
where $Z_{p,\,q}^{n}$ is the partition function turning $\mui$ into
a probability measure, and $c(S)$
is the number of connected components of the graph $(\Lambda_{n},\,S)$.
Clearly the measure $\mui$ can be regarded as a probability measure on $\Omega_{n}=\{0,\,1\}^{E_{n}}$, i.e., we will identify $X=(X(e))_{e\in E_{N}}\in\Omega_{n}$
with a subset $A$ of $E_{n}$ where $e\in A$ if and only
if $X(e)=1$. Hence, by slight abuse of notation, we can always
regard $X\in\Omega_{n}$ as a subset of $E_{n}$.
The random-cluster model was introduced by Fortuin and Kasteleyn (see \cite{F1972,FK1972}) and unifies the study of various objects in statistical mechanics such as random graphs, spin systems and electrical networks (see \cite{gri}). When $q = 1,$ this model corresponds to the standard bond percolation but when $q>1$ (resp., $q < 1$), the probability measure biases
subgraphs with more (resp., fewer) connected components.
For the special case of integer $q\ge 2$ the random-cluster model is a dual to
the classical ferromagnetic $q$-state Potts model, via the so called Edward-Sokal coupling of the models (see, e.g., \cite{ES}).
 However, note that unlike spin systems, the probability that an edge $e$ belongs to $A$ does not depend only
on the dispositions of its neighboring edges but on the entire configuration $A$, since connectivity is
a global property (see Figure \ref{fig1} for an illustration).

\subsection{\label{sec12}FK-dynamics (Glauber/Heat-bath dynamics)}

The FK-dynamics is a reversible Markov process $X_t=\{X_t(e)\}_{e\in E_n}$ on $\Omega_{n},$ whose invariant
measure is given by  $\mui$.
Informally, at rate one, the state of every edge $X(e)$ is resampled conditionally on the state of the remaining edges i.e.,

\[
X(e)=\begin{cases}
1 & \text{w.p.}\,\, p \text{ if }e \text{ is not a cut-edge,} \\
1 & \text{w.p.}\,\, \frac{p}{p+(1-p)q}\,\, \text{if }e\,\,\text{is a cut-edge,} \\
0 & \text{otherwise},
\end{cases}
\]
where we use the standard terminology {\it{cut-edge}} to denote an edge whose removal increases the number of connected components by one. A more formal treatment appears in Definition \ref{def11}.

{Note that unlike Glauber dynamics on spin systems like Ising or Potts models, the FK-dynamics has long range dependencies (see Figure \ref{fig1}).}
The FK-dynamics has been an object of significant interest and has played a key role in several recent works. A by no means complete, but nonetheless representative list includes \cite{}.

The key statistic  we will consider is the time taken by the above dynamics to converge to equilibrium.

\begin{figure}[h]
\centering
\includegraphics[scale=.20]{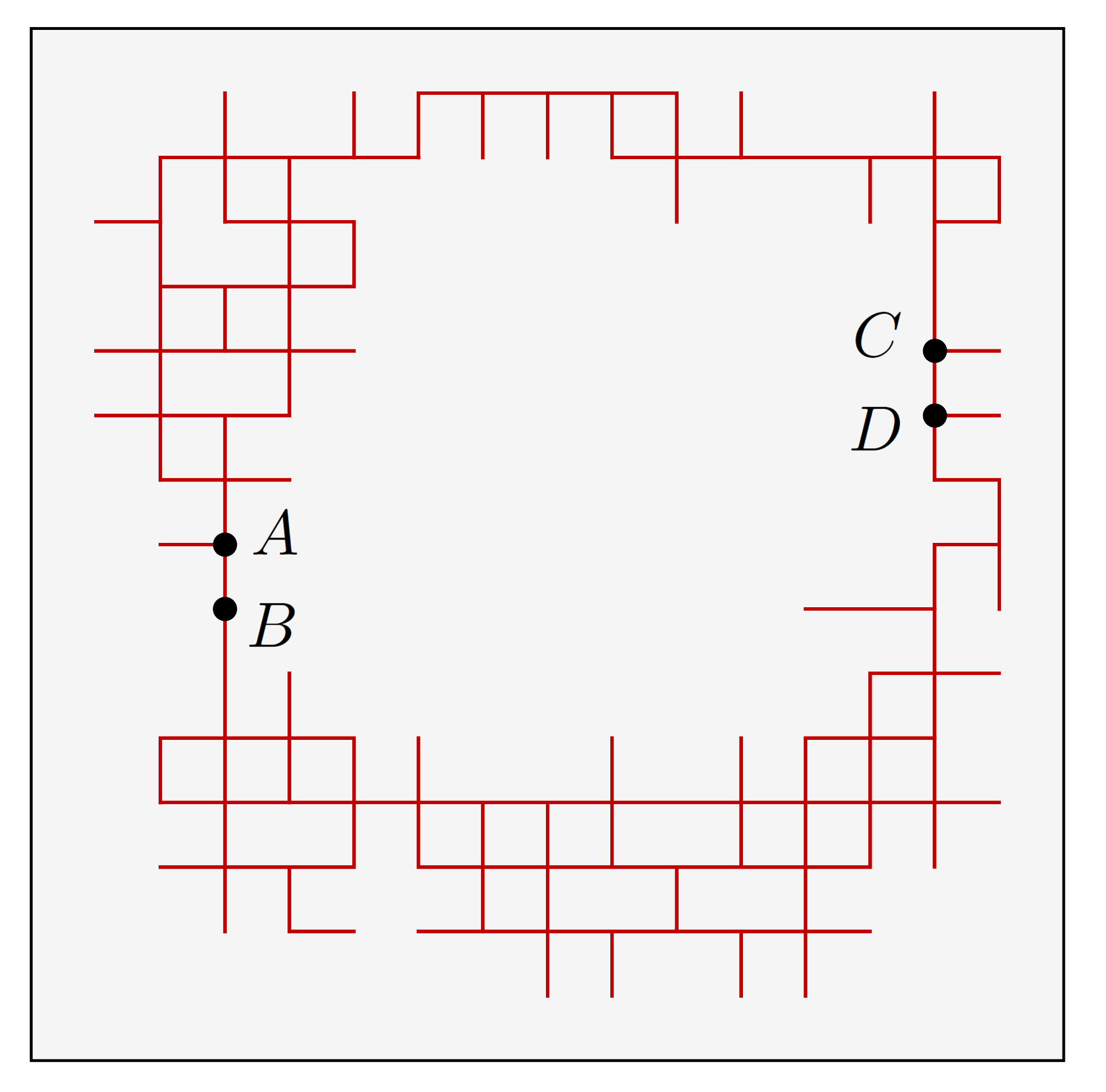}
\caption{Illustrating long range dependencies in FK-dynamics.
Consider a configuration where the red edges are open while everything else is closed.
The probability of the edge $AB$ to be open then depends on whether the edge $CD$ is open or not.}
\label{fig1}
\end{figure}
\subsection{\label{sec13}Mixing of Markov chains and cutoff phenomenon} We review in brief the  set up of interest for us from the theory of  reversible Markov chains with finite state spaces. For an extensive account of all the details and recent progress in various directions see \cite{LPW}.   For two probability
measures $\mu_1$ and $\mu_2$ on $S$ we will be interested in the $L^1$-distance or the so-called \textit{total variation distance} between them denoted by $\|\mu_1-\mu_2\|_{\textrm{TV}}$\footnote{$\|\mu_1-\mu_2\|_2$ will be used to denote the $L^2$-distance where the $1$-norm in \eqref{l1norm} is replaced by the $2$-norm, {{ i.e.,
$$
\Vert \mu_1 -\mu_2 \Vert_2 = \sum_{x\in S} \left| \frac{\mu_1(x)}{\mu_2(x)} - 1 \right|^2 \mu_2 (x)\;.
$$
}}
}:
\begin{equation}
\sup_{A \subset S} ( \mu_1(A)-\mu_2(A))=\frac{1}{2}\sum_{x\in S}|\mu_1(x)-\mu_2(x)|
=\frac{1}{2}\sum_{x\in S}\left|\frac{\mu_1(x)}{\mu_2(x)}-1\right|\mu_2(x)\;.
\label{l1norm}
\end{equation}
For concreteness consider a continuous time reversible Markov chain $Y_t$ with a finite state space $S$ and equilibrium measure $\pi$.

Denote by $\mathbb{P}_x$, $x\in S$, the law of Markov chain starting from $x$.

We will be primarily interested in the  total variation mixing time  defined by
$$t_{\mix}(\e)=\inf\big\{t: \sup_{y\in S}\|\bP_{y}[Y_t \in \cdot ]-\pi\|_\textrm{TV}\le \e \big\}\;\;;\;\e\in(0,\,1)\;.$$
For notational brevity, we will {denote by $d(t)$, the worst case total variation distance to stationarity for the FK-dynamics, i.e.,
\begin{equation}\label{dt}
d(t)=d_n (t):=\sup_{x\in \Omega_n}\|\bP_x[X_t \in \cdot\,]-\mui\|_\textrm{TV}
\end{equation}
from now on.
Many naturally occurring Markov chains are expected to exhibit a sharp transition in convergence, in the sense that the total variation distance to equilibrium drops from one to zero in a rather short time window.  This is formalized by the notion of {{cutoff}} formulated by Aldous and Diaconis \cite{AD86} (see also \cite{dia1}).
Formally a sequence of Markov chain $Y^{(1)}_t,Y^{(2)}_t, \ldots$ with mixing times given by $t^{(1)}_{\mix}(\e), \,t^{(2)}_{\mix}(\e),\ldots$ is said to exhibit the \textit{\textbf{Cutoff Phenomenon}} if for any $\e\le 1/2,$ $$\lim_{i\to \infty}\frac{t^{(i)}_{\mix}(\e)}{t^{(i)}_{\mix}(1-\e)}=1\;.$$
 Moreover cutoff is said to occur with window size $w_{i}$ if for any $\e\le 1/2$ one has  $${t^{(i)}_{\mix}(\e)}-{t^{(i)}_{\mix}(1-\e)}=O_{\e}(w_{i})\;,$$
 where $w_i= o(t^{(i)}_{\mix}(\frac{1}{4})).$
}

\subsection{\label{sec14}Main result}
Given the above definitions, our main result establishes cutoff for the FK-dynamics for a range of sub-critical values of the parameters $p, q.$

\begin{thm}
\label{tmain}For any $d\ge 2$, there exists $p_{0}={p_{0}(d)}>0$ such that, for all
$p\in(0,\,p_{0})$ and {$q>1$}, there exists a constant $\lambda_{\infty}=\lambda_{\infty}(p,q)$ such that the FK-dynamics on $\Omega_{n}$ exhibits
cutoff at $\frac{d}{2\lambda_{\infty}}\log n$ with
order $O(\log\log n)$ window size.
\end{thm}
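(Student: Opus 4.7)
The strategy is to extend the Information Percolation (IP) machinery of Lubetzky--Sly \cite{LS4} from spin systems to the Random-Cluster model, leveraging the subcritical small-cluster estimates available for the FK-dynamics when $p$ is sufficiently small, in the spirit of Blanca--Sinclair \cite{BS}. The argument splits into a matching upper and lower bound, the upper bound being by far the more substantial component.

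First, realize the FK-dynamics through the standard graphical construction: attach independent rate-one Poisson clocks $\{\mathcal{N}_e\}_{e\in E_n}$ to the edges and associate i.i.d.\ $\mathrm{Unif}[0,1]$ marks $\{U_e(i)\}$ with the successive rings. Setting $p^{\star} = p/(p+(1-p)q) \le p$, declare the $i$-th update of $e$ \emph{unambiguous open} when $U_e(i) \le p^\star$, \emph{unambiguous closed} when $U_e(i) > p$, and \emph{ambiguous} when $U_e(i) \in (p^\star, p]$; the ambiguous case is precisely the one where the cut-edge status must be consulted. Unambiguous updates make $X(e)$ independent of the past, while ambiguous updates introduce a dependence on the present configuration and hence, via connectivity, on potentially distant edges.

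Next, for each pair $(e,t)$ build a space-time set $H(e,t)$ by tracing dependencies backward in the graphical construction: follow the last update of $e$ before time $t$; if it was unambiguous, terminate that branch; if ambiguous, append the edges needed to evaluate the relevant cut-edge predicate at that moment, and recurse from each of them. Since $H(e,t)$ is a function of the clocks and marks alone (independent of initial data), the core upper-bound claim is that for $t \ge \tfrac{d}{2\lambda_\infty}\log n + C\log\log n$, with probability $1-o(1)$ every $H(e,t)$ terminates strictly before reaching time $0$; on this event, two copies of the chain started from arbitrary initial conditions and coupled through the same graphical construction coincide at time $t$. Following \cite{LS4}, classify the edges into \emph{red} (history shrinks quickly), \emph{blue} (history survives abnormally long) and \emph{green} (intermediate); a union bound shows blue edges are too sparse to matter, and a direct coupling closes the upper bound.

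The principal obstacle is the global nature of the ambiguous updates: a single cut-edge check can in principle probe an arbitrarily large portion of the configuration, so a naive bound on the growth of $H(e,t)$ is useless. The tool for dealing with this is a uniform-in-time exponential decay of connection probabilities for the FK-dynamics when $p < p_0(d)$: at every intermediate time $s$ the configuration should be stochastically dominated by a slightly-super-$p$ Bernoulli percolation, which remains subcritical once $p_0$ is chosen small enough. Iterating this domination across the backward tree defining $H(e,t)$ produces the dependent-across-time sequence of Bernoulli percolations emphasised in the abstract; the hard technical step is to show that this dependent sequence is nevertheless controlled by a single subcritical percolation, yielding exponential tails on the neighbourhood explored at each ambiguous update, so that the expected growth of $H(e,t)$ per backward step is strictly less than one. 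This is the place where the restriction $p < p_0(d)$ truly enters.

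For the matching lower bound, fix two extremal initial conditions (for instance all-open and all-closed), pick a bounded local distinguishing statistic $f$ supported near a single vertex whose expectations under the two chains differ by a quantity decaying at the explicit rate $\lambda_\infty$, and then run a standard Wilson-type second-moment argument summing $\Theta(n^d)$ weakly correlated translates of $f$ to amplify the gap. This yields $t_{\mix}(1-\varepsilon) \ge \tfrac{d}{2\lambda_\infty}\log n - C(\varepsilon)\log\log n$, matching the upper bound within the claimed $O(\log\log n)$ window and completing the proof of Theorem \ref{tmain}.
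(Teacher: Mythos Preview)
Your proposal captures the broad spirit of extending Information Percolation to the FK-dynamics, but it misidentifies the role IP actually plays in the proof and omits an entire structural layer without which the constant $\lambda_\infty$ cannot appear.

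First, a symptom: your red/blue/green classification is inverted. In the IP framework (both in \cite{LS4} and in this paper, Definition~\ref{def45}), \emph{red} clusters are the ones whose history \emph{survives} down to the initial time, \emph{blue} clusters are singletons whose history dies, and \emph{green} are non-singletons whose history dies. The bad objects are the red clusters, and the key estimate (Proposition~\ref{p45}) bounds the probability that a given set is red.

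More seriously, your upper bound is structurally off. You claim that for $t$ past the cutoff time one shows that with probability $1-o(1)$ every history $H(e,t)$ terminates before reaching time $0$, and that this ``direct coupling closes the upper bound.'' But this is not what IP delivers, and it cannot by itself produce the sharp constant $\frac{d}{2\lambda_\infty}$. What the IP analysis actually yields is Proposition~\ref{p46}: at time $t_\star = C\log n$ (for \emph{some} large $C$, not the sharp one), the $L^2$-distance to equilibrium is at most $2$. This is then upgraded via the spectral-gap contraction \eqref{l2contrac} to the $L^2$ bound of Theorem~\ref{t41}. The constant $\lambda_\infty$ enters through a separate \emph{barrier-dynamics / product-chain reduction} (Section~\ref{reduction1}): one shows that the global chain on $\mathbb{Z}_n^d$ behaves, up to negligible error, like a product of $\Theta((n/\log^5 n)^d)$ independent FK-dynamics on tori of side $r = 3\log^5 n$, and then applies the standard $L^1$-$L^2$ bound for product chains together with the $L^2$ estimate of Theorem~\ref{t41} on each factor. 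The cutoff location emerges as $\frac{d}{2\lambda(r)}\log n$, and one finally shows (Proposition~\ref{p73}) that $\lambda(r)$ converges to a limit $\lambda_\infty$ fast enough that replacing $\lambda(r)$ by $\lambda_\infty$ costs only $o(\log\log n)$. Your sketch has no mechanism that singles out $\lambda_\infty$; ``histories terminate with high probability'' is a statement about \emph{some} coupling time, not the spectral-gap-governed one.

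On the technical IP side, you correctly flag the dependent-across-time percolations as the heart of the matter, but the paper's treatment is more delicate than ``controlled by a single subcritical percolation.'' Time is discretized with mesh $\Delta = p^{-1/2}$, and the history at step $i$ is bounded via the environment $\Xi_{i-1}\cup\Xi_i$, which is only $1$-dependent across $i$; the resulting branching-process domination (Proposition~\ref{lem413}) therefore works in \emph{two-step} increments, and establishing it requires the geometric Lemmas~\ref{lem421}--\ref{lem422} and Proposition~\ref{prop412} to disentangle the dependence. You should not expect a one-line domination argument here.

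Finally, your lower bound via a Wilson-style second-moment argument on local statistics is in the right spirit, but the paper instead obtains it from the same product-chain machinery (part~(2) of Theorem~\ref{t61}), which is more in line with \cite{LS1} than with a direct distinguishing-statistic computation.
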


Some remarks are in order. Note that the case $q=1$ is the well known example of random walk on a hypercube where cutoff occurs for all values of $p$ {{see \cite[Theorem 18.3]{LPW}}}. Similarly in the case $d=1,$ one notices that each edge is a cut edge unless the configuration is completely full.  Thus the process in this case can also be coupled with a random walk on a hypercube, implying cutoff for all values of $p$ and $q.$

The value of the threshold $p_{0}$ in the statement above, only depends on the dimension through the value of the critical bond percolation probability and does not depend on
$q$. We shall assume that $q> 1$ is fixed from now on. {Notice that by a duality argument as in  \cite[Section 7]{BS}, in the planar case (i.e., $d=2$)} it follows that Theorem \ref{tmain} holds also when  $p$ is close enough to $1$.
 We will also elaborate on a description of $\lambda_{\infty}$ in terms of the spectral gap of the Glauber dynamics for the infinite volume RCM in Section \ref{infinitevolume}.

\subsection{Background and related work}
There has been much activity over the past two decades in analyzing Glauber dynamics for spin systems in both statistical physics and
computer science leading to deep connections  between the
mixing time and the phase structure of the physical model. In contrast, the Glauber dynamics for
the RCM remains less understood. The main reason for this is that connectivity is a global property. Ullrich in a series of important  papers \cite{ullrich1,ullrich2,ullrich3} established comparison estimates between the FK-dynamics and the well known non-local Swendsen-Wang (SW) dynamics (\cite{SW1}) using functional analytic arguments. Although initially the arguments appeared only for integer
values of $q$ exploring
connections with the Ising/Potts models, the analysis extends to all $q>1$, which appeared in \cite{blanca1}.
Until recently, all existing bounds on the  FK-dynamics were via transferring results for the SW or related dynamics \cite{SW1} using comparison estimates as above.
 However these methods typically yield highly sub-optimal bounds  and does not provide any insight into the behavior of RCM. Recently the authors of  \cite{BS} established a fast mixing  time{ of order $O(n^2\log n)$}  bound for the {discrete time} FK-dynamics on RCM in a box of size $n$ in $\Z^2$ with a special class of boundary conditions.  The proof works for all $q\ge 1$ and $p\neq p_c(q).$ Furthermore, although not explicitly mentioned, the arguments extend to periodic boundary conditions as well.
The key ingredients used were {planar duality}, tools developed for mixing of spin systems in \cite{MS13} and most importantly the exponential decay of connectivity below $p_c(q)$ established in the breakthrough work \cite{BD}. More recently \cite{BGV} extends the results to a more general class of boundary conditions with weaker bounds. Among various things, the latter work in particular also shows that boundary conditions can have a drastic effect on the mixing time.

A general conjecture of Peres \cite{peres2004}  indicates that  one should expect cutoff to occur in the regime of fast mixing for  many natural chains as above.  In  the breakthrough papers, \cite{LS1,LS2}, Lubetzky and Sly verified the above conjecture for Glauber dynamics for Ising and Potts models, putting forward a host of new methods using ideas similar to  the  Propp-Wilson {\it{coupling from the past}} \cite{proppwilson} as well as relating $L^1$-mixing to $L^2$-mixing using powerful log-Sobolev inequalities \cite{diasaloff}.
Subsequently in \cite{LS4, LS3}, the results of the above papers were refined by inventing the general \textit{\textbf{Information percolation}} machinery.
Furthermore in very recent work, \cite{NS} extended the above framework to prove cutoff results for the non-local SW dynamics for Potts models on the torus in any dimension for suitably high temperatures.

{However as indicated above, the FK-dynamics has  significant differences with the above described spin models and whether cutoff occurs in the fast mixing regime in this case was left open. The main theorem of this paper answers this question in the affirmative as long as $p$ is small enough and $q>1$.  In the process, we extend the Information Percolation framework to the RCM setting as well. An elaborate description of the various geometric difficulties and how to encounter them is presented in the next section. We end this section by also mentioning the recent work of Lubetzky and Gheissari on proving quasi-polynomial bounds for the mixing time at criticality for FK-dynamics in two dimensions and related bounds for critical spin systems in \cite{gheissari2016,gheissari2018,gheissarimixing} based on recent breakthroughs in \cite{DC1,DC2} .}

\section{Idea of the proof and organization of the article}\label{iop}
We first develop a graphical construction (grand coupling of FK-dynamics) which will be quite useful in constructing coupling arguments.
We then discuss the key issues that one faces towards proving the main result and what new ideas one needs beyond the existing literature to address them.
 \subsection{Graphical construction/Monotone coupling}
 We will define the FK-dynamics formally through the following graphical construction by creating what is now popularly called in the literature as the \textit{\textbf{Update sequence}} (see \cite{LS1,NS}). For $e\in E_{n}$,
define the sequence of updates as
\begin{equation}
\upd(e)=\left\{ (t_{1},\,U_{1}),\,(t_{2},\,U_{2}),\,\cdots\right\} \;,\label{upseq}
\end{equation}
where $t_{1}<t_{2}<\cdots$ is a sequence of update times obtained
from an independent Poisson process with rate $1$ attached at $e$,
and for each $i$, $U_{i}$ is a uniform random variable in $[0,\,1]$
independent of all other randomness. The sequence $\upd(e)$ is
the update sequence corresponding to $e$. Then, we define the full
update sequence as
\begin{equation}
\upd=\bigcup_{e\in E_{n}}\upd(e)\;.\label{upseq2}
\end{equation}
Note that $t_{i}(e)\neq t_{j}(e')$ for all $i,\,j\in\mathbb{N}$
and $e,\,e'\in E_{n}$ almost surely. It would also be useful to define for $0<t_{1}<t_{2}$,
the update sequence of $e$ in the time interval $(t_{1},\,t_{2}]$
as
\begin{equation}
\upd[t_{1},\,t_{2}](e)=\{(s,\,U):(s,\,U)\in\upd(e),\,s\in(t_{1},\,t_{2}]\}\;,\label{updt}
\end{equation}
and
\[
\upd[t_{1},\,t_{2}]=\bigcup_{e\in E_{n}}\upd[t_{1},\,t_{2}](e)\;.
\]
For $X\in\Omega_{n}$, we say that $e\in E_{n}$ is a \textit{cut-edge
}if $c(X\setminus\{e\})\neq c(X\cup\{e\})$, (recall that $c(\cdot)$ denotes the number of connected components).
Furthermore, from now on, we shall assume $q>1$ and write
\[
p^{*}=\frac{p}{q(1-p)+p}<p
\]
for convenience.

We now introduce a construction of the FK-dynamics suitable for our purposes. This is the standard grand coupling for the FK-dynamics (see \cite{gri1})
\begin{defn}[FK-dynamics/Monotone Coupling]
\label{def11} For each $(t,\,U)\in\upd(e)$ for some $e\in E_{n}$,

\begin{enumerate}
\item \begin{enumerate}
\item If $U<1-p+p^{*}$, we let
\[
X_{t}(e)=\begin{cases}
0 & \text{if }U\in[0,\,1-p),\\
1 & \text{if }U\in[1-p,\,1-p+p^{*}).
\end{cases}
\]
\item If $U\ge1-p+p^{*}$, we let $X_{t}(e)=0$ if $e$ is a cut-edge in
$(\Lambda_{n},\,X_{t-})$, and $X_{t}(e)=1$ if $e$ is not a cut-edge
in $(\Lambda_{n},\,X_{t-})$.
\end{enumerate}
\item We set $X_{t}(e')=X_{t-}(e')$ for all $e'\neq e$.
\end{enumerate}
\end{defn}
 We will denote by $\mathbb{P}_{x_{0}}=\mathbb{P}_{x_{0}}^{p,\,q,\,n}$
the law of the FK-dynamics starting from $x_{0}\in\Omega_{n}$.
{{Similarly, for a probability measure $\nu$ on $\Omega_n$, denote by $\mathbb{P}_{\nu}$ the law of FK-dynamics starting from the initial distribution $\nu$.
}}
 Note that the FK-dynamics is reversible with respect to its invariant measure $\mui$. Naturally the update sequence allows a grand coupling of $(X_{t})$ started from all possible configurations $x_0.$ A well known fact is the monotonicity of FK-dynamics i.e., if $(X_{t})$ and $(Y_{t})$ are two copies of the Markov chain started
from $x_{0}$ and $y_{0}$ with $x_{0}\le y_0$ in the usual partial order on $\Omega_n$, then under the grand coupling for all later times $t$ one has $X_t\le Y_t.$ {Thus often this coupling is called the \textit{monotone
coupling }and the corresponding law is denoted by $\mathbb{P}_{x_{0},\,y_{0}}$.}
Note that another perhaps more canonical way to define the dynamics would be to first check if $e$ is a cut-edge (resp. not) and then accordingly set it to $0$ or $1$ depending on whether $U<1-p^*$ or not (resp. $U<1-p$ or not). However
the  above alternative formulation has the nice property that if $U<1-p+p^{*}$,
we do not need to check whether $e$ is a cut-edge or not, and the randomness
at $e$ only depends on $U$, not the entire configuration of $X_{t}$.
This will be used throughout the paper in various coupling arguments.

\subsection{The key ideas of the proof}
In the work of Lubetzky and Sly \cite{LS1} on the Ising model, the key idea was to break the dependencies in the Markov chain to reduce the analysis to the study of a product chain of Glauber dynamics on small boxes. The proof then relied on the relation between the $L^1$-mixing time of the product chain to $L^2$-mixing time of the individual coordinates and sharp estimates on the latter obtained via Log-Sobolev inequalities (LSI). Unfortunately such functional analytic tools are not available for the RCM. {Although it is perhaps natural to predict that such estimates hold at least in some part of the parameter space, it is important to point out that the standard arguments which work for nearest neighbor spin systems fail owing to long range effects. Whether the LSI indeed holds for the RCM thus remains an important open problem.}

Furthermore, to improve the size of the cutoff window to $O(1),$ in \cite{LS4,LS3}, the powerful machinery of information percolation was invented to bypass the use of log-Sobolev inequalities to estimate the $L^2$-mixing time. The proof however still relied heavily on the local nature of Glauber dynamics for spin systems. {On the other hand a non-local Markov chain admitting global changes is the well known Swendsen-Wang (SW) dynamics for Potts model.  In SW dynamics for the Potts model,  one proceeds by sampling an independent bond percolation on each of the mono-chromatic  components (connected component of vertices with the same spins) and then for each connected component of the percolation sampled, a uniformly random spin is assigned. This is done at every time step independently of the past and hence the interaction of the spin at every vertex at every time step in only limited to spins within its percolation cluster.}

 Very recently in \cite{NS} the strategy was extended to SW dynamics. The latter work is  based on the observation made above that while in Glauber dynamics, in one step the spin at a vertex can only depend on its immediate neighbors,  the state of a vertex  in SW by definition depends on all the vertices inside an independent percolation cluster sampled at each time step. Thus in the subcritical regime, since the cluster diameters have exponential tails, one can expect the same approach to go through and indeed this is what is made rigorous in \cite{NS}.
The arguments in this article draw inspiration mostly from this last article.

As indicated before, at a very high level, one of the main contributions of our approach is extending the  Information Percolation framework to the setting of FK-dynamics.
However in RCM, in one step the update of an edge can depend on the status of an arbitrarily far located edge (see Figure \ref{fig1}).
To bypass this, we first run the process for an $O(1)$ burn-in time which allows the process to be dominated by a subcritical Bernoulli percolation.

At this point we try to analyze the information percolation clusters. Very informally (see Section \ref{sec4} for precise definitions) this approach involves keeping track of the interactions between various edges as they are updated, {\it{backwards}} in time. For e.g.,: if an edge $e$ is updated using an element $(t,U)\in \upd(e)$ one of two things could happen (recall Definition \ref{def11}):
\begin{itemize}
\item $U<1-p+p^*$, in which case the updated value of the edge is a Bernoulli variable independent of the state of the system. In this case we call the edge to become \textit{\textbf{Oblivious}}.
\item However if $U>1-p+p^*$ one needs to check whether $e$ is a cut-edge or not and in the process interacts (shares information) with several edges.
\end{itemize}

Formally one considers a space-time slab (see Figure \ref{ip}) and evolves backward in time by branching out to all possible edges an update shares information with, or gets killed in case of an oblivious update.
The key usefulness of this approach as exploited in \cite{LS1,LS2,LS4,LS3, NS} is that if the backward branching process (called the \textit{\textbf{History diagram}}) is subcritical then, the process will be killed before reaching the initial configuration in this backward evolution causing the final configuration to be independent of the initial one implying coupling of all starting states under the grand coupling. However this is an overkill since for cutoff to occur one can tolerate some mild dependence on the initial condition as long as that is hidden inside the natural fluctuation of the system.

To bound the growth rate of the history diagram
we first discretize time with interval length $\Delta=\frac{1}{\sqrt{p}}$ (as the reader will notice, this choice of $\Delta$ is not  special and a host of other choices will work too) and consider the interval $[\tau_{i},\tau_{i+1}]$ where $\tau_i=i\Delta$ and define the history diagram only at times $\tau_i.$
We first extract several auxiliary percolation models based on the update sequence (see Table \ref{chart}), and one of which denoted by $\env_i(\cdot)$ captures the following: For every $i,$ $\env_i(e)$ is $1$ iff $e$ has not been updated in the interval $[\tau_{i},\tau_{i+1}]$ or $e$ is open at least once in  $[\tau_{i},\tau_{i+1}]$ for the Glauber dynamics for the standard Bernoulli percolation with parameter $p,$ (random walk on the hypercube) using the same update sequence and starting from the empty configuration. Now given the history diagram up to time $\tau_{i+1}$ for any edge we first check if it has been updated or not in an interval $[\tau_{i},\tau_{i+1}]$ (recall the history diagram flows backwards). If not, the edge continues to be a part of the history diagram, if it is updated using an oblivious update it gets killed, otherwise we bound the spreading of information by the connected component of $e$ in the percolation $\env_i \cup \env_{i-1}.$

{
Note that to ensure that the state of the edge $e$ throughout the interval $[\tau_{i},\tau_{i+1}]$ does not depend on any edge not included in the history diagram  we need the boundary of the latter to be closed throughout the entire interval i.e., we must consider its connected component `forward in time' which a priori depends on the entire time interval $[0,\tau_i].$  However this is the point at which we use the smallness of $p$ crucially, which creates an environment which is subcritical and hence the connected component can be bounded by the connected component of $\env_i \cup \env_{i-1},$ i.e., instead of the entire interval $[0,\tau_{i+1}]$ we can get by, just using the information on $[\tau_{i-1},\tau_{i+1}].$
}

Given the above, the situation is similar to the definition of the SW dynamics considered in \cite{NS}, except that the percolation sampled at every discrete time step is now $1$-dependent across time.
This creates the need for a refined and delicate analysis of the information percolation clusters to yield $L^2$-mixing bounds.
This is stated as Theorem \ref{t41} and Proposition \ref{p46}. The proof of the latter is the core of this work. The above approach adopted in the paper of extracting dependent percolation models that can be analyzed could be of independent interest and useful in other general  contexts in bounding how passage of information occurs in such dynamical settings.

Assuming these results, the arguments used to show cutoff are quite similar to the ones already appearing in \cite{NS} based on the methods in \cite{LS1}.
An additional ingredient needed to prove Theorem \ref{t41} from Proposition \ref{p46} is that the spectral gap of the FK-dynamics is positive uniformly in the system size.  In SW the lower bound on the spectral gap follows by path coupling by establishing a one step contraction which unfortunately is absent in our setting; instead we rely on the a priori mixing time bounds obtained in \cite{BS}.  c.

{ Finally, we mention that for the Ising model, \cite{LS4} exploited monotonicity of the system, to prove an $O(1)$ bound on the cutoff window without resorting to the methods of \cite{LS1}. Such sharp bounds are missing in \cite{NS} which deals with the general Potts model. However the RCM is monotone and whether this can be used to prove a similar improvement of Theorem \ref{tmain} is not pursued in this paper and is left for further research.  Furthermore, another possible direction to investigate is the effect of boundary conditions. While the current paper only deals with periodic boundary conditions, for local dynamics on Ising and Potts models \cite{LS2} proved sharp mixing time results for general boundary conditions.  Recall that typically in addressing such questions, there are two goals. One is to control the cutoff window size and the other is to pin down the location. Under certain special cases, in \cite{LS2}, the location of mixing was related to infinite volume objects.  Moreover, to bound the window size, \cite{LS2} relied on certain worst case Log-Sobolev constants. Since these are not available in our setting and  boundary conditions can lead to delicate global dependencies, the current arguments in the paper do not directly go through. Nonetheless, this is an important project to be taken up in the future.
}

\subsection{Organization of the article} We prove and collect results about a priori bounds on the mixing time and the spectral gap in Section \ref{specgap} to be used throughout the rest of the article.  As mentioned above, we need to define several auxiliary percolation models based on the update sequence. This is done in Section \ref{sec3}. \textbf{Section \ref{sec4} is the core of this work and the main contribution in this paper which bounds the $L^2$-mixing time by defining suitable information percolation clusters}.  This section is rather long and has several new constructions and delicate geometric arguments. However assuming the main result of this section, the proof of Theorem \ref{tmain} is quite similar to the arguments appearing in \cite{LS1, LS4, NS}.  The reader not familiar with the latter papers can choose to first assume the results of Section \ref{sec4} to see how they are used in the subsequent sections to then come back to the proofs of Section \ref{sec4}.

The proof of the main result Theorem \ref{tmain} spans  Section \ref{reduction1} where certain modifications of arguments of \cite{LS1} and  Section \ref{proofmain} where the final proof appears. The outstanding proofs of some of the stated claims are collected in the Appendix (Section \ref{appendix1}).

\subsection*{Acknowledgements}
The authors thank Antonio Blanca, Fabio Martinelli and Alistair Sinclair for several useful discussions.  They also thank the anonymous referees for the various useful comments and suggestions that helped improve the paper. IS was supported by the National Research Foundation of Korea (NRF) grant funded by the Korea government (MSIT) (No. 2018R1C1B6006896 and No. 2017R1A5A1015626) and Research Resettlement Fund for the new faculty of Seoul National University.

\section{A priori bounds on mixing time and spectral gap}\label{specgap}

We start by recalling the following standard result.
 \begin{prop}
\label{propgap}\cite[Theorems 12.3 and 12.4]{LPW} Let $(Z_{t})$ be a discrete time ergodic reversible
Markov chain on a finite state space $S$ with the equilibrium measure $\pi$, let $ \mathbb{Q}_z$ be the law of Markov chain $(Z_t)$ starting from $z\in S$,
and let $\gamma$ be the spectral gap of the Markov chain $(Z_t)$. Then,
\[
(1-\gamma)^{t}\,\le \,2\, \sup_{z\in S} \Vert \mathbb{Q}_z(Z_{t}\in \cdot) -\pi \Vert_{\textrm{TV}}
\,\le\,\frac{1}{\pi_{\min}}\,(1-\gamma)^{t}\;,
\]
where $\pi_{\textrm{min}}=\min_{x\in S}\pi(x)$.
\end{prop}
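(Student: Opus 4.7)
The plan is to use the spectral decomposition of the discrete-time transition operator together with the standard $L^{1}\le L^{2}$ comparison. Since $(Z_{t})$ is reversible, the transition kernel $P$ is self-adjoint on $L^{2}(\pi)$, so its eigenvalues can be listed as $1=\lambda_{1}>\lambda_{2}\ge \cdots \ge \lambda_{|S|}\ge -1$ with $L^{2}(\pi)$-orthonormal eigenfunctions $f_{1}\equiv 1,f_{2},\ldots,f_{|S|}$; the (absolute) spectral gap is $\gamma=1-\max_{i\ge 2}|\lambda_{i}|$.

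For the upper bound, I would expand the density
$$\frac{\mathbb{Q}_{z}(Z_{t}=y)}{\pi(y)}-1 \;=\; \sum_{i\ge 2}\lambda_{i}^{t}\,f_{i}(z)f_{i}(y)$$
and use Parseval in $L^{2}(\pi)$ to get
$$\Big\|\tfrac{\mathbb{Q}_{z}(Z_{t}=\cdot)}{\pi(\cdot)}-1\Big\|_{2,\pi}^{2} \;=\; \sum_{i\ge 2}\lambda_{i}^{2t}f_{i}(z)^{2} \;\le\; (1-\gamma)^{2t}\sum_{i\ge 2}f_{i}(z)^{2} \;\le\; \frac{(1-\gamma)^{2t}}{\pi(z)},$$
where the last inequality uses the completeness relation $\sum_{i}f_{i}(z)^{2}=1/\pi(z)$, obtained by expanding the indicator $y\mapsto \mathbf{1}_{\{y=z\}}/\pi(z)$ in the basis $\{f_{i}\}$. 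A Cauchy--Schwarz step then converts $L^{2}(\pi)$ to $L^{1}(\pi)$, yielding $2\|\mathbb{Q}_{z}(Z_{t}\in\cdot)-\pi\|_{\mathrm{TV}}\le (1-\gamma)^{t}/\sqrt{\pi(z)}$, and the crude bound $\sqrt{\pi(z)}\ge \pi_{\min}$ (which holds since $\pi_{\min}\le 1$) produces the stated right-hand side.

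For the lower bound, I would choose an eigenfunction $f_{\ast}$ realising the extremal value $|\lambda_{\ast}|=1-\gamma$, normalise it so that $\|f_{\ast}\|_{\infty}=1$, and pick $z\in S$ with $|f_{\ast}(z)|=1$. Orthogonality to $f_{1}\equiv 1$ gives $\mathbb{E}_{\pi}[f_{\ast}]=0$, while $\mathbb{E}_{z}[f_{\ast}(Z_{t})]=\lambda_{\ast}^{t}f_{\ast}(z)$. The elementary inequality $|\mathbb{E}_{\mu}g-\mathbb{E}_{\nu}g|\le 2\|g\|_{\infty}\|\mu-\nu\|_{\mathrm{TV}}$ then delivers
$$(1-\gamma)^{t} \;=\; |\lambda_{\ast}^{t}f_{\ast}(z)| \;\le\; 2\,\|\mathbb{Q}_{z}(Z_{t}\in\cdot)-\pi\|_{\mathrm{TV}},$$
which is the left-hand inequality after taking the supremum over $z$.

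The main obstacle is not conceptual --- this is the textbook relation quoted from \cite{LPW} --- but rather the bookkeeping needed to handle both signs in the absolute spectral gap (the extremal eigenvalue could be $\lambda_{2}$ or $\lambda_{|S|}$) and to choose the starting state $z$ that saturates each bound. Both points are dispatched directly from the spectral decomposition; no further input is required.
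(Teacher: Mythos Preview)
Your argument is correct and is precisely the standard spectral-decomposition proof found in the cited reference \cite{LPW}. Note, however, that the paper does not supply its own proof of this proposition: it is stated purely as a recalled textbook fact with a pinpoint citation to \cite[Theorems 12.3 and 12.4]{LPW}, so there is nothing to compare against beyond the source you have reproduced.
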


In \cite{BS}, a discrete version of FK-dynamics is considered where at every discrete time step, an uniformly chosen edge is updated. Denote
by $(\widehat{X}_{k})_{k\ge0}$ the discrete FK-dynamics in $\Omega_{n}$, and by $\mathbb{\widehat{\mathbb{P}}}_{x_{0},\,y_{0}},$
the law of the monotone coupling (Definition \ref{def11}) of two copies of discrete FK-dynamics
$\widehat{X}_{k}$ and $\widehat{Y}_{k}$ starting from two initial conditions $x_{0}, y_0 \in\Omega_{n}$ respectively.
Moreover, let $\hatl(n)=\hatl(n,\,p,\,q)$ denote the
spectral gap of the above process.
{Furthermore let $\hat{t}_{\rm mix}=\hat{t}_{\rm mix}(1/4)$ and $\hat{d}(t)$ be the mixing time   and  the worst-case distance to stationarity respectively in the sense of \eqref{dt} for the discrete time dynamics.}
Then, the following sharp mixing time results were either obtained or are consequences of the results
 in \cite{BS}. In the latter, only the two dimensional case was treated but one can easily verify that the
 arguments extend to general dimensions under exponential decay of connectivity.
We provide brief sketches of the proofs of these results with pinpoint references to the relevant literature for the remaining details.

\begin{thm}
\label{t21}For any dimension $d$, there exists $p_0=p_0(d)$ such that for all $q\ge1$ and $p<p_{0}$, there exists $C=C(p)>0$
and $\lambda=\lambda(p)>0$ such that:
\begin{enumerate}
\item For all $x_{0},\,y_{0}\in\Omega_{n}$, $k\le o(n^{1/(d+2)})$ and $e\in E_{n}$,
it holds that
\[
\mathbb{\widehat{\mathbb{P}}}_{x_{0},\,y_{0}}\left[\widehat{X}_{k {n^{d}}}(e)\neq\widehat{Y}_{k {n^{d}}}(e)\right]\le e^{-Ck}\;.
\]
\item The mixing time   $\hat{t}_{\rm mix}$ of discrete process $\widehat X_k$ is $\Theta({n^{d}}\log n)$.
\item For
all $n\in\mathbb{N}$, $\hatl(n)\ge\lambda  {n^{-d}}$.
\end{enumerate}
\end{thm}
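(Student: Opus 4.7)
Since the two-dimensional proof in \cite{BS} relies crucially on planar duality, which is unavailable for $d\ge 3$, the plan is to pay the price of taking $p<p_0(d)$ small enough that the RCM lies deep inside its subcritical regime. In particular $p<p_c(q)$ in every dimension, so by \cite{BD} the two-point connectivity in $\mui$ decays exponentially; this is the only nontrivial probabilistic input I will need. The three parts then follow in order, with (1) doing the bulk of the work and (2), (3) being relatively standard consequences.

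\textbf{Part (1).} Fixing $x_0,y_0\in\Omega_n$, I would run the monotone coupling $\widehat{\mathbb P}_{x_0,y_0}$ of $\widehat X_k$ and $\widehat Y_k$ and track the disagreement set $D_k=\{e:\widehat X_k(e)\ne \widehat Y_k(e)\}$. At each update the new state of the updated edge depends on the global configuration only when $U\ge 1-p+p^*$; otherwise the update is \emph{oblivious} and both chains are instantly forced to agree at that edge. Hence any edge $e\in D_{kn^d}$ admits, going backward in time from $(kn^d,e)$, a space-time chain of non-oblivious updates $(t_1,e_1),(t_2,e_2),\ldots$ whose spatial trace connects $e$ to $D_0$. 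The plan is to dominate this trace, uniformly in the update sequence, by a subcritical FK cluster in a suitable space-time window; here the hypothesis $kn^d=o(n^{d+1/(d+2)})$ ensures the backward exploration stays inside a box of side $o(n)$, so toroidal wrap-around can be neglected and one can work on $\Z^d$. Exponential decay of connectivity, combined with a union bound over the combinatorial shape of the chain, will then yield $\widehat{\mathbb P}_{x_0,y_0}[\widehat X_{kn^d}(e)\ne \widehat Y_{kn^d}(e)]\le e^{-Ck}$.

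\textbf{Parts (2) and (3).} The upper bound in (2) is immediate from (1): with $k=C'\log n$ for $C'$ large, the edge union bound
\[
\widehat{\mathbb P}\big[\widehat X_{kn^d}\ne \widehat Y_{kn^d}\big] \;\le\; |E_n|\, e^{-Ck} \;=\; o(1)
\]
gives $\hat d(kn^d)=o(1)$, hence $\hat t_{\rm mix}=O(n^d\log n)$. The matching lower bound is a standard coupon-collector estimate: at time $c n^d\log n$ with $c$ sufficiently small, on the order of $n^{d-c}$ edges have never been updated, and the statistic ``number of open edges among those never updated'' is biased by the initial configuration by more than its $O(n^{d/2})$ equilibrium fluctuation, yielding TV distance $\Omega(1)$. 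For (3), I would follow the block-dynamics / strong-spatial-mixing route of \cite{MS13} used in \cite{BS}: exponential decay of connectivity together with monotonicity of the RCM implies strong spatial mixing for $\mui$, and a standard comparison with block dynamics on $O(1)$-size boxes then produces a continuous-time spectral gap uniformly bounded below by some $\lambda(p)>0$, which rescales to $\hatl(n)\ge \lambda n^{-d}$ for the discrete chain.

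\textbf{Main obstacle.} The delicate step is part (1): showing that the backward space-time chain of non-oblivious updates is indeed dominated by a subcritical FK cluster. The subtlety is that ``$e_i$ is a cut-edge at time $t_i$'' depends on the entire current configuration of both chains, so the dominating cluster is not a static FK sample but a dynamic, time-dependent object. Building the appropriate space-time dominating percolation is essentially the content of Sections \ref{sec3} and \ref{sec4}; executing the union bound so that the $e^{-Ck}$ factor survives the combinatorial entropy of possible chains in general dimension, and identifying the precise range $k\le o(n^{1/(d+2)})$ in which the torus can safely be replaced by $\Z^d$, is where the main care is needed.
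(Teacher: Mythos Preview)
Your outline for parts (1) and (2) is broadly in line with the paper, which simply cites \cite{BS} for both (noting that the two-dimensional argument there carries over once exponential decay of connectivity is available, which holds for $p$ below the Bernoulli percolation threshold). Your disagreement-percolation sketch for (1) is in the spirit of what \cite{BS} actually does, though your explanation of the constraint $k=o(n^{1/(d+2)})$ as a wraparound issue is not quite how it arises there.

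The real divergence is in part (3). You propose to go through strong spatial mixing and block-dynamics comparison \`a la \cite{MS13}. The paper instead derives (3) directly from (1) in three lines, using only the elementary lower bound $(1-\hatl)^t\le 2\hat d(t)$ from Proposition~\ref{propgap}: union-bounding (1) over all edges gives $\hat d(kn^d)\le e^{-\Omega(k)+d\log n}$, hence $(1-\hatl)^{kn^d}\le e^{-\Omega(k)+d\log n}$, and taking logarithms and choosing $k$ large (but still $o(n^{1/(d+2)})$) yields $\hatl\ge \lambda n^{-d}$. Your route would also work, but it is substantially heavier machinery for a conclusion that is already implicit in the coupling estimate you have just established in (1). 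The paper's shortcut is worth knowing: once you have an exponential single-edge coupling bound valid for $k$ up to some mild power of $n$, the spectral gap lower bound comes for free.

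One small correction: the space--time domination you allude to for (1) is not ``essentially the content of Sections~\ref{sec3} and~\ref{sec4}''; those sections build the information-percolation framework for the continuous-time $L^2$ bound (Theorem~\ref{t41}), which is a separate and considerably more delicate object than what is needed for Theorem~\ref{t21}.
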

{ \begin{rem}Indeed, one can take $p_0$  to be the critical Bernoulli bond percolation probability on $\mathbb Z^d$. For $d=2$, thanks to the complete knowledge about
exponential decay of connectivity up to the critical point established in \cite{BD}, the results of Theorem \ref{t21} were shown to hold for all
subcritical $p,$ for each $q\ge 1$ in \cite{BS}.
\end{rem} }
\begin{proof}
(1) and (2) appear as  \cite[Display (13)]{BS}, and \cite[Theorem 6.1]{BS} respectively. Note that  (1) proves the upper bound in (2)
 by taking $k=C\log n.$
The proof of the lower bound of mixing time appears in \cite[Theorem 6.1]{BS}. Although (3) does not quite appear in \cite{BS} it is a consequence of (1). To see this, we will use the well known lower bound of total
variation distance in terms of spectral gap recalled in Proposition
\ref{propgap}. Namely,
using the above and union bounding over all elements in $E_{n},$
we get that $\hat{d}(kn^{d})$, the worst-case total variation
distance at time $kn^{d}$ is $e^{-\Omega(k)+d\log n}$, and hence
\[
(1-\hatl)^{kn^{d}}\le e^{-\Omega(k)+d\log n}
\]
 for all $k\le o(n^{1/(d+2)})$. Now taking logs we get $-kn^{d}\hatl\le-\Omega(k)+d\log n$,
and therefore for some $C>0$,
\[
\frac{1}{n^{d}}\left(C-\frac{\log n}{k}\right)\le\hatl\;.
\]
Thus by choosing a large enough $k=o(n^{1/(d+2)})$ the result follows.
\end{proof}

However for our purposes, we will need a translation of the result for the continuous time setting.
Denote by $\lambda(n)=\lambda(n,\,p,\,q),$
the spectral gap of the continuous time FK-dynamics defined in Definition
\ref{def11}.
\begin{cor}
\label{cor23}For any dimension $d$, there exists $p_0=p_0(d)$ such that for all $q\ge1$ and $p<p_{0}$, there exists $C=C(p)>0$
and $\lambda=\lambda(p)>0$ such that:
\begin{enumerate}
\item For all $x_{0},\,y_{0}\in\Omega_{n}$ and  $k\le o(n^{1/(d+2)})$, it
holds that,
\[
\mathbb{{\mathbb{P}}}_{x_{0},\,y_{0}}\left[{X}_{t}(e)\neq {Y}_{t}(e)\right]\le e^{-Ct}\;.
\]
\item The FK-dynamics in $\Lambda$ has mixing time of order $\Omega(\log n)$.
\item For all $n\in\mathbb{N}$, it holds that $\lambda(n)\ge\lambda$.
\end{enumerate}
\end{cor}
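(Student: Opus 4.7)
My plan is to translate all three statements of Theorem \ref{t21} from the discrete-time FK-dynamics to the continuous-time one by Poissonization. Under the graphical construction of Definition \ref{def11}, the continuous-time chain coincides with the discrete-time chain run for a Poisson-random number of steps: writing $N_t$ for the total number of edge updates in $[0,t]$, we have $X_t = \widehat X_{N_t}$ where $N_t \sim \mathrm{Poisson}(|E_n|t)$ with $|E_n| = d n^d$, and the same $N_t$ simultaneously realizes the monotone coupling of $X_t$ and $Y_t$ in terms of $\widehat X$ and $\widehat Y$.

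For (3), the continuous-time generator is $|E_n|(\widehat{P} - I)$, so its spectral gap equals $\lambda(n) = |E_n|\,\hatl(n) \ge d n^d \cdot \lambda n^{-d} = d\lambda$ by Theorem \ref{t21}(3), and this bound is uniform in $n$. For (1), I condition on $N_t$ to get
\[
\mathbb{P}_{x_0,y_0}[X_t(e) \ne Y_t(e)] = \mathbb{E}\bigl[\widehat{\mathbb{P}}_{x_0,y_0}[\widehat X_{N_t}(e) \ne \widehat Y_{N_t}(e)]\bigr],
\]
and split on the event $\mathcal A_t := \{N_t \ge \tfrac12 d n^d t\}$. A Chernoff bound for the Poisson distribution gives $\mathbb{P}(\mathcal A_t^c) \le e^{-c d n^d t}$, while on $\mathcal A_t$, setting $k := N_t/n^d$ we have $k \ge dt/2$ and $k = o(n^{1/(d+2)})$ whenever $t = o(n^{1/(d+2)})$, so Theorem \ref{t21}(1) applies to produce $\widehat{\mathbb{P}}_{x_0,y_0}[\widehat X_{N_t}(e) \ne \widehat Y_{N_t}(e)] \le e^{-Ck} \le e^{-Cdt/2}$. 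Combining the two contributions yields the claimed bound $e^{-C't}$.

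For (2), the $O(\log n)$ upper bound on mixing is an immediate union-bound consequence of (1), summed over the $dn^d$ edges coupled to a stationary copy; only the $\Omega(\log n)$ lower bound needs genuine work. I would transfer the discrete lower bound $\hat t_{\rm mix} = \Omega(n^d \log n)$ from Theorem \ref{t21}(2) by extracting from its proof a distinguishing event $A_n$ and an initial state $x_n^\star$ for which the gap $\widehat{\mathbb{P}}_{x_n^\star}[\widehat X_m \in A_n] - \mui(A_n)$ remains bounded below by some fixed $\delta > 0$ throughout the window $m \in [0,\,c_1 n^d \log n]$. For $t = c_0 \log n$ with $c_0 < c_1/(2d)$, Poisson concentration gives $\mathbb{P}(N_t \le c_1 n^d \log n) \ge 3/4$, so averaging over $N_t$ produces a strictly positive excess $\mathbb{P}_{x_n^\star}[X_t \in A_n] - \mui(A_n)$ (after a standard amplification if $\delta$ is small), yielding $t_{\rm mix}(1/4) \ge c_0 \log n$.

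\textbf{Main obstacle.} The most delicate step is producing a distinguishing event for (2) whose separation from $\mui$ is \emph{monotone} — or at least uniform — throughout a discrete-time window of length $\Omega(n^d \log n)$, since the averaging over $N_t$ could otherwise suffer cancellations. For the FK-dynamics below $p_c$, the monotone grand coupling and the exponential decay of connectivity of \cite{BD} make upward-closed events from the all-open initial condition $\mathbf{1}$ the natural candidates, but the monotonicity in $m$ of the associated excess needs verification. The remaining steps, (1) and (3), are routine once the Poissonization correspondence is in place.
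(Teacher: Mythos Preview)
Your approach is correct and coincides with the paper's, which simply asserts that ``the continuous dynamics is $n^d$ times faster than the discrete counterpart'' and cites \cite[Lemmas 20.5 and 20.11]{LPW} for the spectral-gap relation in part~(3). Your treatment of~(2) is in fact more careful than the paper's one-line dismissal: the cancellation concern you raise is genuine, and a clean way to dispose of it is to work with $\bar d(m)=\sup_{x,y}\|\hat P^m(x,\cdot)-\hat P^m(y,\cdot)\|_{\mathrm{TV}}$, which is monotone nonincreasing for \emph{any} chain and, by monotonicity of the FK-dynamics, is realized by the full/empty pair---so $\bar d(m)>1/4$ throughout $m\le \hat t_{\mathrm{mix}}(1/8)$ and the Poisson averaging goes through without cancellation.
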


\begin{proof}
All these results are immediate from Theorem \ref{t21} since the
continuous dynamics is $n^{d}$ times faster than the discrete counterpart.
In particular, to show part (3), see \cite[Lemmas 20.5 and 20.11]{LPW}.
\end{proof}

\section{\label{sec3}Auxiliary percolation models,  and disagreement propagation bounds} Given the randomness defined by the update sequence in \eqref{upseq}, we will need to define several auxiliary percolation models extracted from the graphical construction, which though simple will be useful in various comparison arguments appearing throughout the paper. We will also state useful bounds on the speed of propagation of disagreements. We start with the percolation models.
Before providing precise definitions, for the reader's benefit we give short descriptions off what each of these models capture. Furthermore, for ease of reference throughout the article, all the definitions are collected in Table \ref{chart} at the end of this section and the reader can choose to skip the precise definitions at first read referring to the table whenever needed.

\begin{enumerate}
\item Standard Percolation dynamics ($q=1$)/Random walk on the hypercube, i.e., edges are randomly refreshed at rate one with a Bernoulli($p$) variable  independently.  This will  dominate the FK-dynamics in the regime of our interest.
\item Enlarged percolation: An edge is said to be open if it was open at least once in the Standard Percolation dynamics in a given (to be specified) time interval.
\item Update/Non-update percolation: An edge is open if it has not been updated at least once in a given interval of time.
\end{enumerate}

\subsection{\label{sec31}Standard percolation dynamics (SPD) }
{It will be useful to discretize time as we will see in later applications. Throughout the article we will fix $\Delta:=\Delta(p)=p^{-1/2}$, to be the basic unit of discretization and let $\tau_{i}:=i\Delta$.} (The choice of $\Delta$ is not special as long as it satisfies the properties discussed in this section.) Also let $\mathbb{Z}_+$ be the set of non-negative integers.
\begin{defn}[SPD associated to the update sequence $\upd$]
\label{def31}For each $i\in\mathbb{Z}_{+}$, we construct a SPD $(\ful_{t}^{i})_{t\ge\tau_{i}}$ in $\Omega_{n}$ as follows:
\begin{enumerate}
\item $\ful_{\tau_{i}}^{i}=E_{n}$.
\item For each $t>\tau_i$ and $e\in E_{n}$,
\begin{enumerate}
\item If $\upd[\tau_{i},\,t](e)=\emptyset$, we let $\ful_{t}^{i}(e)=\ful_{\tau_{i}}^{i}(e)(=1)$.
\item Otherwise, let $(t^{*},\,U^{*})$ be the last update in $\upd[\tau_{i},\,t](e)$.
\begin{enumerate}
\item We let $\ful_{t}^{i}(e)=1$ if $U^{*}>1-p,$
\item else let $\ful_{t}^{i}(e)=0$ if $U^{*}\le1-p.$
\end{enumerate}
\end{enumerate}
\end{enumerate}
We define the dynamics $(\emp_{t}^{i})_{t\ge\tau_{i}}$ in an identical
manner by replacing step (1) with $\emp_{\tau_{i}}^{i}=\emptyset$.
In other words, $(\ful_{t}^{i})$ and $(\emp_{t}^{i})$ are the Glauber
dynamics of the percolation measure with open probability $p$  on
$\Omega_{n}$ starting at $t=\tau_{i}$ from the full and empty configurations,
respectively.
\end{defn}

Since $(\ful_{t}^{i})$ and $(\emp_{t}^{i})$, for $i\in\mathbb{Z_{+}},$ and
the FK-dynamics $(X_{t}),$ share the same update sequence, we can
couple all of them in the time window $[\tau_{i},\,\infty)$ in a
natural manner calling this as the \textit{canonical
coupling}.
We record some simple but useful lemmas below.
\begin{lem}
\label{lem031}Under the canonical coupling, for all $i\in\mathbb{Z}_{+}$, it holds that
\[
X_{t}\le\ful_{t}^{i}\text{ for all }t\ge\tau_{i}\;.
\]
\end{lem}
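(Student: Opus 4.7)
The plan is to prove this by induction on the sequence of update times in $\upd$ restricted to $[\tau_i,\infty)$, exploiting the way the Definition~\ref{def11} construction has been engineered to align with the SPD construction of Definition~\ref{def31}. The core observation is that the two dynamics use literally the same uniform variables $U$ at the same update times, and the thresholds have been chosen so that whenever the FK-dynamics sets an edge to $1$, the SPD also sets it to $1$.

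First I would establish the base case at $t=\tau_i$: by construction $\ful_{\tau_i}^i = E_n$ is the full configuration, so $X_{\tau_i}\le \ful_{\tau_i}^i$ holds vacuously, regardless of what $X_{\tau_i}$ is. Then, assuming $X_s\le \ful_s^i$ at some time $s\ge \tau_i$, I would argue that the property is preserved at the next update time $t$, say of edge $e$ with associated uniform $U$. For every $e'\neq e$ the two configurations are unchanged, so $X_t(e')=X_{s}(e')\le \ful_{s}^i(e')=\ful_t^i(e')$. For $e'=e$ there are two cases dictated by the position of $U$:
\begin{itemize}
\item If $U<1-p$, then Definition~\ref{def11}(1)(a) forces $X_t(e)=0$ and Definition~\ref{def31}(2)(b)(ii) forces $\ful_t^i(e)=0$, so equality holds.
\item If $U\ge 1-p$, then Definition~\ref{def31}(2)(b)(i) gives $\ful_t^i(e)=1$, and the inequality $X_t(e)\le 1$ is trivial. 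In particular this subsumes both subcases in Definition~\ref{def11}(1): the automatic setting to $1$ when $U\in[1-p,\,1-p+p^*)$, and the cut-edge-dependent assignment when $U\ge 1-p+p^*$.
\end{itemize}
Since both chains are constant between update times and the update times of $\upd$ are a.s.\ discrete and locally finite, induction over them yields $X_t\le \ful_t^i$ for all $t\ge \tau_i$.

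There is essentially no serious obstacle here. The entire content of the lemma is built into the alternative formulation of FK-dynamics given in Definition~\ref{def11}: by separating out the ``oblivious'' event $\{U<1-p+p^*\}$ and choosing the interval $[1-p,\,1-p+p^*)$ of length $p^*\le p$ for the automatic $1$-assignment, the authors ensure that the $1$-event for the FK-dynamics is contained in the $1$-event $\{U\ge 1-p\}$ for the SPD. The only thing worth being careful about is that the argument is performed edge by edge at a single update time, which is valid because at any such time exactly one edge changes a.s., as noted just after \eqref{upseq2}.
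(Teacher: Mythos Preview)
Your proof is correct. The inductive comparison you outline works because, as you observe, whenever the FK-dynamics sets an edge to $1$ the associated uniform must satisfy $U\ge 1-p$, which is exactly the event on which the SPD sets the edge to $1$; the inductive hypothesis is only needed to carry over the inequality on the non-updated edges. (There is a measure-zero discrepancy at the boundary $U=1-p$ between Definition~\ref{def11}(1)(a) and Definition~\ref{def31}(2)(b), but this is harmless since $U$ is continuous.)

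The paper's proof is organized differently: it factors through two intermediaries, writing
\[
X_t \;\le\; X_t^{\textrm{full}} \;\le\; \ful_t^{0} \;\le\; \ful_t^{i}\quad\text{for }t\ge\tau_i,
\]
where $X_t^{\textrm{full}}$ is the FK-dynamics started from the full configuration. The first inequality is the monotone coupling of FK-dynamics, the third is monotonicity of SPD in its (full) initial time, and the middle inequality is precisely your update-by-update comparison, specialized to $i=0$ with full initial condition (the paper simply says it ``comes directly from the definitions''). Your route is more direct in that it applies this comparison once, starting at $\tau_i$ with an arbitrary $X_{\tau_i}$, and avoids invoking the grand-coupling monotonicity of FK-dynamics or of SPD separately. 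The paper's decomposition, on the other hand, isolates the monotone-coupling step as a reusable fact. Both are equally short and valid.
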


\begin{proof}
Denote by $X_{t}^{\textrm{full}}$ the FK-dynamics on $\Omega_{n}$ with $X_{0}=E_n,$
the full configuration. Via the monotone coupling, we have $X_{t}\le X_{t}^{\textrm{full}}$
for all $t\ge0$. Now the inclusion $X_{t}^{\textrm{full}}\le\ful_{t}^{0}$
for all $t\ge0$ comes directly from the definitions of FK-dynamics
and percolation dynamics.  Since we have $\ful_{t}^{0}\le\ful_{t}^{i}$
for all $t\ge\tau_{i}$ for all $i\in\mathbb{Z}_{+}$ under the canonical
coupling, we are done.
\end{proof}

For $s\in[0,\,1]$, denote by $\per(s)$
the standard bond percolation on $E_n$ where an edge $e$ is open with probability $s$. Denote by $\preceq$ the usual stochastic domination.

\begin{lem}
\label{lem032}For all $i\in\mathbb{Z}_{+}$ and $t\ge0$, the law
of $\ful_{t+\tau_{i}}^{i}$ is given by  $\per(e^{-t}+p[1-e^{-t}])$.
Therefore, for all $x_{0}\in\Omega_{n}$, it holds that
\[
\mathbb{P}_{x_{0}}\left[X_{t}\in\cdot\,\right]\preceq\per(e^{-t}+p[1-e^{-t}])\;.
\]
\end{lem}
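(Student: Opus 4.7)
The plan is to first pin down the single-edge marginal of $\ful^i_{t+\tau_i}$, then use independence across edges to identify the joint law with $\per(e^{-t}+p[1-e^{-t}])$, and finally combine with Lemma \ref{lem031} and the monotone coupling to get the stochastic domination.

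For a fixed edge $e \in E_n$, the restriction of $\upd$ to $e$ in $[\tau_i,\tau_i+t]$ is a Poisson process of rate $1$ of length $t$, marked by i.i.d.\ uniforms. Condition on whether $\upd[\tau_i,\tau_i+t](e)$ is empty or not. With probability $e^{-t}$ there is no update, and per step (2)(a) of Definition \ref{def31} the edge retains its initial value $\ful^i_{\tau_i}(e) = 1$. With probability $1-e^{-t}$ there is at least one update; conditional on this, the uniform mark $U^*$ attached to the last update in $[\tau_i,\tau_i+t]$ is still uniform on $[0,1]$ (independence of marks from the Poisson skeleton), and by step (2)(b) we have $\ful^i_{t+\tau_i}(e) = 1$ precisely on $\{U^* > 1-p\}$, an event of probability $p$. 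Combining, $\mathbb{P}[\ful^i_{t+\tau_i}(e) = 1] = e^{-t} + p(1-e^{-t})$.

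Next I would establish independence across edges. The update sequences $\upd(e)$ for distinct $e \in E_n$ are independent by construction (see \eqref{upseq}-\eqref{upseq2}), and the rule in Definition \ref{def31} determines $\ful^i_{t+\tau_i}(e)$ as a measurable function of $\upd(e)$ alone. Hence $\{\ful^i_{t+\tau_i}(e)\}_{e \in E_n}$ is a family of independent Bernoulli variables with the common parameter computed above, which is exactly the law $\per(e^{-t}+p[1-e^{-t}])$.

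For the second assertion, apply the first with $i = 0$, so $\tau_0 = 0$ and $\ful^0_t$ has law $\per(e^{-t}+p[1-e^{-t}])$. Lemma \ref{lem031} (with $i = 0$) gives that under the canonical coupling $X_t \le \ful^0_t$ pointwise for every $t \ge 0$, when $X_t$ is started from the full configuration; for a general $x_0 \in \Omega_n$, the monotone coupling $\mathbb{P}_{x_0,E_n}$ provides $X_t \le X_t^{\mathrm{full}} \le \ful^0_t$ almost surely. This pointwise coupling is a witness to the stochastic domination $\mathbb{P}_{x_0}[X_t \in \cdot] \preceq \per(e^{-t}+p[1-e^{-t}])$. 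There is no genuine obstacle here — the only subtle point is to note that conditional on the last update existing, its uniform mark is still uniformly distributed, which is immediate from the independence of the marks from the Poisson times.
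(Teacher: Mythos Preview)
Your proof is correct and follows essentially the same approach as the paper: compute the single-edge marginal by conditioning on whether an update occurs in $[\tau_i,\tau_i+t]$, invoke independence across edges to identify the joint law as $\per(e^{-t}+p[1-e^{-t}])$, and then deduce the domination from Lemma \ref{lem031} with $i=0$. The only minor difference is that you partially re-derive the content of Lemma \ref{lem031} (the chain $X_t \le X_t^{\mathrm{full}} \le \ful^0_t$), whereas the paper simply cites it; but since that lemma already applies to arbitrary initial conditions, the direct citation suffices.
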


\begin{proof}
By definition, $\ful_{t+\tau_{i}}^{i}(e)=1$ if $\upd[\tau_{i},\,\tau_{i}+t](e)=\emptyset$.
Otherwise, i.e., if $\upd[\tau_{i},\,\tau_{i}+t](e)\neq\emptyset$,
\[
\ful_{t+\tau_{i}}^{i}(e)=\begin{cases}
1 & \text{with probability }p,\\
0 & \text{with probability }1-p
\end{cases}
\]
since the status of $\ful_{t+\tau_{i}}^{i}(e)$ depends only on the
last update for this edge before $t+\tau_{i}$. Since
\[
\mathbb{P}[\upd[\tau_{i},\,\tau_{i}+t](e)=\emptyset]=e^{-t}\;,
\]
it follows that
\[
\mathbb{P}[\ful_{t+\tau_{i}}^{i}(e)=1]=e^{-t}+p[1-e^{-t}]\;.
\]
The proof of the first assertion is completed since the status of
edges are independent  under SPD. The second
assertion follows from Lemma \ref{lem031} and choosing  $i=0$.
\end{proof}

As indicated in {Section \ref{iop},} we will allow ourselves an $O(1)$ burn-in time which will be enough by the above domination results for the configuration to look like a sample of a subcritical percolation. This then creates a situation where no connected component is large and hence the interactions between various edges are still rather local.
To make this formal, denote by $p_{\textrm{perc}}(d)\in(0,\,1)$ the critical probability
of the edge percolation in $\mathbb{Z}^{d}$.
\textbf{From now on we will  assume that $p\in(0,\,p_{\textrm{perc}}(d))$ and
further arguments would put additional smallness conditions on $p$.}
Define
$$
\pini=\pini(p):=\frac{1}{2}(p+p_{\textrm{perc}}(d))\in(p,\,p_{\textrm{perc}}(d))\;,$$ and let  $\tini=\tini(p)$ be the solution of the following equation:
\begin{equation}
p(1-e^{-\tini})+e^{-\tini}=\pini\;.\label{pini}
\end{equation}
As the next lemma will show, we can restrict our initial conditions to the class
of measures $\nu$ satisfying $\nu\preceq\per(\pini)$. More precisely,
define
\[
\widehat{d}(t)=\sup_{\nu:\nu\preceq\per(\pini)}\left\Vert \mathbb{P}_{\nu}\left[X_{t}\in\cdot\,\right]-\mui\right\Vert _{\textrm{TV}}\;,
\]
and
\[
\widehat{t}_{\textrm{mix}}(\epsilon)=\inf\big\{ t:\widehat{d}(t)<\epsilon \big\}\;.
\]
{{Then, we obtain the following comparison result between $t_{\textrm{mix}}(\epsilon)$ and $\widehat{t}_{\textrm{mix}} (\epsilon)$.
\begin{lem}
\label{pro35}For all $p<p_{\textrm{perc}}(d)$ and $t>\tini$, we
have
\begin{equation}
\sup_{x_{0}\in\Omega_{n}}\left\Vert \mathbb{P}_{x_{0}}\left[X_{t}\in\cdot\,\right]-\mui\right\Vert _{\textrm{TV}}\le\sup_{\nu:\nu\preceq\per(\pini)}\left\Vert \mathbb{P}_{\nu}\left[X_{t-\tini}\in\cdot\,\right]-\mui\right\Vert _{\textrm{TV}}\;.\label{ep35}
\end{equation}
Therefore, we have
\begin{equation}
\widehat{t}_{\textrm{mix}} (\epsilon)\le t_{\textrm{mix}} (\epsilon) \le\widehat{t}_{\textrm{mix}(\epsilon)}+\tini
\;.\label{ep36}
\end{equation}
\end{lem}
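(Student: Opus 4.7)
The plan is to use the two ingredients already in place: the stochastic domination bound from Lemma \ref{lem032} and the monotonicity of the FK--dynamics, and to combine them with the Markov property at the burn--in time $\tini$.

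First, I would observe that by Lemma \ref{lem032} applied with $i=0$, for any $x_{0}\in\Omega_{n}$ we have
\[
\mathbb{P}_{x_{0}}[X_{\tini}\in\,\cdot\,]\;\preceq\;\per(e^{-\tini}+p(1-e^{-\tini}))\;=\;\per(\pini)\,,
\]
where the last equality is the defining relation \eqref{pini} of $\tini$. Thus if we write $\nu_{x_{0}}:=\mathbb{P}_{x_{0}}[X_{\tini}\in\,\cdot\,]$, then $\nu_{x_{0}}\preceq\per(\pini)$ for every deterministic starting configuration $x_{0}$.

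Second, I would apply the Markov property at time $\tini$, which gives, for any $t>\tini$,
\[
\mathbb{P}_{x_{0}}[X_{t}\in\,\cdot\,]\;=\;\mathbb{P}_{\nu_{x_{0}}}[X_{t-\tini}\in\,\cdot\,]\,,
\]
and hence
\[
\bigl\Vert\mathbb{P}_{x_{0}}[X_{t}\in\,\cdot\,]-\mui\bigr\Vert_{\textrm{TV}}\;=\;\bigl\Vert\mathbb{P}_{\nu_{x_{0}}}[X_{t-\tini}\in\,\cdot\,]-\mui\bigr\Vert_{\textrm{TV}}\;\le\;\sup_{\nu\preceq\per(\pini)}\bigl\Vert\mathbb{P}_{\nu}[X_{t-\tini}\in\,\cdot\,]-\mui\bigr\Vert_{\textrm{TV}}\,,
\]
since $\nu_{x_{0}}$ belongs to the class over which the supremum is taken. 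Taking the supremum over $x_{0}\in\Omega_{n}$ on the left yields \eqref{ep35}. The right inequality of \eqref{ep36} is then immediate from \eqref{ep35}: if $s=\widehat{t}_{\textrm{mix}}(\epsilon)$, then at time $t=s+\tini$ the right--hand side of \eqref{ep35} is at most $\epsilon$, hence $t_{\textrm{mix}}(\epsilon)\le s+\tini$.

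For the left inequality $\widehat{t}_{\textrm{mix}}(\epsilon)\le t_{\textrm{mix}}(\epsilon)$, I would use convexity of total variation distance: for any initial law $\nu$, writing $\mathbb{P}_{\nu}[X_{t}\in\,\cdot\,]=\int\mathbb{P}_{x_{0}}[X_{t}\in\,\cdot\,]\,d\nu(x_{0})$ and $\mui=\int\mui\,d\nu(x_{0})$, the triangle inequality gives
\[
\bigl\Vert\mathbb{P}_{\nu}[X_{t}\in\,\cdot\,]-\mui\bigr\Vert_{\textrm{TV}}\;\le\;\int\bigl\Vert\mathbb{P}_{x_{0}}[X_{t}\in\,\cdot\,]-\mui\bigr\Vert_{\textrm{TV}}\,d\nu(x_{0})\;\le\;\sup_{x_{0}\in\Omega_{n}}\bigl\Vert\mathbb{P}_{x_{0}}[X_{t}\in\,\cdot\,]-\mui\bigr\Vert_{\textrm{TV}}\,,
\]
so $\widehat{d}(t)\le\sup_{x_{0}}\Vert\mathbb{P}_{x_{0}}[X_{t}\in\,\cdot\,]-\mui\Vert_{\textrm{TV}}$, which in turn yields $\widehat{t}_{\textrm{mix}}(\epsilon)\le t_{\textrm{mix}}(\epsilon)$.

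There is no real obstacle here; the only point worth being careful about is that stochastic domination of the \emph{starting} law under the FK--dynamics does not by itself give pointwise monotonicity of the law at a later time, so the argument is routed through total variation rather than through the monotone coupling directly. Once \eqref{ep35} is established via the Markov property, \eqref{ep36} is automatic.
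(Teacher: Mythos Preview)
Your proof is correct and follows the same approach as the paper: apply Lemma \ref{lem032} at time $\tini$ to get $\nu_{x_0}\preceq\per(\pini)$, then use the Markov property and take the supremum. The paper's proof is simply a terser version of yours, compressing your convexity argument for $\widehat d(t)\le d(t)$ into a single asserted inequality.
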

}}
\begin{proof}
By Lemma \ref{lem032} and definition of $\tini$ and $\pini$, we
have that the distribution of $X_{\tini}$ given any initial configuration
is stochastically bounded by $\per(\pini)$. Hence, the first assertion
of proposition follows. The inequalities in \eqref{ep36} follow since $\widehat{d}(t)\le d(t)\le\widehat{d}(t-\tini)$
by \eqref{ep35}.
\end{proof}

\begin{center}\textit{\textbf{Thus we will take}} $\tini$ \textit{\textbf{to be our burn-in time.}}\end{center}

\subsection{\label{sec32}Enlarged and non-update percolations}
{{In this section we define the second and the third models indicated at the beginning of the section.}}

 \begin{defn}
\label{def37}We define two sequences of random configurations $(\eful_{i})_{i\in\mathbb{N}}$
and $(\eemp_{i})_{i\in\mathbb{N}}$ in $\Omega_{n}$ based on the definitions $(\ful^{i}_t)_{i\in\mathbb{N}}$
and $(\emp^{i}_t)_{i\in\mathbb{N}}$ as follows:
\begin{enumerate}
\item For $i\in\mathbb{N}$, define $\eful_{i}\in\Omega_{n}$ as
\[
\eful_{i}(e)=1\text{ iff }\ful_{t}^{i-1}(e)=1\text{ for some }t\in[\tau_{i},\,\tau_{i+1}]\;.
\]
Note that here we consider $\ful_{t}^{i-1}$ instead of $\ful_{t}^{i}$ since otherwise $\eful_{i}(e)$ would be deterministically $1.$
\item For $i\in\mathbb{Z}_{+}$, define $\eemp_{i}\in\Omega_{n}$ as
\[
\eemp_{i}(e)=1\text{ iff }\emp_{t}^{i}(e)=1\text{ for some }t\in[\tau_{i},\,\tau_{i+1}]\;.
\]
\end{enumerate}
\end{defn}
 The following result is a static version of Lemma \ref{lem031}.
\begin{lem}
\label{lem038}Under the canonical coupling, for all $i\in\mathbb{N}$,
we have
\[
X_{t}\le\eful_{i}\;\text{for all }t\in[\tau_{i},\,\tau_{i+1}]\;.
\]
\end{lem}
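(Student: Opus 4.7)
The plan is to derive this as a direct consequence of Lemma \ref{lem031} combined with the definition of $\eful_i$. The key observation is that $\eful_i$ records whether an edge is ever open during the time window $[\tau_i,\tau_{i+1}]$ in the SPD $(\ful_t^{i-1})$ started from the full configuration at time $\tau_{i-1}$, so it dominates anything dominated by $\ful_t^{i-1}$ at a single time $t$ in that window.

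First I would apply Lemma \ref{lem031} with the index $i-1$ in place of $i$: under the canonical coupling, since $\tau_{i-1} \le \tau_i \le t \le \tau_{i+1}$, we have $X_t(e) \le \ful_t^{i-1}(e)$ for every edge $e$ and every $t \in [\tau_i,\tau_{i+1}]$. Next I would argue edgewise: fix an edge $e$ and a time $t$ in the window. If $X_t(e) = 0$ the inequality $X_t(e) \le \eful_i(e)$ is automatic. If instead $X_t(e) = 1$, then by the previous step $\ful_t^{i-1}(e) = 1$ at this particular $t \in [\tau_i,\tau_{i+1}]$, and Definition \ref{def37}(1) then forces $\eful_i(e) = 1$, again yielding $X_t(e) \le \eful_i(e)$. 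Since $e$ and $t$ were arbitrary, this establishes the desired pointwise domination.

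There is no real obstacle here; the content of the lemma is that taking the pointwise supremum over $t \in [\tau_i,\tau_{i+1}]$ of the upper bound $\ful_t^{i-1}$ provided by Lemma \ref{lem031} still dominates $X_t$ at each individual time in the window. The only subtlety worth flagging is the choice of the index $i-1$ (rather than $i$) in invoking Lemma \ref{lem031}, which is exactly the asymmetry already noted in Definition \ref{def37}(1): using $\ful_t^{i}$ would make $\eful_i(e)$ trivially $1$ since $\ful_{\tau_i}^i = E_n$, so the useful comparison is through the previous block's SPD, which is legitimate because the canonical coupling couples all the $(\ful_t^{j})$ and $(X_t)$ on their common time domain.
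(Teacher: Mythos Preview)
Your proof is correct and follows exactly the same approach as the paper's: apply Lemma~\ref{lem031} with index $i-1$ to get $X_t \le \ful_t^{i-1}$ on $[\tau_i,\tau_{i+1}]$, then invoke the definition of $\eful_i$. The paper states this in a single sentence, while you spell out the edgewise argument and the reason for the index shift, but the content is identical.
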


\begin{proof}
Since $X_{t}\le\ful_{t}^{i-1}$ for all $t\in[\tau_{i},\,\tau_{i+1}]$
by Lemma \ref{lem031}, the proof is immediate from the definition
of $\eful_{i}$.
\end{proof}
Now we investigate the distributions of $\eemp_{i}$ and $\eful_{i}$.
To this end we introduce the non-update percolation $\nup_{i}\in\Omega_{n}$,
for $i\in\mathbb{Z}_{+}$, as the following:
\begin{equation}\label{nonup21}
\nup_{i}(e)=\begin{cases}
1 & \text{if }\upd[\tau_{i},\,\tau_{i+1}](e)=\emptyset\;,\\
0 & \text{if }\upd[\tau_{i},\,\tau_{i+1}](e)\neq\emptyset\;.
\end{cases}
\end{equation}
In order words, $\nup_{i}(e)=0$ if and only if there is an update
$(t_{1},\,U_{1})\in\upd(e)$ such that $t_{1}\in(\tau_{i},\,\tau_{i+1}]$.
Given the above definitions, we have the following comparison results.
\begin{lem}
\label{pbd}The following holds:
\begin{enumerate}
\item For all $i\in\mathbb{Z}_{+}$, we have  $\eemp_{i}\preceq\per(p^{1/2})$.
\item For all $i\in\mathbb{Z}_{+}$, we have $\nup_{i}\preceq\per(p^{1/2})$.
\item For all $i\in\mathbb{N}$, we have $\eful_{i}\preceq\per(3p^{1/2})$.
\end{enumerate}
\end{lem}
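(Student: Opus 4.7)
The key structural observation is that each of $\eemp_i$, $\nup_i$, and $\eful_i$ is a deterministic function of the update data $\upd(e)$ at a \emph{single} edge $e$, and the family $\{\upd(e)\}_{e\in E_n}$ is independent across edges by construction. Hence all three random configurations are product Bernoulli measures on $\Omega_n$, and each claimed stochastic domination by $\per(s)$ reduces to verifying that the open probability at a single edge is at most $s$. The proof thus becomes three short Poisson/thinning calculations at a fixed edge, and the only general point requiring care is to invoke this edgewise independence in order to upgrade marginal estimates to the claimed stochastic dominations.

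Parts~(1) and~(2) are one-line computations. For $\nup_i(e)$, the event asks that the rate-$1$ Poisson process of update times has no point in an interval of length $\Delta=p^{-1/2}$, so its probability equals $e^{-p^{-1/2}}$, which is bounded by $p^{1/2}$ via the elementary inequality $\log x\le x$ applied with $x=p^{-1/2}$. For $\eemp_i(e)$, starting from $\emp^i_{\tau_i}(e)=0$ the trajectory reaches $1$ somewhere in $(\tau_i,\tau_{i+1}]$ iff at least one update in that interval has uniform mark exceeding $1-p$; by Poisson thinning these ``positive'' updates form a rate-$p$ Poisson process, so the open probability is $1-e^{-p\Delta}=1-e^{-p^{1/2}}\le p^{1/2}$.

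For part~(3) I would condition on the value of $\ful^{i-1}_{\tau_i}(e)$, which by Lemma~\ref{lem032} is Bernoulli with parameter $e^{-p^{-1/2}}+p(1-e^{-p^{-1/2}})\le p+e^{-p^{-1/2}}$. On the complementary event $\{\ful^{i-1}_{\tau_i}(e)=0\}$, for $\ful^{i-1}$ to become open somewhere in $(\tau_i,\tau_{i+1}]$ one needs a positive update of $e$ inside that interval, contributing at most $p^{1/2}$ exactly as in the previous paragraph. Summing gives the single-edge bound $\mathbb{P}[\eful_i(e)=1]\le p+e^{-p^{-1/2}}+p^{1/2}$, which is at most $3p^{1/2}$ for all $p$ small enough, consistent with the paper's standing smallness assumption on $p$. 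No step is genuinely delicate; the real work in this section lies not in this lemma itself but in combining these dominations with subcritical percolation estimates further downstream.
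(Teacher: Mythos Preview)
Your proof is correct. Parts (1) and (2) are essentially the paper's argument (the paper writes the Poisson sum explicitly rather than invoking thinning, but the content is the same).

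For part (3) you take a genuinely different route. The paper does not condition on $\ful^{i-1}_{\tau_i}(e)$; instead it proves the \emph{pointwise} (sample-path) inclusion
\[
\eful_i \;\le\; \nup_{i-1}\cup\eemp_{i-1}\cup\eemp_i
\]
and then union-bounds the open probability using parts (1) and (2). Your conditioning argument reaches the same numerical conclusion (and in fact your bound $p+e^{-p^{-1/2}}+p^{1/2}\le 3p^{1/2}$ holds for all $p\in(0,1)$, not just small $p$, since $p\le p^{1/2}$ and $e^{-x}\le 1/x$). The practical difference is that the paper's pointwise inclusion is not a throwaway step: it is cited again in the proof of Proposition~\ref{penv}(2) to obtain the almost-sure domination $X_t\le\env_{i-1}\cup\env_i$ on $[\tau_i,\tau_{i+1}]$, which is the workhorse for the information-percolation construction. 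Your argument only yields a distributional bound on $\eful_i$, so if you adopt it you would need to supply the pointwise inclusion separately when you reach Proposition~\ref{penv}.
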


\begin{proof}
We start by observing that $\eemp_{i}(e)=1$ if and only if
\begin{equation}
\text{\ensuremath{\{U>1-p} for some }(t,\,U)\in\upd[\tau_{i},\,\tau_{i+1}](e)\}\label{e391}\;.
\end{equation}
To compute the probability of the latter notice that given
the event $|\upd[\tau_{i},\,\tau_{i+1}](e)|=k$, the event \eqref{e391}
happens with probability $1-(1-p)^{k}$. Hence, the probability of
the event \eqref{e391} can be written as
\[
\sum_{k=0}^{\infty}e^{-\Delta}\frac{\Delta^{k}}{k!}(1-(1-p)^{k})=1-e^{-p\Delta}\le p\Delta=p^{1/2}\;.
\]
This finishes the proof of (1). Part (2) can be readily obtained from
the observation that
\[
\nup_{i}\sim\per(e^{-\Delta})\preceq\per(p^{1/2})\;.
\]
For part (3), we claim that
\begin{equation}\label{e391-1}
\eful_{i}\le\nup_{i-1}\cup\eemp_{i-1}\cup\eemp_{i}\;.
\end{equation}
This claim along with parts (1) and (2) will finish the proof. To  prove the claim, first suppose that $\eful_{i}(e)=1$ and $\nup_{i-1}(e)=0$.
Then, $\upd[\tau_{i-1},\,\tau_{i}](e)\neq\emptyset$ and hence we
can take the last update $(t_{1},\,U_{1})$ in $\upd[\tau_{i-1},\,\tau_{i}](e)$.
Since $\eful_{i}(e)=1$, at least one update $(t,\,U)$ in $\{(t_{1},\,U_{1})\}\cup\upd[\tau_{i},\,\tau_{i+1}](e)$
satisfies $U>1-p$. It implies either $\eemp_{i-1}(e)=1$ or $\eemp_{i}(e)=1$.
This finishes the proof.
\end{proof}
We end this section with a final definition.
For $i\in\mathbb{Z}_{+}$, let
\begin{equation}
\env_{i}:=\eemp_{i}\cup\nup_{i}\in\Omega_{n}\;.\label{env}
\end{equation}
We record a key fact in the next lemma. In short the lemma says that the FK-dynamics across time can be dominated by a sequence of Bernoulli percolations which are one dependent across time. This will be crucially used in the analysis of how information spreads in the FK-dynamics.
\begin{prop}
\label{penv}The following hold:
\begin{enumerate}
\item For all $i\in\mathbb{Z}_{+}$, the distribution of $\env_{i}$ is
stochastically dominated by $\per{(2p^{1/2})}.$
\item For all $i\in\mathbb{N}$, under the canonical coupling, we have that
\[
X_{t}\le\env_{i-1}\cup\env_{i}\;\text{for all }t\in[\tau_{i},\,\tau_{i+1}]\;.
\]
\end{enumerate}
\end{prop}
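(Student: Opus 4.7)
The plan is to derive both parts by combining the edge-wise comparisons already established in Lemma \ref{pbd} with the observation that, for each fixed $i$, the configuration $\env_i$ inherits mutual independence across distinct edges from the graphical construction. This reduces the proof to routine bookkeeping on top of ingredients that are already in place.

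For part (1), I would first note that $\env_i(e) = \eemp_i(e) \vee \nup_i(e)$ depends only on the restricted update sequence $\upd[\tau_i,\tau_{i+1}](e)$ at the single edge $e$: indeed $\nup_i(e)$ depends on whether this set is empty, and $\eemp_i(e)$ depends on the marks of its elements. Because the Poisson processes attached to distinct edges in \eqref{upseq} are independent, the family $\{\env_i(e)\}_{e\in E_n}$ is mutually independent. For product measures, stochastic domination by $\per(s)$ is equivalent to the marginal bound $\mathbb{P}[\env_i(e)=1]\le s$, so the union bound
\[
\mathbb{P}[\env_i(e)=1]\;\le\;\mathbb{P}[\eemp_i(e)=1]+\mathbb{P}[\nup_i(e)=1]\;\le\;p^{1/2}+p^{1/2}\;=\;2p^{1/2},
\]
where the two marginal estimates are parts (1) and (2) of Lemma \ref{pbd}, delivers $\env_i\preceq\per(2p^{1/2})$.

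For part (2), I would simply chain previously established inclusions. Lemma \ref{lem038} gives $X_t\le\eful_i$ for every $t\in[\tau_i,\tau_{i+1}]$ under the canonical coupling. The proof of Lemma \ref{pbd}(3) already established the deterministic (coupling-wise) inclusion $\eful_i\le\nup_{i-1}\cup\eemp_{i-1}\cup\eemp_i$ from inspecting the last update of $e$ in $[\tau_{i-1},\tau_i]$ together with the updates in $[\tau_i,\tau_{i+1}]$. By the definition \eqref{env}, $\nup_{i-1}\cup\eemp_{i-1}\le\env_{i-1}$ and $\eemp_i\le\env_i$, so $\eful_i\le\env_{i-1}\cup\env_i$, and the desired bound on $X_t$ follows.

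Neither step poses a genuine difficulty, but the one point worth flagging is the passage from marginals to stochastic domination in part (1): the argument rests crucially on $\env_i$ being a product measure, a property that fails for naively stacked time windows (for instance, across $i$ the configurations $\env_i$ and $\env_{i-1}$ are only $1$-dependent, because both see $\upd[\tau_{i-1},\tau_i]$ via $\nup_{i-1}$ and $\eemp_{i-1}$ versus updates defining $\env_i$). So I would emphasize that the factorization is used strictly for each fixed $i$, whereas the $1$-dependence across $i$ is exactly what makes the downstream information-percolation analysis in Section \ref{sec4} genuinely new relative to the independent-slab picture in \cite{NS}.
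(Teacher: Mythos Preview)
Your proof is correct and follows essentially the same route as the paper: part (1) combines parts (1) and (2) of Lemma \ref{pbd} (with the independence across edges made explicit, which the paper leaves implicit), and part (2) chains Lemma \ref{lem038} with the inclusion \eqref{e391-1} established in the proof of Lemma \ref{pbd}(3). One small inaccuracy in your closing parenthetical: $\env_i$ and $\env_{i-1}$ are actually independent, since they depend on the disjoint update windows $(\tau_i,\tau_{i+1}]$ and $(\tau_{i-1},\tau_i]$ respectively; the $1$-dependence you allude to arises not in the sequence $(\env_i)$ itself but in the history-diagram expansion, which at step $i$ uses $\env_{i-1}\cup\env_i$ and thus shares $\env_i$ with the expansion at step $i+1$.
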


\begin{proof}
The proof of part (1) is immediate from (1) and (2) of Lemma \ref{pbd}, while the proof of part (2)
is an immediate consequence of Lemma \ref{lem038} and \eqref{e391-1}.
\end{proof}
For purpose of easy reference throughout the article we record all the percolation models defined so far in  Table \ref{chart}.
\begin{table}
\begin{tabular*}{.8615\textwidth}{ | c | c | c |}
  \hline
  {\rule{0pt}{2.4ex}}\textbf{Percolation}   & \textbf{Description} & \textbf{Defined in}{\rule[-1.2ex]{0pt}{0pt}}\\
  \hline
 {\rule{0pt}{2.4ex}}$\emp^{i}_t$ & Percolation on $[\tau_i,\infty)$,  starting at $\tau_i$ from empty& Def. \ref{def31}{\rule[-1.2ex]{0pt}{0pt}}\\
  \hline
 {\rule{0pt}{2.4ex}}$\ful^{i}_t$ & Percolation on $[\tau_i,\infty)$, starting at $\tau_{i}$ from full & Def. \ref{def31}{\rule[-1.2ex]{0pt}{0pt}}\\
  \hline
   {\rule{0pt}{2.4ex}}$\eemp_{i}$ & Open some time in $[\tau_i,\tau_{i+1}]$ starting with empty at {\rule{0pt}{2.4ex}}$\tau_{i}$ &  Def. \ref{def37}{\rule[-1.2ex]{0pt}{0pt}}\\
  \hline
   {\rule{0pt}{2.4ex}}$\eful_{i}$ & Open some time in $[\tau_i,\tau_{i+1}]$ starting with full at {\rule{0pt}{2.4ex}}$\tau_{i-1}$ &  Def. \ref{def37}{\rule[-1.2ex]{0pt}{0pt}}\\
  \hline
   {\rule{0pt}{2.4ex}}$\nup_{i}$ & Non-update in $[\tau_{i},\tau_{i+1}]$ implies open & \eqref{nonup21}{\rule[-1.2ex]{0pt}{0pt}}\\
  \hline
   {\rule{0pt}{2.4ex}}$\env_{i}$ & $\eemp_{i} \cup \nup_{i} $& \eqref{env}{\rule[-1.2ex]{0pt}{0pt}}\\
\hline
  \end{tabular*}\\
 \vspace{.2in}
\caption{Different kinds of percolation.}
\label{chart}
\end{table}

\subsection{\label{sec33}Decay of connectivity}

We now record some useful exponential decay of connectivity results for a non-equilibrium RCM. It is well-known that for a  sub-critical bond percolation or RCM, one observes an exponential decay of connectivity, i.e., {{the probability that two sites $u$ and $v$ belong to the same
cluster decays exponentially in the graph distance $d(u,\,v)$,
(cf.
\cite[Theorem 2]{BD}).}}
We  would need a dynamical version for our purposes and  start with some definitions.
Note that $\eful_{i}$ had so far been defined for $i\ge1$ only. We now
define $\eful_{0}$ as
\[
\eful_{0}=X_{0}\cup\eemp_{0}\;.
\]
Then, by definition
\begin{equation}
X_{t}\le\eful_{0}\text{ for all }t\in[0,\,\tau_{1}]\;.\label{ep312}
\end{equation}

\begin{prop}
\label{propdec}For all small enough $p$, there exists $\gamma=\gamma(p)>0$
such that,
\[
\sup_{\nu:\nu\preceq\per(\pini)}\mathbb{P}_{\nu}\bigg[\,u\stackrel{\eful_{i}}{\longleftrightarrow}v\,\bigg]\le e^{-\gamma d(u,\,v)}
\]
for all $i\in\mathbb{Z}_{+}$, $n\in\mathbb{N}$, and $u,\,v\in\Lambda_{n}$.
\end{prop}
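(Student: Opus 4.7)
The plan is to couple $\eful_i$, for every $i \in \mathbb{Z}_+$ and every admissible initial law $\nu$, to a Bernoulli bond percolation on $\Lambda_n$ whose parameter is strictly below $p_{\textrm{perc}}(d)$, and then invoke the classical subcritical exponential decay of connectivity. The only genuinely distinctive point will be the initial slab $i=0$, which is not covered by Lemma \ref{pbd}(3).

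For $i \geq 1$ the domination is handed to us by Lemma \ref{pbd}(3): $\eful_i \preceq \per(3p^{1/2})$. The case $i=0$ requires separate treatment because $\eful_0 = X_0 \cup \eemp_0$ by definition, and here I would exploit that $X_0$ is independent of the Poisson clocks $\upd$ and therefore of $\eemp_0$. Combining the hypothesis $\nu \preceq \per(\pini)$ with Lemma \ref{pbd}(1) produces, on a common probability space, \emph{independent} samples $\zeta_1 \sim \per(\pini)$ and $\zeta_2 \sim \per(p^{1/2})$ with $X_0 \leq \zeta_1$ and $\eemp_0 \leq \zeta_2$, whence
\[
\eful_0 \;\leq\; \zeta_1 \cup \zeta_2 \;\sim\; \per\bigl(1 - (1-\pini)(1-p^{1/2})\bigr)\;.
\]
Since $\pini < p_{\textrm{perc}}(d)$ is a fixed quantity, for all $p$ small enough both $3p^{1/2}$ and $1-(1-\pini)(1-p^{1/2})$ lie strictly below $p_{\textrm{perc}}(d)$. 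Hence, uniformly in $i \in \mathbb{Z}_+$ and $\nu \preceq \per(\pini)$, one gets $\eful_i \preceq \per(s)$ for some $s = s(p) < p_{\textrm{perc}}(d)$.

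It then suffices to combine this domination with the classical exponential decay of the subcritical Bernoulli two-point function on $\mathbb{Z}^d$ (e.g.\ \cite[Theorem 2]{BD} specialized to $q=1$); the transfer to the torus $\Lambda_n$ is by the standard device of lifting connections in $\Lambda_n$ to connections in the universal cover $\mathbb{Z}^d$ and summing the two-point function over the preimages of $v$, which introduces at most a polynomially-growing prefactor that is absorbed by slightly shrinking the exponent. Since $\{u \leftrightarrow v\}$ is an increasing event, the monotone couplings assembled above give
\[
\mathbb{P}_\nu\bigl[\,u \overset{\eful_i}{\longleftrightarrow} v\,\bigr] \;\leq\; \mathbb{P}_{\per(s)}[\,u \leftrightarrow v\,] \;\leq\; e^{-\gamma\, d(u,v)}
\]
with $\gamma = \gamma(p) > 0$, uniformly in $i$, $n$, and $\nu$. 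The one point demanding care is exactly the $i=0$ reduction: one must use the independence of the initial configuration and the update clocks in order to realize $\eful_0$ inside an honest product percolation, rather than merely inside a correlated Bernoulli configuration to which classical decay estimates would not directly apply.
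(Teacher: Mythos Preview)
Your proof is correct and follows essentially the same approach as the paper's: dominate $\eful_i$ by a subcritical Bernoulli percolation (via Lemma~\ref{pbd}(3) for $i\ge 1$ and via $\eful_0 = X_0 \cup \eemp_0$ for $i=0$) and then invoke exponential decay of connectivity. Your treatment of the $i=0$ case is in fact slightly more careful than the paper's, which simply records the cruder bound $\pini + p^{1/2}$ without spelling out the independence, and your explicit torus-to-$\mathbb{Z}^d$ lift is a detail the paper omits.
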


\begin{proof}
By Lemma \ref{pbd}, the distribution of $\eful_{i}$ is dominated
by $\per(3p^{1/2})$ for $i\ge1$. For $i=0$, we notice from the
definition of $\eful_{0}$ that the distribution of the latter is
dominated by $\per(\pini+p^{1/2})$.

In conclusion, for all small enough $p$, the distribution
of $\eful_{i}$, $i\ge 0$, is dominated by $\per(s)$
for some $s<p_{\textrm{perc}}(d)$ and hence we are done by decay of connectivity for subcritical percolation  \cite[Theorem 3.7]{Hug18}.
\end{proof}

{{From now on, all the statements are asymptotic in $n$, so that they
 hold only when $n$ is large enough. In addition, we  write $C$ or $c$ for positive constants whose different occurrences might denote different values.  We shall not repeat stating these explicitly. 
}}

The next result follows from similar arguments as in the proof of the previous proposition. 
\begin{lem}\label{contour}Suppose that two disjoint subsets $A$ and $B$ of $E_n$ satisfy
  $d(A,\, B)\ge c\log^2 n$ for some $c>0$. Denote by $\mu_{B^c}^{+}$ the random-cluster measure on
  $B^c=E_n\setminus B$ under the full boundary condition on $B$. Denote by $X\in\{0,\,1\}^{B^c}$ a random-cluster configuration sampled
  according to  $\mu_{B^c}^{+}$, and denote by $\conn(B;X)$ the set of edges in $B^c$
   connected to an edge of $B$ via an open path in $X$. Then, for all small enough $p$,  we have
\begin{equation*}
\mu_{B^c}^{+} \left[ \conn(B;X) \cap A =\emptyset \right] \ge 1- \frac{1}{n^{2d}}\;.
\end{equation*}
\end{lem}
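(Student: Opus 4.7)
The plan is to reduce the event to the existence of a long open connection in a subcritical percolation, and then conclude by a union bound over pairs of endpoints, with the only delicate point being the proper treatment of the full boundary condition on $B$.

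First I would observe that if some edge $e \in A$ lies in $\conn(B;X)$, then there is a path of open edges (in the configuration $X$ together with $B$, the latter fully open by the boundary condition) from an endpoint of $e$ to an endpoint of some edge in $B$. Truncating this path at the first time it enters $B$ produces a self-avoiding path of edges open in $X$ and lying entirely in $B^c$, from some endpoint $u$ of an edge of $A$ to some endpoint $v$ of an edge of $B$. By hypothesis, any such pair satisfies $d(u,v) \ge d(A,B) \ge c \log^2 n$.

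Next I would invoke stochastic domination. Because $q \ge 1$, a standard fact (see e.g.\ \cite{gri}) is that $\mu_{B^c}^+$ is stochastically dominated by $\per(p)$ regardless of the boundary condition on $B$. Since the event $\{u \longleftrightarrow v\}$ in the configuration restricted to $B^c$ is increasing, for $p$ small enough subcriticality of $\per(p)$ together with the classical exponential decay of connectivity (the static analogue used in Proposition \ref{propdec}) yields, for some $\gamma = \gamma(p) > 0$,
$$
\mu_{B^c}^+\bigl[u \stackrel{X}{\longleftrightarrow} v\bigr] \;\le\; \bP_{\per(p)}\bigl[u \longleftrightarrow v\bigr] \;\le\; e^{-\gamma d(u,v)} \;\le\; e^{-\gamma c \log^2 n}
$$
for all $n$ sufficiently large.

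Finally I would take a union bound over endpoints of $A$ and $B$. Each of these sets contains $O(n^d)$ vertices, so
$$
\mu_{B^c}^+\bigl[\conn(B;X) \cap A \neq \emptyset\bigr] \;\le\; C\, n^{2d}\, e^{-\gamma c \log^2 n},
$$
which is $\le n^{-2d}$ for $n$ large enough, since the exponent grows like $\log^2 n$ while the prefactor is only polynomial in $n$. The only step that really requires care is the initial path-truncation argument; once the connection event has been confined strictly inside $B^c$, the remainder is a standard subcritical percolation estimate essentially identical to the one proving Proposition \ref{propdec}.
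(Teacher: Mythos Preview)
Your proof is correct and follows essentially the same route as the paper: reduce $\{\conn(B;X)\cap A\neq\emptyset\}$ to the existence of a long open connection inside $B^c$, invoke exponential decay of connectivity uniformly over boundary conditions (which you justify via the standard stochastic domination $\mu_{B^c}^+\preceq\per(p)$ for $q\ge1$), and finish with a union bound over $O(n^{2d})$ pairs of endpoints. The paper's proof is terser---it simply asserts that the decay of connectivity from Proposition~\ref{propdec} carries over to any domain with any boundary condition and then union bounds over $\binom{n^d}{2}$ pairs---but the substance is identical, and your explicit path-truncation and domination steps make the argument cleaner.
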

\begin{proof}
{{One can readily observe that the decay of connectivity established in the previous result holds for any connected domain with any  boundary condition. Hence, we get
 }}
\begin{eqnarray*}
\mu_{B^c}^{+} \left[\conn(B;X) \cap A  \neq \emptyset \right]
   &\le & \mu_{B^c}^{+} \left[u\stackrel{X}{\longleftrightarrow}v
   \mbox{ for some }u,\,v\in B^c \mbox{ such that }d(u,\,v)\ge c\log^2 n\right]\\
  &\le & e^{-c\log^2 n} {{n^d}\choose{2}} < \frac{1}{n^{2d}}\;,
\end{eqnarray*}
where the second inequality follows by the union bound.
\end{proof}

The final result of this section records a statement about how fast disagreement percolates in FK-dynamics.
\subsection{\label{sec5}Estimates on the propagation of disagreements }
We fix a subset $A\subset  E_{n}$ this section. Define an enlargement
$A^{+}$ of $A$ as
\begin{equation}
A^{+}=\{e\in E_n :d(e,\,A)\le\log^{4}n\}\;.\label{aplus}
\end{equation}
The main objective in this section is to show that,  under monotone coupling, FK-dynamics started from two configurations that agree on $A^{+}$ and are reasonably sparse,  continue to agree on $A$ for all $t\in[0,\,\smax]$ where
\begin{equation}
\smax=\log^{2}n\;.\label{esm}
\end{equation}
 Consider two censored dynamics $(Z_{t}^{+})$ (resp. $(Z_{t}^{-})$)
as FK-dynamics on $\{0,\,1\}^{A^{+}}$ conditioned on full (resp.
empty) configuration on $E_{n}\setminus A^{+}$. Let $\per^{A^{+}}(\pini)$ denote
the percolation measure on $A^{+}$ with open probability $\pini.$
\begin{lem}
\label{lem0311} Consider two copies of FK-dynamics $(Z_{t}^{+})$
and $(Z_{t}^{-})$ on $\{0,\,1\}^{A^{+}}$ coupled via the monotone
coupling. Suppose that the law of the initial condition $Z_{0}^{-}$
follows a law $\nu$ on $\{0,\,1\}^{A^{+}}$ satisfying $\nu\preceq\per^{A^{+}}(\pini)$,
and suppose further that $Z_{0}^{+}=Z_{0}^{-}$. Then, for all sufficiently
small $p$, we have that
\begin{equation}\label{esm1}
\mathbb{P}\,\big[\,Z_{t}^{+}(A)=Z_{t}^{-}(A)\text{ for all }t\in[0,\,\smax]\,\big]\ge1-\frac{1}{n^{3d}}\;.
\end{equation}
\end{lem}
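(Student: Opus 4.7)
The strategy is a finite-speed-of-propagation argument: the disagreement between $Z^+$ and $Z^-$ is driven only by the mismatch of boundary conditions on $E_n\setminus A^+$, and in time $\smax=\log^2 n$ the effective propagation speed of the FK-dynamics---measured by the diameter of connected components, which is logarithmic by decay of connectivity---is far too slow to traverse the buffer $d(A,\partial A^+)=\log^4 n$.

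First, using the monotone coupling one has $Z^-_t\le Z^+_t$ pointwise, so the disagreement set $D_t := \{e\in A^+ : Z^+_t(e)\ne Z^-_t(e)\}$ starts empty by assumption. From Definition~\ref{def11}, an oblivious update ($U<1-p+p^*$) sets both processes to the same value at the updated edge, so only cut-edge updates ($U\ge 1-p+p^*$) can create a disagreement. A cut-edge update at $e$ creates a disagreement at $e$ iff the cut-edge statuses of $e$ in $Z^+_{t-}$ and $Z^-_{t-}$ differ, and since $Z^-_{t-}\le Z^+_{t-}$, a direct monotonicity check shows this can only happen when the connected component of $e$ in $Z^+_{t-}$ augmented by the ``full'' exterior outside $A^+$ intersects either $\partial A^+$ or the current $D_{t-}$.

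Next I would discretize $[0,\smax]$ into the slabs $[\tau_i,\tau_{i+1}]$ of Section~\ref{sec3}. Extending both processes to $E_n$ via the monotone coupling with the global FK-dynamics started from the full initial configuration, Lemma~\ref{lem038} yields $Z^\pm_t\le\eful_i$ throughout each slab, so every relevant connected component is dominated by an $\eful_i$-component. Proposition~\ref{propdec} then guarantees that for $p$ sufficiently small, each fixed edge lies in an $\eful_i$-cluster of diameter at most $L:=C\log n$ except on an event of probability $\le n^{-10d}$; a union bound over the $O(\smax/\Delta)=O(\log^2 n)$ slabs and over the edges of $A^+$ yields a good event $\mathcal G$ with $\mathbb P[\mathcal G]\ge 1-n^{-4d}$ on which every disagreement-creating cut-edge update can advance $D_t$ by at most $L$ in graph distance from $\partial A^+\cup D_{t-}$.

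On $\mathcal G$, penetration from $\partial A^+$ to $A$ demands a chain of at least $\log^4 n/L=\Omega(\log^3 n)$ successively deeper disagreement-creating cut-edge updates during $[0,\smax]$. Each link of the chain requires a Poisson cut-edge update of rate $\Theta(p)$ at a specific edge whose $\eful_i$-cluster reaches into the current ``penetrated region,'' so the probability of such a chain can be bounded slab by slab using the static contour estimate of Lemma~\ref{contour} applied to the $\eful_i$-clusters, giving a product bound of the form $(Cp\,\smax)^{\Omega(\log^3 n)}=n^{-\omega(1)}$; a final union bound over $e\in A$ absorbs a factor of $|A|\le n^d$ and produces the advertised $1-n^{-3d}$ estimate \eqref{esm1}.

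The main obstacle is this last step: the component bounds from Proposition~\ref{propdec} are static, so translating them into a uniform space-time estimate over the long interval $[0,\smax]$ and over arbitrary propagation chains requires a careful directed space-time contour argument that prevents the inter-slab dependencies from inflating the failure probability. The slab decomposition isolates each propagation step into an essentially static connectivity event controlled by $\eful_i$-domination, and the directed nature of a chain reaching $A$---each step must strictly penetrate deeper into $A^+$---is what supplies the product structure needed to beat $n^{-3d}$.
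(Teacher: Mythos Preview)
Your slab decomposition, the $\eful_i$-domination of $Z^\pm_t$, and the good event $\mathcal G$ (all $\eful_i$-clusters have diameter $\le C\log n$, for every $i$) are exactly what the paper sets up. The gap is in what you do next: you try to run a probabilistic space-time chain argument on $\mathcal G$, whereas on $\mathcal G$ the conclusion is already \emph{deterministic}. The point you are missing is that $\eful_i$ is \emph{static} throughout the slab $[\tau_i,\tau_{i+1}]$, so the closed $\eful_i$-surface it provides stays closed for the entire slab, not merely at the instant of a single update. Concretely, define nested sets $A^+=A_0\supset A_1\supset\cdots\supset A_{L_{\rm slab}}=A$ with layer width $\gg C\log n$ (there is room since $L_{\rm slab}=\smax/\Delta=O(\log^2 n)$ and the buffer is $\log^4 n$). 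On $\mathcal G$ there is a closed $\eful_i$-surface $V_i$ in $A_i\setminus A_{i+1}$; since $Z^\pm_t\le\eful_i$ on $A^+$ for all $t\in[\tau_i,\tau_{i+1}]$, the surface $V_i$ is closed in both processes throughout the slab, so if $Z^+_{\tau_i}=Z^-_{\tau_i}$ on $A_i$ then $Z^+_t=Z^-_t$ on $A_{i+1}$ for all $t\in[\tau_i,\tau_{i+1}]$. Induction on $i$ finishes the proof on $\mathcal G$, and $\mathbb P[\mathcal G^c]\le n^{-3d}$ by your own union bound.

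In other words, the correct accounting is ``advance $\le C\log n$ per \emph{slab},'' not ``per update''; with $O(\log^2 n)$ slabs the total penetration is $O(\log^3 n)\ll\log^4 n$, and there is nothing probabilistic left to do. Your product bound $(Cp\,\smax)^{\Omega(\log^3 n)}$ is not justified as written---the ``specific edge'' claim is false (many edges can serve as the next link, so the entropy of the chain must be controlled), and the steps of such a chain are not independent across updates within a slab. You correctly flag this as the main obstacle; the resolution is not a more careful contour argument but the observation that the obstacle does not exist.
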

\begin{figure}[h]
\centering
\includegraphics[scale=.24]{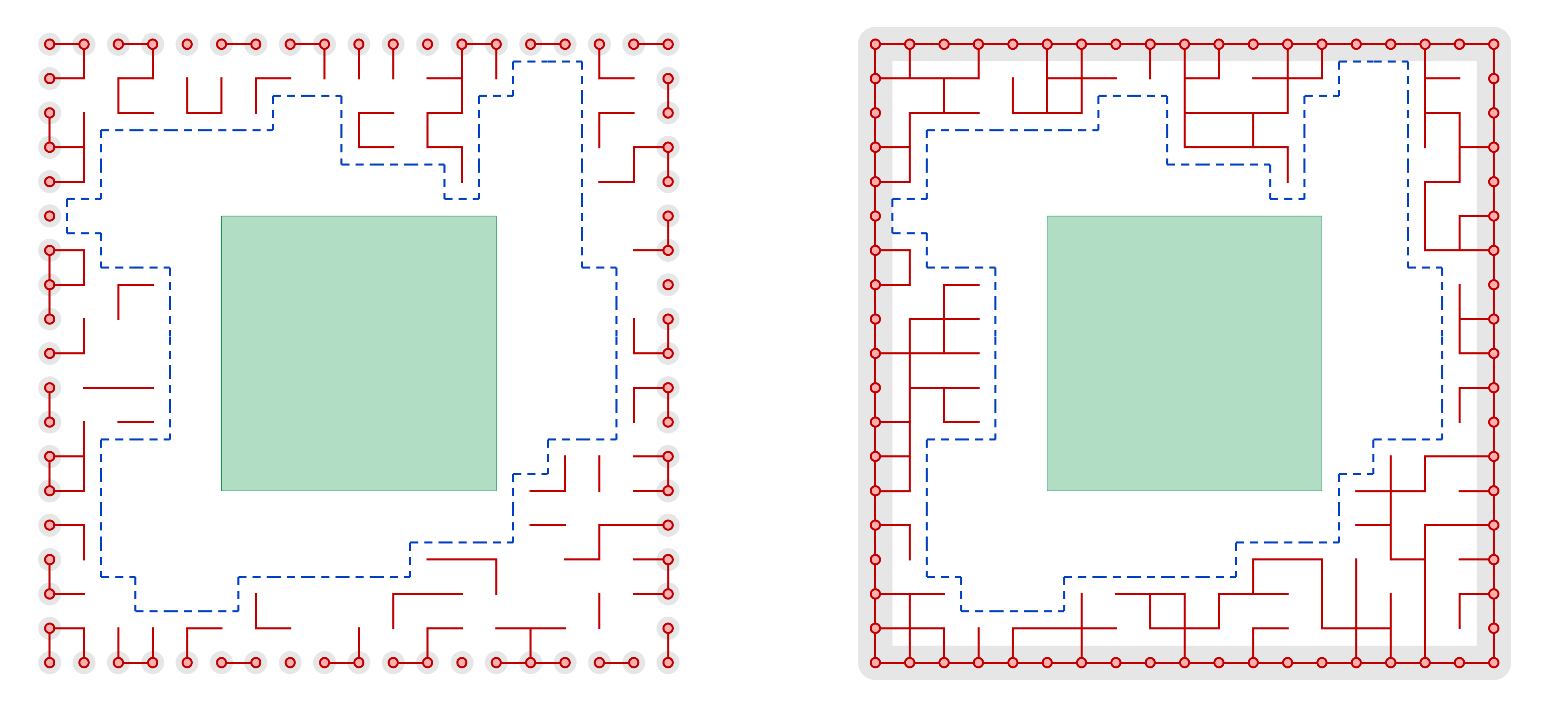}
\caption{Figure illustrating the weak spatial mixing property of the subcritical RCM.
Here we consider the equilibrium measures with free (LHS) and wired boundary conditions (RHS).
By monotonicity of the equilibrium measures with respect to their boundary conditions, there exists a
coupling such that the LHS is dominated by the RHS. However under this coupling by the exponential
 decay of connectivity the RHS (and hence the LHS) has a closed surface (contour in the planar case) within $O(\log n)$ distance
  from the boundary and they agree in the interior of the surface in particular on the green region. }
\label{fig2}
\end{figure}
\begin{rem}
{Note that the  probability in \eqref{esm1} is  with respect to both the FK-dynamics and also  the initial measure $\nu$. In other words, this is an annealed probability.}
\end{rem}

{
\begin{rem}
\label{rm52}Even though we considered the two worst boundary conditions, namely,
full and empty, a simple monotonicity consideration allows us to conclude that
$$
\mathbb{P}\,\big[\,Z_{t}^{+}(A)=Z_{t}^{-}(A)=Z_t(A)\text{ for all }t\in[0,\,\smax]\,\big]\ge1-\frac{1}{n^{3d}}\;,
$$
where $Z_t$ is one of the following processes on $\{0,\,1\}^{A^+}$:
\begin{itemize}
\item Censored FK-dynamics on $A^{+}$ conditioned on any configuration on
$E_{n}\setminus A^{+}$, i.e., one that only updates sites in $A^{+}.$
\item If $A$, and hence $A^{+}$, are square boxes, the FK-dynamics on $\{0,\,1\}^{A^{+}}$
with periodic boundary conditions.
\item The FK-dynamics on $E_n$ projected to $A^+\subset E_n$, i.e., $X_{t}(A^+).$
\end{itemize}
\end{rem}
}

\begin{rem}
\label{rm53}{In the above theorem, the size of the ambient space $\Lambda_n$ (which is $n$)  is not
important. }Taking the ambient space to be $\Lambda_m$ which contains $A^+$ suffices.  Moreover, we can replace $\log^{4}n$ in the
statement of lemma with $\log^{3+\delta}$ for any $\delta>0$ with
$\smax=\log^{1+\delta}n$.
\end{rem}
The proof follows the arguments in \cite{BS, MS13} and is postponed to the Appendix (Section \ref{appendix1}).

\section{\label{sec4}Information percolation clusters and time dependent Bernoulli percolations}
As emphasized before, this is the section which contains all the new ideas in the paper. The main result is the following bound on $L^{2}$-mixing. Recall the spectral gap $\lambda(n)$ from Corollary \ref{cor23}.
\begin{thm}
\label{t41}For all small enough $p>0$, there exists $C=C(p)>0$
such that the following $L^{2}$-bound holds for all large enough
$n$:
\[
\max_{x_{0}\in\Omega_{n}}\left\Vert \mathbb{P}_{x_{0}}\left[X_{t}\in\cdot\,\right]-\mui\right\Vert _{L^{2}(\mui)}\le2\exp\left\{ -\lambda(n)(t-C\log n)\right\}
\]
 for all $t\ge C\log n$.
\end{thm}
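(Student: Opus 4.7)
The plan is to deduce Theorem \ref{t41} from a cluster-extinction estimate (Proposition \ref{p46}) combined with a standard information-percolation $L^{2}$ bound and spectral-gap contraction. First, by Lemma \ref{pro35} one may assume after an $O(1)$ burn-in $\tini$ that the initial law is dominated by $\per(\pini)$, so Proposition \ref{penv}(2) applies throughout: in every discrete window $[\tau_{i},\tau_{i+1}]$ of length $\Delta=p^{-1/2}$ the FK-configuration lies below the environment $\env_{i-1}\cup\env_{i}$, where each $\env_{i}$ is dominated by $\per(2p^{1/2})$ and depends only on $\upd[\tau_{i},\tau_{i+1}]$. The $\env_{i}$ are therefore independent across $i$ individually, but the successive dominations $\env_{i-1}\cup\env_{i}$ and $\env_{i}\cup\env_{i+1}$ share the factor $\env_{i}$, which is the source of $1$-dependence in the backward exploration below.

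One then constructs the information-percolation cluster backwards from a target edge at time $t$: at each backward step $\tau_{i+1}\to\tau_{i}$, an edge in the current cluster either (a) survives unchanged if $\upd[\tau_{i},\tau_{i+1}](e)=\emptyset$, (b) is killed if the last update in the window is oblivious (i.e.\ has $U<1-p+p^{*}$), or (c) is replaced by the full connected component of $e$ in $\env_{i-1}\cup\env_{i}$, which by Proposition \ref{penv}(2) dominates the set of edges that can have influenced $e$ throughout the window. Step 1 is to prove (Proposition \ref{p46}) that with probability at least $1-n^{-C'}$ the backward cluster of any fixed edge dies out in at most $C\log n$ discrete steps. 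Each step carries an oblivious-killing probability close to $1$ for small $p$ (since the last update in a window of length $\Delta=p^{-1/2}$ is oblivious with probability $(1-e^{-\Delta})(1-p+p^{*})$, and $\Delta\to\infty$ as $p\to 0$), whereas on the complementary event the cluster expands only to $e$'s component in a subcritical percolation whose size has exponential tails by Proposition \ref{propdec}. For $p$ small enough the one-step expected cluster growth is therefore strictly less than one, and handling the $1$-dependence by pairing consecutive backward steps (so that each pair uses an independent chunk $\upd[\tau_{i-1},\tau_{i+1}]$ of the update sequence) then yields the required tail via a Peierls-type path-counting argument in the spirit of \cite{LS4,NS}.

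Step 2 converts the cluster bound into the $L^{2}$ estimate. Writing $t=t_{1}+s$ with $t_{1}=C\log n$ and $s=t-C\log n$, the information-percolation $L^{2}$ decomposition of \cite{LS4,NS} gives $\|\mathbb P_{x_{0}}[X_{t_{1}}\in\cdot\,]-\mui\|_{L^{2}(\mui)}\le 2$ whenever all backward clusters from time $t_{1}$ are killed before reaching time $0$, which holds with probability at least $1-|E_{n}|\,n^{-C'}$ by union-bounding Proposition \ref{p46} over edges. Combined with Proposition \ref{propgap} and $\lambda(n)\ge\lambda>0$ from Corollary \ref{cor23}(3), spectral-gap contraction for the remaining time $s$ multiplies this by $e^{-\lambda(n)s}$, giving the theorem.

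The main obstacle is Step 1. The SW analysis of \cite{NS} gives a template, but two new geometric issues arise in the FK setting: the one-step spread at a non-oblivious update is to a full connected component of a random subcritical percolation (rather than to a fixed small neighborhood), and the relevant environments are $1$-dependent across time rather than i.i.d. Both are addressed by the pairing trick combined with the subcritical-percolation tail bound of Proposition \ref{propdec}, together with the burn-in $\tini$ that puts us in the regime where these subcritical bounds apply; ensuring that the corresponding space-time path count stays under control, uniformly in $n$, is the technically delicate part of Proposition \ref{p46}.
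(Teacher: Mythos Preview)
Your reduction of Theorem \ref{t41} to Proposition \ref{p46} via the spectral-gap contraction \eqref{l2contrac} is correct, and your sketch of the backward-history construction and of the pairing device for the $1$-dependence is broadly faithful to the paper. But note that Proposition \ref{p46} in the paper \emph{is} the statement $\max_{x_0}\|\mathbb P_{x_0}[X_{t_\star}\in\cdot\,]-\mui\|_{L^2(\mui)}\le 2$ at $t_\star=C\log n$, not a per-edge cluster-extinction bound as you describe it in Step 1.

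The genuine gap is your Step 2. You claim the IP decomposition ``gives $\|\mathbb P_{x_{0}}[X_{t_{1}}\in\cdot\,]-\mui\|_{L^{2}(\mui)}\le 2$ whenever all backward clusters are killed before reaching time $0$'', obtained by a union bound over edges. This conflates a random event with a deterministic norm, and is not how the $L^2$ bound is obtained: even on the event that all histories die out, the law of $X_{t_1}$ is not $\mui$ (it is a green-cluster law times a product Bernoulli on the blue edges), and on the complementary event the law can sit on configurations of exponentially small $\mui$-mass, which is precisely what an $L^2(\mui)$ norm penalizes. A union bound of the type you describe is essentially Lemma \ref{lem46}, and that is only one input. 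The paper instead conditions on $\mathscr H_{\mathcal G}$ and projects to $E\setminus E_{\mathcal G}$ (Lemma \ref{lem48}), compares not to $\mui$ but to a \emph{product Bernoulli} measure via the Miller--Peres inequality (Lemma \ref{lem49}), reducing the problem to bounding $\mathbb E[\pstar^{\,-|E_{\mathcal R}\cap E_{\mathcal R}'|}\mid\mathscr H_{\mathcal G}]$ for two independent copies of the red set, and then controls this exponential moment through the coupling of Lemma \ref{lemls} together with the key estimate $\mathcal P_A\le Ce^{-(\theta|\conn(A)|+\alpha\tau_m)}$ of Proposition \ref{p45}. That bound decays in \emph{both} the spatial extent $|\conn(A)|$ and in time; the spatial decay is what makes the sum over all subsets $A\ni e$ converge and what tames the $\pstar^{-|\cdot|}$ factor, and it is not implied by the per-edge survival bound in your Step 1.
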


Recall that the spectral gap governs the rate of decay of $L^2$ norm. More precisely for any $s\le t$ and any starting state $x_0\in \Omega_n$ we have {{(see for example, \cite[Lemma 20.5]{LPW}),}}
\begin{equation}\label{l2contrac}
\left\Vert \mathbb{P}_{x_{0}}\left[X_{t}\in\cdot\,\right]-\mui\right\Vert _{L^{2}(\mui)}\le e^{-\lambda(n)(t-s)}\left\Vert \mathbb{P}_{x_{0}}\left[X_{s}\in\cdot\,\right]-\mui\right\Vert _{L^{2}(\mui)}\;.
\end{equation}

By Corollary \ref{cor23}, it suffices to prove the following
proposition.
\begin{prop}
\label{p46}For all small enough $p>0$, there exists $C=C(p)>0$
such that for $t_{\star}=C\log n$,
\begin{equation}\label{boundl2}
\max_{x_{0}\in\Omega_{n}}\left\Vert \mathbb{P}_{x_{0}}\left[X_{t_{\star}}\in\cdot\,\right]-\mui\right\Vert _{L^{2}(\mui)}\le2\;.
\end{equation}
\end{prop}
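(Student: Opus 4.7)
The plan is to extend the Information Percolation framework of \cite{LS4,NS} to the FK-dynamics, using the $1$-dependent dominating Bernoulli percolations $\env_i$ of Section \ref{sec32} in place of the independent bond percolations that appear in the Swendsen--Wang analysis. Take $t_\star = K\Delta$ with $K = \lceil C\log n\rceil$, and for each $e\in E_n$ construct a backward-in-time \emph{history diagram} $\sh(e)$ as follows. Initialize $A_K := \{e\}$. Given an alive set $A_{i+1} \subset E_n$ at level $i+1$, first enlarge spatially to
\[
\widetilde A_{i+1} \;:=\; \bigcup_{f\in A_{i+1}} \mathcal{C}_f(\env_{i-1}\cup\env_i),
\]
where $\mathcal{C}_f(\cdot)$ denotes the open cluster of $f$; by Proposition \ref{penv}(2) this is an upper bound on the set of edges whose statuses in $[\tau_i,\tau_{i+1}]$ can influence $X_t(f)$ for any $f\in A_{i+1}$ and $t\in[\tau_i,\tau_{i+1}]$. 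Then pass to $A_i$ by killing every $g\in\widetilde A_{i+1}$ whose last update in $\upd[\tau_i,\tau_{i+1}](g)$ has $U < 1-p+p^{\ast}$ (an \emph{oblivious} update, in which case $X_{\tau_{i+1}}(g)$ is determined without any reference to $X_{\tau_i}$) and keeping the rest. Call $\sh(e)$ \emph{red} if $A_0\neq\emptyset$, and let $R := \{e\in E_n : \sh(e)\text{ is red}\}$.

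The second step is a subcriticality bound on $\sh(e)$. By Proposition \ref{penv}(1), $\env_{i-1}\cup\env_i \preceq \per(4\sqrt p)$, which is subcritical for small $p$, and Proposition \ref{propdec} yields exponentially decaying cluster tails with expected cluster size $\kappa(p)=O(1)$. In the temporal pass, a given edge survives only when the last update in an interval of length $\Delta = p^{-1/2}$ is non-oblivious, which has probability at most $e^{-\Delta} + (1-e^{-\Delta})(p-p^{\ast}) = O(\sqrt p)$; so each level of the backward recursion multiplies the expected number of alive edges by some $\rho(p)$ with $\rho(p)\downarrow 0$ as $p\downarrow 0$. After handling the $1$-dependence of the $\env_i$ by, e.g., grouping consecutive levels into pairs so that $(\env_{i-1}\cup\env_i)$ and $(\env_{i+1}\cup\env_{i+2})$ become independent blocks, iterating this contraction for $K$ steps gives $\mathbb{P}[\sh(e)\text{ is red}] \le \rho(p)^{\lfloor K/2\rfloor}$, and hence $\mathbb{P}[R\neq\emptyset] \le n^{-D}$ for any fixed $D$ once $C$ is large enough.

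To upgrade the cluster estimate to the $L^2$ bound \eqref{boundl2}, I follow the cluster-factorization strategy of \cite[Secs.~4--5]{LS4} and \cite[Sec.~6]{NS}. Writing
\[
\bigl\|\mathbb{P}_{x_0}[X_{t_\star}\in\cdot] - \mui\bigr\|_{L^2(\mui)}^2 + 1 \;=\; \mathbb{E}\!\left[\frac{\mathbf{1}\{X_{t_\star} = X'_{t_\star}\}}{\mui(X_{t_\star})}\right]
\]
for two independent copies $(X_t),(X'_t)$ from $x_0$ driven by independent update sequences, one decomposes the right-hand side according to the joint red set $R \cup R'$. On $R = R' = \emptyset$, both copies are measurable with respect to their own update sequence and independent of $x_0$, so the contribution reduces to the stationary chi-squared and is a bounded constant. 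The complementary contribution is controlled by (i) the tail $\mathbb{P}[R \neq \emptyset] \le n^{-D}$, (ii) the fact that each red cluster has diameter $O(\log n)$ with overwhelming probability (an additional output of the subcritical branching analysis combined with Lemma \ref{contour} for boundary effects), and (iii) factorization of the FK weight $\mui^{-1}$ over red components using the local nature of these weights once boundaries are dominated by closed contours.

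The main technical obstacle is the $1$-dependence of the $\env_i$ together with the entanglement of spatial and temporal randomness. Unlike the SW setting of \cite{NS}, where the percolation sampled at each step is independent of the past chain, here the Poisson marks on $[\tau_i,\tau_{i+1}]$ enter both the spatial enlargement (via $\env_i$) and the killing decision at level $i$, so the backward recursion is not a clean Galton--Watson process; one must carefully separate the marks responsible for $\eemp_i$ from those responsible for $\nup_i$ and from the last-update variables governing survival. This is exactly where the choice $\Delta = p^{-1/2}$ is essential: it simultaneously makes $e^{-\Delta}$ (the no-update probability, which inflates the cluster) and $p-p^{\ast}$ (the non-oblivious fraction, which inflates temporal survival) both of order $O(\sqrt p)$, so that their product remains strictly subcritical after disentangling. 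Once this bookkeeping is in place, the remainder of the $L^2$ analysis proceeds as in \cite{LS4,NS}.
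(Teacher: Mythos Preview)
Your Information Percolation setup and the subcriticality heuristic are close to what the paper does: the paper also dominates the history by clusters in $\env_{i-1}\cup\env_i$, also handles the $1$-dependence by a two-step contraction (Proposition~\ref{lem413}), and also needs the choice $\Delta=p^{-1/2}$ for exactly the balancing reason you identify. The substantive gap is in your conversion of the cluster bounds into the $L^2$ estimate.

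Your two-copy collision identity is correct, and the contribution from $\{R=R'=\emptyset\}$ is indeed bounded (since on that event $\mathbb{P}_{x_0}[X_{t_\star}=z,\,R=\emptyset]=\mathbb{P}_{\mu}[X_{t_\star}=z,\,R=\emptyset]\le\mu(z)$). The problem is the term $\{R\neq\emptyset,\,R'\neq\emptyset\}$: there $1/\mu(X_{t_\star})$ is of order $\exp(cn^d)$, and your point~(iii), ``factorization of $\mui^{-1}$ over red components,'' does not hold for the random-cluster measure---the factor $q^{c(S)}$ is global and does not decouple across a random set $R$, closed contours notwithstanding. A bound of the form $\mathbb{P}[R\neq\emptyset]\le n^{-D}$ cannot by itself absorb an $\exp(cn^d)$ weight, and nothing in your sketch supplies the needed pointwise or conditional control of $\mathbb{P}_{x_0}[X_{t_\star}=z,\,R\neq\emptyset]/\mu(z)$.

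The paper sidesteps this entirely by \emph{not} comparing to $\mui$ inside the IP argument. It introduces a blue/green/red trichotomy, conditions on the green history $\sh_{\mathcal G}$, and observes that blue edges are \emph{exactly} i.i.d.\ Bernoulli($\pstar$) (Corollary~\ref{cor4s}). It then compares the law on $E\setminus E_{\mathcal G}$ to the product measure $\nu_{E\setminus E_{\mathcal G}}$ (Lemma~\ref{lem48}), transfers back to $L^2(\mui)$ via the pointwise bound $\mui(\cdot\,|\,\sh_{\mathcal G})\ge\tfrac12\nu$, and applies the Miller--Peres inequality (Lemma~\ref{lem49}) to get $\mathbb{E}[\pstar^{-|E_{\mathcal R}\cap E'_{\mathcal R}|}]$. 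Controlling this expectation requires not just survival probability but the stronger conditional estimate $\mathcal{P}_A\le Ce^{-\theta|\conn(A)|-\alpha\tau_m}$ of Proposition~\ref{p45}, with the $|\conn(A)|$ term doing the work of summing over all possible red sets. Your proposal has only the unconditional survival bound and no analogue of the $|\conn(A)|$ factor, which is a second (and related) gap.
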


The proof of Proposition \ref{p46} is the heart of this work and is rather long, intricate and involves several percolation arguments based on the models introduced in Section \ref{sec3}. As mentioned earlier, using the results of this section as inputs, the arguments of the following sections are quite similar to the ones appearing in \cite{LS1, NS}.  Readers not familiar with these papers,  at first read, to get a sense of the overall flow of arguments, could choose to assume Theorem \ref{t41} and read the subsequent easier sections first, before coming back to this section.

We provide a roadmap for this section for the ease of reading.
\begin{itemize}
\item The construction of information percolation is done in Section \ref{sec41} relying on the definitions in Section \ref{sec3}, particularly the percolation models listed in Table \ref{chart}.  At a very high level it amounts to classifying vertices into green, red and blue where the state of the red vertices depend on the initial configuration, the blue vertices are independent Bernoulli variables independent of everything else, whereas the green vertices have a complicated dependency on each other but are still independent of the  initial configuration (Theorem \ref{pinfo}).
\item Using the above, the proof of Proposition \ref{p46} occupies  Sections \ref{sec42} and \ref{sec43}. The key steps are the following:
\begin{enumerate}
\item To bound the $L^2$-distance it suffices to condition on the green clusters. Then the strategy is to compute the $L^2$-distance of the conditional distribution to a product Bernoulli Measure instead of the equilibrium measure (Lemma \ref{lem48}). The Bernoulli measure is exactly the one which describes the law of the blue vertices. Thus this distance would be zero if there does not exist any red cluster.
\item We then establish the key estimate showing exponential unlikeliness of red vertices with time in Proposition \ref{p45} which makes the above step sufficient. The proof of this proposition uses a comparison with a subcritical branching process and is presented in Section \ref{sec43}. In particular, the proof involves delicate geometric arguments relying on several properties of the auxiliary percolation models defined in Table \ref{chart}.
\end{enumerate}

\end{itemize}

\subsection{\label{sec41}Information percolation (IP)}
As mentioned before (Section \ref{sec31}), we will discretize time using $\tau_i$ and will define IP on the space-time
slab $E_{n}\times[\tau_{1}, \,\tau_{m}]$ for some $m\in\mathbb{N}$.
We shall take $m=\Omega(\log n)$ later, but for the  moment we think of
$m$ as a fixed integer. We also recall the various percolations defined in Table \ref{chart}.

{{For $\Xi\in\Omega_{n},$ and $e=(u,v)\in E_{n}$  where $u,v \in \Lambda_n$,  if $\Xi(e)=1$, define  $\conn(e;\Xi)$ as
the connected component of $\Xi$ containing $(u,\,v)$. On the other hand, we define $\conn(e;\Xi)=\emptyset$ if $\Xi(e)=0$. }}

Furthermore define $\partial\conn(e;\Xi)$ as the edge boundary of $\conn(e;\Xi)$ i.e.,
as the set of edges in $E_{n}\setminus\conn(e;\Xi),$  which are adjacent
to an edge in $\conn(e;\Xi)$ and define
\begin{equation}
\overline{\conn}(e;\Xi)=\conn(e;\Xi)\cup\partial\conn(e;\Xi)\;.\label{barc}
\end{equation}
We set  $\overline{\conn}(e;\Xi)=\emptyset$ if $\Xi(e)=0$.
Given the above notations, we now define IP for the
FK-dynamics. It would be notationally convenient to define $\tau_{i+1/2}=(i+1/2)\Delta,$
for $i\in\mathbb{N}.$
Furthermore to distinguish between edges (elements of $E_n$) and connections across time,
we will call the former `space edges'  as just edges and the latter as `time edges'
 ({see} Figure \ref{ip} for an illustration).

\begin{defn}[Information percolation]
\label{def44}The information percolation cluster is defined on the
space-time slab $E_{n}\times[\tau_{1},\,\tau_{m}]$ for some fixed
$m\ge2$. For an edge $e\in E_{n}$, we define the history $\mathscr{H}_{e}=(\mathscr{H}_{e}(t))_{t\in[\tau_{1},\,\tau_{m}]}$
associated to the edge $e$ backward in time recursively as follows:
Start by setting $\mathscr{H}_{e}(\tau_{m})=\{e\}$.
\begin{enumerate}
\item For each $t=\tau_{i+1}$ with $i\in\llbracket1,\,m-1\rrbracket,$ suppose
that $\mathscr{H}_{e}(\tau_{i+1})$ is given by a subset of $E_{n}.$
Then we let $\mathscr{H}_{e}(\tau_{i+1/2})$ be the same as $\mathscr{H}_{e}(\tau_{i+1})$, as well  as for any  $w\in\mathscr{H}_{e}(\tau_{i+1})$ we connect  the two edges $(w,\,\tau_{i+1})$ and $(w,\,\tau_{i+1/2})$,
 by a `time edge' in the time direction ({see Figure \ref{ip}}.)
\item For each $w\in\mathscr{H}_{e}(\tau_{i+1/2})$, we check if it has been updated in the time interval $(\tau_i,\tau_{i+1})$ (recall the various notations from Table \ref{chart}).
\begin{enumerate}
\item If $\nup_{i}(e)=1$, then {introduce} the `space edge' $(w,\,\tau_{i})$ and connect
$(w,\,\tau_{i+1/2})$ and $(w,\,\tau_{i})$ by a time edge.
\item If $\nup_{i}(e)=0$, we take the last update $(t_0,\,U_{e})$ for
$e$ in $(\tau_{i},\,\tau_{i+1}]$.
\begin{enumerate}
\item If $U_{e}<1-p+p^{*}$, this update is called \textbf{\textit{oblivious }}and
we do not take any action on the edge $(w,\,\tau_{i+1/2})$.
\item If $U_{e}>1-p+p^{*}$, then $w$ is open in $\eemp_{i}$, and hence is  open in $\env_{i}$ as well (cf. \eqref{env}). In this case, we include all the edges in $\overline{\conn}(w;\env_{i-1}\cup\env_{i})$
in $\mathscr{H}_{e}(\tau_{i+1/2})$ and $\mathscr{H}_{e}(\tau_{i})$.
Finally we  connect the space edges $(w',\,\tau_{i+1/2})$ and $(w',\,\tau_{{i}})$ for
all the edges $w'$ in $\overline{\conn}(w;\env_{i-1}\cup\env_{i})$ using time edges.
\end{enumerate}
\end{enumerate}
\item Steps (1) and (2) above  define  $\mathscr{H}_{e}(\tau_{i})$ as a subset
of $E_{n}$. Now return to the first step if $i\ge2$ to use the above construction recursively.
\end{enumerate}
For $A\subset E_{n}$, define $\mathscr{H}_{A}=(\mathscr{H}_{A}(t))_{t\in[\tau_{1},\,\tau_{m}]}$
as $\mathscr{H}_{A}=\bigcup_{e\in A}\mathscr{H}_{e}$. Two histories
$\mathscr{H}_{e}$ and $\mathscr{H}_{e'}$ are connected if they share
an edge.
\end{defn}
Some remarks are in order. First, we emphasize  that
two histories $\mathscr{H}_{e}$ and $\mathscr{H}_{e'}$ are regarded as two disconnected pieces if
they share vertices only. Second, by the construction rule, one can observe that:
\begin{equation}\label{consistent}
\mathscr{H}_{e}(\tau_{i+1/2})=\mathscr{H}_{e}(\tau_{i+1}) \cup \mathscr{H}_{e}(\tau_{i})\;.
\end{equation}

{Using terminology from existing literature we will often refer to the collection $\sh:=\{\sh_e\}_{e\in E_n}$ as the history diagram.}
This induces a new graph structure on $E_n.$ i.e. $e$ and $e'$ are connected if $\mathscr{H}_{e}$ and $\mathscr{H}_{e'}$ are connected. {{Note that the vertex set for this graph is $E_n.$}}

With the above conventions, each connected component
of this new graph is called an \textit{\textbf{information percolation
cluster.}} We shall simply refer to them as  \textit{clusters}.
{Let them be indexed by the set $\mathcal{C}$.}

\begin{figure}[h]
\centering
\includegraphics[scale=.20]{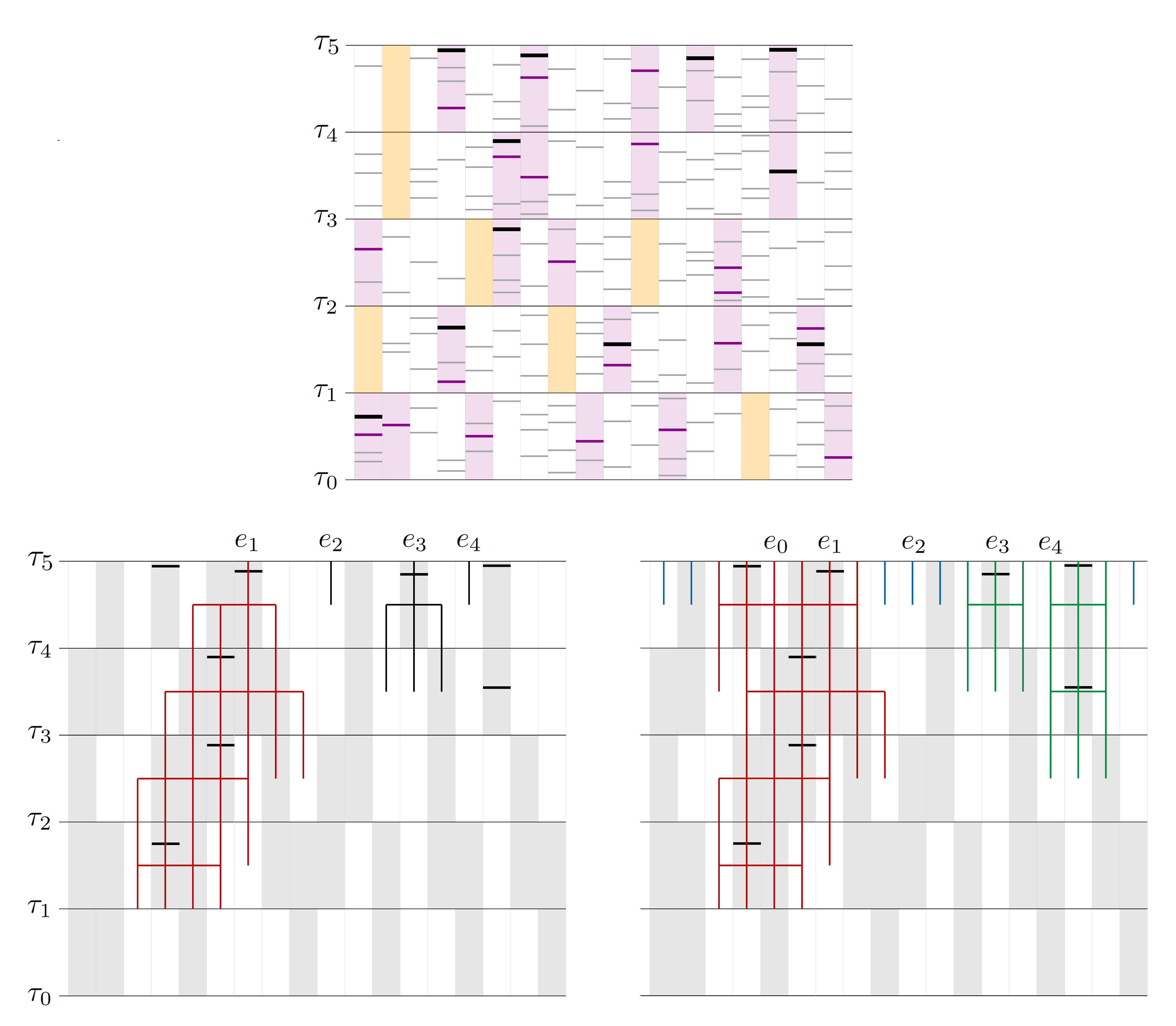}
\caption{{\textbf{(Up)} The various colors indicate the values of the uniform variables for each update: gray $\leftrightarrow \{U<1-p\}$, purple $\leftrightarrow \{1-p\le U \le 1-p+p^*\}$, black $\leftrightarrow \{U>1-p+p^*\}$. The purple region denotes $\eemp_i(e)=1$, while
the yellow region implies that $\nup_i(e)=1$.
\textbf{(Down)} In the two graphs, the gray region indicates whether $\Xi_i \cup \Xi_{i-1} (e)$ is $1$ or $0$ with gray indicating the former.
\textbf{(Down-left)} History diagrams for $e_1, \,e_2,\,e_3,\,e_4$. We can assert that $\mathscr H_{e_1}$ is red, but not able to say
anything about the remaining ones;
\textbf{(Down-right)} History diagram for $e_0$ is combined with that of $e_1$. $e_4$ belongs to green cluster although its last update is oblivious}. The vertical edges   acting as connections across time are referred to as `time edges' in the article. }
\label{ip}
\end{figure}

\begin{defn}[IP clusters and their colors]
\label{def45}Each cluster $C\in \mathcal{C}$ is colored  red, blue or green according to the following rule:
\begin{itemize}
\item Colored \textbf{red} if $\mathscr{H}_{C}(\tau_{1})\neq\emptyset$.
\item Colored \textbf{blue} if $\mathscr{H}_{C}(\tau_{1})=\emptyset$
and $|C|=1$.
\item Colored \textbf{green} if $\mathscr{H}_{C}(\tau_{1})=\emptyset$
and $|C|\ge2$.
\end{itemize}
Denote by $\mathcal{C}_{\mathcal{R}}$, $\mathcal{C}_{\mathcal{B}}$
and $\mathcal{C}_{\mathcal{G}}$ the collection of red, blue and green
clusters, respectively. Define
\begin{equation}\label{rc1}
E_{\mathcal{R}}=\left\{ e:e\in C \mbox{ for some } C\in \mathcal{C}_{\mathcal{R}}\right\}
\end{equation}
and define $E_{\mathcal{B}}$ and $E_{\mathcal{G}}$ similarly. We
use the following simplified notations to denote the history diagrams emanating from the various colored edges:
\[
\mathscr{H}_{\mathcal{R}}:=\mathscr{H}_{E_{\mathcal{R}}}\;,\;\; \mathscr{H}_{\mathcal{B}}:=\mathscr{H}_{E_{\mathcal{B}}}\;,\text{ and}\;\;\mathscr{H}_{\mathcal{G}}:=\mathscr{H}_{E_{\mathcal{G}}}\;.
\]

\end{defn}

The following theorem justifies the above definitions. In short, it says that to reconstruct the state of the edges in $\mathscr{H}_{A}(\tau_{i+1}),$ all one needs is the update sequence and the state of the edges $\mathscr{H}_{A}(\tau_{i})$ at time $\tau_i$ provided that $A$ is a cluster.

\begin{thm}
\label{pinfo}{Given a history diagram $\sh$, suppose that a set $A\subset E_{n}$ is a cluster.} Then, for each $i\in\llbracket1,\,m-1\rrbracket$,
the configuration $X_{\tau_{i+1}}(\mathscr{H}_{A}(\tau_{i+1}))$ is
a deterministic function of
\begin{equation}\label{tcw}
X_{\tau_{i}}(\mathscr{H}_{A}(\tau_{i}))\;\;\text{and\;\;}\bigcup_{e\in\mathscr{H}_{A}(\tau_{i+1/2})}\upd[\tau_{i},\,\tau_{i+1}](e)\;.
\end{equation}
In particular, if $\mathscr{H}_A (\tau_i)=\emptyset,$ for some $i\ge 1$, then $X_{\tau_{i+1}}$ is independent of $X_{\tau_{i}}$ and therefore of $X_{\tau_{1}}$.

\end{thm}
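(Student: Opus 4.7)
The strategy is to show that the FK-dynamics restricted to $\mathscr{H}_{A}(\tau_{i+1/2})$ during the interval $(\tau_{i},\,\tau_{i+1}]$ can be simulated forward in time from $\tau_{i}$ using only the initial values $X_{\tau_{i}}(\mathscr{H}_{A}(\tau_{i}))$ and the update sequence listed in \eqref{tcw}, and that this simulation exactly reproduces the true trajectory, in particular $X_{\tau_{i+1}}(\mathscr{H}_{A}(\tau_{i+1}))$.

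The geometric input is Proposition \ref{penv}(2), which gives the pointwise domination $X_{t}\le \env_{i-1}\cup\env_{i}$ throughout $[\tau_{i},\,\tau_{i+1}]$. From this I first establish the following local cut-edge lemma: if $w\in \mathscr{H}_{A}(\tau_{i+1/2})$ undergoes a non-oblivious update ($U_{w}>1-p+p^{*}$) at some time $t_{0}\in(\tau_{i},\,\tau_{i+1}]$, then the cut-edge status of $w$ in $X_{t_{0}-}$ is determined by the restriction of $X_{t_{0}-}$ to $\overline{\conn}(w;\env_{i-1}\cup\env_{i})$. Indeed, any path connecting the endpoints of $w$ in $X_{t_{0}-}\setminus\{w\}$ lies in the connected component of $w$ in $X_{t_{0}-}$, which by the domination is contained in $\conn(w;\env_{i-1}\cup\env_{i})$. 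Moreover, edges of $\partial\conn(w;\env_{i-1}\cup\env_{i})$ are closed in $\env_{i-1}\cup\env_{i}$ by definition of the connected component, and hence closed in $X_{t}$ throughout the interval; they act as a passive sealing layer that does not contribute any dynamics we must track.

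To check that the simulation is closed under the inputs \eqref{tcw}, observe that by step (2)(b)(ii) of Definition \ref{def44}, whenever $w\in\mathscr{H}_{A}(\tau_{i+1/2})$ has a non-oblivious last update in $(\tau_{i},\,\tau_{i+1}]$, the full set $\overline{\conn}(w;\env_{i-1}\cup\env_{i})$ is placed into both $\mathscr{H}_{A}(\tau_{i+1/2})$ and $\mathscr{H}_{A}(\tau_{i})$, so the initial values at time $\tau_{i}$ on all relevant edges are available. Furthermore, for any $w'\in\conn(w;\env_{i-1}\cup\env_{i})$ one has the identity $\overline{\conn}(w';\env_{i-1}\cup\env_{i})=\overline{\conn}(w;\env_{i-1}\cup\env_{i})$, so a non-oblivious update at $w'$ occurring earlier in the interval (not merely the chronologically last update) also only queries edges already tracked in $\mathscr{H}_{A}(\tau_{i+1/2})$. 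I then perform an induction on the chronologically ordered updates falling inside $(\tau_{i},\,\tau_{i+1}]$ on edges of $\mathscr{H}_{A}(\tau_{i+1/2})$: between updates states are constant, and at each update the local FK rule of Definition \ref{def11} applies (oblivious updates read off directly from $U$, non-oblivious updates resolved via the local cut-edge lemma), consuming only values already computed. This yields the desired deterministic reconstruction of $X_{\tau_{i+1}}$ on $\mathscr{H}_{A}(\tau_{i+1})$.

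The "in particular" clause is then immediate: if $\mathscr{H}_{A}(\tau_{i})=\emptyset$, the reconstructed value of $X_{\tau_{i+1}}$ on $\mathscr{H}_{A}(\tau_{i+1})$ depends only on the update sequence, which is independent of $X_{\tau_{i}}$, and by the Markov property of the chain also of $X_{\tau_{1}}$. The main subtlety to handle carefully in the write-up is that the history diagram only records the \emph{last} update of each edge inside a slab, whereas the forward simulation must process \emph{all} updates chronologically; the reconciliation of these two viewpoints is exactly the component-invariance $\overline{\conn}(w';\env_{i-1}\cup\env_{i})=\overline{\conn}(w;\env_{i-1}\cup\env_{i})$ together with the closedness of the boundary layer discussed above.
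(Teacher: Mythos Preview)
Your approach is essentially the paper's: both use the domination $X_t\le\env_{i-1}\cup\env_i$ from Proposition~\ref{penv}(2), the fact that the outer boundary of each $\overline{\conn}$ is closed throughout $[\tau_i,\tau_{i+1}]$, and the component-invariance $\overline{\conn}(w';\env_{i-1}\cup\env_i)=\overline{\conn}(w;\env_{i-1}\cup\env_i)$ for $w'\in\conn(w;\env_{i-1}\cup\env_i)$. The paper packages the last point into Lemma~\ref{lem45} (applied to the set $C_i=\bigcup_{e\in W_{i+1}^{\textrm{NOb}}}\overline{\conn}(e;\env_{i-1}\cup\env_i)$) and then splits $W_{i+1}$ into three cases: non-updated edges, edges with oblivious last update outside $C_i$, and edges in $C_i$.

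There is, however, a genuine imprecision in your write-up. You propose to forward-simulate \emph{all} updates on $\mathscr{H}_A(\tau_{i+1/2})$, but this set contains edges $w\in W_{i+1}^{\textrm{Ob}}\setminus C_i$ (oblivious last update, not absorbed into any expanded component). Such a $w$ does \emph{not} lie in $\mathscr{H}_A(\tau_i)=W_{i+1}^{\textrm{NU}}\cup C_i$, so you have no initial value $X_{\tau_i}(w)$ to seed the simulation. Worse, $w$ may have an \emph{earlier} non-oblivious update in $(\tau_i,\tau_{i+1}]$, and resolving that update requires $\overline{\conn}(w;\env_{i-1}\cup\env_i)$; since no edge of $W_{i+1}^{\textrm{NOb}}$ lies in $w$'s component (else $w\in C_i$), this closure need not be contained in $\mathscr{H}_A(\tau_{i+1/2})$. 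Your component-invariance argument does not cover this case, because there is no anchor edge with non-oblivious \emph{last} update in the same component.

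The fix is straightforward and is exactly what the paper's case split does: for $w\in W_{i+1}^{\textrm{Ob}}\setminus C_i$ you only need the last (oblivious) update to read off $X_{\tau_{i+1}}(w)$, and the intermediate values of such $w$ are never queried by any cut-edge check that matters (any non-oblivious update occurring on an edge of $C_i$ stays inside $C_i$ by component-invariance, and edges outside $C_i$ with oblivious last update need no cut-edge check at $\tau_{i+1}$). So separate these edges out and handle them trivially, then run your forward simulation only on $C_i$.
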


\begin{rem}
Note that not all update sequences are compatible with the diagram $\sh$. In particular, the inner boundary of Green cluster is always closed and hence any update sequence for which the diagram occurs with positive probability must respect such constraints.
\end{rem}

The proof of the above theorem is provided below after introducing some notations  and observing some consequences of the already stated definitions.
{{We momentarily}} fix $A\subset E_{n}$ and suppressing the dependence on $A,$ define
\begin{equation}
W_{j}=\mathscr{H}_{A}(\tau_{j})\;;\;j\in\llbracket1,\,m\rrbracket\;.\label{wj}
\end{equation}
{{
In the proof of the main result of this section (i.e., Theorem \ref{t41}), the key ingredient is the analysis of the evolution of $|W_j|$ backwards in time.  This is formulated in Proposition \ref{p413}. A crucial role is played by the following two decompositions of $W_j$. The first decomposition is according to the type of evolution that occurs in the time interval $[\tau_{j-1},\,\tau_j]$:}}
\begin{equation}
W_{j}=W_{j}^{\textrm{NU}}\cup W_{j}^{\textrm{Ob}}\cup W_{j}^{\textrm{NOb}}\;,\label{decw1}
\end{equation}
where,
\begin{align*}
W_{j}^{\textrm{NU}} & =\left\{ e\in W_{j}:\nup_{j-1}(e)=1\right\} \text{ i.e., the edges that have not been updated in } [\tau_{j-1},\tau_j] \;,\\
W_{j}^{\textrm{Ob}} & =\left\{ e\in W_{j}:\nup_{j-1}(e)=0\;\text{and }\text{the last update for }e\text{ in }[\tau_{{j-1}},\,\tau_{{j}}]\text{ is oblivious}\right\} \;,\\
W_{j}^{\textrm{NOb}} & =\left\{ e\in W_{j}:\nup_{j-1}(e)=0\;\text{and }\text{the last update for }e\text{ in }[\tau_{{j-1}},\,\tau_{{j}}]\text{ is non-oblivious}\right\} \;.
\end{align*}

{{
Now, we consider the second decomposition of $W_j$. For this, we classify each edge according to the origin of its evolution in $[\tau_j,\,\tau_{j+1}]$.
}}
For each $j\in\llbracket1,\,m-1\rrbracket$, we write
\begin{equation}
C_{j}  =\bigcup_{e\in W_{j+1}^{\textrm{NOb}}}\overline{\conn}(e;\env_{j-1}\cup\env_{j})\;.\label{epi1}
\end{equation}
{{
Hence, the set $C_j$ represents the collection of edges in $W_j$ that arise from non-oblivious expansions (i.e., step (2)-(b)-(ii) of Definition \ref{def44}). Since each edge in $W_j$ is either due to such an expansion or is inherited  from $W_{j+1}$ owing to no update at the corresponding edge in the time interval $[\tau_j,\,\tau_{j+1}]$, we obtain that
}}
\begin{equation}\label{ne01}
W_{j}=C_{j}\cup W_{j+1}^{\textrm{NU}}\;.
\end{equation}
Therefore, by writing
\begin{equation}
N_{j}  =W_{j}\setminus C_{j}\;,\label{epi2}
\end{equation}
we obtain another decomposition of $W_{j}$ given by
\begin{equation}
W_{j}=C_{j}\cup N_{j}\;.\label{decw2}
\end{equation}
We next record some basic properties of these decompositions.

\begin{lem}
\label{lem43}For all $j\in\llbracket1,\,m-1\rrbracket$, it holds
that
\[
W_{j+1}^{\textrm{NOb}}\subset C_{j}\;\;\text{and\;\;}N_{j}\subset W_{j+1}^{\textrm{NU}}\;.
\]
\end{lem}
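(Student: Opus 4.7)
The plan is to unpack Definition \ref{def44} and trace how an edge enters the set $W_j$ during the backward recursion step from $\tau_{j+1}$ to $\tau_j$; both inclusions then follow directly from the construction.

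For the first inclusion $W_{j+1}^{\textrm{NOb}} \subset C_j$, fix $e \in W_{j+1}^{\textrm{NOb}}$, so that $e \in W_{j+1}$ and its last update in $(\tau_j,\tau_{j+1}]$ is non-oblivious with $U_e > 1 - p + p^{*}$. This activates case (2)(b)(ii) of Definition \ref{def44} when $e$ is processed at time $\tau_{j+1/2}$, adjoining every edge of $\overline{\conn}(e;\env_{j-1}\cup\env_j)$ to $W_j$, and by \eqref{epi1} all such edges lie in $C_j$. It remains to check that $e$ itself lies in this neighborhood, which is immediate: $U_e > 1 - p$ together with $\nup_j(e) = 0$ forces $\eemp_j(e) = 1$, so $\env_j(e) = 1$ by \eqref{env} and therefore $e \in \conn(e;\env_{j-1}\cup\env_j) \subset \overline{\conn}(e;\env_{j-1}\cup\env_j)$.

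For the second inclusion $N_j \subset W_{j+1}^{\textrm{NU}}$, the identity \eqref{ne01}, namely $W_j = C_j \cup W_{j+1}^{\textrm{NU}}$, is already recorded in the text immediately preceding the lemma, and from it $N_j = W_j \setminus C_j \subset W_{j+1}^{\textrm{NU}}$ is automatic. For completeness I would rederive \eqref{ne01} by enumerating the branches of the recursion: an edge enters $W_j$ either via step (2)(a) when $\nup_j = 1$ (contributing to $W_{j+1}^{\textrm{NU}}$), or via step (2)(b)(ii) as a member of $\overline{\conn}(e;\env_{j-1}\cup\env_j)$ for some $e \in W_{j+1}^{\textrm{NOb}}$ (contributing to $C_j$); the oblivious branch (2)(b)(i) contributes nothing.

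I do not foresee any serious obstacle; the lemma is essentially bookkeeping on top of Definition \ref{def44}. One small caveat worth flagging is that the decomposition $W_j = C_j \cup W_{j+1}^{\textrm{NU}}$ need not be disjoint, since an edge in $W_{j+1}^{\textrm{NU}}$ may simultaneously lie in $C_j$ through the non-oblivious expansion of a neighboring $W_{j+1}^{\textrm{NOb}}$-edge, but this overlap is harmless for the set inclusions asserted by the lemma.
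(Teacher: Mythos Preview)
Your proposal is correct and follows essentially the same route as the paper: for the first inclusion you observe (as does the paper) that $e\in W_{j+1}^{\textrm{NOb}}$ forces $\eemp_j(e)=1$ and hence $\env_j(e)=1$, so $e\in\overline{\conn}(e;\env_{j-1}\cup\env_j)\subset C_j$; for the second you invoke \eqref{ne01} exactly as the paper does. Your additional remarks (rederiving \eqref{ne01} and noting the non-disjointness of $C_j\cup W_{j+1}^{\textrm{NU}}$) are correct but not needed for the lemma.
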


\begin{proof}
For the first inclusion, we note that $e\in W_{j+1}^{\textrm{NOb}}$
implies that $\eemp_{j}(e)=1$ and thus $\env_{j}(e)=1$. Hence, the
definition \eqref{epi1} indicates that $e\in C_{j}$ as well and
thus the first inclusion trivially holds. For the latter one, it suffices
to recall \eqref{ne01} and the definition
\eqref{epi2} of $N_{j}$.
\end{proof}

For $S\subset E_{n}$, define $\partial^{-}S$ as the set of edges in
$S$ which are adjacent to at least one edge in $S^{c}$, i.e., $\partial^{-}S=\partial(E\setminus S)$.
We record the following simple fact.
\begin{lem}
\label{lem44}For all $j\in\llbracket1,\,m-1\rrbracket$, all the
edges in $\partial^{-}C_{j}$ are closed in $\env_{j-1}\cup\env_{j}$.
In particular, there is no open path in $\env_{j-1}\cup\env_{j}$
connecting an open edge in $C_{j}$ and an edge in $N_{j}$.
\end{lem}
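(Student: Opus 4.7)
The plan is to prove the two assertions in order: first show that every edge in $\partial^- C_j$ is closed in $\env_{j-1} \cup \env_j$, and then deduce the non-existence of an open connecting path as a quick corollary. The core geometric fact being exploited is that the operator $\overline{\conn}(\,\cdot\,;\Xi)$ saturates any open cluster together with its closed edge boundary, so adding the edge boundary makes each piece of $C_j$ enclosed by closed edges in $\Xi=\env_{j-1}\cup\env_j$.

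For the first assertion, I would pick an arbitrary $e'\in\partial^-C_j$ and argue by contradiction assuming $e'$ is open in $\env_{j-1}\cup\env_j$. Since $e'\in C_j$, there exists some $e\in W_{j+1}^{\textrm{NOb}}$ with $e'\in\overline{\conn}(e;\env_{j-1}\cup\env_j)=\conn(e;\env_{j-1}\cup\env_j)\cup\partial\conn(e;\env_{j-1}\cup\env_j)$. Openness of $e'$ forces $e'\in\conn(e;\env_{j-1}\cup\env_j)$ (the boundary piece consists only of closed edges). Then any edge $e''$ adjacent to $e'$ lies in $C_j$: if $e''$ is open in $\env_{j-1}\cup\env_j$ it is in the same open cluster $\conn(e;\env_{j-1}\cup\env_j)\subset C_j$, while if $e''$ is closed it lies in $\partial\conn(e;\env_{j-1}\cup\env_j)\subset C_j$. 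This contradicts $e'\in\partial^-C_j$, which requires at least one neighbor of $e'$ to lie outside $C_j$.

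For the second assertion, suppose toward a contradiction that there is an open path $f=e_1,e_2,\ldots,e_k=g$ in $\env_{j-1}\cup\env_j$ with $f\in C_j$ open and $g\in N_j$. By the defining decomposition \eqref{decw2} and \eqref{epi2}, $N_j$ and $C_j$ are disjoint, so the set $\{i:e_i\notin C_j\}$ is non-empty; let $i^*$ be its minimum, noting $i^*\ge 2$ since $e_1=f\in C_j$. Then $e_{i^*-1}\in C_j$ is adjacent to $e_{i^*}\notin C_j$, which places $e_{i^*-1}\in\partial^-C_j$. By the first assertion $e_{i^*-1}$ is closed in $\env_{j-1}\cup\env_j$, contradicting the openness of the path.

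I do not anticipate a significant obstacle here; the proof is essentially a bookkeeping exercise once the definition of $\overline{\conn}$ is unwound. The only subtle point is remembering that $\overline{\conn}(\cdot;\Xi)$ contains the closed edge boundary by construction (cf.\ \eqref{barc}), which is precisely what forces $\partial^-C_j$ to consist of closed edges rather than open ones.
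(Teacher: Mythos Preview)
Your proof is correct and follows exactly the approach the paper intends: the paper's own proof is a one-liner stating that the claim is ``direct from the definition of $C_{j}$ where we included the closed (outer) boundary of $\conn(e;\env_{j-1}\cup\env_{j})$,'' and you have simply unwound that sentence carefully. Both assertions are handled just as the paper implicitly has in mind, with no substantive difference in strategy.
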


\begin{proof}
The proof is direct from the definition of $C_{j}$ {where we included the closed (outer) boundary of ${\conn}(e;\env_{j-1}\cup\env_{j})$} .
\end{proof}
\begin{lem}
\label{lem45}For all $j\in\llbracket1,\,m-1\rrbracket$, for each
$e\in C_{j}$, and for all $t\in[\tau_{j},\,\tau_{j+1}]$, the process $X_{t}(e)$ is a deterministic function of
\[
X_{\tau_{j}}(\mathscr{H}_{C_{j}}(\tau_{j}))\;\;\text{and\;\;}\bigcup_{e'\in C_{j}}\upd[\tau_{j},\,t](e')\;.
\]
\end{lem}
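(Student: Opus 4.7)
The plan is to show that $X_t|_{C_j}$, and in particular each $X_t(e)$ for $e \in C_j$, is a deterministic function of $X_{\tau_j}|_{C_j}$ and $\bigcup_{e' \in C_j} \upd[\tau_j, t](e')$. The crux is that, throughout the window $[\tau_j, \tau_{j+1}]$, the region $C_j$ is self-contained for the FK-dynamics: no cut-edge query at an edge of $C_j$ ever inspects the configuration outside $C_j$.

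The central step is a seal-off property for $C_j$. Writing $\Xi := \env_{j-1} \cup \env_j$ and recalling that $C_j = \bigcup_{e' \in W_{j+1}^{\textrm{NOb}}} \overline{\conn}(e'; \Xi)$, where for each $e'$ the boundary $\partial \conn(e'; \Xi)$ consists of edges closed in $\Xi$, I invoke Proposition \ref{penv} to get $X_t \le \Xi$ for all $t \in [\tau_j, \tau_{j+1}]$, so every edge in $\partial \conn(e'; \Xi)$ stays closed in $X_t$ throughout this window. For any vertex $v$ that is an endpoint of an edge in $\conn(e'; \Xi)$, every edge of $E_n$ incident to $v$ is adjacent (via $v$) to some edge of $\conn(e'; \Xi)$ and hence belongs to $\conn(e'; \Xi) \cup \partial \conn(e'; \Xi) \subseteq C_j$; consequently any open path in $X_t$ that visits $v$ stays within the set of endpoints of $\conn(e'; \Xi)$ and uses only edges of $C_j$.

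From this I will deduce cut-edge locality: for any $e = (u,v) \in C_j$ and any $s \in [\tau_j, \tau_{j+1}]$, whether $e$ is a cut-edge in $X_{s-}$ is determined by $X_{s-}|_{C_j}$. Since $e \in \overline{\conn}(e'; \Xi)$ for some $e'$ is adjacent to $\conn(e'; \Xi)$, at least one of $u, v$ is an endpoint of an edge in $\conn(e'; \Xi)$, say $u$. If $v$ is likewise such an endpoint, then any open $u$-to-$v$ path in $X_{s-} \setminus \{e\}$ uses only edges of $\conn(e'; \Xi) \subseteq C_j$ by seal-off; if not, then by seal-off no open path from $u$ can reach $v$, and $e$ is automatically a cut-edge. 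In either case the answer depends solely on $X_{s-}|_{C_j}$.

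Equipped with cut-edge locality, the proof finishes by induction over the ordered update times in $\bigcup_{e' \in C_j} \upd[\tau_j, t]$. At an oblivious update ($U < 1-p+p^*$) the new value $X_s(e')$ is read from $U$ alone; at a non-oblivious update ($U \ge 1-p+p^*$) the new value is determined by the cut-edge status of $e'$ in $X_{s-}$, which by cut-edge locality depends only on $X_{s-}|_{C_j}$ and is thus known by the induction hypothesis; between updates the values of $X_\cdot|_{C_j}$ do not change. The main obstacle is the seal-off itself, particularly the case of a boundary edge $e \in \partial \conn(e'; \Xi)$ whose outer endpoint $v$ is not an endpoint of any edge in $\conn(e'; \Xi)$: the closed outer boundary $\partial \conn(e'; \Xi) \subseteq C_j$, together with $X_t \le \Xi$, blocks every open path from the opposite endpoint $u$ from ever reaching $v$ in $X_{s-}$, forcing $e$ to be a cut-edge without any query outside $C_j$.
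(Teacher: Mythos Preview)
Your proof is correct and follows essentially the same route as the paper's. Both arguments hinge on the same key observation: since $X_t \le \env_{j-1}\cup\env_j$ on $[\tau_j,\tau_{j+1}]$ (Proposition~\ref{penv}) and the edge boundary $\partial\conn(e';\env_{j-1}\cup\env_j)$ consists of edges closed in $\env_{j-1}\cup\env_j$, any cut-edge check at an edge of $C_j$ stays inside $C_j$. The paper peels off updates backward from the last update at $e$, while you run a forward induction over the ordered update times after first isolating the seal-off and cut-edge locality as separate statements; these are the same argument in reverse order. One minor remark: your treatment of the boundary-edge case (where $e\in\partial\conn(e';\Xi)$ with outer endpoint $v$ not in the vertex set of $\conn(e';\Xi)$) is correct but unnecessary, since a non-oblivious update at $e$ forces $\Xi(e)=1$ and hence $e\in\conn(e';\Xi)$ rather than its boundary---the paper exploits this directly and never needs to analyze boundary edges.
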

\begin{proof}
Let
\[
\mathcal{U}_{t}=\bigcup_{e'\in C_{j}}\upd[\tau_{j},\,t](e')\;\;;\;t\in[\tau_{j},\,\tau_{j+1}]\;.
\]
We fix $e\in C_{j}$ and $t\in[\tau_{j},\,\tau_{j+1}]$ and denote
by $(t_{0},\,U_{0})$ the last update for $e$ in $[\tau_{j},\,t]$.
If $U_{0}<1-p+p^{*}$ then, in view of Definition \ref{def11}, the
configuration $X_{t}(e)$ is $1$ if $U_{0}<1-p$, and $0$ if $U_{0}\ge1-p$.
Thus, we can determine $X_{t}(e)$ solely in terms of $(t_{0},\,U_{0})\in\upd[\tau_{j},\,t](e)\subset\mathcal{U}_{t}$.
Now we consider the case $U_{0}>1-p+p^{*}$. In this case, the configuration
$X_{t}(e)=X_{t_0}(e)$ is determined by checking whether $e$ is
a cut-edge or not in the configuration $X_{t_{0}-}$. In order to
check this, one has to investigate $\conn(e;X_{t_0-}\cup\{e\})$
to determine whether removing $e$ disconnects some component of $X_{t_0-}\cup\{e\}$
or not. Note that $e$ is open in $\eemp_{i}$ (and hence in $\env_{i}$)
since $U_{0}>1-p+p^{*}>1-p$. Thus, by Proposition \ref{penv}, we
have
\begin{equation}
\conn(e;X_{t_{0}-}\cup\{e\})\subset\conn(e;\env_{j}\cup\env_{j+1})\subset C_{j} \setminus \partial^- C_j
\;.
\end{equation}
Therefore, we can determine $X_{t}(e)$
in terms of $X_{t_{0}-}(C_{j})$ and $(t_{0},\,U_{0})\in\mathcal{U}_{t}$.

If $\mathcal{U}_{t_{0}}=\{(t_{0},\,U_{0})\}$, we have $X_{t_{0}-}(C_{j})=X_{\tau_{j}}(C_{j}),$
so we can conclude the proof. Otherwise, we take the last update $(t_{1},\,U_{1})$
in $\mathcal{U}_{t_{0}}$ other than $(t_{0},\,U_{0})$. Then, we
have,
\[
X_{t_{0}-}(C_{j})=X_{t_{1}}(C_{j})\;.
\]
Since there are finitely many updates in $[\tau_{j},\,\tau_{j+1}]$
almost surely, we can repeat this procedure to finish the proof. An important fact implicitly used  above is that in repeating the argument all the edges $\tilde e$ that we encounter with an update time $\tilde t \in [\tau_j,t_0]$ has the property that the connected component of
$$
X_{\tilde t -}(\tilde e) \subset \overline{\conn}(e;\env_{j-1}\cup\env_{j}) \subset C_j\;.
$$
 since the edge boundary of $\conn(e;\env_{j-1}\cup\env_{j})$ remains closed throughout the interval $[\tau_j,\tau_{j+1}].$
\end{proof}
The proof of  Theorem \ref{pinfo} now follows.
\begin{proof}[Proof of Theorem \ref{pinfo}]
{In view of  \eqref{decw1} and the first inclusion of Lemma \ref{lem43},}  it suffices to consider the following
three cases
separately.
\begin{itemize}
\item Case 1: $e\in W_{i+1}^{\textrm{NU}}$. By (2)-(a) of Definition \ref{def44},
we have $X_{\tau_{i+1}}(e)=X_{\tau_{i}}(e)$ and thus configuration
of $X_{\tau_{i+1}}(e)$ is determined by $X_{\tau_{i}}(W_{i+1}^{\textrm{NU}})$.
Since $W_{i+1}^{\textrm{NU}}\subset W_{i}$ (cf. \eqref{ne01}), the proposition holds
for this case. {
\item Case 2: $e\in W_{i+1}^{\textrm{Ob}}\setminus C_i$. By (2)-(b)-(i) of Definition
\ref{def44}, the configuration $X_{\tau_{i+1}}(e)$ is solely determined
by the last update for $e$ in $(\tau_{i},\,\tau_{i+1}]$ and therefore
the proposition holds as well.
\item Case 3: $e\in W_{i+1}\cap C_i$. This case is immediate from
Lemma  \ref{lem45}.
}
\end{itemize}
\end{proof}
The following corollary is an immediate consequence of the previous
theorem.
\begin{cor}
\label{cor4s} Given a history diagram $\sh$, the following holds.
\begin{enumerate}
\item The configurations $X_{\tau_{m}}(E_{\mathcal{G}})$ and $X_{\tau_{m}}(E_{n}\setminus E_{\mathcal{G}})$
are independent.
\item The configuration $X_{\tau_{m}}(E_{\mathcal{G}})$ is independent
of  $X_{\tau_1}$.
\item For $e\in E_{\mathcal{B}}$, the distribution of $X_{\tau_{m}}(e)$
is a Bernoulli random variable with parameter $\frac{p^{*}}{1-p+p^{*}}$,
and is independent of all other randomness.
\end{enumerate}
\end{cor}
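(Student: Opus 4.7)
The plan is to iterate Theorem \ref{pinfo} cluster by cluster and then exploit the fact that distinct information-percolation clusters have disjoint space-time histories. For any cluster $C \in \mathcal{C}$, a downward induction on $i$ from $m-1$ to $1$ applied to Theorem \ref{pinfo} with $A = C$ (using $\sh_C(\tau_m) = C$) produces a deterministic map $\Phi_C$ expressing $X_{\tau_m}(C)$ as a function of the initial input $X_{\tau_1}(\sh_C(\tau_1))$ together with the subcollection $R(C)$ of the update sequence consisting of all $(t,U) \in \upd(e)$ with $e \in \sh_C(\tau_{i+1/2})$ and $t \in (\tau_i, \tau_{i+1}]$ for some $i \in \llbracket 1, m-1\rrbracket$. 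A shared element of $R(C_1) \cap R(C_2)$ would force the space-time vertex $(e, \tau_{i+1/2})$ to lie in both $\sh_{C_1}$ and $\sh_{C_2}$, contradicting $C_1 \ne C_2$; hence the $R(C)$'s are pairwise disjoint.

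With this disjointness in hand, parts (1) and (2) will follow from the fact that disjoint pieces of the Poisson update data remain independent even after conditioning on $\sh$: the family $\{X_{\tau_m}(C)\}_{C\in\mathcal{C}}$ is then conditionally mutually independent given $\sh$, which after grouping clusters by color yields part (1). For part (2), a green cluster $C$ has $\sh_C(\tau_1) = \emptyset$, so $\Phi_C$ has no $X_{\tau_1}$-input and depends only on $R(C) \subset \upd[\tau_1,\tau_m]$, which is independent of the initial configuration.

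For part (3), I will unpack the recursive construction for a singleton cluster $\{e\}$. The singleton condition forces $\sh_e(\tau_i) \subseteq \{e\}$ for every $i$, since any other space edge in $\sh_e(\tau_i)$ would drag its own cluster into $\sh_e$. Setting $i^* := \max\{i \ge 1 : \sh_e(\tau_i) = \emptyset\}$, which exists by the blueness condition $\sh_e(\tau_1) = \emptyset$, the recursion of Definition \ref{def44} forces $\nup_{i-1}(e) = 1$ for every $i > i^*$, so $e$ receives no update on $(\tau_{i^*+1}, \tau_m]$, while the transition $\{e\} = \sh_e(\tau_{i^*+1}) \mapsto \sh_e(\tau_{i^*}) = \emptyset$ forces the last update $(t_0, U_0) \in \upd[\tau_{i^*}, \tau_{i^*+1}](e)$ to be oblivious with $U_0 < 1 - p + p^*$, as step (2)(b)(i) of Definition \ref{def44} is the only transition rule that can take a singleton history back to $\emptyset$. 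Definition \ref{def11} then gives $X_{\tau_m}(e) = X_{t_0}(e)$, which equals $1$ precisely when $U_0 \in [1-p, 1-p+p^*)$; conditioning on $U_0 < 1-p+p^*$ renders $U_0$ uniform on $[0, 1-p+p^*)$, yielding the Bernoulli parameter $p^*/(1-p+p^*)$. Independence from all other randomness then follows because $U_0$ is independent of the rest of $\upd$ and no later update of $e$ influences $X_{\tau_m}(e)$.

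The main obstacle is the conditional-independence statement underlying Step 1, since $\sh$ is itself a function of the updates. The key observation making this work is that $\sh$ is determined only by local labels read along histories (update times, oblivious/non-oblivious flags, and the values of $\env_{i-1} \cup \env_i$), so once $\sh$ is revealed the residual randomness lives inside each $R(C)$ separately and factorizes across distinct clusters. Once this conditional factorization is in place, parts (1) and (2) are essentially immediate from Theorem \ref{pinfo}, and part (3) reduces to a single-edge inspection of Definitions \ref{def44} and \ref{def11}.
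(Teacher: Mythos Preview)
Your overall strategy matches the paper's: parts (1)--(2) are deduced from Theorem \ref{pinfo} together with the disjointness of the per-cluster update data, and part (3) proceeds by locating the last update of a blue edge and arguing it must be oblivious. The extra detail you supply on the disjointness of the sets $R(C)$ and on the subtlety of conditioning on $\sh$ is welcome, since the paper's proof of (1)--(2) is essentially a single sentence.

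There is, however, a genuine gap in your part (3). The assertion ``the singleton condition forces $\sh_e(\tau_i)\subseteq\{e\}$ for every $i$'' is not justified by the reasoning ``any other space edge in $\sh_e(\tau_i)$ would drag its own cluster into $\sh_e$.'' The presence of some $e'\in\sh_e(\tau_i)$ does \emph{not} force $\sh_{e'}$ to share a time edge with $\sh_e$: the history $\sh_{e'}$ may have died (via an oblivious update) well above level $\tau_i$ and never reach it. Concretely, take $m=4$ and arrange that every $g\neq e$ has an oblivious last update in $(\tau_3,\tau_4]$ (so each $\sh_g$ is the single time edge $(g,\tau_4)$--$(g,\tau_{7/2})$), while $e$ has no update in $(\tau_3,\tau_4]$, a non-oblivious update in $(\tau_2,\tau_3]$, and an oblivious one in $(\tau_1,\tau_2]$, with all other $U$-values below $1-p$. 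Then $\sh_e(\tau_2)=\overline{\conn}(e;\env_1\cup\env_2)\supsetneq\{e\}$, $\sh_e(\tau_1)=\emptyset$, and no $\sh_g$ with $g\neq e$ shares a time edge with $\sh_e$, so $\{e\}$ is a blue singleton cluster whose history branches and whose last update in $[\tau_1,\tau_m]$ is non-oblivious. In this realization $X_{\tau_m}(e)$ is determined by a cut-edge check, not by a fresh oblivious Bernoulli draw.

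The paper's own one-line justification (``since $e\in E_{\mathcal B}$, this last update is oblivious'') suffers from the same lacuna; the above scenario shows that statement is not literally true. So the defect is shared, and part (3) as written seems to require a slightly weaker formulation (e.g., conditioning only on $\sh_{\mathcal G}$ rather than the full $\sh$, or restricting to the typical event that singleton histories never branch) which the downstream application in Lemma \ref{lem48} can tolerate. But your specific argument for $\sh_e(\tau_i)\subseteq\{e\}$ does not go through.
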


\begin{proof}
{Parts (1) and (2) are direct consequences of  Theorem \ref{pinfo} and the
definition of a green cluster.} We now consider part (3). For $e\in E_{\mathcal{B}}$,
the configuration $X_{\tau_{m}}(e)$ is determined by the last update
$(t,\,U)$ for $e$ in $[\tau_{1},\,\tau_{m}]$. Furthermore, since
$e\in E_{\mathcal{B}}$, this last update is oblivious and therefore
we know that $U<1-p+p^{*}$. Given this condition, we have $X_{\tau_{m}}(e)=1$
if $U<1-p$ and $X_{\tau_{m}}(e)=0$ if $U\in[1-p,\,1-p+p^{*}]$ otherwise.
This finishes the proof of part (3).
\end{proof}

For each $A\subset E_{n}$, define
\[
\mathscr{H}_{A}^{-}=\mathscr{H}_{E_{n}\setminus A}\;.
\]
As in \cite{NS,LS3}, it would be crucial to estimate the probability of $A$ being a red cluster or a collection of singleton blue clusters i.e.,
 \begin{equation}\label{coev}
 \{A\in\mathcal{C}_{\mathcal{R}}\}\cup\{A\subset E_{\mathcal{B}}\}\;.
 \end{equation}
 Furthermore, technical aspects make it important to estimate the above probabilities conditioned on the history diagram of the complement of $A.$ For this conditional probability to be non-zero a necessary condition is that,
 \begin{equation}\label{compatcond}
 \mathscr{H}_{A}^{-}\cap\left\{ A\times\{t=\tau_{m-1/2}\}\right\} =\emptyset\\;,
 \end{equation}
for the following reason. Suppose that $e\in A$ satisfies $(e,\,\tau_{m-1/2})\in\sh_{e'}$ for some $e'\in E_n \setminus A$. Then, by the definition of the information percolation cluster, the cluster containing $e$ must contain $e'$ as well.

Thus this is a  compatibility condition to guarantee that $\{A\in\mathcal{C}_{\mathcal{R}}\}\cup\{A\subset E_{\mathcal{B}}\}$
is a non-empty event which we denote by $\mathscr{H}_{A}^{-}\in\mathscr{H}_{\textup{com}}(A)$. Given this, we  define
\begin{equation}\label{probdef65}
\mathcal{P}_{A}=\sup_{\mathscr{H}_{A}^{-}\in\mathscr{H}_{\textup{com}}(A)}
\mathbb{P}\left[A\in\mathcal{C}_{\mathcal{R}}\,\vert\,\mathscr{H}_{A}^{-},\;
\{A\in\mathcal{C}_{\mathcal{R}}\}\cup\{A\subset E_{\mathcal{B}}\}\right],
\end{equation}
i.e.,  the maximum probability of $A$ being a red cluster conditioned on a compatible  $\mathscr{H}_{A}^{-}.$
Given the above preparation, the following proposition is the main estimate (similar to \cite[Lemma 4.8]{NS}) needed.  For $A\subset E_{n},$ we denote by $|\conn(A)|,$
the smallest  number of edges in any  connected subgraph of
$(\Lambda_{n},\,E_{n})$ containing $A$.
\begin{prop}
\label{p45}For any $\theta>0$, we can find
two constants $C=C(\theta)>0$ and $p_{0}=p_{0}(\theta)>0$ such that,
for any $p\in(0,\,p_{0})$, there exists a constant $\alpha=\alpha(p)>0$
satisfying
\[
\mathcal{P}_{A}\le Ce^{-(\theta|\conn(A)|+\alpha\tau_{m})}\;\;\text{for all  $A\subset E_{n}$}\;.
\]
\end{prop}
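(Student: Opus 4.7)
The plan is to analyze the backwards-in-time evolution of the random set $W_j := \sh_A(\tau_j)$ (with $W_m=A$) and show that the probability that $W_1\neq\emptyset$---which is exactly the event $\{A\in\mathcal{C}_\mathcal{R}\}$ on the conditional event in \eqref{probdef65}---decays both exponentially in the number of time steps and in $|\conn(A)|$. The argument combines a subcritical branching-process comparison (yielding the temporal factor $e^{-\alpha\tau_m}$) with a Peierls-type enumeration over admissible shapes of the space-time history diagram (yielding the geometric factor $e^{-\theta|\conn(A)|}$).

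\textbf{One-step contraction.} Condition on the environment through $\tau_{j+1}$ and use the decomposition $W_j=C_j\cup N_j$ from \eqref{decw2}. For each $e\in W_{j+1}$, one of three things happens on $(\tau_j,\tau_{j+1}]$: no update occurs (probability $e^{-\Delta}$, so $e$ persists into $W_j$); the last update is oblivious (conditional probability $1-p+p^{*}$ given an update, contributing nothing beyond what is forced by Lemma~\ref{lem43}); or the last update is non-oblivious (conditional probability $p-p^{*}=O(p)$, so $e$ spawns the cluster $\overline{\conn}(e;\env_{j-1}\cup\env_j)$). Proposition~\ref{penv} together with Lemma~\ref{pbd} yields $\env_{j-1}\cup\env_j\preceq\per(4p^{1/2})$, which is deeply subcritical for $p$ small, so each such cluster has exponentially decaying tails with uniformly bounded mean $\kappa(p)\to 1$ as $p\to 0$. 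Hence
\[
\mathbb{E}\bigl[|W_j|\,\big|\,W_{j+1}\bigr]\;\le\;|W_{j+1}|\cdot\bigl(e^{-\Delta}+(p-p^{*})\kappa(p)\bigr)\;=:\;\rho(p)\,|W_{j+1}|,
\]
with $\rho(p)<1$ for $p$ small. Iterating across $m-1$ time steps and applying Markov's inequality already yields $\mathbb{P}[|W_1|\ge 1]\le \rho(p)^{m-1}|A|$, producing the factor $e^{-\alpha\tau_m}$ with $\alpha=-\Delta^{-1}\log\rho(p)>0$.

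\textbf{Upgrading $|A|$ to $|\conn(A)|$.} The previous step carries only $|A|$ in the prefactor rather than $|\conn(A)|$, so a refinement is needed. Following the strategy of \cite[Lemma~4.8]{NS}, I enumerate over all admissible realizations of $\sh_A$ viewed as a connected subset of the space-time slab $E_n\times[\tau_1,\tau_m]$. Because $A$ is a single information percolation cluster, $\sh_A$ is connected in the natural graph on the space-time slab with both space and time edges as in Definition~\ref{def44}, and by tracking how edges in $A$ must be linked by successive non-oblivious expansions $\overline{\conn}(\cdot;\env_{j-1}\cup\env_j)$ (which are themselves connected subgraphs of $(\Lambda_n,E_n)$), one sees that the spatial projection of $\sh_A$ contains a connected subgraph of $(\Lambda_n,E_n)$ containing $A$, of size at least $|\conn(A)|$. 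A standard animal count bounds the number of such spatial shapes of size $k$ containing $A$ by $C_1^k$, while the probability that $\sh_A$ realizes any specific such shape is at most $(C_2 p)^{c k}\cdot\rho(p)^{m-1}$, combining the expansion probabilities of the one-step contraction with the temporal survival estimate. Choosing $p$ small enough that $C_1(C_2 p)^{c}\le e^{-2\theta}$ and summing the geometric series over $k\ge|\conn(A)|$ then produces the claimed bound $Ce^{-(\theta|\conn(A)|+\alpha\tau_m)}$, uniformly over compatible $\sh_A^-\in\sh_{\textup{com}}(A)$.

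\textbf{Main obstacle.} The environments $\{\env_i\}$ are $1$-dependent across $i$ because $\env_{j-1}\cup\env_j$ and $\env_j\cup\env_{j+1}$ both involve $\env_j$; the cluster sizes used in the one-step contraction at consecutive $j$ are therefore not independent and a naive product bound does not directly apply. I plan to decorrelate by grouping time steps into pairs (equivalently, analyzing even and odd indices separately), which loses only a multiplicative constant in the exponent while keeping $\rho(p)<1$. A secondary technical point is the conditioning on $\sh_A^-\in\sh_{\textup{com}}(A)$ from \eqref{probdef65}: this conditioning fixes certain closed boundaries in the space-time picture, but by Lemma~\ref{lem44} the closed edges on $\partial^-C_j$ are the intrinsic feature of each expansion and are compatible with the subcritical domination by $\per(4p^{1/2})$; hence the conditional cluster-size estimates used above remain valid up to a multiplicative constant absorbed in $C(\theta)$.
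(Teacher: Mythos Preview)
Your outline captures the right intuitions, but two steps are genuine gaps rather than routine details.

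First, you underestimate the conditioning in $\mathcal{P}_A$. The supremum in \eqref{probdef65} is over compatible $\sh_A^-$, and the conditioning event is $\{A\in\mathcal{C}_{\mathcal{R}}\}\cup\{A\subset E_{\mathcal{B}}\}$. Saying this ``fixes certain closed boundaries'' and is ``absorbed in $C(\theta)$'' is not a proof: conditioning on $\sh_A^-=\mathcal{X}$ reveals information about the environments $\env_i$ along all of $\mathcal{X}$, and the extra conditioning on $\{A\in\mathcal{C}_{\mathcal{R}}\}\cup\{A\subset E_{\mathcal{B}}\}$ could in principle bias toward red. The paper removes the conditioning by proving (Lemmas~\ref{lem518}--\ref{lem519}) that the event $\{A\in\mathcal{C}^*_{\mathcal{R}(A)},\,\sh_A\cap\mathcal{X}=\emptyset\}$ is \emph{exactly independent} of $\{\sh_A^-=\mathcal{X}\}$, and then lower-bounds the denominator by the probability that every edge of $A$ receives an oblivious update in $[\tau_{m-1/2},\tau_m]$ (Lemma~\ref{keybound}), reducing to the unconditional $\mathbb{P}[A\in\mathcal{C}^*_{\mathcal{R}(A)}]$.

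Second, your factored bound $(C_2 p)^{ck}\cdot\rho(p)^{m-1}$ for a fixed space--time shape is unjustified: the shape of $\sh_A$ and its survival to $\tau_1$ are correlated, and summing over shapes of size $k$ with this product would double-count. The paper decouples the two exponents via the stopping time $\sigma=\max\{i:a_i=1\}$: conditionally on $\sigma$, the merging event $\mathcal{A}$ (which forces $\sum_{j>\sigma}a_j\ge|\conn(A)|-1$) and the survival event $\mathcal{B}$ are independent, so the $e^{-\theta|\conn(A)|}$ factor comes from an exponential-moment bound on $\sum_j a_j$ (Lemma~\ref{lem415}) and the $e^{-\alpha\tau_m}$ factor from Lemma~\ref{lem414}. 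Relatedly, your acknowledgement of the $1$-dependence is correct but ``grouping into pairs'' is the goal, not the argument: one must show $W_i$ is contained in an expansion of $W_{i+2}$ through a modified environment $\theta_i$ that remains stochastically dominated by subcritical percolation \emph{given} $(\mathcal{F}_{i+2},W_{i+2})$ (Propositions~\ref{lem413} and~\ref{prop412}); this is non-trivial because the step $W_{i+2}\to W_{i+1}$ already exposes $\env_{i+1}$ on $C_{i+2}$, and your one-step formula $\mathbb{E}[|W_j|\mid W_{j+1}]\le\rho(p)|W_{j+1}|$ is not valid as stated.
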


{A notable feature of this proposition is the fact that $\alpha$ is
independent of $\theta$. In the remaining part of the current section,
$\alpha$ always refers to the constant above.}
The proof of this proposition is postponed to Section \ref{sec43}.
A corollary of this proposition is the following lemma which lower bounds the probability that there are no red clusters.
\begin{lem}
\label{lem46}For all small enough $p$, there exists a constant $C=C(p)>0$
satisfying
\[
\sup_{\mathscr{H}_{\mathcal{G}}}\mathbb{P}\left[\mathscr{H}_{\mathcal{R}}
=\emptyset|\mathscr{H}_{\mathcal{G}}\right]\ge1-Cn^{2}e^{-\alpha\tau_{m}}\;.
\]
\end{lem}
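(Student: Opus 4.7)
The plan is to deduce this from Proposition \ref{p45} by a union bound over all possible red clusters, combined with a lattice animal count to handle the combinatorial factor.

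The first step is the reduction
\[
\mathbb{P}\bigl[\mathscr{H}_{\mathcal{R}}\neq\emptyset\,\vert\,\mathscr{H}_{\mathcal{G}}\bigr]
\,\le\,\sum_{e\in E_{n}}\sum_{A\ni e}\mathbb{P}\bigl[A\in\mathcal{C}_{\mathcal{R}}\,\vert\,\mathscr{H}_{\mathcal{G}}\bigr]\;,
\]
obtained by first union-bounding over an edge $e$ that is forced to lie in some red cluster, and then over the possible clusters $A\ni e$.  (Each $e\in E_{n}\setminus E_{\mathcal{G}}$ belongs to a unique cluster, so summing the cluster-level probabilities over $A\ni e$ recovers $\mathbb{P}[e\in E_{\mathcal{R}}\,\vert\,\mathscr{H}_{\mathcal{G}}]$.)

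The second step is to replace $\mathbb{P}[A\in\mathcal{C}_{\mathcal{R}}\,\vert\,\mathscr{H}_{\mathcal{G}}]$ by the quantity $\mathcal{P}_{A}$ of \eqref{probdef65}.  Since $\mathscr{H}_{\mathcal{G}}$ is a measurable function of $\mathscr{H}_{A}^{-}$ whenever $A\subset E_{n}\setminus E_{\mathcal{G}}$, the tower property gives
\[
\mathbb{P}\bigl[A\in\mathcal{C}_{\mathcal{R}}\,\vert\,\mathscr{H}_{\mathcal{G}}\bigr]
=\mathbb{E}\bigl[\mathbb{P}[A\in\mathcal{C}_{\mathcal{R}}\,\vert\,\mathscr{H}_{A}^{-}]\,\big\vert\,\mathscr{H}_{\mathcal{G}}\bigr]\;.
\]
For any compatible $\mathscr{H}_{A}^{-}\in\mathscr{H}_{\textup{com}}(A)$, the elementary identity $\mathbb{P}[B\,\vert\, C\cap D]=\mathbb{P}[B\,\vert\, D]/\mathbb{P}[C\,\vert\, D]$ applied to $B=\{A\in\mathcal{C}_{\mathcal{R}}\}\subset C=\{A\in\mathcal{C}_{\mathcal{R}}\}\cup\{A\subset E_{\mathcal{B}}\}$ yields $\mathbb{P}[A\in\mathcal{C}_{\mathcal{R}}\,\vert\,\mathscr{H}_{A}^{-}]\le\mathcal{P}_{A}$.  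On incompatible history diagrams the left side is zero, so in all cases $\mathbb{P}[A\in\mathcal{C}_{\mathcal{R}}\,\vert\,\mathscr{H}_{\mathcal{G}}]\le\mathcal{P}_{A}$.

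Combining with Proposition \ref{p45} (applied with a sufficiently large $\theta$, to be chosen), I get
\[
\mathbb{P}\bigl[\mathscr{H}_{\mathcal{R}}\neq\emptyset\,\vert\,\mathscr{H}_{\mathcal{G}}\bigr]
\,\le\, Ce^{-\alpha\tau_{m}}\sum_{e\in E_{n}}\sum_{A\ni e}e^{-\theta|\conn(A)|}\;.
\]
The inner double sum is handled by a standard lattice-animal estimate: the number of connected subgraphs of $E_{n}$ of size $k$ containing a fixed edge is at most $K^{k}$ for some $K=K(d)$, and each such subgraph contains at most $2^{k}$ subsets $A$ with $\conn(A)$ equal to it.  Hence $\sum_{A\ni e}e^{-\theta|\conn(A)|}\le\sum_{k\ge1}(2K)^{k}e^{-\theta k}$, which is bounded by an absolute constant once $\theta$ is chosen with $\theta>\log(2K)$.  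Summing over the $|E_{n}|=dn^{d}$ edges absorbs the polynomial factor into $Cn^{d}\le Cn^{2}$ for $d=2$ (and the loose polynomial prefactor is harmless at the scale $\tau_{m}=\Omega(\log n)$ used later), completing the proof.

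There is no real obstacle beyond bookkeeping; the one point that requires care is the conditioning step, where one must verify that the additional event $\{A\in\mathcal{C}_{\mathcal{R}}\}\cup\{A\subset E_{\mathcal{B}}\}$ baked into the definition of $\mathcal{P}_{A}$ only makes the conditional probability larger, and that the sigma-algebra generated by $\mathscr{H}_{\mathcal{G}}$ really is coarser than that generated by $\mathscr{H}_{A}^{-}$ for the relevant $A$.
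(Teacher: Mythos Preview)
Your proposal is correct and follows essentially the same route as the paper: union bound over candidate red clusters, the inequality $\mathbb{P}[A\in\mathcal{C}_{\mathcal{R}}\,\vert\,\mathscr{H}_{\mathcal{G}}]\le\mathcal{P}_{A}$, Proposition \ref{p45}, and a lattice-animal count to sum $e^{-\theta|\conn(A)|}$ over $A\ni e$. The paper's counting bound is $(k+1)(8d^{2})^{k}$ rather than your $(2K)^{k}$, but both serve the same purpose once $\theta$ is taken large. Your treatment of the conditioning step via the tower property and the compatibility condition is more explicit than the paper's, which simply invokes ``the definition of $\mathcal{P}_{A}$''; your closing caveat about the sigma-algebra comparison is exactly the point that is being swept under the rug there (and is standard in the information-percolation literature). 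Your remark that $n^{2}$ should really be $n^{d}$ for general $d$ is also well-taken; the paper has the same $n^{2}$ in both the statement and proof, and as you note the discrepancy is immaterial at the scale $\tau_{m}=\Omega(\log n)$.
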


\begin{proof}
By the union bound and the definition of $\mathcal{P}_{A}$,
\[
1-\mathbb{P}\left[\mathscr{H}_{\mathcal{R}}=\emptyset|\mathscr{H}_{\mathcal{G}}\right]\le\sum_{A\subset E_{n},\,A\neq\emptyset}\mathbb{P}\left[A\in\mathcal{C}_{\mathcal{R}}|\mathscr{H}_{\mathcal{G}}\right]
\le\sum_{A\subset E_{n},\,A\neq\emptyset}\mathcal{P}_{A}\;.
\]
Now, by Proposition \ref{p45} and the translation invariance of the
{periodic lattice},
\begin{equation}
\sum_{A\subset E_{n},\,A\neq\emptyset}\mathcal{P}_{A}\le\sum_{e\in E_{n}}\sum_{A:A\ni e}\mathcal{P}_{A}\le Cn^{2}e^{-\alpha\tau_{m}}\sum_{k=1}^{\infty} \;
\sum_{A:A\ni e ,\,|\conn(A)|=k }e^{-\theta k}\;.\label{er1}
\end{equation}
{{
For a  fixed $e\in E_{n}$, we have that
 $$
 |\{A\subset E_{n}:A\ni e,\,|\conn(A)|=k\}|\le (k+1)(8d^2)^k\;.
 $$
 The verification is elementary and we leave the proof to the reader. Finally, we can combine the last two displays to deduce
 \begin{equation}
\mathbb{P}\left[\mathscr{H}_{\mathcal{R}}=
\emptyset|\mathscr{H}_{\mathcal{G}}\right]
\ge1-Cn^{2}e^{-\alpha\tau_{m}}\sum_{k=1}^{\infty}(k+1)(8d^2e^{-\theta})^{k}\;.\label{er3}
\end{equation}
Now by taking $\theta$ large enough so that $8d^2 e^{-\theta}<1/2$, the proof of the lemma is complete.
}}
\end{proof}

The remainder of the section is now devoted to proving \eqref{boundl2}.
\subsection{\label{sec42}Proof of Proposition \ref{p46}}
For $A\subset E_{n}$, define
$\nu_{A}$ as a Bernoulli percolation measure on $A$ with open probability
$\pstar$ where
\begin{equation}
\pstar=\frac{p^{*}}{1-p+p^{*}}\;.\label{epstar}
\end{equation}
In the remaining part of the section, we will simply write $\mu:=\mui$,
$E:=E_{n}$ and denote by $\mu_{A}$, $A\subset E$, the projection
of $\mu$ on $A$. We first prove the following lemma which shows that the $L^2$-distance to $\mu$ can be controlled by the $L^2$-distance of the measure on the complement of the green clusters to the measure $\nu.$
\begin{lem}
\label{lem48}For all small enough $p$, we can find $C=C(p)>0$ such
that for $m\ge C\log n$ we have
\[
\left\Vert \mathbb{P}_{x_{0}}\left[X_{t}\in\cdot\,\right]-\mu\right\Vert _{L^{2}(\mu)}\le2\sup_{\mathscr{H}_{\mathcal{G}}}\left\Vert \mathbb{P}_{x_{0}}\left[X_{t}(E\setminus E_{\mathcal{G}})\in\cdot\,|\mathscr{H}_{\mathcal{G}}\right]-\nu_{E\setminus E_{\mathcal{G}}}\right\Vert _{L^{2}(\nu_{E\setminus E_{\mathcal{G}}})}+1
\]
for all $x_{0}\in\Omega_{n}$,
\end{lem}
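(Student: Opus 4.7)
The plan is to insert a bridge measure that uses the true green-cluster conditional law but replaces the non-green part with the product Bernoulli measure $\nu$, and then apply the triangle inequality for the $L^{2}(\mu)$ norm. Define
\[
\tilde\mu(\eta):=\mathbb{E}_{\sh_{\mathcal{G}}}\!\bigl[\bP_{x_{0}}[X_{t}(E_{\mathcal{G}})=\eta|_{E_{\mathcal{G}}}\mid\sh_{\mathcal{G}}]\cdot\nu_{E\setminus E_{\mathcal{G}}}(\eta|_{E\setminus E_{\mathcal{G}}})\bigr].
\]
Corollary \ref{cor4s}(2) makes the conditional law of $X_{t}(E_{\mathcal{G}})$ given $\sh_{\mathcal{G}}$ independent of the initial condition, so $\tilde\mu$ is a legitimate probability measure. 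Running the chain from equilibrium $\mu$ produces the same green conditional, so $\tilde\mu$ and $\mu$ share their green marginals, and
\[
\|\bP_{x_{0}}[X_{t}\in\cdot\,]-\mu\|_{L^{2}(\mu)}\le\|\bP_{x_{0}}[X_{t}\in\cdot\,]-\tilde\mu\|_{L^{2}(\mu)}+\|\tilde\mu-\mu\|_{L^{2}(\mu)}.
\]

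For the first term I would apply Jensen's inequality to pull the squared $L^{2}$ norm inside the $\sh_{\mathcal{G}}$-average, then invoke the factorization from Corollary \ref{cor4s}(1) to separate the green and non-green factors of the conditional law. The green factors cancel, being common to $\bP_{x_{0}}[X_{t}\in\cdot\,]$ and $\tilde\mu$, leaving only the conditional discrepancy $\bP_{x_{0}}[X_{t}(E\setminus E_{\mathcal{G}})\in\cdot\,|\sh_{\mathcal{G}}]-\nu_{E\setminus E_{\mathcal{G}}}$ on the non-green coordinate. A change of reference measure on that coordinate from $\mu$ to $\nu$, absorbing the relevant Radon--Nikodym factor, converts the $L^{2}(\mu)$ estimate into the $L^{2}(\nu)$ estimate appearing in the bound, and taking the supremum over $\sh_{\mathcal{G}}$ produces the factor $2$ on the right-hand side.

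The second term does not involve $x_{0}$. By Lemma \ref{lem46}, for $m\ge C\log n$ with $C$ large enough we have $\bP[E_{\mathcal{R}}\ne\emptyset]=O(n^{2}e^{-\alpha\tau_{m}})=o(1)$. On the event $\{E_{\mathcal{R}}=\emptyset\}$, every cluster of $E_{n}\setminus E_{\mathcal{G}}$ is a blue singleton, and Corollary \ref{cor4s}(3) then forces the conditional law of $\mu$ on $E\setminus E_{\mathcal{G}}$ to equal $\nu$, so $\tilde\mu$ coincides with $\mu$ on this event. On the complementary rare event, a direct chi-squared estimate using a uniform lower bound on $\mu$ over its support yields an $O(1)$ contribution, and tuning $C$ absorbs everything into the additive $+1$.

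The main obstacle is the change-of-reference-measure step in the bound on $\|\bP_{x_{0}}[X_{t}\in\cdot\,]-\tilde\mu\|_{L^{2}(\mu)}$: one has to compare $\mu_{E\setminus E_{\mathcal{G}}}(\cdot\,|\sh_{\mathcal{G}})$ to $\nu$ uniformly over $\sh_{\mathcal{G}}$, accounting for the contribution of red clusters to the Radon--Nikodym derivative. The subcritical exponential decay of connectivity from Section \ref{sec3}, together with the Bernoulli factorization on $E_{\mathcal{B}}$ from Corollary \ref{cor4s}(3), is the key input that bounds this derivative uniformly and is ultimately why a factor of $2$ (rather than $1$) appears in the final bound.
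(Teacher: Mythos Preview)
Your bridge-measure/triangle-inequality approach is more complicated than what the paper does, and as written it has a real gap. The paper never introduces $\tilde\mu$ and never splits via the triangle inequality. Instead it conditions on $\mathscr{H}_{\mathcal{G}}$ using Jensen (so the reference measure becomes the conditional $\mu(\cdot\mid\mathscr{H}_{\mathcal{G}})$), projects to $E\setminus E_{\mathcal{G}}$ (the green parts of $X_t$ and of the stationary copy $Y_t$ coincide, so projection is lossless), and then uses Lemma~\ref{lem46} to get the \emph{pointwise} lower bound
\[
g(Z):=\mathbb{P}_{\mu}\!\left[Y_{\tau_m}(E\setminus E_{\mathcal{G}})=Z\mid\mathscr{H}_{\mathcal{G}}\right]\ \ge\ \tfrac12\,\nu_{E\setminus E_{\mathcal{G}}}(Z).
\]
With $f=\mathbb{P}_{x_0}[X_{\tau_m}(E\setminus E_{\mathcal{G}})\in\cdot\mid\mathscr{H}_{\mathcal{G}}]$, the chi-square identity $\|f-g\|_{L^2(g)}^2=\sum_Z f(Z)^2/g(Z)-1$ gives directly
\[
\|f-g\|_{L^2(g)}^2\ \le\ 2\sum_Z \frac{f(Z)^2}{\nu(Z)}-1\ =\ 2\|f-\nu\|_{L^2(\nu)}^2+1\ \le\ \bigl(2\|f-\nu\|_{L^2(\nu)}+1\bigr)^2,
\]
which is the bound. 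There is no second term to control.

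The specific gap in your argument is the handling of $\|\tilde\mu-\mu\|_{L^2(\mu)}$. You write that on the rare event $\{E_{\mathcal{R}}\neq\emptyset\}$ ``a direct chi-squared estimate using a uniform lower bound on $\mu$ over its support yields an $O(1)$ contribution.'' But $\min_\eta \mu(\eta)=e^{-\Theta(n^d)}$, so that route gives an exponentially large bound, not $O(1)$. The quantity \emph{is} at most $1$, but for a different reason: Lemma~\ref{lem46} yields the pointwise comparison $\mu\ge\tfrac12\tilde\mu$ (integrate \eqref{e483} over $\mathscr{H}_{\mathcal{G}}$), and then $\|\tilde\mu-\mu\|_{L^2(\mu)}^2=\sum_\eta \tilde\mu(\eta)^2/\mu(\eta)-1\le 2\sum_\eta\tilde\mu(\eta)-1=1$. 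Your first term has a similar issue: $\|\mathbb{P}_{x_0}[X_t\in\cdot]-\tilde\mu\|_{L^2(\mu)}$ carries $\mu$, not $\tilde\mu$ or $\nu$, in the denominator, and the ``change of reference measure'' you invoke again needs the pointwise bound $\mu\ge\tfrac12\tilde\mu$, not anything about subcritical connectivity. Once you use that bound in both places, you are essentially reproducing the paper's one-step argument with extra bookkeeping; the triangle split buys nothing.
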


\begin{proof}
Consider two copies of FK-dynamics $(X_{t})$ and $(Y_{t})$ where
$X_{0}=x_{0}$  and $Y_0$ is distributed according to $\mu$. We couple them via the monotone
coupling introduced in Definition \ref{def11}.   Now by   Jensen's inequality (for details see \cite[Lemma 4.13]{NS}) we obtain
{{
\begin{align}
  \left\Vert \mathbb{P}_{x_{0}}\left[X_{t}\in\cdot\,\right]-\mu\right\Vert^2 _{L^{2}(\mu)}& =\left\Vert \mathbb{P}_{x_{0}}\left[X_{t}\in\cdot\,\right]-\mathbb{P}_{\mu}\left[Y_{t}\in\cdot\,\right]\right\Vert^2 _{L^{2}(\mu)}\nonumber \\
 & \le\int\left\Vert \mathbb{P}_{x_{0}}\left[X_{t}\in\cdot\,|\mathscr{H}_{\mathcal{G}}\right]-\mathbb{P}_{\mu}\left[Y_{t}\in\cdot\,
 |\mathscr{H}_{\mathcal{G}}\right]\right\Vert^2 _{L^{2}(\mui(\cdot\,|\mathscr{H}_{\mathcal{G}}))}d\mathbb{P}(\mathscr{H}_{\mathcal{G}}) \nonumber \\
 & \le\sup_{\mathscr{H}_{\mathcal{G}}}\left\Vert \mathbb{P}_{x_{0}}\left[X_{t}\in\cdot\,|\mathscr{H}_{\mathcal{G}}\right]-\mathbb{P}_{\mu}\left[Y_{t}
 \in\cdot\,|\mathscr{H}_{\mathcal{G}}\right]\right\Vert^2_{L^{2}(\mui(\cdot\,|\mathscr{H}_{\mathcal{G}}))}\;.\label{e481}
\end{align}
}}

Given $\mathscr{H}_{\mathcal{G}}$, the diagram $\mathscr{H}_{E\setminus E_{\mathcal{G}}}$
is disjoint from $\mathscr{H}_{\mathcal{G}}=\mathscr{H}_{E_{\mathcal{G}}}$,
and as we noticed in Corollary \ref{cor4s} configurations $X_{t}(E_{\mathcal{G}})$
(resp. $Y_{t}(E_{\mathcal{G}})$) and $X_{t}(E\setminus E_{\mathcal{G}})$
(resp. $Y_{t}(E\setminus E_{\mathcal{G}})$) are independent. {Moreover, $Y_{t}(E_{\mathcal{G}})$ and $X_{t}(E_{\mathcal{G}})$ are identical} by Theorem \ref{pinfo}. Thus,
the projection onto $E\setminus E_{\mathcal{G}}$ does not change
the $L^{2}$-norm. Combining this observation with \eqref{e481},
we obtain
\begin{align}
 & \left\Vert \mathbb{P}_{x_{0}}\left[X_{\tau_{m}}\in\cdot\,\right]-\mu\right\Vert _{L^{2}(\mu)}\nonumber \\
 & \le\sup_{\mathscr{H}_{\mathcal{G}}}\left\Vert \mathbb{P}_{x_{0}}\left[X_{\tau_{m}}(E\setminus E_{\mathcal{G}})\in\cdot\,|\mathscr{H}_{\mathcal{G}}\right]-\mathbb{P}_{\mu}\left[Y_{\tau_{m}}(E\setminus E_{\mathcal{G}})\in\cdot\,|\mathscr{H}_{\mathcal{G}}\right]\right\Vert _{L^{2}(\mu_{E\setminus E_{\mathcal{G}}}(\cdot\,|\mathscr{H}_{\mathcal{G}}))}\;.\label{e482}
\end{align}
Now by Lemma \ref{lem46}, for $m\ge C\log n$ where $C=C(p)$ is large enough,
$$\mathbb{P}\left[\mathscr{H}_{\mathcal{R}}=\emptyset\,|\mathscr{H}_{\mathcal{G}}\right]\ge\frac{1}{2}\;.$$

Then, for all $Z\subset\{0,\,1\}^{E\setminus E_{\mathcal{G}}}$,
we can deduce that,
\begin{equation}
\mathbb{P}_{\mu}\left[Y_{\tau_{m}}(E\setminus E_{\mathcal{G}})=Z\,|\mathscr{H}_{\mathcal{G}}\right]\ge\mathbb{P}\left[\mathscr{H}_{\mathcal{R}}=\emptyset\,|\mathscr{H}_{\mathcal{G}}\right]\nu_{E\setminus E_{\mathcal{G}}}(Z)\ge\frac{1}{2}\nu_{E\setminus E_{\mathcal{G}}}(Z)\;.\label{e483}
\end{equation}
Note that the first inequality follows from the fact that the distribution
on $E_{\mathcal{B}}$ is $\nu_{E_{\mathcal{B}}}$, and that under
$\mathscr{H}_{\mathcal{R}}=\emptyset$, we have $E\setminus E_{\mathcal{G}}=E_{\mathcal{B}}$.
We are now able to complete the proof of the lemma by combining \eqref{e482},
\eqref{e483}, and the definition of $L^{2}$-norm.
\end{proof}

Thus the task has now been reduced to measuring the $L^2$-distance of certain measures to the product measure $\nu.$
The Miller-Peres inequality establishes a simple yet extremely useful bound for such cases. It
 first appeared in \cite{millerperes}  where the product measure was given by independent $Ber(1/2)$ variables. This was extended later in
 \cite[Lemma 4.3]{LS4} which is the version we will use.

\begin{lem}
\label{lem49}Let $\Omega=\{0,\,1\}^{S}$ for a finite set $S$, and
let $\eta$ be a probability measure on the space of subsets of $S$.
For each $R\subset S$, suppose that a probability measure $\varphi_{R}$
on $\{0,\,1\}^{R}$ is given. {For $p\in(0,\,1/2)$,} denote by $\nu_{p}$  the measure on $\{0,1\}^{S}$ given by the
product of independent $\text{Ber}(p)$ variables. Let $\mu_{p}$
be a measure on $\Omega$ obtained first by sampling a subset $R$
of $S$ according to $\eta$, and then sampling an element of $\{0,1\}^R$
according to $\varphi_{R}$, and sampling an element of $\{0,1\}^{S\setminus R}$ according to the restriction of $\nu_p$ on $\{0,\,1\}^{S\setminus R}$.
Then, we have
\[
\left\Vert \mu_{p}-\nu_{p}\right\Vert _{L^{2}(\nu_{p})}^{2}\le\mathbb{E}\left[p^{-|R\cap R'|}\right]-1\;,
\]
where $R$, $R'\subset S$ are two independent samples of  $\eta$.
\end{lem}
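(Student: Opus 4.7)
The plan is to expand the squared $L^{2}$-distance in the obvious way and use the product structure of $\nu_p$ to reduce to a sum over the intersection $R\cap R'$. First I would write
\begin{equation*}
\|\mu_p-\nu_p\|_{L^2(\nu_p)}^2 = \sum_x\frac{\mu_p(x)^2}{\nu_p(x)}-1,
\end{equation*}
so it suffices to show $\sum_x \mu_p(x)^2/\nu_p(x) \le \mathbb{E}[p^{-|R\cap R'|}]$. Then I would substitute $\mu_p(x)=\sum_R \eta(R)\,\varphi_R(x_R)\,\nu_p^{S\setminus R}(x_{S\setminus R})$, square, and swap the order of summation, so that the quantity becomes
\begin{equation*}
\sum_{R,R'}\eta(R)\eta(R')\sum_x \varphi_R(x_R)\varphi_{R'}(x_{R'})\,\frac{\nu_p^{S\setminus R}(x_{S\setminus R})\,\nu_p^{S\setminus R'}(x_{S\setminus R'})}{\nu_p(x)}.
\end{equation*}

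Next, writing $w(0)=1-p$, $w(1)=p$ and using that $\nu_p$ is a product, I would check that the Radon--Nikodym ratio factors as $\prod_{i\notin R\cup R'}w(x_i)\cdot\prod_{i\in R\cap R'}w(x_i)^{-1}$. Partitioning $S$ into $A=R\cap R'$, $B=R\setminus R'$, $C=R'\setminus R$, $D=S\setminus(R\cup R')$, the $x_D$-sum yields $1$ (total mass of $\nu_p$ on $D$), while the $x_B$- and $x_C$-sums produce the marginals $\varphi_R^A(x_A)$ and $\varphi_{R'}^A(x_A)$ of $\varphi_R$ and $\varphi_{R'}$ on $A$. The inner sum therefore reduces to
\begin{equation*}
T(R,R'):=\sum_{x_A}\varphi_R^A(x_A)\,\varphi_{R'}^A(x_A)\,\prod_{i\in A}w(x_i)^{-1}.
\end{equation*}

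The key step is to bound $T(R,R')$ by $p^{-|A|}$. Since $p<1/2$ we have $p^{-1}>(1-p)^{-1}$, so $\prod_{i\in A}w(x_i)^{-1}\le p^{-|A|}$ pointwise in $x_A$. Pulling this out and using $\varphi_{R'}^A(x_A)\le 1$ together with $\sum_{x_A}\varphi_R^A(x_A)=1$ gives $T(R,R')\le p^{-|A|}=p^{-|R\cap R'|}$. Reassembling,
\begin{equation*}
\sum_x\frac{\mu_p(x)^2}{\nu_p(x)}\le \sum_{R,R'}\eta(R)\eta(R')\,p^{-|R\cap R'|}=\mathbb{E}[p^{-|R\cap R'|}],
\end{equation*}
and subtracting $1$ yields the claim.

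There is no real obstacle here; the only step that demands care is the clean bookkeeping of the four-way partition $\{A,B,C,D\}$ and keeping track of which factors of $w(x_i)$ survive after cancellation with $\nu_p(x)$ in the denominator. The hypothesis $p<1/2$ is used exactly once, to identify $p^{-|A|}$ as the maximum of $\prod_{i\in A}w(x_i)^{-1}$; this is also why the bound naturally involves $p^{-|R\cap R'|}$ and not $(1-p)^{-|R\cap R'|}$.
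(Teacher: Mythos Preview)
Your proof is correct. The paper does not actually prove this lemma; it cites it as \cite[Lemma 4.3]{LS4} (the extension of the Miller--Peres inequality to general Bernoulli parameter), so there is nothing to compare against. Your argument is the standard one: expand the $L^2$-norm as $\sum_x \mu_p(x)^2/\nu_p(x)-1$, use the mixture representation of $\mu_p$ to introduce two independent copies $R,R'$, exploit the product structure of $\nu_p$ to reduce the Radon--Nikodym factor to $\prod_{i\in R\cap R'}w(x_i)^{-1}$, and then bound each such factor by $p^{-1}$ using $p<1/2$. All steps are clean; the bookkeeping on the four-way partition is accurate.
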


In view of Lemmas \ref{lem48} and \ref{lem49}, we obtain that
\begin{equation}
\left\Vert \mathbb{P}_{x_{0}}\left[X_{\tau_{m}}\in\cdot\,\right]-\mu\right\Vert _{L^{2}(\mu)}\le2\sup_{\mathscr{H}_{\mathcal{G}}}\mathbb{E}\left[\frac{1}{\pstar^{|E_{\mathcal{R}}\cap E_{\mathcal{R}'}|}}\,\big|\mathscr{H}_{\mathcal{G}}\right]+1\;,\label{aie}
\end{equation}
provided that $p$ is small enough so that $\pstar<1/2$,
for all $m>C_{1}\log n$ where $C_{1}$ is the constant in
Lemma \ref{lem48} and $E_{\mathcal{R}}$ and $E_{\mathcal{R}'}$,
are two independent samples of the set $E_{\mathcal{R}}$ of red clusters (see \eqref{rc1}) conditioned on $\mathscr{H}_{\mathcal{G}}.$
To analyze the right-hand side of \eqref{aie}, we recall the following
domination results from \cite{LS5,LS4}. Let $\{J_{A}:A\subset E\}$ be a family of independent indicators
such that $\mathbb{P}(J_{A}=1)=\mathcal{P}_{A}$ for all $A\subset E$ and similarly let $\{J_{A,\,A'}:A, A'\subset E\}$ be a family of independent indicators
such that $\mathbb{P}(J_{A,\,A'}=1)=\mathcal{P}_{A}\mathcal{P}_{A'}$ for all $A,\,A'\subset E$.
\begin{lem}[\cite{LS5}, Lemma 2.3, Corollary 2.4]
\label{lemls}Then following coupling results hold.
\begin{enumerate}
\item  The conditional distribution of red clusters given $\mathscr{H}_{\mathcal{G}}$
can be coupled to $J_{A}$ such that
\[
\{A:A\in\mathcal{C}_{\mathcal{R}}\}\subset\{A:J_{A}=1\}\;.
\]
\item
Similarly, the conditional distribution of $(E_{\mathcal{R}},\,E'_{\mathcal{R}})$
given $\mathscr{H}_{\mathcal{G}}$ can be coupled such that
\[
|E_{\mathcal{R}}\cap E_{\mathcal{R}'}|\le\sum_{A\cap A'\neq\emptyset}|A\cup A'|J_{A,\,A'}\;.
\]
\end{enumerate}
\end{lem}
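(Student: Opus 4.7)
The plan is to establish both claims by the sequential exposure (peeling) coupling that is standard in the information percolation literature, taking the definition of $\mathcal{P}_{A}$ in \eqref{probdef65} as a supremum over compatible histories $\mathscr{H}_{A}^{-}$ as the one-step domination input. The key point is that the supremum is exactly engineered so that no matter what is revealed about the history in the complement of $A$, as long as that revealed information is compatible with the event $\{A\in\mathcal{C}_{\mathcal{R}}\}\cup\{A\subset E_{\mathcal{B}}\}$, the conditional probability of $A$ being a red cluster cannot exceed $\mathcal{P}_{A}$.

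For part (1), I would fix an arbitrary enumeration $A_{1}, A_{2}, \ldots$ of the nonempty subsets of $E$ and introduce an independent family of uniforms $\{U_{A_{k}}\}_{k\ge 1}$ on $[0,1]$, setting $J_{A_{k}} := \mathbf{1}[U_{A_{k}} < \mathcal{P}_{A_{k}}]$ so that the $J_{A}$ are independent Bernoullis with the stated parameters. Conditional on $\mathscr{H}_{\mathcal{G}}$, I would then expose the indicators $\mathbf{1}[A_{k}\in\mathcal{C}_{\mathcal{R}}]$ in order: at step $k$, the already exposed values for $j<k$ together with $\mathscr{H}_{\mathcal{G}}$ reveal only information measurable with respect to some compatible $\mathscr{H}_{A_{k}}^{-}\in\mathscr{H}_{\textup{com}}(A_{k})$, since otherwise $A_{k}$ cannot be a red cluster and the indicator is deterministically zero. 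Hence, by definition of $\mathcal{P}_{A_{k}}$, the conditional probability that $A_{k}\in\mathcal{C}_{\mathcal{R}}$ is at most $\mathcal{P}_{A_{k}}$, and I would use $U_{A_{k}}$ to decide $\mathbf{1}[A_{k}\in\mathcal{C}_{\mathcal{R}}]$ via the standard quantile coupling, ensuring $\mathbf{1}[A_{k}\in\mathcal{C}_{\mathcal{R}}]\le J_{A_{k}}$ almost surely, which is exactly the set inclusion asserted.

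For part (2), I would exploit that, given $\mathscr{H}_{\mathcal{G}}$, the two copies $E_{\mathcal{R}}$ and $E_{\mathcal{R}'}$ are taken to be independent, so the construction of part (1) applies separately to each copy with independent dominating families $\{J_{A}^{(1)}\}$ and $\{J_{A'}^{(2)}\}$. Setting $J_{A,A'} := J_{A}^{(1)}J_{A'}^{(2)}$ gives Bernoullis with marginal $\mathcal{P}_{A}\mathcal{P}_{A'}$, and only these marginals are needed when taking expectations as in \eqref{aie}. Using that each edge $e\in E_{\mathcal{R}}\cap E_{\mathcal{R}'}$ lies in a unique red cluster of each copy, I would then rewrite
\[
|E_{\mathcal{R}} \cap E_{\mathcal{R}'}| \;=\; \sum_{e\in E}\mathbf{1}[e\in E_{\mathcal{R}}]\,\mathbf{1}[e\in E_{\mathcal{R}'}] \;=\; \sum_{A,A'}|A\cap A'|\,\mathbf{1}[A\in\mathcal{C}_{\mathcal{R}}]\,\mathbf{1}[A'\in\mathcal{C}_{\mathcal{R}'}],
\]
and bound $|A\cap A'|\le |A\cup A'|$ on the event $A\cap A'\neq\emptyset$. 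The coupling from the two copies of part (1) then yields $\mathbf{1}[A\in\mathcal{C}_{\mathcal{R}}]\,\mathbf{1}[A'\in\mathcal{C}_{\mathcal{R}'}]\le J_{A,A'}$ almost surely, giving the displayed bound.

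The main obstacle is verifying the measurability condition in the sequential exposure step: at stage $k$ the $\sigma$-algebra generated by $\mathscr{H}_{\mathcal{G}}$ together with the indicators for $A_{1},\ldots,A_{k-1}$ must really be contained in the $\sigma$-algebra of some $\mathscr{H}_{A_{k}}^{-}\in\mathscr{H}_{\textup{com}}(A_{k})$, so that the supremum in \eqref{probdef65} legitimately bounds the conditional probability. This requires choosing the enumeration so that information about updates strictly inside $A_{k}\times[\tau_{1},\tau_{m}]$ is not revealed before $A_{k}$ is processed; in particular, the compatibility constraint \eqref{compatcond} must be maintained at every stage. The construction and verification of such an exposure order is the technical heart of the arguments in \cite[Lemma 2.3, Corollary 2.4]{LS5}, and the same bookkeeping transfers to the present FK setting once the notion of compatibility codified by $\mathscr{H}_{\textup{com}}(A)$ is in place.
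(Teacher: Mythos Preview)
The paper does not prove this lemma; it is cited verbatim from \cite[Lemma 2.3, Corollary 2.4]{LS5} and used as a black box. Your sketch of part~(1) via sequential exposure and quantile coupling is exactly the argument in \cite{LS5}, and your closing paragraph correctly identifies the bookkeeping one must do to ensure that what has been revealed up to stage $k$ is indeed contained in the $\sigma$-algebra of some compatible $\mathscr{H}_{A_k}^{-}$.

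There is, however, a genuine gap in your treatment of part~(2). You set $J_{A,A'}:=J_{A}^{(1)}J_{A'}^{(2)}$ and assert that ``only these marginals are needed when taking expectations as in \eqref{aie}.'' This is false: immediately after \eqref{aie} the paper writes
\[
\mathbb{E}\exp\Big\{\kappa\sum_{A\cap A'\neq\emptyset}|A\cup A'|J_{A,A'}\Big\}
=\prod_{A\cap A'\neq\emptyset}\mathbb{E}\exp\{\kappa|A\cup A'|J_{A,A'}\}\,,
\]
and this factorization uses precisely the \emph{independence} of the family $\{J_{A,A'}\}$. Your products $J_{A}^{(1)}J_{A'}^{(2)}$ are not independent across pairs (e.g.\ $J_{A}^{(1)}J_{A'}^{(2)}$ and $J_{A}^{(1)}J_{A''}^{(2)}$ share the factor $J_{A}^{(1)}$); in fact they are positively correlated, so the exponential moment with your variables is at least as large as the factorized product, and you cannot recover the needed bound this way. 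The correct route, as in \cite{LS5}, is to run the peeling argument of part~(1) directly on the \emph{product} of the two update sequences, enumerating pairs $(A,A')$ and using that, conditionally on all previously exposed information, the joint event $\{A\in\mathcal{C}_{\mathcal{R}}\}\cap\{A'\in\mathcal{C}'_{\mathcal{R}}\}$ has probability at most $\mathcal{P}_{A}\mathcal{P}_{A'}$ by independence of the two copies; this produces genuinely independent $J_{A,A'}$ dominating the pair indicators.
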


We are now ready to prove Proposition \ref{p46}.
\begin{proof}[Proof of Proposition \ref{p46}]
It suffices to prove that the right-hand side of \eqref{aie} is
bounded by $2$ for $m=C\log n$ with large enough $C$. Write $\kappa:=\log (1/\pstar)>0$.
By part (2) of Lemma \ref{lemls}, we have
\begin{align*}
\sup_{\mathscr{H}_{\mathcal{G}}}\mathbb{E}\left[\,\pstar^{\,-|E_{\mathcal{R}}\cap E_{\mathcal{R}'}|}\,\big|\,\mathscr{H}_{\mathcal{G}}\,\right] &
\le\mathbb{E}\exp\Big\{ \,\kappa\sum_{A\cap A'\neq\emptyset}|A\cup A'|J_{A,\,A'}\,\Big\}
\\
 & =\prod_{A\cap A'\neq\emptyset}\mathbb{E}\exp\left\{ \kappa|A\cup A'|J_{A,\,A'}\right\}
 \\
 & \le\prod_{e\in E_{n}}\,\prod_{(A,\,A'):e\in A\;,e\in A'}\left[\,(e^{\kappa(|A|+|A'|)}-1)\mathcal{P}_{A}\mathcal{P}_{A'}+1\,\right]
 \\
 & \le\exp\bigg\{\, |E|\,\Big[\,\sum_{A:e\in A}e^{\kappa|A|}\mathcal{P}_{A}\,\Big]^{2}\,\bigg\} \;,
\end{align*}
where $e$ in the last line is an arbitrary edge in $E$. The last
inequality follows from $x+1\le e^{x}$ and the translation invariance
of the underlying graph. Hence, it suffice to show that
\[
\sum_{A:e\in A}e^{\kappa|A|}\mathcal{P}_{A}\le\frac{1}{n^{3}}
\]
for $m=C\log n$ with sufficiently large $C$. To this end, we recall
Proposition \ref{p45} so that
\[
\sum_{A:e\in A}e^{\kappa|A|}\mathcal{P}_{A}\le Ce^{-\alpha\tau_{m}}\sum_{A:e\in A}e^{\kappa|A|}{}^{-\theta|\conn(A)|}\le Ce^{-\alpha\tau_{m}}\sum_{A:e\in A}e^{(\kappa}{}^{-\theta)|\conn(A)|}\;.
\]
{Thus, we can proceed as in \eqref{er1}  and \eqref{er3}
to deduce that the last summation bounded by is bounded by $1$, provided that $\theta$ is large enough}. This
finishes the proof.
\end{proof}

\subsection{\label{sec43}Proof of Proposition \ref{p45}: domination by subcritical branching processes}

We now prove Proposition \ref{p45} to complete our discussion on
Theorem \ref{t41}. {For $S\subset E$, define $\mathcal{C}_{\mathcal{R}(S)}^{*}$
to be the collection of red clusters that arises when exposing the joint
histories of elements of $S$ i.e., $\mathscr{H}_S$ only.
 Similarly define $\mathcal{C}_{\mathcal{B}(S)}^{*}$
 for blue clusters.}
\begin{lem}\label{keybound}
There exists $c=c(p)>0$ such that, for all $A\subset E$ we have
\begin{equation}
\mathcal{P}_{A}\le e^{c|\conn(A)|}\,\mathbb{P}\left[A\in\mathcal{C}_{\mathcal{R}(A)}^{*}\right]\;.\label{e431}
\end{equation}
\end{lem}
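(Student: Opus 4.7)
The plan is to bound $\mathcal{P}_A$ by decomposing the conditional probability in \eqref{probdef65} into a ratio and bounding numerator and denominator separately. Fix any $\mathscr{H}_A^- \in \mathscr{H}_{\textup{com}}(A)$ and abbreviate $E := \{A \in \mathcal{C}_{\mathcal{R}}\} \cup \{A \subset E_{\mathcal{B}}\}$. Since the two events in $E$ are disjoint (the red event forces $\mathscr{H}_A(\tau_1)\neq\emptyset$ while ``all blue singletons'' forces $\mathscr{H}_e(\tau_1)=\emptyset$ for each $e\in A$), I write
\[
\mathbb{P}[A \in \mathcal{C}_{\mathcal{R}} \mid \mathscr{H}_A^-, E] \;=\; \frac{\mathbb{P}[A \in \mathcal{C}_{\mathcal{R}} \mid \mathscr{H}_A^-]}{\mathbb{P}[E \mid \mathscr{H}_A^-]} \;\le\; \frac{\mathbb{P}[A \in \mathcal{C}_{\mathcal{R}} \mid \mathscr{H}_A^-]}{\mathbb{P}[A \subset E_{\mathcal{B}} \mid \mathscr{H}_A^-]}.
\]
The goal is to bound the denominator below by $e^{-c|\conn(A)|}$ and the numerator above by $\mathbb{P}[A \in \mathcal{C}_{\mathcal{R}(A)}^*]$.

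For the denominator, observe that $\{A \subset E_{\mathcal{B}}\}$ is implied by the event $F_A := \bigcap_{e \in A} F_e$, where $F_e$ says that $e$ receives at least one update in $(\tau_{m-1}, \tau_m]$ whose last mark $U$ is oblivious, i.e.\ $U < 1-p+p^*$. Indeed, on $F_e$ the very first backward step in Definition \ref{def44} triggers rule (2)-(b)-(i), killing $\mathscr{H}_e$ immediately and rendering $e$ a blue singleton. For a single edge one computes $\mathbb{P}[F_e] = (1 - e^{-\Delta})(1 - p + p^*) \ge c_1$ for a constant $c_1 = c_1(p)>0$ uniform in $m$. The events $\{F_e\}_{e \in A}$ are mutually independent because the Poisson update clocks on distinct edges are independent; moreover, the compatibility condition imposes constraints at the top time $\tau_{m-1/2}$ but, crucially, leaves the first top-level update on each $e \in A$ unconstrained to a sufficient extent for $F_e$ to retain constant probability. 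Hence $\mathbb{P}[A \subset E_{\mathcal{B}} \mid \mathscr{H}_A^-] \ge c_1^{|A|} \ge e^{-c|\conn(A)|}$, using $|A| \le |\conn(A)|$; this delivers the exponential factor claimed in the lemma.

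For the numerator, the key inclusion is $\{A \in \mathcal{C}_{\mathcal{R}}\} \subseteq \{A \in \mathcal{C}_{\mathcal{R}(A)}^*\}$: if $A$ is a red cluster of the full information percolation, then the mutual connections among $\{\mathscr{H}_e\}_{e \in A}$ that made $A$ a single component in the full diagram already exist inside $\mathscr{H}_A$ and persist when exposing only $\mathscr{H}_A$, while $\mathscr{H}_A(\tau_1)\neq\emptyset$ is intrinsic to $\mathscr{H}_A$. Thus
\[
\mathbb{P}[A \in \mathcal{C}_{\mathcal{R}} \mid \mathscr{H}_A^-] \;\le\; \mathbb{P}[A \in \mathcal{C}_{\mathcal{R}(A)}^* \mid \mathscr{H}_A^-].
\]
To remove the conditioning, I would observe that $\{A \in \mathcal{C}_{\mathcal{R}(A)}^*\}$ depends only on updates at edges visited by $\mathscr{H}_A$, which by Proposition \ref{penv} are supported on the $\env_{i-1}\cup\env_i$-clusters around the evolving frontier, and then use that the Poisson clocks at distinct edges are independent together with compatibility to argue that the conditional law of these updates given $\mathscr{H}_A^-$ coincides with the unconditional law up to a multiplicative factor absorbable into the already-gained $e^{c|\conn(A)|}$. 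Combining the numerator and denominator bounds yields the lemma.

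The main obstacle is making the removal of the conditioning on $\mathscr{H}_A^-$ in the numerator rigorous, since $\mathscr{H}_A$ and $\mathscr{H}_A^-$ can share information through the $\env_{i-1} \cup \env_i$-clusters used in the non-oblivious expansion rule of Definition \ref{def44}, and the conditional law of the frontier of $\mathscr{H}_A$ can differ from its unconditional law near the boundary of $\conn(A)$ in a subtle way. The cleanest execution likely proceeds by a sequential/inductive exposure of $\mathscr{H}_A$ backwards in time, at each step comparing the conditional increment to its unconditional counterpart and invoking the subcritical $\env$-decay from Proposition \ref{propdec} to ensure that the cumulative perturbation is dominated by $e^{c|\conn(A)|}$.
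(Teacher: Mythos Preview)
Your overall decomposition into a ratio and the denominator bound are essentially the paper's approach, and the denominator argument is morally correct (though your independence claim for $F_e$ under the conditioning on $\mathscr{H}_A^-$ still needs the observation that the compatibility condition forces $e\notin\mathcal{X}_{m-1}\cup\mathcal{X}_m$ for every $e\in A$, so that $\upd[\tau_{m-1},\tau_m](e)$ is genuinely untouched by the conditioning).

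The genuine gap is in the numerator. You pass from $\{A\in\mathcal{C}_{\mathcal{R}}\}$ to $\{A\in\mathcal{C}_{\mathcal{R}(A)}^*\}$ by mere inclusion and then try to remove the conditioning on $\mathscr{H}_A^-=\mathcal{X}$ by a perturbation argument. This does not work: the event $\{A\in\mathcal{C}_{\mathcal{R}(A)}^*\}$ by itself is \emph{not} close to independent of $\mathscr{H}_A^-$, because whenever $\mathscr{H}_A$ enters $\mathcal{X}$ the two diagrams share the same $\env_{i-1}\cup\env_i$ environment and the same non-oblivious updates, and conditioning on a large $\mathcal{X}$ can dramatically inflate the survival probability of $\mathscr{H}_A$. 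A cumulative correction of size $e^{c|\conn(A)|}$ is not available here, since the interaction persists over the whole time window $[\tau_1,\tau_m]$ and is governed by the (random, possibly large) set of edges $\mathscr{H}_A$ visits, not by $|\conn(A)|$.

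The missing idea is that, conditioned on $\mathscr{H}_A^-=\mathcal{X}$, one actually has the \emph{equality}
\[
\{A\in\mathcal{C}_{\mathcal{R}}\}=\{A\in\mathcal{C}_{\mathcal{R}(A)}^*\}\cap\{\mathscr{H}_A\cap\mathcal{X}=\emptyset\},
\]
because for $A$ to be a single cluster in the full diagram its history must avoid $\mathscr{H}_A^-$ entirely. Keeping the extra constraint $\{\mathscr{H}_A\cap\mathcal{X}=\emptyset\}$ is precisely what makes the numerator event depend only on updates disjoint from those determining $\{\mathscr{H}_A^-=\mathcal{X}\}$ (this is the content of Lemmas~\ref{lem518} and~\ref{lem519}), so the conditioning drops \emph{exactly}, yielding
\[
\mathbb{P}\big[A\in\mathcal{C}_{\mathcal{R}}\mid\mathscr{H}_A^-=\mathcal{X}\big]
=\mathbb{P}\big[A\in\mathcal{C}_{\mathcal{R}(A)}^*,\ \mathscr{H}_A\cap\mathcal{X}=\emptyset\big]
\le\mathbb{P}\big[A\in\mathcal{C}_{\mathcal{R}(A)}^*\big].
\]
Your proposed ``sequential exposure plus subcritical decay'' would at best reproduce a weak form of this disjointness and is not the right route; retain the avoidance constraint and prove exact independence instead.
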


To prove the above we will first attempt to understand the effect of conditioning on the event
$\mathscr{H}_{A}^{-}=\mathcal{X}\in\mathscr{H}_{\textrm{com}}(A)$.  We will determine a subset of $\upd[0,\,\tau_{m}]$ that is enough to determine
the event $\{\mathscr{H}_{A}^{-}=\mathcal{X}\}$. We write $\mathcal{X}_{i}=\mathcal{X}\cap\{t=\tau_{i}\}$
for $i\in\{n/2:n\in\mathbb{Z}\}$. Recall from \eqref{consistent}  that the event $\{\mathscr{H}_{A}^{-}=\mathcal{X}\}$
is non-empty only when $\mathcal{X}$ satisfies the consistency condition
\begin{equation}
\mathcal{X}_{i+1/2}=\mathcal{X}_{i}\cup\mathcal{X}_{i+1}\text{ for all }i\in\llbracket0,\,m-1\rrbracket\;.\label{xcon}
\end{equation}
For each $i\in\llbracket1,\,m-1\rrbracket$ and $e\in E_{n}$, we
define
\begin{equation}
\mathcal{U}_{i}(e)=\begin{cases}
\upd[\tau_{i-1},\,\tau_{i+1}](e) & \text{if }e\in\mathcal{X}_{i}\;,\\
\upd[\tau_{i},\,\tau_{i+1}](e) & \text{if }e\in\mathcal{X}_{i+1}\setminus\mathcal{X}_{i}\;,\\
\emptyset & \text{otherwise.}
\end{cases}\label{yi}
\end{equation}
Then, we define
\[
\mathcal{U}_{i}=\bigcup_{e\in E_{n}}\mathcal{U}_{i}(e)\;\;\;\text{and\;\;\;\ensuremath{\mathcal{U=}}}\bigcup_{i=1}^{m-1}\mathcal{U}_{i}\;.
\]
Note that $\mathcal{U}$ depends on $\mathcal{X}$.
\begin{lem}\label{lem518}
The event $\{\mathscr{H}_{A}^{-}=\mathcal{X}\}$ is independent of
the update variables not in $\mathcal{U}$.
\end{lem}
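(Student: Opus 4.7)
The plan is to prove the stronger assertion that the event $\{\mathscr{H}_A^- = \mathcal{X}\}$ is measurable with respect to the $\sigma$-algebra generated by the updates in $\mathcal{U}$, from which the desired independence follows immediately since the individual update variables are mutually independent. I will argue by backward induction on $i \in \llbracket 1, m \rrbracket$, showing that the partial event $F_i := \bigcap_{j=i}^{m} \{\mathscr{H}_A^-(\tau_j) = \mathcal{X}_j\}$ lies in the $\sigma$-algebra generated by $\bigcup_{k \ge i} \mathcal{U}_k$. The base case $i = m$ is trivial since $\mathscr{H}_A^-(\tau_m) = E_n \setminus A$ is deterministic.

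For the inductive step, on $F_{i+1}$ one has $\mathscr{H}_A^-(\tau_{i+1}) = \mathcal{X}_{i+1}$, so verifying $\mathscr{H}_A^-(\tau_i) = \mathcal{X}_i$ amounts to running step (2) of Definition \ref{def44} on each $w \in \mathcal{X}_{i+1}$ and checking that the output equals $\mathcal{X}_i$. This requires only (i) the last update of each $w \in \mathcal{X}_{i+1}$ in $(\tau_i, \tau_{i+1}]$, which by inspection of \eqref{yi} is contained in $\mathcal{U}_i(w)$ in both cases $w \in \mathcal{X}_{i+1} \cap \mathcal{X}_i$ and $w \in \mathcal{X}_{i+1} \setminus \mathcal{X}_i$; and (ii) for each non-oblivious $w$, the value $\env_{i-1}(e) \cup \env_i(e)$ at every edge $e$ reached while exploring $\overline{\conn}(w; \env_{i-1} \cup \env_i)$. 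On the candidate event $F_i$, every such $e$ must lie in $\mathcal{X}_i$ because step (2)(b)(ii) of Definition \ref{def44} adds the entire component and its edge-boundary to $\mathscr{H}_A^-(\tau_i)$; and for any $e \in \mathcal{X}_i$ the set $\mathcal{U}_i(e) = \upd[\tau_{i-1}, \tau_{i+1}](e)$ determines $\env_{i-1}(e) \cup \env_i(e)$.

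The delicate point I expect to be the main obstacle is ruling out the ``spillover'' scenario in which the true component $\overline{\conn}(w; \env_{i-1} \cup \env_i)$ in the ambient graph extends beyond $\mathcal{X}_{i+1/2}$, which at first glance would depend on updates at edges $e' \notin \mathcal{X}_{i+1/2}$. The resolution is purely structural: if some graph-neighbor $e'$ of $w$'s restricted open component lies outside $\mathcal{X}_{i+1/2}$, then $F_i$ fails irrespective of the status of $e'$, since an open $e'$ would force $e' \in \mathcal{X}_i$ by Definition \ref{def44}(2)(b)(ii) while a closed $e'$ would force $e'$ into $\mathcal{X}_i$ as a boundary edge, both contradicting $e' \notin \mathcal{X}_{i+1/2}$. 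Consequently the no-spillover requirement collapses to a set-theoretic condition on $\mathcal{X}$ together with the restriction of $\env_{i-1} \cup \env_i$ to $\mathcal{X}_{i+1/2}$, both determined by $\mathcal{U}$, completing the induction and hence the proof.
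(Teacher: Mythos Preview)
Your approach is essentially the same as the paper's: a backward induction showing that, given $\mathscr{H}_A^-(\tau_{i+1})=\mathcal{X}_{i+1}$, the further event $\mathscr{H}_A^-(\tau_i)=\mathcal{X}_i$ can be decided from the updates in $\mathcal{U}_i$ alone. The paper states this tersely by writing $\mathcal{Y}_i = \mathcal{Y}_{i+1}^{\mathrm{NU}}\cup\bigcup_{e\in\mathcal{Y}_{i+1}^{\mathrm{NOb}}}\overline{\conn}(e;\Xi_{i-1}\cup\Xi_i)$ and asserting that the inclusions $\mathcal{X}_i\setminus\mathcal{Y}_{i+1}^{\mathrm{NU}}\subset\bigcup\overline{\conn}\subset\mathcal{X}_i$ can be checked from $\bigcup_{e\in\mathcal{X}_i}\mathcal{U}_i(e)$; your spillover discussion is a more explicit justification of exactly this last assertion.

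There is, however, a small but genuine gap in your final paragraph. You phrase the no-spillover criterion in terms of $\mathcal{X}_{i+1/2}$ and conclude by claiming that the restriction of $\Xi_{i-1}\cup\Xi_i$ to $\mathcal{X}_{i+1/2}$ is determined by $\mathcal{U}$. That claim is false: for an edge $e\in\mathcal{X}_{i+1}\setminus(\mathcal{X}_i\cup\mathcal{X}_{i-1})$ one has $\mathcal{U}_i(e)=\upd[\tau_i,\tau_{i+1}](e)$ and $\mathcal{U}_{i-1}(e)=\emptyset$, so $\upd[\tau_{i-1},\tau_i](e)$ is not contained in $\mathcal{U}$ and hence $\Xi_{i-1}(e)$ is not $\mathcal{U}$-measurable. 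The fix is simply to replace $\mathcal{X}_{i+1/2}$ by $\mathcal{X}_i$ throughout that paragraph. Your own ``open or closed, $F_i$ fails'' argument applies verbatim to any graph-neighbour $e'\notin\mathcal{X}_i$ of the component explored within $\mathcal{X}_i$ (not merely to $e'\notin\mathcal{X}_{i+1/2}$), since in either case $e'$ lands in $\overline{\conn}(w)$ and hence in $\mathscr{H}_A^-(\tau_i)$, contradicting $e'\notin\mathcal{X}_i$. With this correction the restricted exploration stays entirely inside $\mathcal{X}_i$, where $\mathcal{U}_i(e)=\upd[\tau_{i-1},\tau_{i+1}](e)$ does determine $\Xi_{i-1}(e)\cup\Xi_i(e)$, and the induction closes.
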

\begin{proof}
Write $\mathcal{Y}_{i}=\mathscr{H}_{A}^{-}\cap\{t=\tau_{i}\}$ and
define the event $\mathcal{E}_{i}$ by
\[
\mathcal{E}_{i}=\{\mathcal{Y}_{i}=\mathcal{X}_{i}\}\;.
\]
If $\mathcal{X}$ satisfies the condition \eqref{xcon}, we can write
\[
\{\mathscr{H}_{A}^{-}=\mathcal{X}\}=\bigcap_{i=1}^{m}\mathcal{E}_{i}\;.
\]
We claim that given $\mathcal{E}_{i+1}$, the event $\mathcal{E}_{i}$
depends only on the events in $\mathcal{U}_{i}$. Given $\mathcal{E}_{i+1}$,
we decompose $\mathcal{Y}_{i+1}=\mathcal{X}_{i+1}$ as following (similar
to those in Theorem \ref{pinfo}):
\begin{align*}
 & \mathcal{Y}_{i+1}^{\textrm{NU}}=\left\{ e\in\mathcal{Y}_{i+1}:\nup_{i}(e)=1\right\} \;,\\
 & \mathcal{Y}_{i+1}^{\textrm{Ob}}=\left\{ e\in\mathcal{Y}_{i+1}:\nup_{i}(e)=0\text{ and the last update for }e\text{ in }[\tau_{i},\,\tau_{i+1}]\text{ is oblivious}\right\} \;,\\
 & \mathcal{Y}_{i+1}^{\textrm{NOb}}=\left\{ e\in\mathcal{Y}_{i+1}:\nup_{i}(e)=1\text{ and the last update for }e\text{ in }[\tau_{i},\,\tau_{i+1}]\text{ is non-oblivious}\right\}\;.
\end{align*}
This classification can be carried out if we only know
\[
\bigcup_{e\in\mathcal{Y}_{i+1}}\upd[\tau_{i},\,\tau_{i+1}](e)\subset\mathcal{U}_{i}\;.
\]
Now we suppose that this classification is given. Then, we have
\[
\mathcal{Y}_{i}=\mathcal{Y}_{i+1}^{\textrm{NU}}\cup\bigcup_{e\in\mathcal{Y}_{i+1}^{\textrm{NOb}}}\overline{\conn}(e;\Xi_{i-1}\cup\Xi_{i})\;,
\]
and therefore $\mathcal{X}_{i}=\mathcal{Y}_{i}$ holds if
\[
\mathcal{X}_{i}\setminus\mathcal{Y}_{i+1}^{\textrm{NU}}\subset\bigcup_{e\in\mathcal{Y}_{i+1}^{\textrm{NOb}}}\overline{\conn}(e;\Xi_{i-1}\cup\Xi_{i})\subset\mathcal{X}_{i}\;.
\]
This event can be determined by knowing $\bigcup_{e\in\mathcal{X}_{i}}\mathcal{U}_{i}(e).$
This finishes the proof.
\end{proof}
\begin{lem}\label{lem519}For all $\mathcal{X}$ satisfying \eqref{xcon}, it holds that
$$\mathbb{P}\left[A\in\mathcal{C}_{\mathcal{R}(A)}^{*},\,\mathscr{H}_{A}\cap\mathcal{X}=\emptyset\,\big\vert\,\mathscr{H}_{A}^{-}
=\mathcal{X}\right]=\mathbb{P}\left[A\in\mathcal{C}_{\mathcal{R}(A)}^{*},\,\mathscr{H}_{A}\cap\mathcal{X}=\emptyset\right]\;.$$
\end{lem}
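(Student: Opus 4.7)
The plan is to show that the event $F := \{A \in \mathcal{C}_{\mathcal{R}(A)}^*,\,\mathscr{H}_A \cap \mathcal{X} = \emptyset\}$ is independent of $\{\mathscr{H}_A^- = \mathcal{X}\}$, from which the lemma is immediate. By Lemma \ref{lem518}, $\{\mathscr{H}_A^- = \mathcal{X}\}$ lies in the $\sigma$-algebra generated by the updates indexed by $\mathcal{U}$. Since the updates indexed by $\mathcal{U}$ and those indexed by $\upd \setminus \mathcal{U}$ correspond to disjoint portions of the independent Poisson clocks on edges (together with their associated uniforms), they are independent; it therefore suffices to exhibit $F$ as $\sigma(\upd \setminus \mathcal{U})$-measurable.

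For this I would trace the backward BFS construction of $\mathscr{H}_A$ from Definition \ref{def44} and check that, on $\{\mathscr{H}_A \cap \mathcal{X} = \emptyset\}$, every update it queries lies in $\upd \setminus \mathcal{U}$. At the backward step from $\tau_{i+1}$ to $\tau_i$ there are two types of query. First, for each $w \in \mathscr{H}_A(\tau_{i+1/2})$ one inspects $\upd[\tau_i,\,\tau_{i+1}](w)$ to determine $\nup_i(w)$ and classify the last update. Second, if that last update is non-oblivious one BFS-explores $\overline{\conn}(w;\env_{i-1}\cup\env_i)$, which requires the updates $\upd[\tau_{i-1},\,\tau_{i+1}](e')$ for each $e'$ in the interior or on the outer boundary of this component. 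By step (2)(b)(ii) of Definition \ref{def44} every such $e'$ is added to both $\mathscr{H}_A(\tau_i)$ and $\mathscr{H}_A(\tau_{i+1/2})$, and by step (1) of the next iteration it is propagated to $\mathscr{H}_A(\tau_{i-1/2})$. Combined with the hypothesis $\mathscr{H}_A \cap \mathcal{X} = \emptyset$ and the consistency condition \eqref{xcon}, this gives
$$
w \notin \mathcal{X}_{i+1/2} = \mathcal{X}_i \cup \mathcal{X}_{i+1}, \qquad e' \notin \mathcal{X}_{i-1/2} \cup \mathcal{X}_{i+1/2} = \mathcal{X}_{i-1} \cup \mathcal{X}_i \cup \mathcal{X}_{i+1}.
$$
A direct inspection of definition \eqref{yi} now shows $\mathcal{U}_i(w) = \emptyset$ and $\mathcal{U}_{i-1}(e') = \mathcal{U}_i(e') = \emptyset$, so none of the queried updates can belong to $\mathcal{U}$.

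To conclude formally, I would introduce a truncated exploration $\widetilde{\mathscr{H}}_A$ that performs the same backward BFS but aborts whenever it would need to read an update lying in $\mathcal{U}$; since $\mathcal{U}$ is a deterministic family of (edge, time-interval) slots once $\mathcal{X}$ is fixed, $\widetilde{\mathscr{H}}_A$ is manifestly $\sigma(\upd \setminus \mathcal{U})$-measurable. The previous paragraph shows $\widetilde{\mathscr{H}}_A = \mathscr{H}_A$ on $\{\mathscr{H}_A \cap \mathcal{X} = \emptyset\}$, so $F$ coincides with the analogous event defined via $\widetilde{\mathscr{H}}_A$ and hence is $\sigma(\upd \setminus \mathcal{U})$-measurable. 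The desired independence, and thus the lemma, then follows. I expect the main subtlety to be the careful matching of the BFS query list against the three-case definition \eqref{yi} of $\mathcal{U}_i(e)$; this is essentially the mirror image of the bookkeeping already carried out in Lemma \ref{lem518}, and I do not anticipate any substantial additional obstacle.
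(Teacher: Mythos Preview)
Your proposal is correct and follows essentially the same approach as the paper. Both arguments use Lemma~\ref{lem518} and then verify that the event $\mathcal{E}=\{A\in\mathcal{C}_{\mathcal{R}(A)}^{*}\}\cap\{\mathscr{H}_{A}\cap\mathcal{X}=\emptyset\}$ is determined by updates outside $\mathcal{U}$; the paper carries this out by an inductive decomposition $\mathcal{E}=\bigcap_i \mathcal{E}_i$ with a two-case analysis at each level, while you package the same edge-by-edge bookkeeping (matching queried slots against the definition~\eqref{yi}) via a truncated exploration $\widetilde{\mathscr{H}}_A$---the content is identical.
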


\begin{proof}
We keep the notation from the previous lemma. In view of the previous
lemma, it suffices to demonstrate that the event
\[
\mathcal{E}=\{A\in\mathcal{C}_{\mathcal{R}(A)}^{*}\}\cap\{\mathscr{H}_{A}\cap\mathcal{X}=\emptyset\}
\]
does not depend on the updates in $\mathcal{U}$. Note that this event is the same as saying $\mathscr{H}_{A}$ reaches $t=\tau_{1}$ without touching
 $\mathcal{X}$. We prove this by induction (see Figure \ref{fig52} for an illustration). Write $W_{i}(A)=\mathscr{H}_{A}\cap\{t=\tau_{i}\}$
and for each $i\in\llbracket2,\,m\rrbracket$, define the event $\mathcal{E}_{i}$
as
\[
\mathcal{E}_{i}=\{W_{i-1}(A)\neq\emptyset\;\;\text{and}\;\;W_{i-1}(A)\cap(\mathcal{X}_{i-3/2}\cup\mathcal{X}_{i-1/2})=\emptyset\}\;.
\]

\begin{figure}[h]
\centering
\includegraphics[scale=.20]{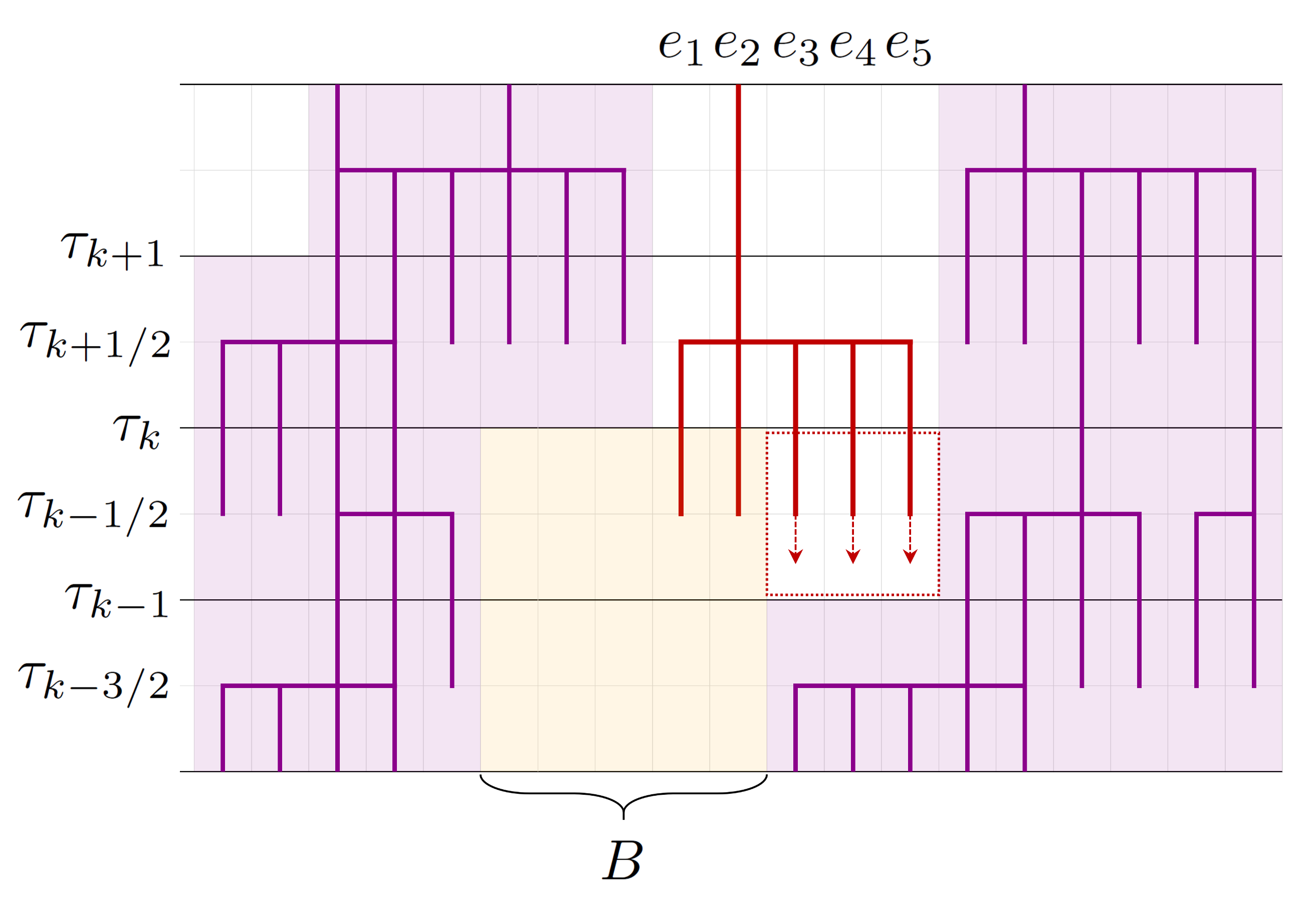}
\caption{Illustrating the proofs of Lemmas \ref{lem518} and \ref{lem519}. The purple graph is $\mathcal{X}$ and $\mathcal{U}$ is the set of updates in the purple region. The red graph is the history diagram $\mathscr{H}_A$.
At time $t=\tau_k$, the occurrence of the event $\mathcal{E}_k$ does not depend on the updates in the purple region. Note that for the latter event to occur  $e_3, \,e_4,\,e_5$ cannot hit the purple region and hence the last updates for each of them  in $(\tau_{k-1},\,\tau_k ]$ should be oblivious. This depends on the updates in the red box. For $e_1$ and $e_2$, they can be expanded and one of them must be to ensure that they all together form a red cluster. However this expansion should be confined to $B$. This can be determined by the updates in yellow region and therefore also independent of updates in the purple region.}
\label{fig52}
\end{figure}

We suppose that $\mathcal{X}$ satisfies $\mathcal{X}_{m}\cap A=\emptyset$
and $\mathcal{X}_{m-1/2}\cap A=\emptyset$ since otherwise the event
$\{\mathscr{H}_{A}\cap\mathcal{X}=\emptyset\}$ (and hence $\mathcal{E})$
cannot happen. Under this minimal consistency assumption, we can write
$\mathcal{E}=\bigcap_{i=2}^{m}\mathcal{E}_{i}$.

We now claim that, for each $k\in\llbracket2,\,m-1\rrbracket$, given
$\bigcap_{i=k+1}^{m}\mathcal{E}_{i}$, the event $\mathcal{\mathcal{E}}_{k}$
does not depend on updates in $\mathcal{U}$ . For each $e\in W_{k}(A)$,
we consider two cases:
\begin{enumerate}
\item $e\in\mathcal{X}_{k-3/2}$: The last update in $(\tau_{k-1},\,\tau_{k}]$
must be oblivious to have $W_{k-1}(A)$ disjoint from $\mathcal{X}_{k-3/2}$.
This update belongs to $\mathcal{U}$ only if $e\in\mathcal{X}_{k-1}$
or $e\in\mathcal{X}_{k}$, which cannot happen under $\mathcal{E}_{k+1}$,
since under $\mathcal{E}_{k+1}$ the set $W_{k}(A)$ is disjoint to
$\mathcal{X}_{k-1/2}=\mathcal{X}_{k-1}\cup\mathcal{X}_{k}$ (cf. \eqref{xcon}).
\item $e\notin\mathcal{X}_{k-3/2}$: We still have two cases: either there
is no update in $(\tau_{k-1},\,\tau_{k}]$ for the edge $e$, or the
last update in $(\tau_{k-1},\,\tau_{k}]$ for the edge $e$ is oblivious
and
\[
\conn(e;\env_{k-1}\cup\env_{k})\bigcap(\mathcal{X}_{k-3/2}\,\cup\,\mathcal{X}_{k-1/2})=\emptyset\;.
\]
Determining whether this holds or not can be performed by looking only
at
\[
\bigcup_{e'\notin\mathcal{X}_{k-3/2}\cup\mathcal{X}_{k-1/2}}\upd[\tau_{k-2},\,\tau_{k}](e')\;.
\]
These updates are disjoint to $\mathcal{U}$ since $e'\notin\mathcal{X}_{k-3/2}\cup\mathcal{X}_{k-1/2}$
implies $\upd[\tau_{k-2},\,\tau_{k}](e')\cap\mathcal{U}=\emptyset$
\end{enumerate}
Furthermore, the non-emptiness of $W_{k-1}(A)$ implies that at least
one of the last updates of $e\in W_{k}$ is non-oblivious or there
is an edge $e\in W_{k}$ such that there is no update in $(\tau_{k-1},\,\tau_{k}]$.
By the same reasoning as (1), this is independent of the updates
in $\mathcal{U}$.
Summing up, for the event $\mathcal{E}_{i}$ to occur, all the events
 described above must occur simultaneously and the probability of this is  independent of the conditioning on the randomness in $\mathcal{U}$.
\end{proof}

\begin{proof}[Proof of Lemma \ref{keybound}]
Given the above preparation, the remaining steps of  the proof already appears in \cite{LS2, NS}. Note first that, conditioned
on $\mathscr{H}_{A}^{-}=\mathcal{X}\in\mathscr{H}_{\textrm{com}}(A)$,
one has
$\{A\in\mathcal{C}_{\mathcal{R}}\}= \{ A\in\mathcal{C}_{\mathcal{R}(A)}^{*} \} \cap \{ \mathscr{H}_{A}\cap\mathcal{X}=\emptyset \} \;$
and similarly
$ \{A\subset\mathcal{C}_{\mathcal{B}}\}=\{ A\subset\mathcal{C}_{\mathcal{B}(A)}^{*} \} \cap \{ \mathscr{H}_{A}\cap\mathcal{X}=\emptyset \}.
$
Therefore, we can deduce
\begin{align}
 & \mathbb{P}\left[A\in\mathcal{C}_{\mathcal{R}}\,\big\vert\,\mathscr{H}_{A}^{-}=\mathcal{X},\;\{A\in\mathcal{C}_{\mathcal{R}}\}\cup\{A\subset E_{\mathcal{B}}\}\right]\nonumber \\
=\; & \mathbb{P}\left[A\in\mathcal{C}_{\mathcal{R}(A)}^{*},\,\mathscr{H}_{A}\cap\mathcal{X}=
\emptyset\,\big\vert\,\mathscr{H}_{A}^{-}=\mathcal{X},\;\{A\in\mathcal{C}_{\mathcal{R}}\}\cup\{A\subset E_{\mathcal{B}}\}\right]\nonumber \\
=\; & \frac{\mathbb{P}\left[A\in\mathcal{C}_{\mathcal{R}(A)}^{*},\,\mathscr{H}_{A}\cap\mathcal{X}=\emptyset
\,\big\vert\,\mathscr{H}_{A}^{-}=\mathcal{X}\right]}{\mathbb{P}\left[\{ A\in\mathcal{C}_{\mathcal{R}(A)}^{*} \} \cup \{ A\subset\mathcal{C}_{\mathcal{B}(A)}^{*}\} ,\,\mathscr{H}_{A}\cap\mathcal{X}=\emptyset\,\big\vert\,\mathscr{H}_{A}^{-}=\mathcal{X}\right]}\nonumber \\
\le \;& \frac{\mathbb{P}\left[A\in\mathcal{C}_{\mathcal{R}(A)}^{*},\,\mathscr{H}_{A}\cap\mathcal{X}=\emptyset\right]}{\mathbb{P}\left[A\subset\mathcal{C}_{\mathcal{B}(A)}^{*},
 \mathscr{H}_{A}\cap\mathcal{X}=\emptyset \,\big\vert\,\mathscr{H}_{A}^{-}=\mathcal{X}\right]}\;\;\;\nonumber (\text{by Lemma }\ref{lem519})\\
 \le\; &  \frac{\mathbb{P}\left[A\in\mathcal{C}_{\mathcal{R}(A)}^{*}\right]}{\mathbb{P}\left[A\subset\mathcal{C}_{\mathcal{B}(A)}^{*},\,
 \mathscr{H}_{A}\cap\mathcal{X}=\emptyset \,\big\vert\,\mathscr{H}_{A}^{-}=\mathcal{X}\right]}\;.
 \label{leis}
\end{align}

Now we bound the denominator of \eqref{leis} from below.  Since $\mathcal{X}$ satisfies the compatibility condition \eqref{compatcond} by hypothesis,
an event which implies the event in the denominator is the following: all the edges in $A$ are updated in the time interval $[\tau_{m-1/2},\tau_m]$ with oblivious updates.
Note that this implies that $\mathscr{H}_A$, the history diagram of $A,$
 will only intersect $E\times \{\tau_{m-1/2}, \tau_m\}$ and hence will not intersect $\mathcal{X}.$
Now the probability of an edge being updated in $[\tau_{m-1/2},\tau_m]$ is $1-e^{-\frac{\Delta}{2}}$ where $\Delta$ appeared in the definition of the $\tau_i$'s. Moreover the probability of an update being oblivious is $1-p+p^*.$
Putting the above together, we get that the denominator of \eqref{leis} is bounded below by $e^{-c(p)|A|}$ for some $c(p)>0$.
This completes the proof of \eqref{e431}.
\end{proof}
Thereby, it only remains to prove the following proposition.
\begin{prop}
\label{p413}For any $\theta>0$, we can find two constants $C=C(\theta)>0$
and $p_{0}=p_{0}(\theta)>0$ such that, for any $p\in(0,\,p_{0})$
there exists a constant $\alpha=\alpha(p)>0$ satisfying
\[
\mathbb{P}\left[A\in\mathcal{C}_{\mathcal{R}(A)}^{*}\right]\le Ce^{-(\theta|\conn(A)|+\alpha\tau_{m})}\;\;\text{for all }A\subset E.
\]
\end{prop}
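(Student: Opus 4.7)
The plan is to dominate the backward evolution $W_j := \mathscr{H}_A(\tau_j)$, $j=m,m-1,\ldots,1$, by a subcritical branching process on $E_n$ whose extinction time has exponential tails, and to separately account for the requirement that $\mathscr{H}_A$ form a single IP cluster rather than several. The event $\{A \in \mathcal{C}_{\mathcal{R}(A)}^*\}$ splits into a \emph{survival} piece ($W_1 \neq \emptyset$), which will furnish the factor $e^{-\alpha \tau_m}$, and an initial \emph{connectivity} piece (the histories emanating from the edges of $A$ must all merge into one IP component), which will furnish the factor $e^{-\theta|\conn(A)|}$.

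I would first carry out a one-step analysis of the transition $W_{j+1} \mapsto W_j$. Using Definition~\ref{def44} together with Lemma~\ref{lem43}, each $e \in W_{j+1}$ falls into exactly one of three categories according to its behavior in $[\tau_j,\tau_{j+1}]$: (i) no update, with probability $e^{-\Delta}$, in which case $e$ is inherited into $W_j$; (ii) oblivious last update, with probability $(1-e^{-\Delta})(1-p+p^*)$, in which case $e$ contributes nothing to $W_j$; (iii) non-oblivious last update, with probability at most $p\Delta = \sqrt{p}$, in which case $e$ contributes the full expansion $\overline{\conn}(e;\env_{j-1}\cup\env_j)$. By Proposition~\ref{penv}, $\env_{j-1}\cup\env_j$ is stochastically dominated by a Bernoulli percolation at parameter $O(\sqrt{p})$, which is subcritical for small $p$, so the expansion cluster in~(iii) has a geometric size tail with rate $\gamma(p) \to \infty$ as $p \to 0$. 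The net expected offspring count per edge is therefore bounded by $e^{-\Delta} + \sqrt{p}\cdot O(1)$ minus the oblivious-kill contribution, which is strictly less than $1$ for $p$ small enough. Standard subcritical branching-process comparisons (in the spirit of \cite{LS1, NS}) then deliver
\[
\mathbb{P}\bigl[\,W_1 \neq \emptyset \,\bigm|\, W_m = A\,\bigr] \;\le\; C|A|\,e^{-\alpha \tau_m},
\]
with some $\alpha = \alpha(p) > 0$ that is independent of $\theta$.

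It remains to extract the factor $e^{-\theta|\conn(A)|}$ enforcing the cluster requirement. For $\mathscr{H}_A$ to form a single IP cluster, the histories $\mathscr{H}_e$ ($e\in A$) must be pairwise linked via shared edges, which can only happen through non-oblivious expansions $\overline{\conn}(\cdot\,;\env_{i-1}\cup\env_i)$. Selecting a Steiner-type spanning structure inside the space-time slab that witnesses this connectivity, the total ``expansion length'' must be at least $|\conn(A)|$, and each segment of length $d$ contributes a factor at most $e^{-\gamma(p)d}$ by the subcritical decay of connectivity (Proposition~\ref{propdec}). A Peierls-type sum over all admissible spanning structures introduces a local-entropy factor $e^{\kappa|\conn(A)|}$ (with $\kappa$ depending only on $d$), which is absorbed by choosing $p_0(\theta)$ so that $\gamma(p_0) > \theta + \kappa$. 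Multiplying by the survival estimate from the previous paragraph then yields the proposition.

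The main obstacle, and the principal departure from the argument in \cite{NS}, is that the environments $\env_{j-1}\cup\env_j$ used at consecutive backward steps share $\env_j$ and are therefore \emph{one-dependent} across time rather than independent. I would handle this by either (a) pairing backward steps $\{2k-1,2k\}$ so that the environments used within distinct pairs are independent, losing at most a factor of two in $\tau_m$; or (b) invoking a Liggett--Schonmann--Stacey-type domination theorem to stochastically dominate the one-dependent family $\{\env_{j-1}\cup\env_j\}_j$ by an i.i.d.\ Bernoulli field at parameter $C\sqrt{p}$, which remains subcritical for small $p$. Either route reduces the analysis to the independent setting handled in \cite{NS}, after which both the branching-process survival bound and the connectivity estimate apply to yield the claim.
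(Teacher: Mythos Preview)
Your overall strategy---subcritical branching for survival plus a separate connectivity cost---matches the paper, and your instinct to handle the one-dependence by pairing steps is in the right direction. However, there are two genuine gaps.

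First, neither option (a) nor (b) actually resolves the one-dependence. Naive pairing fails: the two-step transition $W_{i+2}\to W_i$ uses $\env_{i-1},\env_i,\env_{i+1}$, while the next pair $W_i\to W_{i-2}$ uses $\env_{i-3},\env_{i-2},\env_{i-1}$, so consecutive pairs still share $\env_{i-1}$. More fundamentally, $W_{i+2}$ is itself constructed using $\env_{i+1}$, so conditioning on $W_{i+2}$ already biases the law of $\env_{i+1}$; this is exactly why a one-step Markovian analysis is unavailable. The paper's resolution is a geometric lemma (Proposition~\ref{prop412}, supported by Lemmas~\ref{lem421} and~\ref{lem422}): it builds a modified environment $\theta_i$ that zeroes out $\env_{i+1}$ on the set $C_{i+2}$ already exposed by the previous expansion, proves the inclusion $W_i\subset\bigcup_{e\in W_{i+2}}\overline{\conn}(e;\theta_i)$, and shows that $\theta_i$ is still dominated by $\per(3p_1)$ conditionally on $(\mathcal{F}_{i+2},W_{i+2})$. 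Option~(b) via LSS does not help here, since the difficulty is the conditional law given $W_{i+2}$, not the marginal law of the space--time field.

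Second, you cannot simply multiply the survival and connectivity probabilities, as the two events are not independent. The paper introduces the stopping time $\sigma=\max\{i:a_i=1\}$, writes $\{A\in\mathcal{C}^*_{\mathcal{R}(A)}\}\subset\mathcal{A}\cap\mathcal{B}$ where $\mathcal{A}$ encodes merging by time $\tau_\sigma$ and $\mathcal{B}$ encodes survival after $\tau_\sigma$, and proves these are conditionally independent given $\sigma$. The connectivity cost then enters not via a Peierls sum over spanning structures but through the observation that on $\mathcal{A}\cap\{\sigma=k\}$ one has $\sum_{i>k}a_i\ge|\conn(A)|-1$, combined with the exponential-moment bound $\mathbb{E}\exp\{c_0\sum_i a_i\}\le e^{|A|}$ (Lemma~\ref{lem415}) and Chebyshev. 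Your Steiner/Peierls route is not obviously wrong, but as stated it does not interface with the survival estimate, and making it rigorous would still require the two-step domination machinery above.
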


\subsubsection{Domination by sub-critical branching process}\label{dscbp1}

To estimate the probability $\mathbb{P} [A\in\mathcal{C}_{\mathcal{R}(A)}^{*} ]$,
we fix $A$ and $m$ in the remaining part of the current section.
Recall the notation $W_{i}$ from \eqref{wj}. The main idea of the
proof is that for sufficiently small $p$, the sequence $W_{m},\,W_{m-1},\,\ldots,\,W_{1}$
is dominated by a subcritical branching process in a suitable sense
that will be explained below. Note that the event $\{A\in\mathcal{C}_{\mathcal{R}(A)}^{*}\}$
requires that
\begin{enumerate}
\item $\sh_e$ for some $e\in A$ starting  at time $t=\tau_{m}$ survives to time
{$t=\tau_1$}.
\item All the history diagrams $\mathscr{H}_{e}$, $e\in A$, are connected together
before arriving at {$t=\tau_1$}.
\end{enumerate}
Comparing them with sub-critical branching processes will allow us to bound the probabilities of the above events. As Lemma \ref{lem14} and the discussion following that will show, the analysis has to take into account that the $1$-dependence across time of the Bernoulli percolation clusters used to define the information percolation history diagrams prevents a contraction every time step. Nonetheless this is sufficient to yield subcritical behavior once every  two steps which is enough for our purposes.

We start with a general lemma. For $r\in(0,p_{\textrm{perc}}(d))$,
let $\omega_{r}$ be an i.i.d. standard bond percolation configuration on the lattice
$(\mathbb{Z}^{d},\,E(\mathbb{Z}^{d}))$ where each edge is open with probability $r$.
Denote by $\overline{\conn}(e;\omega_{r})$ the closure of the open cluster
containing an edge $e$  as in \eqref{barc}, and let $m_{r}$ be the distribution of $|\overline{\conn}(e;\omega_{r})|$, i.e.,
\begin{equation}\label{upsi}
m_r (k) = \mathbb{P}\left[|\overline{\conn}(e;\omega_{r})|=k\right]\;\;;\;k\in\mathbb{Z}_+\;.
\end{equation}
It is well-known (see \cite{Hug18, gri})
 that there exists a constant $\rho(r)>0$ such that, for all $e\in E(\mathbb{Z}^{d})$,
\begin{equation}
m_r (k)\le e^{-\rho(r)k}\;\;
\text{for all }k\ge 1\;.\label{eqp}
\end{equation}
\begin{lem}
\label{lem412}Fix a non-empty set $A\subset E$ and consider a random
configuration $X\in\Omega_{n}$ whose distribution is stochastically
dominated by $\per(r)$ for some $r\in(0,p_{\textrm{perc}}(d))$.

Given $X$, we define
\begin{equation*}
A(X)=\bigcup_{e\in A} \overline\conn(e;X)\;.
\end{equation*}
Let $(y_{i})_{i=1}^{\infty}$  be a sequence of i.i.d. random variables
in $\mathbb{Z}_{+}$ distributed according to $m_r$.
Then,  $|A(X)|$ is stochastically dominated
by   $y_{1}+y_{2}+\cdots+y_{|A|}$.

\end{lem}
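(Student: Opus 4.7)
The plan is to first pass from the general $X\preceq\per(r)$ to an iid Bernoulli$(r)$ configuration on the lattice, and then to carry out a sequential BFS exploration of the closed clusters emanating from $A$, coupled with fresh independent randomness at each step so that each exploration increment is stochastically dominated by an independent copy of the $\mathbb{Z}^d$-cluster size $m_r$. A quick check gives that $\overline{\conn}(e;\cdot)$ is monotone in the configuration (any $f\in\partial\conn(e;X)$ lies in $\conn(e;Y)$ or in $\partial\conn(e;Y)$ when $X\le Y$), so realizing the domination $X\preceq\per(r)$ by a coupling with iid $\omega_r$ satisfying $X\le\omega_r$ reduces the task to bounding $|A(\omega_r)|$.

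Next, I would enumerate $A=\{e_1,\ldots,e_k\}$ in an arbitrary order and set $\mathcal{F}_i=\bigcup_{j\le i}\overline{\conn}(e_j;\omega_r)$ with $\mathcal{F}_0=\emptyset$, so that
$$|A(\omega_r)|=\sum_{i=1}^{k}\bigl|\,\overline{\conn}(e_i;\omega_r)\setminus\mathcal{F}_{i-1}\,\bigr|.$$
The key claim is that, conditionally on the pair $(\mathcal{F}_{i-1},\,\omega_r|_{\mathcal{F}_{i-1}})$, the $i$-th increment is stochastically dominated by a fresh independent $y_i\sim m_r$. If $e_i\in\mathcal{F}_{i-1}$, a short case analysis (either $e_i\in\conn(e_j;\omega_r)$ for some $j<i$ and then $\overline{\conn}(e_i;\omega_r)=\overline{\conn}(e_j;\omega_r)\subseteq\mathcal{F}_{i-1}$, or $e_i\in\partial\conn(e_j;\omega_r)$ is closed and $\overline{\conn}(e_i;\omega_r)=\emptyset$) makes the increment zero. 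In the nontrivial case $e_i\notin\mathcal{F}_{i-1}$, no open edge of any $\conn(e_j;\omega_r)$ can be adjacent to $\conn(e_i;\omega_r)$, for otherwise the two would merge and $e_i$ would already belong to $\mathcal{F}_{i-1}$; hence $\conn(e_i;\omega_r)\cap\mathcal{F}_{i-1}=\emptyset$ and
$$\overline{\conn}(e_i;\omega_r)\setminus\mathcal{F}_{i-1}=\overline{\conn}\bigl(e_i;\,\omega_r\big|_{E\setminus\mathcal{F}_{i-1}}\bigr).$$

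The standard Markov property of the BFS exploration --- the revealed edges are precisely those in $\mathcal{F}_{i-1}$, so edges outside remain iid Bernoulli$(r)$ conditionally on $(\mathcal{F}_{i-1},\,\omega_r|_{\mathcal{F}_{i-1}})$ --- identifies the right-hand side above with the size of the closed cluster of $e_i$ in iid Bernoulli$(r)$ percolation on the restricted graph $E\setminus\mathcal{F}_{i-1}$. Since removing edges from the ambient graph can only shrink the closed cluster, this is in turn stochastically dominated by $|\overline{\conn}(e_i;\omega_r^{(i)})|\sim m_r$ for a fresh independent Bernoulli$(r)$ configuration $\omega_r^{(i)}$ on $\mathbb{Z}^d$, which one takes as $y_i$. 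Summing the step-wise bounds yields $|A(X)|\le|A(\omega_r)|\preceq y_1+\cdots+y_k$.

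The hard part is the geometric input to the nontrivial case: the disjointness $\conn(e_i;\omega_r)\cap\mathcal{F}_{i-1}=\emptyset$ together with the fresh-randomness Markov property of BFS that preserves the iid Bernoulli$(r)$ law of the unrevealed edges. Once these are in hand, monotonicity of the closed cluster under graph restriction completes the chain of dominations. A minor technicality, since $X$ lives on the torus $\Lambda_n$, is to compare torus clusters with the $\mathbb{Z}^d$-clusters governing $m_r$; the subcriticality of $r$ together with performing the exploration inside a sufficiently large $\mathbb{Z}^d$-box (or, equivalently, on the universal cover) handles this.
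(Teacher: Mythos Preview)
Your proof is correct and follows essentially the same route as the paper: both decompose $|A(X)|$ as a telescoping sum of increments $|\overline{\conn}(e_i;\cdot)\setminus\mathcal{F}_{i-1}|$ and bound each increment conditionally using the spatial Markov property of Bernoulli percolation. Your write-up is in fact more careful than the paper's on two points---you explicitly reduce from $X\preceq\per(r)$ to an i.i.d.\ Bernoulli configuration via monotonicity of $\overline{\conn}(e;\cdot)$ before invoking spatial independence, and you flag the torus-versus-$\mathbb{Z}^d$ discrepancy in the definition of $m_r$---whereas the paper handles both implicitly.
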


\begin{proof}
Take an arbitrary enumeration $A=\{e_{1},\,e_{2},\,\ldots,\,e_{|A|}\}$ and define disjoint
sets $G_{1},\,G_{2},\,\ldots,\,G_{|A|}$ as $G_{1}= \overline\conn(e_1;X)$ and
\[
G_{k}= \overline\conn(e_k;X)\setminus\bigg[\,\bigcup_{i=1}^{k-1} \overline\conn(e_i;X)),\bigg]\;\;;\;k\in\llbracket2,\,|A|\rrbracket\;.
\]
Then, the set $A(X)$ can be represented as the disjoint union of
$G_{1},\,G_{2},\,\dots,\,G_{|A|}$ and thus
\[
|A(X)|=\sum_{i=1}^{|A|}|G_{i}|\;.
\]
We now claim that
\[
\sum_{i=1}^{k}|G_{i}|\preceq\sum_{i=1}^{k}y_{i}\;\;\text{for all }k\in\llbracket1,\,|A|\rrbracket\;.
\]
Clearly this is true for $k=1$. To finish the proof by the
induction, it suffices to prove that the distribution of $|G_{i+1}|$
given $G_{1},\,\dots,\,G_{i},$ is stochastically dominated by $y_{i+1}$.
This follows by the spatial independence of bond percolation.
More precisely, given $G_{1},\,\dots,\,G_{i}$, the edge configuration
on $(G_{1}\cup\cdots\cup G_{i})^{c}$ is a Bernoulli percolation with the same parameter, and thus the distribution
of $G_{i+1}$ is dominated by that of $ \overline\conn(e_{i+1};X)$. By \eqref{upsi} and the fact that $X$ is dominated by $\per(r)$,
the size of the latter is dominated by the distribution $m_{r}(\cdot)$ and
 the proof   is completed.
\end{proof}
Recalling \eqref{wj}, let
\begin{equation*}
a_{i}:=|W_{i}|\;;\;i\in\llbracket0,\,m-1\rrbracket\;,
\end{equation*}
and let
\begin{equation}
p_{1}:=2p^{1/2}\;.\label{p1}
\end{equation}
A direct application of Lemma \ref{lem412} is the following bound
on $a_{m-1}$ which along with the fact that  $\mathbb{P}(y_i = 0)= 1-o_p (1)$ (by  \eqref{eqp}), shows that the information percolation history diagram exhibits a contraction from $W_m = A$ to $W_{m-1}$ similar to a subcritical branching process.
\begin{lem}
\label{lem14}Suppose that $p_{1}<p_{\textrm{perc}}(d)$ and let $(y_{i})_{i=1}^{\infty}$
be a sequence of i.i.d. random variables with distribution $m_{2p_{1}}$.
Then, we have
\[
a_{m-1}\preceq y_{1}+\cdots+y_{|A|}\;.
\]
\end{lem}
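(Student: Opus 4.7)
The plan is to trace the one-step back-propagation $W_m = A \mapsto W_{m-1}$ from Definition \ref{def44} and bound $W_{m-1}$ by the union of $(\env_{m-2}\cup\env_{m-1})$-clusters emanating from $A$, after which Lemma \ref{lem412} delivers the conclusion immediately. Unwinding step (2) of that definition with $i = m-1$, every $w \in W_m$ falls into one of three categories: (a) $\nup_{m-1}(w) = 1$, in which case $w$ alone is added to $W_{m-1}$, and since $w \in \nup_{m-1} \subset \env_{m-1}$ we have $w \in \overline{\conn}(w; \env_{m-2} \cup \env_{m-1})$; (b) the last update of $w$ in $(\tau_{m-1}, \tau_m]$ is oblivious, in which case nothing is added; or (c) the last update is non-oblivious, so $U_w > 1-p+p^* > 1-p$ forces $w \in \eemp_{m-1} \subset \env_{m-1}$ and the whole set $\overline{\conn}(w; \env_{m-2} \cup \env_{m-1})$ is added. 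In every case the contribution is contained in $\overline{\conn}(w; \env_{m-2} \cup \env_{m-1})$, giving
\[
W_{m-1} \;\subset\; \bigcup_{w \in A} \overline{\conn}\bigl(w; \env_{m-2} \cup \env_{m-1}\bigr) \;=:\; A(X),
\]
with $X := \env_{m-2} \cup \env_{m-1}$.

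Next I would verify that $X$ is stochastically dominated by $\per(2p_1)$. Since $\env_j = \eemp_j \cup \nup_j$ is a measurable function of $\upd[\tau_j, \tau_{j+1}]$, the configurations $\env_{m-2}$ and $\env_{m-1}$ depend on updates in the disjoint time intervals $[\tau_{m-2}, \tau_{m-1}]$ and $[\tau_{m-1}, \tau_m]$, hence are independent. Proposition \ref{penv}(1) gives $\env_j \preceq \per(p_1)$ marginally, so coupling each $\env_j$ with an independent dominating $\per(p_1)$ copy produces $X \preceq \per(1-(1-p_1)^2) \preceq \per(2p_1)$; for all sufficiently small $p$ one has $2p_1 = 4p^{1/2} < p_{\textrm{perc}}(d)$, putting us in the regime in which Lemma \ref{lem412} applies.

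Finally, applying Lemma \ref{lem412} to the set $A$ with the configuration $X$ and parameter $r = 2p_1$ yields $|A(X)| \preceq y_1 + \cdots + y_{|A|}$ for i.i.d.\ $y_i \sim m_{2p_1}$. Combining this with the inclusion from the first paragraph gives $a_{m-1} = |W_{m-1}| \le |A(X)| \preceq y_1 + \cdots + y_{|A|}$, as required. The argument is essentially mechanical; the only points needing a moment's care are the independence of $\env_{m-2}$ and $\env_{m-1}$, which reduces to the disjointness of the update intervals on which they depend, and the elementary observation that cases (a) and (c) above automatically place $w$ inside $\env_{m-1}$, so that each $w \in A$ is legitimately included in the cluster union.
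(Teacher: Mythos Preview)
Your proof is correct and follows essentially the same route as the paper: show $W_{m-1}\subset A(X)$ for $X=\env_{m-2}\cup\env_{m-1}$ via the three-case analysis of Definition~\ref{def44}, verify $X\preceq\per(2p_1)$, and invoke Lemma~\ref{lem412}. Your justification of the domination step via the independence of $\env_{m-2}$ and $\env_{m-1}$ (as functionals of disjoint update intervals) is slightly more explicit than the paper's one-line ``union bound,'' but the argument is otherwise identical.
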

\begin{proof}
We apply  Lemma \ref{lem412} with $X=\env_{m-1}\cup\env_{m-2}$.
By Proposition \ref{pbd} and union bound, the distribution of $X$
is stochastically dominated by $\per(2p_{1})$.
{{
We now claim that in the construction of the evolution of the history diagram of an edge $e$ over the time interval $[\tau_{m-1}, \tau_m]$,   $e$ gets expanded to a subset of $\overline\conn(e;X)$. To see this, we first note that there are only three possible cases: the oblivious update, the non-oblivious update, and the non-update and the corresponding expansions being the empty set, $\overline\conn(e;X)$, and $e$, respectively. Thus, to prove the claim, it suffices to just check the non-update case for which the expansion set is merely $e$. However in this case, the claim is verified by observing that $e$ is open in $\nup_{m-1}\preceq \Xi_{m-1}\preceq X$ where all the inclusions are by definition.
}}
Hence, we can conclude that $W_{m-1}\subset A(X)$. The assertion of the lemma is now immediate from Lemma \ref{lem412}.

\end{proof}

We now state the main result regarding the domination by branching process. For $i\in\llbracket1,\,m\rrbracket$, denote by $\mathcal{F}_{i}$
the $\sigma$-algebra on $\Omega_{n}$ generated by update sequence
$\upd[\tau_{i},\,\tau_{m}]$ (Hence $\mathcal{F}_{m}=\{\emptyset,\,\Omega_{n}\}$).
\begin{prop}
\label{lem413}Suppose that $p$ is small enough so that $3p_{1}<p_{\textrm{perc}}(d)$.
Let $(y_{i})_{i=1}^{\infty}$ be a sequence of i.i.d. random variables
with distribution $m_{3p_{1}}$ defined in \eqref{upsi}. For all $i\in\llbracket1,\,m-2\rrbracket$,
the distribution of $a_{i}$ given
$(\mathcal{F}_{i+2},\,W_{i+2})$
is stochastically dominated by
\[
y_{1}+y_{2}+\cdots+y_{a_{i+2}}\;.
\]
\end{prop}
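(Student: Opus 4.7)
My plan is to combine the two backward expansion steps $W_{i+2} \to W_{i+1} \to W_i$ of the history diagram into a single envelope expansion driven by
\[
X := \env_{i-1} \cup \env_i \cup \env_{i+1},
\]
and then apply Lemma \ref{lem412} with $A = W_{i+2}$, generalizing the one-step contraction of Lemma \ref{lem14}.

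The first step is the deterministic inclusion
\[
W_i \;\subset\; \bigcup_{e \in W_{i+2}} \overline{\conn}(e; X).
\]
Unpacking Definition \ref{def44} and the decomposition \eqref{ne01}, each $e \in W_{i+2}$ produces one of three outcomes in $W_{i+1}$: the oblivious case kills $e$; the non-update case leaves $\{e\}$ with $\nup_{i+1}(e) = 1$, so $e$ is open in $\env_{i+1}$; and the non-oblivious case expands $e$ to $\overline{\conn}(e; \env_i \cup \env_{i+1})$, so $e$ is open in $\eemp_{i+1} \subset \env_{i+1}$. In all surviving cases the image sits in $\overline{\conn}(e; \env_i \cup \env_{i+1}) \subset \overline{\conn}(e; X)$. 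The analogous trichotomy for the second step shows that any $w \in W_{i+1}$ that survives into $W_i$ must itself be open in $\env_i \subset X$; consequently $w \in \conn(e; X)$ for the corresponding root $e \in W_{i+2}$ (even when $w$ originally sat on $\partial \conn(e; \env_i \cup \env_{i+1})$, since an open edge adjacent to $\conn(e; X)$ joins the same cluster), and its descendants in $W_i$ lie in $\overline{\conn}(w; \env_{i-1} \cup \env_i) \subset \overline{\conn}(e; X)$.

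Unconditionally, $X$ is dominated by $\per(3 p_1)$: the three environments $\env_{i-1}, \env_i, \env_{i+1}$ are functions of updates in the pairwise disjoint windows $[\tau_{i-1}, \tau_i]$, $[\tau_i, \tau_{i+1}]$, $[\tau_{i+1}, \tau_{i+2}]$ and are therefore independent, each is edge-wise a product measure dominated by $\per(p_1)$ (Proposition \ref{penv}), and a union bound yields the claim.

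The main obstacle is upgrading this to the conditional law of $X$ given $(\mathcal{F}_{i+2}, W_{i+2})$, as required by Lemma \ref{lem412}. The components $\env_{i-1}$ and $\env_i$ depend only on $\upd[\tau_{i-1}, \tau_{i+1}]$, which is independent of the randomness $\upd[\tau_{i+1}, \tau_m]$ generating $\mathcal{F}_{i+2}$ and $W_{i+2}$, so they retain their unconditional laws. The delicate component is $\env_{i+1}$, which is partially revealed by the backward construction of $W_{i+2}$ from $W_{i+3}$; crucially, any edge on which $\env_{i+1}$ is exposed must itself lie inside $W_{i+2}$, so on $E_n \setminus W_{i+2}$ the conditional law of $X$ remains an independent product dominated by $\per(3 p_1)$. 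I would then adapt the exploration in the proof of Lemma \ref{lem412}, starting at $W_{i+2}$ and uncovering only the outward frontier lying in $E_n \setminus W_{i+2}$, to couple $|W_i|$ with $y_1 + \cdots + y_{a_{i+2}}$ for i.i.d.\ $y_k \sim m_{3 p_1}$, which gives the claimed stochastic domination.
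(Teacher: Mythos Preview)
Your deterministic inclusion $W_i \subset \bigcup_{e \in W_{i+2}} \overline{\conn}(e; X)$ with $X = \env_{i-1} \cup \env_i \cup \env_{i+1}$ is correct (it is a coarsening of the paper's Proposition~\ref{prop412}), and your claim that the conditional law of $X$ on $E_n \setminus W_{i+2}$ is still dominated by $\per(3p_1)$ is also right. The gap is in the last step: your envelope $\bigcup_{e \in W_{i+2}} \overline{\conn}(e; X)$ is \emph{not} stochastically dominated by $y_1 + \cdots + y_{a_{i+2}}$ after conditioning, so no adaptation of the exploration in Lemma~\ref{lem412} through this envelope can deliver the coupling.

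Concretely, the conditioning on $(\mathcal{F}_{i+2}, W_{i+2})$ can force $\env_{i+1}(e) = 1$ for essentially all edges in the interior of $C_{i+2}$: if $e \in \conn(e_0; \env_{i+1} \cup \env_{i+2})$ for some $e_0 \in W_{i+3}^{\textrm{NOb}}$ and the $\mathcal{F}_{i+2}$-measurable value $\env_{i+2}(e)$ equals $0$, then $\env_{i+1}(e) = 1$ deterministically. Under such a realization, a set of size comparable to $a_{i+2}$ sits inside $W_{i+2}$, is open in $X$, and therefore lies in your envelope with probability one; yet $y_1 + \cdots + y_{a_{i+2}}$ has mean $a_{i+2}\,\mathbb{E}[y_1]\ll a_{i+2}$ and exceeds $a_{i+2}$ only with exponentially small probability. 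Restricting the exploration to the outward frontier does not help either: at best it yields $a_i \le a_{i+2} + (\text{outward growth})$, i.e.\ $\mathbb{E}[a_i \mid a_{i+2}] \le (1 + O(\mathbb{E}[y_1]))\,a_{i+2}$, which is not a contraction.

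The paper's remedy is precisely to excise the contaminated piece of $\env_{i+1}$: it replaces $X$ by $\theta_i$, equal to $\env_{i-1} \cup \env_i$ on $C_{i+2}$ and to $X$ elsewhere, so that $\theta_i \preceq \per(3p_1)$ holds \emph{globally} given $(\mathcal{F}_{i+2}, W_{i+2})$ and Lemma~\ref{lem412} applies verbatim. The cost is that the inclusion $W_i \subset \bigcup_{e \in W_{i+2}} \overline{\conn}(e; \theta_i)$ is no longer obvious; it is Proposition~\ref{prop412}, proved via Lemma~\ref{lem422}. The key geometric point there is that $\partial^{-} C_{i+2}$ is closed in $\env_{i+1} \cup \env_{i+2}$ (Lemma~\ref{lem44}), so any $(\env_i \cup \env_{i+1})$-open path exiting $C_{i+2}$ must be $\env_i$-open at the crossing edge; this is exactly what lets one discard $\env_{i+1}$ inside $C_{i+2}$ without severing the connection back to $W_{i+2}$. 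This geometric observation is the missing ingredient in your argument.
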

One might expect that the proof of Proposition \ref{lem413} can be carried out similarly as that of Lemma \ref{lem14}. However this does not work, roughly because of the following:
Assume that we condition on $W_{i+1}$ and try to control $a_{i}=|W_{i}|$.
Then, $W_{i}$ is determined by $W_{i+1}$, the environment $\env_{i}\cup\env_{i-1}$,
and the update sequence in $[\tau_{i},\,\tau_{i+1}]$. However, by the same reasoning, $W_{i+1}$ is determined from $W_{i+2}$,
$\env_{i+1}\cup\env_{i}$ and the update sequence in $[\tau_{i+1},\,\tau_{i+2}]$,
and thus $W_{i+1}$ already contains some information on $\env_{i}$.
Therefore, the distribution of $\env_{i}$ given $W_{i}$ is hard to analyze. In particular, in the \textit{worst case}, if all the edges in $W_{i+1}$
belong to $\env_{i}$, one cannot expect a contraction estimate of $a_{i}$ in terms of $a_{i+1}$ described in the previous lemma.
However, at this point one notices that $\env_{i-1}$ and $\env_{i-2}$ are still independent of $W_{i+1}$ and hence one can possibly obtain a bound for $a_{i-1}$ instead. In other words, if we conditioned on $W_{i+2}$ and all the relevant
information prior to it, the distribution of $a_{i}$, instead of
$a_{i+1}$, can be dominated in an appropriate manner.

This is done through the next result whose proof crucially uses the definitions listed in Table \ref{chart}.
Recall the notations $C_{j}$ and $N_{j}$ from \eqref{epi1}.

\begin{prop}
\label{prop412}For $i\in\llbracket1,\,m-2\rrbracket$, define $\theta_{i}\in\Omega_{n}$
as following:
\[
\theta_{i}(e)=\begin{cases}
(\env_{i-1}\cup\env_{i})(e) & \text{if }e\in C_{i+2}\;,\\
(\env_{i-1}\cup\env_{i}\cup\env_{i+1})(e) & \text{if }e\in E\setminus C_{i+2}\;.
\end{cases}
\]
Define
\begin{equation}
Z_{i}=\bigcup_{e\in W_{i+2}}\overline{\conn}(e;\theta_{i})\;. \label{zia}
\end{equation}
Then, it holds that $W_{i}\subset Z_{i}$.
\end{prop}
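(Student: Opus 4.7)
The plan is to trace every $w \in W_i$ two steps backward through the history-diagram rules to locate an ancestor in $W_{i+2}$, and then to adjust this ancestor using the geometry of $C_{i+2}$ so that the surviving connection sits inside the reduced environment $\theta_i$.

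First, I would unfold the decomposition $W_i = C_i \cup N_i$ (using Lemma \ref{lem43}) to produce an edge $a_1 \in W_{i+1}$ with $w \in \overline{\conn}(a_1;\env_{i-1}\cup\env_i)$: take $a_1 \in W_{i+1}^{\textrm{NOb}}$ when $w \in C_i$, and $a_1 = w$ when $w \in N_i$, using $\nup_i(w)=1$ to see that $w$ is open in $\env_i$. Iterating the same argument on $a_1 \in W_{i+1}=C_{i+1}\cup N_{i+1}$, but now with the shifted environment $\env_i\cup\env_{i+1}$, yields $a_2 \in W_{i+2}$ with $a_1 \in \overline{\conn}(a_2;\env_i\cup\env_{i+1})$. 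Since $a_1$ is automatically open in $\env_{i-1}\cup\env_i$ (otherwise its closure would be empty by convention), a short gluing check shows that in the enlarged environment $\Psi := \env_{i-1}\cup\env_i\cup\env_{i+1}$ one has $w \in \overline{\conn}(a_2;\Psi)$ with $a_2 \in W_{i+2}$.

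Second, I would trim the connecting path to land inside $\theta_i$. Pick an auxiliary edge $w^{\dagger}$ that is open in $\env_{i-1}\cup\env_i$: take $w^{\dagger}=w$ if $w$ is itself open there, and otherwise let $w^{\dagger}$ be a neighbor of $w$ in the open cluster of $a_1$ under $\env_{i-1}\cup\env_i$. There is then an open path $\pi$ in $\Psi$ from $w^{\dagger}$ to $a_2$. If $\pi$ together with $a_2$ avoids $C_{i+2}$, the identity $\theta_i = \Psi$ on $E \setminus C_{i+2}$ immediately gives $w^{\dagger} \in \overline{\conn}(a_2;\theta_i)$. Otherwise let $g^{\ast}$ be the edge of $\pi$ in $C_{i+2}$ closest to $w^{\dagger}$; then $g^{\ast} \in \partial^{-}C_{i+2}$, so Lemma \ref{lem44} forces $g^{\ast}$ to be closed in $\env_{i+1}\cup\env_{i+2}$ (hence in $\env_{i+1}$), and since $g^{\ast}$ is open in $\Psi$ it must be open in $\env_{i-1}\cup\env_i$, which on $C_{i+2}$ is precisely $\theta_i$. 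Thus $\theta_i(g^{\ast})=1$, the sub-path of $\pi$ from $w^{\dagger}$ to $g^{\ast}$ sits in $E \setminus C_{i+2}$ up to its endpoint where $\theta_i$ agrees with $\Psi$, and taking $f := g^{\ast} \in C_{i+2} \subset W_{i+2}$ yields $w^{\dagger} \in \overline{\conn}(f;\theta_i)$. Finally $w$ is either $w^{\dagger}$ or adjacent to it, so a short open-cluster versus boundary case split gives $w \in \overline{\conn}(f;\theta_i) \subset Z_i$.

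The hard part will be precisely this interface analysis: the two backward-tracing steps naturally combine paths living in $\Psi$, while $\theta_i$ discards the $\env_{i+1}$ contribution inside $C_{i+2}$. The geometric input that rescues the argument is the closedness of $\partial^{-}C_{i+2}$ in $\env_{i+1}\cup\env_{i+2}$ from Lemma \ref{lem44}, which forces any path crossing into $C_{i+2}$ to do so at an edge already open in $\env_{i-1}\cup\env_i$. The remaining care concerns the boundary-versus-open-cluster dichotomy inherent to $\overline{\conn}$, which is handled cleanly by passing through the auxiliary edge $w^{\dagger}$.
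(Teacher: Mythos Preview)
Your argument is correct and follows essentially the same route as the paper: the paper isolates the two backward steps as Lemmas~\ref{lem421} and~\ref{lem422}, carrying out the interface analysis inside the proof of Lemma~\ref{lem422} (taking the \emph{last} edge of the path in $C_{i+2}$, starting from $e_0\in W_{i+2}$), whereas you first glue both steps into a connection in $\Psi=\env_{i-1}\cup\env_i\cup\env_{i+1}$ and then trim at the end (taking the \emph{first} edge in $C_{i+2}$, starting from $w^\dagger$); in both versions the decisive input is Lemma~\ref{lem44} forcing $\partial^{-}C_{i+2}$ to be closed in $\env_{i+1}\cup\env_{i+2}$. One small point to tidy: your claim $g^{\ast}\in\partial^{-}C_{i+2}$ tacitly uses $w^{\dagger}\notin C_{i+2}$; when $w^{\dagger}\in C_{i+2}$ simply take $f=w^{\dagger}$ directly, since $C_{i+2}\subset W_{i+2}$ and $\theta_i(w^{\dagger})=(\env_{i-1}\cup\env_i)(w^{\dagger})=1$ by your choice of $w^{\dagger}$.
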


The proof of this proposition is based on  two geometric lemmas (Lemmas \ref{lem421} and \ref{lem422}). {We refer to Figure \ref{fig51} for the
illustration of the proofs of these two lemmas and Proposition \ref{prop412}.} However before proving the latter we first finish the proof of Proposition \ref{lem413}.

\begin{figure}[h]
\centering
\includegraphics[scale=.17]{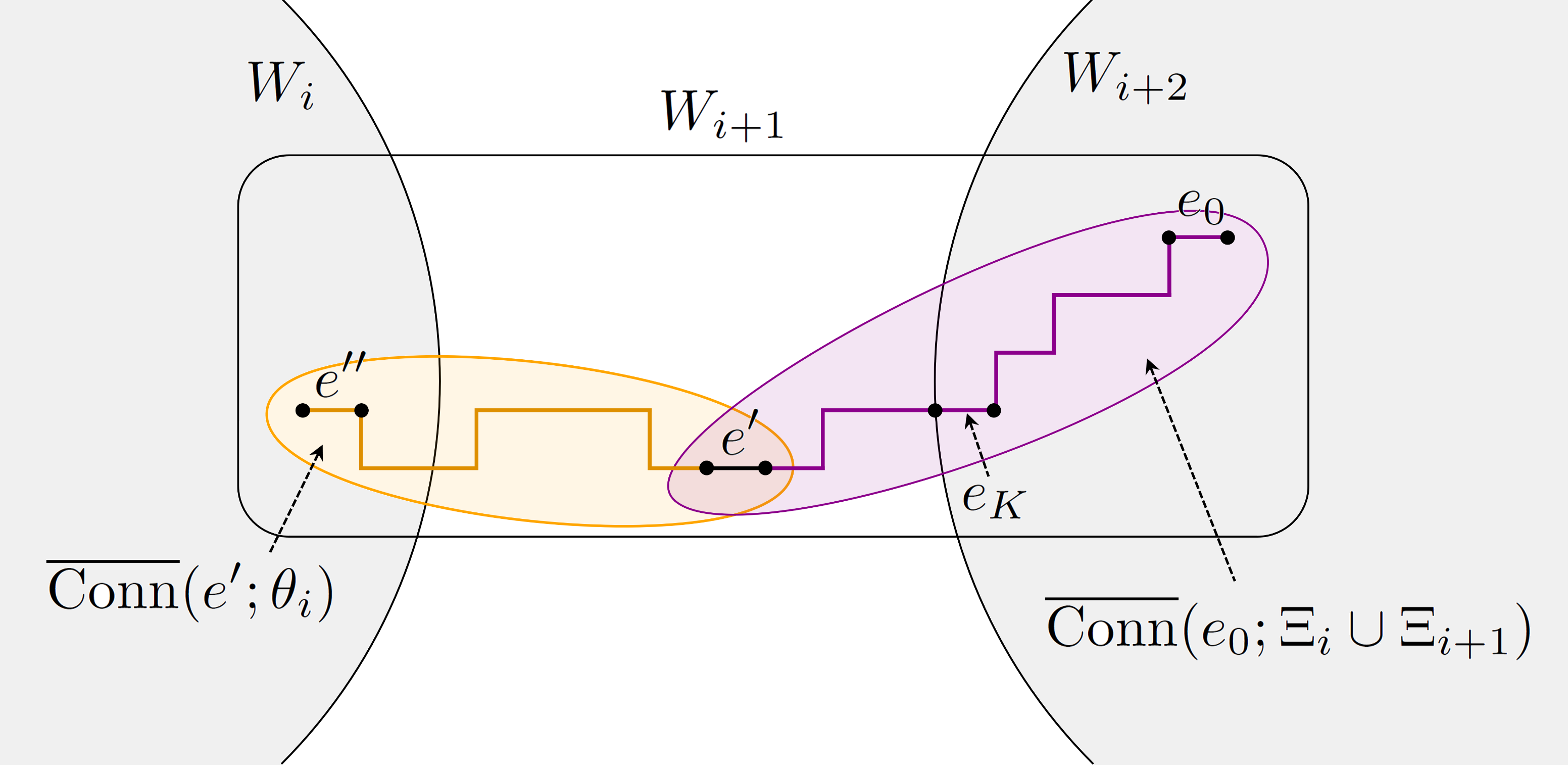}
\caption{{Figure illustrating the proof of Lemmas \ref{lem421}, \ref{lem422}, and Proposition \ref{prop412}. The crucial fact is that $e'\in \conn(e_0;\theta_i)$ (without bar). }}
\label{fig51}
\end{figure}

Note that $W_{i}$ is  determined by $\upd[\tau_{i-1},\,\tau_{m}]$,
and hence $a_{i}$ is a random variable measurable with respect to
$\mathcal{F}_{i-1}$.

\begin{proof}[Proof of Proposition \ref{lem413}]
We consider the following configuration
\[
\env_{i+1}^{o}(e)=\begin{cases}
0 & \text{if }e\in C_{i+2}\;,\\
\env_{i+1}(e) & \text{if }e\in E\setminus C_{i+2}\;.
\end{cases}
\]
We first make the following claim.

\smallskip
\noindent\textbf{Claim. }\textit{Given $(\mathcal{F}_{i+2},\,W_{i+2}),$ the
distribution of $\env_{i+1}^{o}$ is dominated by $\per(p_{1})$. }
\smallskip

Assuming this claim, since $\theta_{i}=\env_{i-1}\cup\env_{i}\cup\env_{i+1}^{o}$, by Proposition \ref{penv},
it follows that the distribution of $\theta_{i}$ given $(\mathcal{F}_{i+2},\,W_{i+2})$
is stochastically dominated by $\per(3p_{1})$. Hence, by  Lemma \ref{lem412} and the definition \eqref{zia} of $Z_i$, we can conclude that  $|Z_i|$ is stochastically bounded above by  $y_1 + \cdots + y_{a_{i+2}}$. Thus we are done by   Proposition \ref{prop412}.

{{
It remains to prove the claim. We start by noting that $C_{i+2}$ is not a deterministic function of $\mathcal{F}_{i+2}$ and $W_{i+2}.$  However, by \eqref{ne01}, $W_{i+3}^{\rm{NU}}$ and $W_{i+3}^{\textrm{NOb}}$  which are subsets of $W_{i+2}$ are indeed measurable with respect to $\mathcal{F}_{i+2}.$  Next recalling how $C_{i+2}$ is constructed from \eqref{epi1}, note that given $\mathcal{F}_{i+2}$ and $W_{i+2}$, by standard exploration of $\overline{\conn}(e;\env_{i+1}\cup\env_{i+2})$ for $e\in W_{i+3}^{\textrm{NOb}}$, further conditioning on
 $C_{i+2}$, does not affect the distribution of the updates  in
 \begin{equation}\label{noiseind}
 \bigcup_{e\in E\setminus C_{i+2}}\upd[\tau_{i+1},\,\tau_{i+2}](e),\end{equation}
(note that here we are crucially using the fact that $\overline{\conn}(e;\env_{j-1}\cup\env_{j})$ includes the closed boundary edges since otherwise conditioning on $C_{i+2}$ would yield information about its boundary edges which would then have been members of $E\setminus C_{i+2}$).
Thus, from now we assume that $C_{i+2}$ is given, and suppose that $e\notin C_{i+2}$.
Since the configuration  $\env_{i+1}(E\setminus C_{i+2})$ is determined by the updates in \eqref{noiseind}, we can conclude that  the distribution of
$\env_{i+1}(E\setminus C_{i+2})$ given \textit{$(\mathcal{F}_{i+2},\,W_{i+2}, C_{i+2})$}
is stochastically bounded by percolation on $E\setminus C_{i+2}$
with open probability $p_{1}$, by Proposition \ref{pbd} and the
definition of $p_{1}$ in \eqref{p1}. Since $\env_{i+1}^{o}(e)=0$
for $e\in C_{i+2}$, the claim holds conditionally on \textit{$(\mathcal{F}_{i+2},\,W_{i+2}, C_{i+2})$} and hence by averaging over $C_{i+2}$, conditionally on  \textit{$(\mathcal{F}_{i+2},\,W_{i+2})$}.}}
\end{proof}

\begin{lem}
\label{lem421}For $i\in\llbracket1,\,m-1\rrbracket$, we have that
\begin{equation}
W_{i}\subset\bigcup_{e\in W_{i+1}}\overline{\conn}(e;\env_{i-1}\cup\env_{i})\;.\label{eev}
\end{equation}
\end{lem}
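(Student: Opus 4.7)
The plan is to exploit the decomposition $W_i = C_i \cup N_i$ from \eqref{decw2} and treat the two pieces separately. For $C_i$, the inclusion is essentially built into the definition, while for $N_i$ it follows from a direct unpacking of what it means to be in $W_{i+1}^{\textup{NU}}$ together with the definition of $\env_i$.

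For the $C_i$ piece: by \eqref{epi1},
\[
C_i = \bigcup_{e \in W_{i+1}^{\textup{NOb}}} \overline{\conn}(e; \env_{i-1} \cup \env_i),
\]
and since $W_{i+1}^{\textup{NOb}} \subset W_{i+1}$ by \eqref{decw1}, we immediately have $C_i \subset \bigcup_{e \in W_{i+1}} \overline{\conn}(e;\env_{i-1}\cup\env_i)$, which is the right-hand side of \eqref{eev}.

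For the $N_i$ piece: by Lemma \ref{lem43}, $N_i \subset W_{i+1}^{\textup{NU}} \subset W_{i+1}$, so for any $e \in N_i$ it is natural to take $e$ itself as a witness in the union on the right-hand side. This works provided $e \in \overline{\conn}(e; \env_{i-1} \cup \env_i)$, which by the convention following \eqref{barc} is equivalent to $(\env_{i-1} \cup \env_i)(e) = 1$. But $e \in W_{i+1}^{\textup{NU}}$ means by definition (cf.\ the decomposition in \eqref{decw1}) that $\nup_i(e) = 1$, and the definition \eqref{env} gives $\env_i = \eemp_i \cup \nup_i \ge \nup_i$, so $\env_i(e) = 1$. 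Therefore $e \in \overline{\conn}(e; \env_{i-1} \cup \env_i)$, which yields $N_i \subset \bigcup_{e \in W_{i+1}} \overline{\conn}(e;\env_{i-1}\cup\env_i)$.

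Combining the two inclusions gives $W_i = C_i \cup N_i \subset \bigcup_{e \in W_{i+1}} \overline{\conn}(e; \env_{i-1} \cup \env_i)$, as desired. There is no real obstacle here: the lemma is essentially a repackaging of the construction of the history diagram in Definition \ref{def44} (step (2)(b)(ii) produces the $C_i$ contribution, step (2)(a) produces the $N_i$ contribution) together with the monotonicity $\nup_i \le \env_i$ built into \eqref{env}. The statement is a one-step geometric input that will feed into the two-step contraction estimate of Proposition \ref{prop412}.
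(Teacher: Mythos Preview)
Your proof is correct and follows essentially the same approach as the paper: both use the decomposition $W_i = C_i \cup N_i$, handle $C_i$ directly from its definition \eqref{epi1} together with $W_{i+1}^{\textup{NOb}} \subset W_{i+1}$, and for $N_i$ invoke $N_i \subset W_{i+1}^{\textup{NU}}$ from Lemma~\ref{lem43} and then observe that $e \in W_{i+1}^{\textup{NU}}$ forces $\nup_i(e)=1$, hence $\env_i(e)=1$, so that $e \in \overline{\conn}(e;\env_{i-1}\cup\env_i)$.
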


\begin{proof}
{{
In view of the definition \eqref{epi1}, the decomposition \eqref{decw2}, and the fact that $W_{i+1}^{\textrm{NU}}\subset W_{i+1}$, it suffice to check that  }}
\begin{equation}
N_{i}\subset\bigcup_{e\in W_{i+1}^{\textrm{NU}}}\overline{\conn}(e;\env_{i-1}\cup\env_{i})\;.\label{eev2}
\end{equation}
If $e\in W_{i+1}^{\textrm{NU}}$, we have $\nup_{i}(e)=1$ by the
definition of $\nup_{i}$, and thus $(\env_{i}\cup\env_{i-1})(e)=1$
since $\nup_{i}\le\env_{i}$. Therefore, we have $e\in\overline{\conn}(e;\env_{i}\cup\env_{i-1})$.
Hence, the right-hand side of \eqref{eev2} contains $W_{i+1}^{\textrm{NU}}$,
and hence contains $N_{i}$ by the second inclusion of Lemma \ref{lem43}.
\end{proof}

\begin{lem}
\label{lem422}For $i\in\llbracket1,\,m-2 \rrbracket$, define $\xi_{i}\in\Omega_{n}$
as follows:
\[
\xi_{i}(e)=\begin{cases}
\env_{i}(e) & \text{if }e\in C_{i+2}\;,\\
(\env_{i}\cup\env_{i+1})(e) & \text{if }e\in E\setminus C_{i+2}\;.
\end{cases}
\]
Then, we have
\[
W_{i+1}\setminus W_{i+2} \subset \bigcup_{e\in W_{i+2}}\overline{\conn}(e;\xi_{i}) \;.
\]
\end{lem}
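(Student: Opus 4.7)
The plan is, for an arbitrary $f \in W_{i+1} \setminus W_{i+2}$, to exhibit some $e \in W_{i+2}$ satisfying $f \in \overline{\conn}(e;\xi_i)$. As a starting point, \eqref{ne01} gives $W_{i+1} = C_{i+1} \cup W_{i+2}^{\textrm{NU}}$ with $W_{i+2}^{\textrm{NU}} \subset W_{i+2}$, so $f \in C_{i+1}$, and by \eqref{epi1} there exists
\[
e_0 \in W_{i+2}^{\textrm{NOb}} \subset W_{i+2}
\]
with $f \in \overline{\conn}(e_0;\env_i \cup \env_{i+1})$. Moreover, since $C_{i+2} \subset W_{i+2}$ but $f \notin W_{i+2}$, we have $f \notin C_{i+2}$; in particular $\xi_i$ agrees with $\env_i \cup \env_{i+1}$ at $f$ and at every edge of $E \setminus C_{i+2}$.

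The key ingredient is Lemma \ref{lem44} applied at $j = i+2$: every edge of $\partial^{-}C_{i+2}$ is closed in $\env_{i+1} \cup \env_{i+2}$, hence in $\env_{i+1}$. Consequently, if $g \in \partial^{-}C_{i+2}$ is open in $\env_i \cup \env_{i+1}$, then $g$ is open in $\env_i$, so $\xi_i(g) = \env_i(g) = 1$. This is the bridge that lets a path which steps from $E \setminus C_{i+2}$ onto the inner boundary of $C_{i+2}$ remain open under $\xi_i$ at that crossing edge, even though $\xi_i$ discards all $\env_{i+1}$ contributions inside $C_{i+2}$.

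I would then finish by a short case analysis on a path witnessing $f \in \overline{\conn}(e_0;\env_i \cup \env_{i+1})$. If $f \in \conn(e_0;\env_i \cup \env_{i+1})$, take an open path $e_0 = g_0, g_1, \ldots, g_k = f$ in $\env_i \cup \env_{i+1}$; if no $g_j$ lies in $C_{i+2}$, then the entire path is open in $\xi_i$ and we set $e = e_0$. Otherwise, let $j_\star = \max\{j : g_j \in C_{i+2}\}$; since $g_k = f \notin C_{i+2}$ we have $j_\star < k$, so $g_{j_\star}$ is adjacent to $g_{j_\star+1} \notin C_{i+2}$, forcing $g_{j_\star} \in \partial^{-}C_{i+2}$. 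The bridging ingredient yields $\xi_i(g_{j_\star}) = 1$, while for $j > j_\star$ one has $\xi_i(g_j) = (\env_i \cup \env_{i+1})(g_j) = 1$; hence $g_{j_\star}, \ldots, g_k$ is a $\xi_i$-open path and we take $e = g_{j_\star} \in C_{i+2} \subset W_{i+2}$. The case $f \in \partial\conn(e_0;\env_i \cup \env_{i+1})$ reduces to the previous one applied to an open neighbor $g$ of $f$: if $g \notin C_{i+2}$ the open case supplies the desired $e$; if $g \in C_{i+2}$, adjacency to $f \notin C_{i+2}$ forces $g \in \partial^{-}C_{i+2}$, so $\xi_i(g) = 1$ by the ingredient and one takes $e = g$ directly.

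The main obstacle the argument has to navigate is the possibility that $e_0 \in C_{i+2}$ with $\env_i(e_0) = 0$, in which case $\xi_i(e_0) = 0$ and $e_0$ itself cannot serve as $e$. The remedy is precisely the trade above: swap $e_0$ for the last $C_{i+2}$-edge along a path to $f$, which the hypothesis $f \notin C_{i+2}$ forces into $\partial^{-}C_{i+2}$, and Lemma \ref{lem44} then makes $\xi_i$-open.
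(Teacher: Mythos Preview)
Your argument is correct and follows essentially the same route as the paper's proof: obtain $e_0\in W_{i+2}$ with $f\in\overline{\conn}(e_0;\env_i\cup\env_{i+1})$, then use Lemma~\ref{lem44} at $j=i+2$ to show that the last $C_{i+2}$-edge on a path toward $f$ must be $\env_i$-open, hence $\xi_i$-open, and serve as the new base point in $W_{i+2}$. The only cosmetic differences are that you invoke \eqref{ne01} and \eqref{epi1} directly (yielding $e_0\in W_{i+2}^{\textrm{NOb}}$) whereas the paper cites Lemma~\ref{lem421}, and that you split into the cases $f\in\conn$ versus $f\in\partial\conn$ while the paper handles both at once by writing a path $e_0,\ldots,e_k=f$ with only $e_0,\ldots,e_{k-1}$ required to be open.
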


\begin{proof}
For $e'\in W_{i+1}\setminus W_{i+2}$, we know from Lemma \ref{lem421}
that there exists $e_0 \in W_{i+2}$ and a path
\[
e_{0},\,e_{1},\,\cdots,\,e_{k}(=e')
\]
in $E$ such that $(\env_{i}\cup\env_{i+1})(e_{l})=1$ for all $l\in\llbracket0,\,k-1\rrbracket$.
If none of $e_{0},\,e_{1},\,\cdots,\,e_{k}$ belongs to $C_{i+2}$
then the assertion of lemma is immediate since $\Xi_i \cup \Xi_{i+1} = \xi_i$ along this path. Otherwise, let
\[
K=\max\{h:e_{h}\in C_{i+2}\}\;.
\]
Since $e'\notin W_{i+2}$, we have $K<k$. Then, since $e_{K+1}\notin C_{i+2}$,
we have $e_{K}\in\partial^{-}C_{i+2}$ and thus $(\env_{i+1}\cup\env_{i+2})(e_{K})=0$
by Lemma \ref{lem44}. Since $(\env_{i}\cup\env_{i+1})(e_{K})=1$,
we can conclude that $\env_{i}(e_{K})=1$. This implies that $e'=e_{k}\in\overline{\conn}(e_{K};\xi_{i})$,
where $e_{K}\in C_{i+2}\subset W_{i+2}$. This completes the proof.
\end{proof}
Now we are ready to prove Proposition \ref{prop412}.
\begin{proof}[Proof of Proposition \ref{prop412}]
Fix arbitrary $e''\in W_{i}$. It suffices to verify that $e''\in Z_{i}$.
Since $\env_{i-1}\cup \env_{i}\le\theta_{i}$, by Lemma \ref{lem421}, there exists $e'\in W_{i+1}$
such that
\begin{equation}
e''\in\overline{\conn}(e';\env_{i-1}\cup \env_{i})\subset \overline{\conn}(e';\theta_i)\;.\label{ii1}
\end{equation}
If $e'\in W_{i+2}$, we can immediately assert that $e'\in Z_{i}$ by the definition of $Z_i$.

On the other hand, if $e'\in W_{i+1}\setminus W_{i+2}$, then by
Lemma \ref{lem422} and by the fact that $\xi_{i}\le\theta_{i}$, there exists $e_0 \in W_{i+2}$ such that
\begin{equation}
e'\in\overline{\conn}(e_{0};\xi_{i}) \subset \overline{\conn}(e_{0};\theta_{i})\;.\label{ii2}
\end{equation}
We remark that \eqref{ii1} implies that $\theta_{i}(e')=1$ since otherwise $\overline{\conn}(e';\theta_{i})=\emptyset$.
Therefore we can replace $e'\in\overline{\conn}(e_{0};\theta_{i})$
in \eqref{ii2} with $e'\in\conn(e_{0};\theta_{i})$. Combining
this with \eqref{ii1} ensures that $e''\in\overline{\conn}(e_{0},\theta_{i})\subset Z_{i}$.
This completes the proof.
\end{proof}

\subsubsection{Bounds on $a_i$ based on domination by branching processes}
Now we present two consequences of the previous branching process
type estimate. These will play a fundamental role in the proof of Proposition \ref{p413}.

 For a random variable $y$ in $\mathbb{Z}_{+}$
following the law $m_{3p_{1}}$ defined in \eqref{upsi}, we define $M=M(p)$ as the solution
of
\[
e^{-2M}=\mathbb{E}(y)\;.
\]
It readily follows that
\begin{equation}\label{mm}
\lim_{p\rightarrow{0}}M=\infty\;.
\end{equation}

\begin{lem}
\label{lem414}For $k\in\llbracket1,\,m\rrbracket$, select ${\mathfrak{r}}\in\{1,\,2\}$
so that $(k-\mathfrak{r})\,{\rm{ mod }}\,\,2=0$. Then, for some constant
$C=C(p)>0$, it holds that
\[
\mathbb{E}\left[a_{\mathfrak{r}}|a_{k}\right]\le Ce^{-Mk}a_{k}\;.
\]
\end{lem}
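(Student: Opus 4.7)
The plan is to iterate the one-step contraction estimate from Proposition \ref{lem413}. By that proposition, for every $i\in\llbracket1,\,m-2\rrbracket$, conditional on $(\mathcal{F}_{i+2},\,W_{i+2})$ the random variable $a_i$ is stochastically dominated by $y_1+\cdots+y_{a_{i+2}}$, where the $y_j$ are i.i.d.\ with law $m_{3p_1}$. Taking expectations and using Wald's identity (the $y_j$ are independent of $a_{i+2}$, which is $\mathcal{F}_{i+2}$-measurable),
\[
\mathbb{E}\!\left[a_i\,\big|\,\mathcal{F}_{i+2},\,W_{i+2}\right]\le \mathbb{E}(y)\,a_{i+2}=e^{-2M}a_{i+2}.
\]
In particular, $\mathbb{E}[a_i\mid a_{i+2}]\le e^{-2M}a_{i+2}$.

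Now fix $k\in\llbracket1,\,m\rrbracket$ and pick $\mathfrak{r}\in\{1,2\}$ so that $k-\mathfrak{r}$ is even, and set $N=(k-\mathfrak{r})/2$. Applying the above one-step estimate iteratively along the chain $k,\,k-2,\,k-4,\,\ldots,\,\mathfrak{r}$ and using the tower property,
\[
\mathbb{E}[a_\mathfrak{r}\mid a_k]\le e^{-2M}\,\mathbb{E}[a_{\mathfrak{r}+2}\mid a_k]\le\cdots\le e^{-2MN}a_k=e^{-M(k-\mathfrak{r})}a_k.
\]
Since $\mathfrak{r}\in\{1,2\}$, we have $e^{-M(k-\mathfrak{r})}\le e^{2M}\,e^{-Mk}$, so the lemma holds with $C=C(p)=e^{2M}$.

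The only point to watch is the measurability requirement in each application of Proposition \ref{lem413}: we need $W_{i+2}$ (and in particular $a_{i+2}$) to be $\mathcal{F}_{i+2}$-measurable, which is immediate from the backward construction of the history diagram since $W_{i+2}$ is determined by $\upd[\tau_{i+1},\,\tau_m]\subset \mathcal{F}_{i+1}\subset\mathcal{F}_{i+2}$ (and in fact even coarser information suffices). No further obstacle is present; the result is purely a consequence of iterating the sub-critical branching bound of Proposition \ref{lem413} and then absorbing the at-most-two-step residue into the constant.
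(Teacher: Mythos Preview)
Your proof is correct and follows the same approach as the paper's: both derive the one-step contraction $\mathbb{E}[a_{k-2}\mid a_k,\mathcal{F}_k]\le e^{-2M}a_k$ from Proposition~\ref{lem413} and then iterate via the tower property, absorbing the residue $e^{M\mathfrak{r}}\le e^{2M}$ into $C(p)$. One minor slip: the filtration is decreasing in $i$, so $\mathcal{F}_{i+2}\subset\mathcal{F}_{i+1}$ (not the reverse), and in fact $W_{i+2}$ is \emph{not} $\mathcal{F}_{i+2}$-measurable---this is precisely why Proposition~\ref{lem413} conditions on the pair $(\mathcal{F}_{i+2},W_{i+2})$; fortunately your argument never actually uses the reversed inclusion, since $a_{i+2}=|W_{i+2}|$ is trivially measurable with respect to that pair.
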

\begin{proof}
It follows from Proposition \ref{lem413} that, for all $k\in\llbracket3,\,m\rrbracket$,
\begin{equation}
\mathbb{E}\left[a_{k-2}|a_{k},\,\mathcal{F}_{k}\right]\le e^{-2M}a_{k}\;.\label{e414}
\end{equation}
Then, the proof of lemma is completed by the induction.
\end{proof}
{
\begin{lem}
\label{lem415}For all sufficiently small $p$,
there exists $c_{0}=c_{0}(p)>0$ such that,
\[
\mathbb{E}\exp\bigg\{ c_{0}\sum_{i=1}^{m-1}a_{i}\bigg\} \le e^{|A|}\;.
\]
Furthermore, $\lim_{p\rightarrow 0}c_{0}(p)=+\infty$.
\end{lem}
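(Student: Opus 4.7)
The plan is to split the sum by the parity of the index, $\sum_{i=1}^{m-1}a_i = S_{\mathrm{odd}}+S_{\mathrm{even}}$, and apply Cauchy--Schwarz,
\begin{equation*}
\mathbb{E}\exp\!\Bigl\{c_0\sum_{i=1}^{m-1}a_i\Bigr\}\;\le\;\bigl(\mathbb{E}\exp\{2c_0 S_{\mathrm{odd}}\}\bigr)^{1/2}\bigl(\mathbb{E}\exp\{2c_0 S_{\mathrm{even}}\}\bigr)^{1/2},
\end{equation*}
so it suffices to bound each factor by $e^{|A|}$. The two cases are symmetric; I focus on $S_{\mathrm{odd}}$. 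The parity split is natural because Proposition~\ref{lem413} supplies a \emph{two-step} contraction along the filtration $\mathcal{F}_m\subset\cdots\subset\mathcal{F}_1$, rather than a one-step one.

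Set $\phi(c):=\mathbb{E}[e^{cy}]$ for $y\sim m_{3p_1}$, and $\psi:=\log\phi$. I process the odd $a_i$ from the smallest to the largest index using the tower property: for each odd $i<m-1$, Proposition~\ref{lem413} yields
\begin{equation*}
\mathbb{E}\bigl[\exp(c\,a_i)\,\big|\,\mathcal{F}_{i+2},W_{i+2}\bigr]\;\le\;\phi(c)^{a_{i+2}}\;=\;\exp\bigl(\psi(c)\,a_{i+2}\bigr).
\end{equation*}
Every later odd $a_j$ (with $j\ge i+4$) is $\mathcal{F}_{j-1}$-measurable and hence $\mathcal{F}_{i+2}$-measurable, while $a_{i+2}=|W_{i+2}|$ sits inside the conditioning $\sigma$-algebra, so the above estimate can be absorbed into the outer expectation and the effective coefficient on $a_{i+2}$ updates from $2c_0$ to $2c_0+\psi(2c_0)$. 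Iterating, the coefficient $d_k$ on the $k$-th odd index from below evolves as
\begin{equation*}
d_1=2c_0,\qquad d_{k+1}=2c_0+\psi(d_k).
\end{equation*}
After exhausting all odd indices except the largest, I cap the remaining factor by Lemma~\ref{lem14} (using $m_{2p_1}\preceq m_{3p_1}$), giving $\mathbb{E}\exp(d\,a_{m-1})\le\phi(d)^{|A|}$.

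Let $c^*=c^*(p)$ denote the fixed point of $x=2c_0+\psi(x)$. Monotonicity of $\psi$ forces $d_k\le c^*$ for every $k$, so the whole telescoping bound collapses to $\phi(c^*)^{|A|}=\exp(|A|\,\psi(c^*))$. What remains is to pick $c_0=c_0(p)$ so that $\psi(c^*)\le 1$ (to obtain the target $e^{|A|}$) while $c_0(p)\to\infty$ as $p\to 0$. Both demands are met using the exponential tail $m_{3p_1}(k)\le e^{-\rho(3p_1)k}$ with $\rho(3p_1)\to\infty$ in the subcritical regime: taking, e.g., $c^*=\rho(3p_1)/2$ gives $\phi(c^*)\le 1+e^{-\rho/2}/(1-e^{-\rho/2})$, hence $\psi(c^*)\to 0$ and $c_0=(c^*-\psi(c^*))/2\sim\rho(3p_1)/4\to\infty$. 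The main care needed lies in the fixed-point/monotonicity bookkeeping and the measurability check in the tower step; no genuinely new geometric difficulty arises, since Proposition~\ref{lem413} already packages the nontrivial input.
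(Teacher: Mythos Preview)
Your proposal is correct and follows essentially the same approach as the paper: Cauchy--Schwarz to split by parity, then a telescoping tower argument along the two-step domination of Proposition~\ref{lem413}, capped by Lemma~\ref{lem14}. The only cosmetic difference is that the paper chooses $c_0$ via $\mathbb{E}e^{(2c_0+1)y}=e$, i.e.\ $\psi(2c_0+1)=1$, which makes the recursion coefficient constant (equal to $2c_0+1$) and avoids your fixed-point bookkeeping; your evolving coefficients $d_k$ and the bound $d_k\le c^*$ recover the same conclusion by a slightly longer route.
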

}
\begin{proof}
By the Cauchy-Schwarz inequality,
\begin{equation}
\mathbb{E}\bigg[\exp\bigg\{ c\sum_{i=1}^{m-1}a_{i}\bigg\} \bigg]^{2}\le\mathbb{E}\exp\bigg\{ 2c\sum_{i:2i\in\llbracket 1,\,m-1\rrbracket}a_{2i}\bigg\} \cdot \mathbb{E}\exp\bigg\{ 2c\sum_{i:2i+1\in\llbracket 1,\,m-1\rrbracket}a_{2i+1}\bigg\} \;.\label{e415}
\end{equation}

Denote by $y$   the random variable with distribution $m_{3p_1}$ defined in \eqref{upsi}. Note
that the following equation  on $c$
\begin{equation}
\mathbb{E}e^{(2c+1)y}=e \label{eqp2}
\end{equation}
has a positive solution $c_0 = c_{0}(p)$ and we can readily check that $\lim_{p\rightarrow0}c_{0}=+\infty$.
Now it suffices to prove that, for
all $\ell$,
\begin{equation}
\mathbb{E}\exp\bigg\{ 2c_0\sum_{i=1}^{\ell}a_{m-2i}\bigg\} \le e^{|A|}\;\;\;\text{and\;\;\;}\mathbb{E}\exp\bigg\{ 2c_0\sum_{i=0}^{\ell}a_{m-2i-1}\bigg\} \le e^{|A|}\;.\label{e440}
\end{equation}
By Proposition \ref{lem413} and \eqref{eqp2}, for all $i\in\llbracket3,\,m\rrbracket$,
we have
\[
\mathbb{E}\left[\left.e^{(2c_0+1)a_{i-2}}\right|a_{i},\,\mathcal{F}_{i}\right]
\le\mathbb{E}\left[e^{(2c_0+1)y}\right]^{a_{i}}\le e^{a_{i}}\;.
\]
Consequently, for all $\ell\ge1$,
\[
\mathbb{E}\bigg[e^{a_{m-2\ell}}\cdot\exp\bigg\{ 2c_0\sum_{i=1}^{\ell}a_{m-2i}\bigg\} \bigg|a_{m-2\ell+2},\,\mathcal{F}_{m-2\ell+2}\bigg]\le e^{a_{m-2\ell+2}}\exp\bigg\{ 2c_0\sum_{i=1}^{\ell-1}a_{m-2i}\bigg\}\;.
\]
Repeating this procedure, we obtain
\begin{align}
\mathbb{E}\exp\bigg\{ 2c_0\sum_{i=1}^{\ell}a_{m-2i}\bigg\}  & \le\mathbb{E}\bigg[e^{a_{2\ell}}\exp\bigg\{ 2c_0\sum_{i=1}^{\ell}a_{m-2i}\bigg\} \bigg]\nonumber \\
 & \le\mathbb{E}\bigg[e^{a_{2\ell+2}}\exp\bigg\{ 2c_0\sum_{i=1}^{\ell-1}a_{m-2i}\bigg\} \bigg]\label{e441}\\
 & \le\cdots\le\mathbb{E}\bigg[e^{(2c_0+1)a_{m-2}}\bigg]\le e^{a_{m}}=e^{|A|}\;.\nonumber
\end{align}
This proves the first inequality in \eqref{e440}.
By a similar argument as above, one can show that
\begin{equation}
\mathbb{E} \exp\bigg\{ 2c_0\sum_{i=0}^{\ell}a_{m-2i-1}\bigg\} \le\mathbb{E} e^{(2c_0+1)a_{m-1}} \;.\label{e442}
\end{equation}
By Lemma \ref{lem14} and the fact that $m_{2p_1}$ is dominated by $m_{3p_1}$, we have
\begin{equation}
\mathbb{E}e^{(2c_0+1)a_{m-1}}
\le\mathbb{E} \big[\,e^{(2c_0+1)y} \,\big]^{|A|}
\le e^{|A|}\;,\label{e443}
\end{equation}
where the last inequality follows from \eqref{eqp2}. Now, \eqref{e440} is proven by combining \eqref{e441}, \eqref{e442}, and \eqref{e443}.
\end{proof}

We now proceed to proving Proposition \ref{p413}.
\subsubsection{Proof of Proposition \ref{p413}.}
Recall that we want to bound $\mathbb{P}\left[A\in\mathcal{C}_{\mathcal{R}(A)}^{*}\right].$

We start by defining
\[
\sigma:=\begin{cases}
\max\left\{ i:a_{i}=1\right\}  & \text{if }a_{i}=1\:\text{for some }i\in\llbracket1,\,m\rrbracket,\\
0 & \text{otherwise.}
\end{cases}
\]
{We further define events $\mathcal{A}$ and $\mathcal{B}$ as
\begin{align*}
\mathcal{A}= & \Big(\{\sigma>0\}\cap \{\mathscr{H}_{A}\text{ merges to one point in }[\tau_{\sigma},\,\tau_{m}]\}\Big)\bigcup\Big(\{\sigma=0\}\cap\{A\in\mathcal{C}_{\mathcal{R}(A)}^{*}\Big),\\
\mathcal{B}= & \Big(\{\sigma>0\}\cap\{\text{The history diagram starting from }W_{\sigma}\text{ at }t=\tau_{\sigma}\text{ survives until }t=\tau_{1}\}\Big)\\
 & \bigcup\{\sigma=0\}.
\end{align*}
Note that in the definition of $\mathcal{A}$, on the event $\{\sigma>0\},$ we put the additional constraint that all the history diagrams in $\mathcal{H}_{A}$  merge to a point in $[\tau_{\sigma}, \tau_m].$ Note that this is not guaranteed just by assuming $\sigma > 0,$ since  it may happen that all the history diagrams have been killed except for one edge which survives on its own up to $\tau_{\sigma}.$  
}
Clearly, the event $\{A\in\mathcal{C}_{\mathcal{R}(A)}^{*}\}$ is a subset
of $\mathcal{A}\cap\mathcal{B}$ since if $\sigma>0,$ the only way $\{A\in\mathcal{C}_{\mathcal{R}(A)}^{*}\}$ can occur is if $\mathcal{A}\cap \mathcal{B}$ occurs since otherwise $\mathcal{C}_{\mathcal{R}(A)}$ has multiple connected components.
Thus we have
\begin{equation}
\mathbb{P}\left[A\in\mathcal{C}_{\mathcal{R}(A)}^{*}\right]
\le\mathbb{P}\left[\mathcal{A}\cap\mathcal{B}\right]\;.\label{e450}
\end{equation}
We next make and prove the following claim.

\smallskip
\smallskip
\smallskip
\noindent\textbf{Claim.}\textit{ Conditioned on the event $\{\sigma=k\}$ with $k \ge0$, two events $\mathcal{A}$ and $\mathcal{B}$
are independent.}
\smallskip

\begin{proof}
For $k=0$, the claim is immediate from the definitions of $\mathcal{A}$ and $\mathcal{B}$. For $k\ge 1$, let us write $W_{k}=\{e\}$. Then, conditioned on the event $\{\sigma=k\}$,  it suffices prove that the event $\mathcal{A}$ is independent
of $\upd[0,\,\tau_{k}]$ since $\mathcal{B}$ depends only on
$\upd[0,\,\tau_{k}]$. Clearly the behavior of {the history diagram starting from }$e$ in $(\tau_{k+1},\,\tau_{m}]$ is independent
of $\upd[0,\,\tau_{k}]$. Hence, it only suffices to check the
interval $(\tau_{k},\,\tau_{k+1}]$. The event $\mathcal{A}$
imposes that all the edges in $W_{k+1}\setminus\{e\}$ exhibit
the oblivious update in $(\tau_{k},\,\tau_{k+1}]$, while
$e\in W_{k+1}$  survives to $\tau_{k}$ without expanding
to $\overline{\conn}(e;\env_{k-1}\cup\env_{k}$) which by definition includes $e$ as well as its adjacent edges. The first
event is determined by $\upd[\tau_{k},\,\tau_{k+1}]$ and
hence is independent of $\upd[0,\,\tau_{k}]$. The second event
occurs only when there is no update at $e$ in $\upd[\tau_{k},\,\tau_{k+1}]$,
and hence this event is also independent of $\upd[0,\,\tau_{k}]$
as well. This completes the proof.
\end{proof}
By this claim and \eqref{e450}, we deduce that
\begin{equation}
\mathbb{P}\left[A\in\mathcal{C}_{\mathcal{R}(A)}^{*}\right]
\le
\mathbb{E}\,\Big[\,
\mathbb{P}\left[ \mathcal{A}\,|\,\sigma \right] \, \mathbb{P}\left[ \mathcal{B}\,|\,\sigma \right]
\,\Big]
\;.\label{epq1}
\end{equation}
{{
We now  claim that there exists $C,\,M>0$ such that for all $k\ge 0,$
\begin{equation}
\mathbb{P}\left[ \mathcal{B}\,|\,\sigma =k  \right]\le  Ce^{-Mk}\;.\label{epq2}
\end{equation}
Since this bound trivially holds for $k=0$ if we take $C>1$, it suffice to consider the case $k\ge 1$. Select ${\mathfrak{r}}\in\{1,\,2\}$
so that $(\sigma-\mathfrak{r})\,{\rm{ mod }}\,\,2=0$.
 Then, by Lemma \ref{lem414}, we get
 \begin{equation*}
\mathbb{P}\left[ \mathcal{B}\,|\,\sigma=k \right]\le\mathbb{P}\left[\,a_{\mathfrak{r}}>0\,|\,a_{k}=1\,\right]
\le\mathbb{E}\left[\,a_{\mathfrak{r}}\,|\,a_{k}=1\,\right]\le Ce^{-Mk}\;.
\end{equation*}
}}This proves the bound \eqref{epq2}. Now by \eqref{epq1} and \eqref{epq2}, we have
\begin{equation}
\mathbb{P}\left[A\in\mathcal{C}_{\mathcal{R}(A)}^{*}\right]
\le
\mathbb{E}\Big[Ce^{-M\sigma}\mathbf{1}_{\mathcal{A}}\Big]\;.\label{epp1}
\end{equation}
If $|A|=1$ so that $\sigma=m$, this inequality proves the assertion
of the proposition. Now we assume that $|A|\ge2,$ so that $\sigma\le m-2$.
Note that $\sigma$ cannot be $m-1$ since $W_{m-1}\supset A$ under
$\mathcal{A}$ {since otherwise for $e\in A\setminus W_{m-1}$, the set $\mathscr{H}_e$ is a singleton and hence $\mathscr{H}_e \cap \mathscr{H}_{A\setminus\{e\}} =\emptyset$.}
 Thus conditioned on the event $\{\sigma=k\}$ {{with $k\ge 1$}}, the event
$\mathcal{A}$ implies that
\[
\left\{ a_{k+1}+\cdots+a_{m-1}\ge|\conn(A)|-1\right\}\;,
\]
{where, recall that $|\conn(A)|$ is
the smallest possible number of edges of the connected subgraph of
$(\Lambda_{n},\,E_{n})$ containing $A$.} We can neglect $a_{m}$ since $W_{m}=A\subset W_{m-1}$ under $\mathcal{A}$,
and we used the fact that $a_{k}=1$.
{{
On the other hand, for $k=0$, the event $\mathcal{A}$ implies that
\[
\left\{ a_{1}+\cdots+a_{m-1}\ge|\conn(A)|\right\}\;.
\]
Recall from Definition $\ref{def44},$ that our definition of the information percolation clusters only extended up to $\tau_1$ and not $\tau_0$. This is reflected in the fact that the above sum starts from $a_1$ instead of $a_0$.
}} Therefore, we can bound the
right-hand side of \eqref{epp1} from above by
\begin{align*}
&  \sum_{k=1}^{m-2}Ce^{-Mk}\,\mathbb{E}\Big[\mathbf{1}\left\{ a_{k+1}+\cdots+a_{m-1}
   \ge |\conn(A)|-1\right\} \mathbf{1}\{\sigma=k\}\Big]\\
& \qquad    +C\, \mathbb{E}\Big[\mathbf{1}\left\{ a_{1}+\cdots+a_{m-1}\ge|\conn(A)|\right\} \,\mathbf{1}\{\sigma=0\}\Big]\;.
\end{align*}
By applying $\mathbf{1}\{\sigma=k\}\le\mathbf{1}\left\{ a_{k+1},\,\cdots,a_{m-1}\ge2\right\} $
here, we obtain
\begin{equation}
\mathbb{E}\Big[Ce^{-M\sigma}\mathbf{1}_{\mathcal{A}}\Big]\le C\sum_{k=0}^{m+1}H_{k}\;,\label{epp0}
\end{equation}
where
\[
H_{k}=e^{-Mk}\,\mathbb{E}\Big[\mathbf{1}\left\{ a_{k+1}+\cdots+a_{m-1}\ge|\conn(A)|-1\right\} \, \mathbf{1}\left\{ a_{k+1},\,\cdots,a_{m-1}\ge2\right\} \Big]\;.
\]
For any $C_{1},\,C_{2}>0$, by the Chebyshev inequality,
\[
H_{k}\le e^{-Mk}e^{-C_{1}(|\conn(A)|-1)}e^{-2C_{2}(m-k)}
\,\mathbb{E} e^{(C_{1}+C_{2})(a_{k+1}+\cdots+a_{m-1})} \;.
\]
Now we take $C_{1}$ and $C_{2}$ such that $C_{1}+C_{2}<c_{0}$ where
the constant $c_{0}$ is the one appeared in Lemma \ref{lem415}.
Then, by Lemma \ref{lem415} and the fact that $|A|\le|\conn(A)|$,
we can further obtain
\begin{align}
 & H_{k}\le e^{-Mk}e^{-C_{1}(|\conn(A)|-1)}e^{-2C_{2}(m-k)}e^{|\conn(A)|}\;.\label{epp4}
\end{align}
For given $\theta>0$, we first take $p$ small enough so that $c_{0}>\theta+2$
and $M>1$. This is possible since
\[
\lim_{p\rightarrow0}c_{0}=\lim_{p\rightarrow0}M=+\infty\;
\]
by \eqref{mm} and Lemma \ref{lem415}.
Take $C_{1}=\theta+1$ and $C_{2}=1/2$. With this selection, the
bound \eqref{epp4} becomes
\[
H_{k}\le e^{\theta+1}e^{-(m+\theta|\conn(A)|)}e^{-(M-1)k}\;.
\]
Combining this with \eqref{epp0} yields
\[
\mathbb{E}\left[Ce^{-M\sigma}\mathbf{1}_{\mathcal{A}}\right]\le C(\theta)e^{-(m+\theta|\conn(A)|)}\;.
\]
Thus the statement of the proposition follows by recalling that  $\tau_{m}=m\Delta$. \qed

\section{Reduction to a product chain}\label{reduction1}
From now on, we define
\[
r=r(n)=3\log^{5}n\;.
\] Moreover recall $\smax$ from \eqref{esm}.
Denote by $(X_{t}^{\dagger})_{t\ge0}$ the FK-dynamics defined on
the periodic lattice $\mathbb{Z}_{r}^{d}.$ Let $\Omega_{r}=\{0,\,1\}^{E_{r}}$ where $E_{r}=E(\mathbb{Z}_{r}^{d})$,
and denote by $\pi^{\dagger}:=\mu_{p,q}^{r}$ the random-cluster measure
on $\Omega_{r}=\{0,\,1\}^{E_{r}}$. Let $\Lambda\subset E_{r}$ be
a {box} of size $2\log^{5}n$. Then, define
\begin{equation}
\mathbf{d}_{t}=\mathbf{d}_{t,n}=\max_{x_{0}^{\dagger}\in\Omega_{r}}\left\Vert \mathbb{P}_{x_{0}}\left[X_{t}^{\dagger}(\Lambda)\in\cdot\,\right]-\pi_{\Lambda}^{\dagger}\right\Vert _{L^{2}(\pi_{\Lambda}^{\dagger})}\;,\label{emt}
\end{equation}
where $X_{t}^{\dagger}(\Lambda)$ represents the configuration of
$X_{t}^{\dagger}$ on $\Lambda$, and $\pi_{\Lambda}^{\dagger}$ stands
for the projection of $\pi^{\dagger}$ onto the set $\Lambda$. The
main result of this section is the following theorem.
\begin{thm}
\label{t61}For all sufficiently small $p$, there exists a constant
$C_{1}=C_{1}(p)$ such that the following hold.
\begin{enumerate}
\item For $s\in[C_{1}\log\log n,\,\smax]$ and $t\in[0,\,\smax]$, it holds
that
\[
\max_{\nu:\nu\preceq\per(\tini)}\left\Vert \mathbb{P}_{\nu}\left[X_{t+s}\in\cdot\,\right]-\mui\right\Vert _{\textrm{TV}}\le\frac{1}{2}\left[\exp\left\{ \frac{n^{d}}{\log^{12d}n}\mathbf{d}_{t}^{2}\right\} -1\right]^{1/2}+\frac{4}{n^{2d}}\;.
\]
\item If $t\ge C_{1}\log\log n$ and
\[
\lim_{n\rightarrow\infty}\bigg(\frac{n}{\log^{10}n}\bigg)^{d}\mathbf{d}_{t}^{2}=+\infty\;,
\]
then we have
\[
\liminf_{n\rightarrow\infty}\max_{\nu:\nu\preceq\per(\tini)}\left\Vert \mathbb{P}_{\nu}\left[X_{t}\in\cdot\,\right]-\mui\right\Vert _{\textrm{TV}}=1\;.
\]
\end{enumerate}
\end{thm}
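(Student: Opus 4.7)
The plan is to reduce the mixing of the FK-dynamics on $\Lambda_n$ to that of many nearly independent copies of the FK-dynamics on the small torus $\mathbb{Z}_r^d$, relying on the disagreement propagation bound (Lemma \ref{lem0311}) and the decay of connectivity (Lemma \ref{contour}) as the two main ingredients. These tools allow $X_{t+s}$, projected onto carefully chosen disjoint blocks, to be replaced by an almost independent product of local small-torus dynamics, after which the single-block $L^{2}$ estimate $\mathbf{d}_t$ tensorizes cleanly.

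For Part (1), I partition $\Lambda_n$ into $N \asymp n^d/\log^{12d} n$ disjoint translates $\Lambda_1,\ldots,\Lambda_N$ of $\Lambda$, each sitting inside a disjoint translate $T_i$ of $\mathbb{Z}_r^d$ and separated from the others by buffer corridors of width at least $\log^4 n$. After the burn-in $s\ge\tini$, Lemma \ref{lem032} gives $X_s\preceq \per(\pini)$, providing the hypothesis of Lemma \ref{lem0311}. On each $T_i$ I launch an independent FK-dynamics $X^{(i)}$ of duration $t$, started from $X_s|_{T_i}$ and driven by the restriction of the original update sequence to $T_i$; Lemma \ref{lem0311} together with Remarks \ref{rm52} and \ref{rm53} then gives $\mathbb{P}[X_{t+s}(\Lambda_i)=X^{(i)}_t(\Lambda_i)]\ge 1-n^{-3d}$, and a union bound over $N\le n^d$ blocks controls the total coupling error by $n^{-2d}$. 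The processes $X^{(i)}$ are mutually independent since the tori $T_i$ are disjoint. An analogous argument, this time using Lemma \ref{contour} to produce closed contours inside the buffers, shows that $\mui$ restricted to $\bigcup_i\Lambda_i$ is within TV distance $n^{-2d}$ of the product $\bigotimes_i \pi_\Lambda^\dagger$. The problem thereby reduces to bounding
\[
\Big\Vert\bigotimes_i \mathbb{P}\big[X^{(i)}_t(\Lambda)\in\cdot\,\big] - \bigotimes_i \pi_\Lambda^\dagger\Big\Vert_{\mathrm{TV}} \le \tfrac{1}{2}\big((1+\mathbf{d}_t^2)^N-1\big)^{1/2} \le \tfrac{1}{2}\big(e^{N\mathbf{d}_t^2}-1\big)^{1/2},
\]
via the standard tensorization $\|\bigotimes_i\nu_i-\bigotimes_i\pi\|_{L^{2}(\bigotimes_i\pi)}^{2}=\prod_i(1+\|\nu_i-\pi\|_{L^{2}(\pi)}^{2})-1$ together with $\|\cdot\|_{\mathrm{TV}}\le \tfrac{1}{2}\|\cdot\|_{L^{2}}$, which combined with the coupling errors gives the stated estimate.

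For Part (2), $L^{2}$ duality furnishes a worst-case initial configuration $x_0^{\dagger}\in\Omega_r$ and a test function $f\colon\{0,1\}^{\Lambda}\to\mathbb{R}$ with $\mathrm{Var}_{\pi_\Lambda^\dagger}(f)\le 1$ and mean gap $|\mathbb{E}[f(X^{\dagger}_t(\Lambda))]-\mathbb{E}_{\pi_\Lambda^\dagger}[f]|\gtrsim \mathbf{d}_t$. Partition $\Lambda_n$ into $N\asymp (n/\log^{10} n)^d$ disjoint translates of $\Lambda$ with buffer corridors of width $\log^4 n$, and consider the additive statistic $F(\sigma)=\sum_{i=1}^{N} f(\sigma(\Lambda_i))$, running the chain from a suitable measure $\nu\preceq\per(\pini)$ built from the worst-case configuration $x_0^{\dagger}$. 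Repeating the coupling of Part (1) both for this chain and for a chain started from $\mui$, each marginal on $\bigcup_i\Lambda_i$ is within $n^{-2d}$ of its product of single-block marginals, so $F$ has variance $O(N)$ under both laws, while the two means of $F$ differ by order $N\mathbf{d}_t$ and the standard deviation is $O(\sqrt{N})$. The assumption $N\mathbf{d}_t^{2}\to\infty$ therefore separates the two means by $\sqrt{N}\,\mathbf{d}_t\to\infty$ standard deviations, and a Chebyshev-type estimate forces $\|\mathbb{P}_\nu[X_t\in\cdot\,]-\mui\|_{\mathrm{TV}}\to 1$.

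The main obstacle is making the product coupling genuinely independent: one must verify that the $N$ small-torus chains $X^{(i)}$ can be realized as a product of mutually independent processes and that the disagreement-propagation error can be union-bounded across all blocks simultaneously. This requires that throughout the entire window $[s,t+s]$ closed contours surround each $T_i$ in its buffer corridor. The proof of Lemma \ref{lem0311} sketched in the appendix following \cite{BS,MS13} produces exactly such contours with probability $1-n^{-3d}$ per block, and the disjointness of the $T_i$'s, together with the independence of the update sequences on disjoint portions of $E_n$, lets the union bound proceed. The parallel static argument under $\mui$ needs Lemma \ref{contour} in the role of dynamical disagreement propagation to produce the contour structure.
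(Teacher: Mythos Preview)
Your Part~(1) contains a genuine gap. You place $N\asymp n^{d}/\log^{12d}n$ disjoint translates of $\Lambda$ (each of side $2\log^{5}n$) inside $\Lambda_{n}$, but these cover only an $O(\log^{-7d}n)$ fraction of $E_{n}$. Projecting onto $\bigcup_i\Lambda_i$ can only \emph{decrease} total variation, so controlling the product chain on these blocks gives no upper bound whatsoever on $\left\Vert\mathbb{P}_{\nu}[X_{t+s}\in\cdot\,]-\mui\right\Vert_{\mathrm{TV}}$. If instead you genuinely tiled $\Lambda_{n}$ with boxes of side $\log^{5}n$, you would be forced to take $N\asymp (n/\log^{5}n)^{d}$ factors, not $n^{d}/\log^{12d}n$, and the stated inequality would not follow. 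You also misread the role of $s$: the hypothesis $\nu\preceq\per(\pini)$ already encodes the burn-in, so invoking Lemma~\ref{lem032} again is redundant and leaves the interval $[\tini,s]$ unused.

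The missing idea is the \emph{update support}. The paper first couples $X_{t+s}$ to a barrier-dynamics map $\mathcal{G}_{s}(X_{t})$ (Lemma~\ref{lem32}), and the point of the window $s\ge C_{1}\log\log n$ is that this map depends on $X_{t}$ only through a random set $\Gamma_{U_{s}}\subset E_{n}$. Lemma~\ref{lem66} shows that $\Gamma_{U_{s}}$ is \emph{sparse} with high probability: it breaks into at most $n^{d}/\log^{12d}n$ pieces of diameter $\le\log^{5}n$ separated by $\ge 4\log^{4}n$. Lemma~\ref{lem64} then passes the TV distance through the deterministic map to $\left\Vert\mathbb{P}_{\nu}[X_{t}(\Gamma_{U_{s}})\in\cdot\,]-\mu_{\Gamma_{U_{s}}}\right\Vert_{\mathrm{TV}}$, and only at this stage does one apply your product-chain comparison (Lemmas~\ref{lemcou}, \ref{lempro}, \ref{lemmu}) to the pieces of the sparse set. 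In short, the time $s$ is what cuts the number of relevant blocks down from $(n/\log^{5}n)^{d}$ to $n^{d}/\log^{12d}n$; without it the tensorization step cannot yield the claimed exponent.

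Your Part~(2) is essentially correct and matches the paper's argument: one plants the worst-case small-torus initial configuration on well-separated blocks, builds an additive distinguishing statistic, and uses the block coupling together with Lemma~\ref{contour} to control variances under both laws, after which Chebyshev separates the two distributions.
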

{{
\begin{rem}
\label{rm62} As the proof will reveal, part (1) of the theorem holds even when $X_t$ is the FK-dynamics on $\mathbb{Z}_{m}^{d}$, instead of $\mathbb{Z}_n^d$, where $m\in \llbracket \log^5 n,\,n\rrbracket$. The inequality in this case is 
\[
\max_{\nu:\nu\preceq\textrm{Perc}_m (\tini)}\left\Vert \mathbb{P}_{\nu}\left[X_{t+s}\in\cdot\,\right]-\mu_{p,\,q}^m \right\Vert _{\textrm{TV}}\le\frac{1}{2}\left[\exp\left\{ m^d \mathbf{d}_{t}^{2}\right\} -1\right]^{1/2}+\frac{4}{n^{2d}}\;, 
\]
where $\mathbf{d}_t$ is as in  \eqref{emt}. 
\end{rem}
}}
Henceforth, the constant $C_{1}>0$ will always refer to the constant
appeared in this theorem. The proof of this theorem will be presented
in the remaining part of the current section. We shall assume that
$p$ is small enough so that all the results established in Sections
\ref{sec3} (including \ref{sec5}) and \ref{sec4} are valid.
As indicated in Section \ref{iop}, following the strategy in \cite{LS1} where a similar statement as Theorem \ref{t61} appears,  we will reduce the chain to an approximate product chain.  The only major difference in the statement of Theorem \ref{t61}, as compared to statements appearing in previous articles is that owing to the non-locality of the dynamics, we initialize from a sparse initial condition dominated by a sub-critical percolation which can be obtained by evolving the initial configuration for a burning time (see Lemma \ref{pro35}).

We start by giving a short roadmap of what the various subsections achieve.
\begin{itemize}
\item The first part (Section \ref{sec61}) constructs the so called Barrier dynamics where the FK-dynamics on $(\Z/n\Z)^d$ gets compared to FK-dynamics on a disjoint collection of $(\Z/r\Z)^d$ where $r=\log^5n.$
\item We define the notion of Update support in Section \ref{sec62}.
{
To get an upper bound on the mixing time, we bound the total variation distance at $d(t+s)$ where $t=t_{\mix}=O(\log n)$ and $s=\log\log(n).$ The natural strategy is to couple the configurations at time $t+s$ starting from any two arbitrary initial configurations.
}
At this point the key observation is that irrespective of the configuration at time $t$, all but a sparse set of small boxes couple at time $t+s.$ The remainder is called the `Update support' for reasons which will be clear later and hence the remaining task is to ensure that the time interval $[0,t]$ is sufficient  for the FK-dynamics starting from two arbitrary configurations to couple on the `Update support'.
\item We prove Theorem \ref{t61} in Section \ref{sec63}.
\end{itemize}

\subsection{\label{sec61}Coupling with barrier-dynamics}

Divide $E_{n}$ into disjoint squares of size $\log^{5}n$ as follows.
Let us write $K=n/\log^{5}n$ and assume that $K$ and $\log^{5}n$
are integers for the simplification of notation. Define
\[
V_{n}=\{0,\,\log^{5}n,\,2\log^{5}n,\,\dots,\,(K-1)\log^{5}n\}^{d}\subset\Lambda_{n}\;.
\]
For each $v\in V_{n}$, we define an \textit{\textbf{edge box}} $B_{v}$ by
\begin{equation}\label{edgebox1}
B_{v}=\left\{ (u,\,u+e_{j}):u\in v+\llbracket0,\,\log^{5}n-1\rrbracket^{d}\text{ and }j\in\llbracket1,\,d\rrbracket\right\}\;,
\end{equation}
where $e_{j}$ represents the $j$th standard normal vector in $\mathbb{R}^{d}$.
One can think of $B_{v}$ as a box of size $\log^{5}n$ with some
boundary edges are removed. Note that $(B_{v})_{v\in V_{n}}$ is a
{decomposition} of $E_{n}$. {Furthermore, we mention that all the boxes below of various sizes, are edge boxes and hence for brevity we will refer to them as boxes.}

\begin{figure}[h]
\centering
\includegraphics[scale=.20]{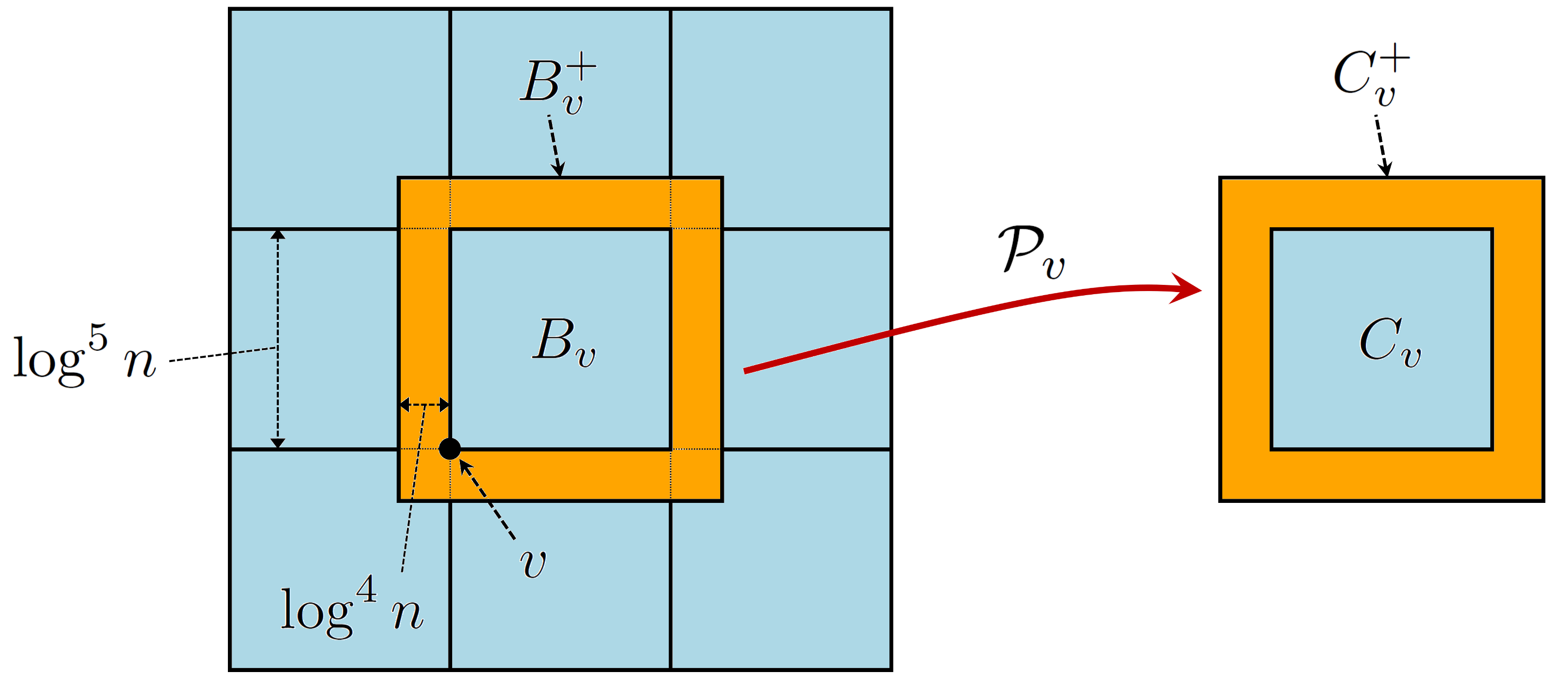}
\caption{Figure illustrating the maps $\mathcal{P}_v$. }
\label{fig3}
\end{figure}

Then, for each $v\in V_{n}$, consider the expanded box $B_{v}^{+}\subset\Lambda_{n}$
of $B_{v}$ in the sense of \eqref{aplus}. Then, $B_{v}^{+}$ is
a box of size $\log^{5}n+2\log^{4}n$ which is concentric
with $B_{v}$. Let $C_{v}^{+}$ be another square lattice of size
$\log^{5}n+2\log^{4}n$ and define a natural identification map $\mathcal{P}_{v}:B_{v}^{+}\rightarrow C_{v}^{+}$.
Define $C_{v}=\mathcal{P}_{v}(B_{v})$ so that $(C_{v},\,C_{v}^{+})$ is
a copy of $(B_{v},\,B_{v}^{+})$ (see Figure \ref{fig3}). We define
\[
\widehat{E}_{n}=\bigsqcup_{v\in V_{n}}C_{v}^{+}\;\;\text{and\;\;}\widehat{\Omega}_{n}=\{0,\,1\}^{\widehat{E}_{n}}\;.
\]
Note that the last union is a disjoint union.
\begin{defn}[Barrier-dynamics]
For each $v\in V_{n}$, the barrier-dynamics is a FK-dynamics $X_{t}^{v}$
on $C_{v}^{+}$ coupled with $X_{t}$ by sharing the same update sequence
via the following rules:
\begin{enumerate}
\item (Initial condition) The initial edge configuration on $C_{v}^{+}$
is identical to that of $B_{v}^{+}$ through $\mathcal{P}_{v}$. In other
words, $X_{0}^{v}(\mathcal{P}_{v}(e))=X_{0}(e)$ for all $e\in B_{v}$.
\item (Dynamics) We define the FK-dynamics $(X_{t}^{v})_{t\ge0}$ on $C_{v}^{+}$
with periodic boundary condition by using the update sequence of
$B_{v}^{+}$. Formally stating, we perform updates for each $e\in C_{v}^{+}$
by using the update sequence $\upd(\mathcal{P}_{v}^{-1}(e))$ of the edge
$\mathcal{P}_{v}^{-1}(e)\in B_{v}^{+}$.
\end{enumerate}
\end{defn}

For $t\ge0$, we define a random map $\mathcal{G}_{t}:\Omega_{n}\rightarrow\Omega_{n}$
such that, for all $X_{0}\in\Omega_{n}$,
\[
[\mathcal{G}_{t}(X_{0})](e)=X_{t}^{v}(\mathcal{P}_{v}(e))\;,
\]
where $v\in V_{n}$ is the unique index such that $e\in B_{v}$. The next lemma now says that the actual dynamics and the barrier dynamics stay coupled for a significant amount of time provided the initial condition is sparse enough (note that for a spin system the latter condition is not needed since each update only depends on its immediate neighbors).
\begin{lem}
\label{lem32}Suppose that $p$ is small enough and the law of the
initial condition $X_{0}$ follows the law $\nu$ such that $\nu\preceq\per(\pini)$.
Then, we have
\[
\mathbb{P}\big[X_{t}=\mathcal{G}_{t}(X_{0})\text{ for all}\;t\in[0,\,\smax]\,\big]\ge1-n^{-2d}\;.
\]

\end{lem}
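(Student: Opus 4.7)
The plan is to prove the claim box-by-box and conclude by a union bound over $v\in V_n$, noting that $|V_n|=(n/\log^5 n)^d\le n^d$. So it suffices to show that, for each fixed $v$,
\[
\mathbb{P}\big[\,X_t(e)=X_t^v(\mathcal{P}_v(e))\ \text{for all}\ e\in B_v\ \text{and all}\ t\in[0,\smax]\,\big]\ge 1-Cn^{-3d}
\]
for some absolute constant $C$, after which summing the failure probabilities over $|V_n|\le n^d$ boxes yields at most $Cn^{-2d}\le n^{-2d}$ for $n$ large.

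The strategy for a single $v$ is to use the censored FK-dynamics $Z_t$ on $B_v^+$ (say, with full boundary condition, started from $X_0|_{B_v^+}$, driven by the restricted update sequence $\upd|_{B_v^+}$) as a \emph{common reference} for the two processes of interest. Since $\nu\preceq\per(\pini)$, the projected law $\nu|_{B_v^+}$ is dominated by $\per^{B_v^+}(\pini)$, so Lemma~\ref{lem0311} applies with $A=B_v$ and $A^+=B_v^+$: the enlargement is $\log^4 n$ and the time horizon is $\smax=\log^2 n$, which comfortably fits the hypotheses (cf.\ Remark~\ref{rm53}). The third bullet of Remark~\ref{rm52} then gives
\[
\mathbb{P}\big[\,X_t(B_v)=Z_t(B_v)\ \text{for all}\ t\in[0,\smax]\,\big]\ge 1-n^{-3d},
\]
while the second bullet of Remark~\ref{rm52}, applied to the barrier dynamics $X_t^v$ on $C_v^+\cong B_v^+$ with periodic boundary and driven by the same update sequence via $\mathcal{P}_v$, gives
\[
\mathbb{P}\big[\,X_t^v(\mathcal{P}_v(e))=Z_t(e)\ \text{for all}\ e\in B_v\ \text{and all}\ t\in[0,\smax]\,\big]\ge 1-n^{-3d}.
\]
Intersecting these two events via the canonical coupling (all three processes share the update sequence on $B_v^+$ and the identical initial condition $X_0|_{B_v^+}$), both $X_t(B_v)$ and $X_t^v(\mathcal{P}_v(B_v))$ coincide with $Z_t(B_v)$, hence with each other, throughout $[0,\smax]$, with probability at least $1-2n^{-3d}$.

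The only substantive point to verify in writing this up is that the two applications of Remark~\ref{rm52} can legitimately use the \emph{same} censored process $Z_t$ as reference. This is built into the barrier construction: the barrier dynamics on $C_v^+$ was defined to share the update sequence with the ambient process on $B_v^+$ via $\mathcal{P}_v$, so $Z_t$ lifts unambiguously to both sides. I do not anticipate a serious obstacle; all the hard work has been done in Section~\ref{sec5} and encoded in Remarks~\ref{rm52}--\ref{rm53}. The sparse-initial-condition hypothesis $\nu\preceq\per(\pini)$ is needed precisely to invoke Lemma~\ref{lem0311}, and the sizes $\log^5 n$ and $\log^4 n$ of the boxes and their enlargements were chosen so that the disagreement-propagation window $\smax=\log^2 n$ is more than enough.
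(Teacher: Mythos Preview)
Your proposal is correct and follows essentially the same approach as the paper's proof. The paper's one-line argument invokes Lemma~\ref{lem0311} via Remark~\ref{rm52} directly: since both the projected full dynamics $X_t(B_v)$ and the periodic barrier dynamics $X_t^v(C_v)$ are among the processes listed in Remark~\ref{rm52}, they \emph{simultaneously} coincide with $Z_t^+(B_v)=Z_t^-(B_v)$ on a single event of probability at least $1-n^{-3d}$, and the union bound over $|V_n|<n^d$ boxes finishes. Your version makes the sandwich explicit by routing through a censored reference process $Z_t$ and applying the comparison twice, which costs a harmless extra factor of $2$ (so your final bound is $1-2n^{-2d}$ rather than $1-n^{-2d}$; note your claim ``$Cn^{-2d}\le n^{-2d}$ for $n$ large'' is not literally true for $C=2$, but this is immaterial since the $n^{-3d}$ in Lemma~\ref{lem0311} can be sharpened at will).
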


\begin{proof}
By Lemma \ref{lem0311} (cf. Remark \ref{rm52}), it holds that
\[
\mathbb{P}\big[X_{t}(B_{i})=X_{t}^{v}(C_{i})\text{ for all}\;t\in[0,\,\smax]\,\big]\ge1-n^{-3d}\;.
\]
Thus, the conclusion of the lemma follows from the union bound since
$|V_{n}|<n^{d}.$
\end{proof}

\subsection{\label{sec62}Sparsity of update support}
\begin{defn}[Update support]
For each $s>0$, denote by $\us=\upd[0,\,s]$   the update sequence
between time $[0,\,s]$. Then, the random map $\mathcal{G}_{s}$ is
completely determined by $\us$ and hence we can write $\mathcal{G}_{s}=g_{\us}$
for some function $g_{\us}:\Omega_{n}\rightarrow\Omega_{n}$. The
update support of $\us$ is the minimum subset $\Gamma_{\us}\subset E_{n}$
such that $\mathcal{G}_{s}$ is a function of $X(\Gamma_{\us})$
for all $X\in\Omega_{n}$, i.e.,
\[
g_{\us}(X)=f_{\us}(X(\Gamma_{\us}))
\]
for some $f_{\us}:\{0,\,1\}^{\Gamma_{\us}}\rightarrow\Omega_{n}.$
\end{defn}

\begin{lem}\cite[Lemma 3.8]{LS1}
\label{lem64}Fix $t\ge0$ and let $\us$ represent the update sequence
for the time interval $[t,\,t+s]$ for $s\le\smax$ where $\smax$ was defined in \eqref{esm}. Suppose that
$p$ is small enough and a probability measure $\nu$ in $\Omega_{n}$
satisfies $\nu\preceq\per(\tini)$. Then, we have
\begin{equation*}
\left\Vert \mathbb{P}_{\nu}[X_{t+s}\in\cdot\,]-\mui\right\Vert _{\textrm{TV}}\le\int \left\Vert \mathbb{P}_{\nu}[X_{t}(\Gamma_{\us})\in\cdot\,]-\mu_{\Gamma_{\us}}\right\Vert _{\textrm{TV}}d\mathbb{P}(\us)+2n^{-3d}\;,
\end{equation*}
where $\mu_{\Gamma_{\us}}$ represents the projection of $\mui$
on $\Gamma_{\us}$.
\end{lem}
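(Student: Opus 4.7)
The plan is to couple two copies $(X_r)_{r\ge 0}$ and $(Y_r)_{r\ge 0}$ of the FK-dynamics started from $\nu$ and from the stationary measure $\mui$ respectively, and then use the barrier-dynamics approximation from Lemma \ref{lem32} to transfer the coupling from time $t+s$ down to a coupling of the restrictions $X_t(\Gamma_{\us})$ and $Y_t(\Gamma_{\us})$.

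As a preliminary step, I would verify that both marginals at time $t$ satisfy the stochastic-domination hypothesis needed to invoke Lemma \ref{lem32}. Under the canonical coupling the FK-dynamics is dominated by the SPD (this follows from Definition \ref{def11} together with $1-p+p^{*}>1-p$, since an FK-update opens an edge on strictly fewer uniform outcomes than the corresponding SPD-update). Combining this with the argument behind Lemma \ref{lem032} and the hypothesis $\nu\preceq\per(\pini)$ gives $\bP_{\nu}[X_t\in\cdot\,]\preceq\per(\pini)$ for every $t\ge 0$; letting $t\to\infty$ in the same domination from an arbitrary start also yields $\mui\preceq\per(\pini)$. A time-shifted application of Lemma \ref{lem32} on $[t,t+s]$ (valid since $s\le\smax$) therefore gives $\bP(E_X^{c})\le n^{-2d}$ and $\bP(E_Y^{c})\le n^{-2d}$, where $E_X:=\{X_{t+s}=\mathcal{G}_s(X_t)\}$ and $E_Y:=\{Y_{t+s}=\mathcal{G}_s(Y_t)\}$, with $\mathcal{G}_s$ the barrier-dynamics map driven by the update sequence $\us$ on $[t,t+s]$.

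The central step is the choice of the joint law of $(X_t,Y_t,\us)$. The key observation I would exploit is that $\us$ is built from the Poisson marks in $[t,t+s]$, hence is independent of any construction of $(X_t,Y_t)$ that only uses the marks in $[0,t]$. Concretely, I would first sample $\us$ (which determines both $\Gamma_{\us}$ and the deterministic function $f_{\us}$ from the update-support definition), and then, conditionally on $\us$, couple $(X_t,Y_t)$ with the correct marginals $\bP_{\nu}[X_t\in\cdot\,]$ and $\mui$ so that the projections onto $\Gamma_{\us}$ realise the TV-optimal coupling,
\[
\bP\!\left[X_t(\Gamma_{\us})\ne Y_t(\Gamma_{\us})\,\big|\,\us\right]=\left\Vert \bP_{\nu}[X_t(\Gamma_{\us})\in\cdot\,]-\mu_{\Gamma_{\us}}\right\Vert _{\textrm{TV}}.
\]
With this coupling fixed, I would then drive both chains from time $t$ to $t+s$ using the common $\us$, so that $X_{t+s}\sim\bP_{\nu}[X_{t+s}\in\cdot\,]$ and $Y_{t+s}\sim\mui$ by stationarity.

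The conclusion will then be a short union-bound. On $E_X\cap E_Y\cap\{X_t(\Gamma_{\us})=Y_t(\Gamma_{\us})\}$, the defining property of the update support forces $X_{t+s}=f_{\us}(X_t(\Gamma_{\us}))=f_{\us}(Y_t(\Gamma_{\us}))=Y_{t+s}$. Combining this with the coupling characterisation of total variation will give
\[
\|\bP_{\nu}[X_{t+s}\in\cdot\,]-\mui\|_{\textrm{TV}}\le \bP(E_X^{c})+\bP(E_Y^{c})+ \int \|\bP_{\nu}[X_t(\Gamma_{\us})\in\cdot\,]-\mu_{\Gamma_{\us}}\|_{\textrm{TV}}\, d\bP(\us),
\]
and substituting the Step~1 bound will produce the claimed inequality, the final $2n^{-3d}$ constant being obtainable from a sharper per-box application of Lemma \ref{lem0311} summed over the at most $n^d$ boxes of Section \ref{sec61}. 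The main obstacle, which Step~2 is engineered to bypass, is that the natural monotone coupling of $(X_t,Y_t)$ driven by a common pre-$t$ update sequence is not TV-optimal on any prescribed edge subset; the resolution is to postpone the choice of the $(X_t,Y_t)$-coupling until after $\us$ (and hence $\Gamma_{\us}$) has been revealed, which is legitimate precisely because the two time-segments of Poisson marks are independent.
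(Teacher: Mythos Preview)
Your proposal is correct and is essentially the argument the paper has in mind: the paper's own proof simply points to \cite[Lemma 3.8]{LS1} and notes that the only model-specific input is the barrier-dynamics coupling of Lemma \ref{lem32}, which is precisely the ingredient you isolate and apply (together with the standard trick of sampling $\us$ first and then choosing a TV-optimal coupling of the time-$t$ marginals on $\Gamma_{\us}$). The minor discrepancy between the $2n^{-2d}$ you naturally obtain from Lemma \ref{lem32} and the stated $2n^{-3d}$ is harmless, and your remark about sharpening the per-box bound in Lemma \ref{lem0311} is the right fix.
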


\begin{proof}The proof in the above reference  relies only on the coupling of $X_t$ and $\mathcal{G}_t (X_0)$ for $t\in [0,\,\smax]$. For our model this has been established in Lemma \ref{lem32} based on the bound on disagreement percolation using the sparse initial conditions.
\end{proof}
Now we establish the sparsity of the update support $\Gamma_{\us}$.
\begin{defn}[Sparse set]
\label{def310}A  $\mathcal{S}\in\Omega_{n}$ is called
\textit{sparse} if for some $K\le n^{d}(\log n)^{-12d}$, the graph
induced by $\mathcal{S}$ can be decomposed into disjoint components
$A_{1},\,A_{2},\,\cdots,\,A_{K}$ such that
\begin{enumerate}
\item For all distinct $i,\,j\in\llbracket1,\,K\rrbracket$, there is no
open path in $S$ connecting $A_{i}$ and $A_{j}$.
\item Every $A_{i}$, $i\in\llbracket1,\,K\rrbracket$, has diameter at
most $\log^{5}n$. In particular, there is a box of size $2\log^{5}n$
containing $A_{i}$.
\item The distance between any distinct $A_{i}$ and $A_{j}$ is at least
$4\log^{4}n$.
\end{enumerate}
We write $\spar$ to denote the set of sparse configurations in $\Omega_{n}$.
\end{defn}

\begin{figure}[h]
\centering
\includegraphics[scale=0.17]{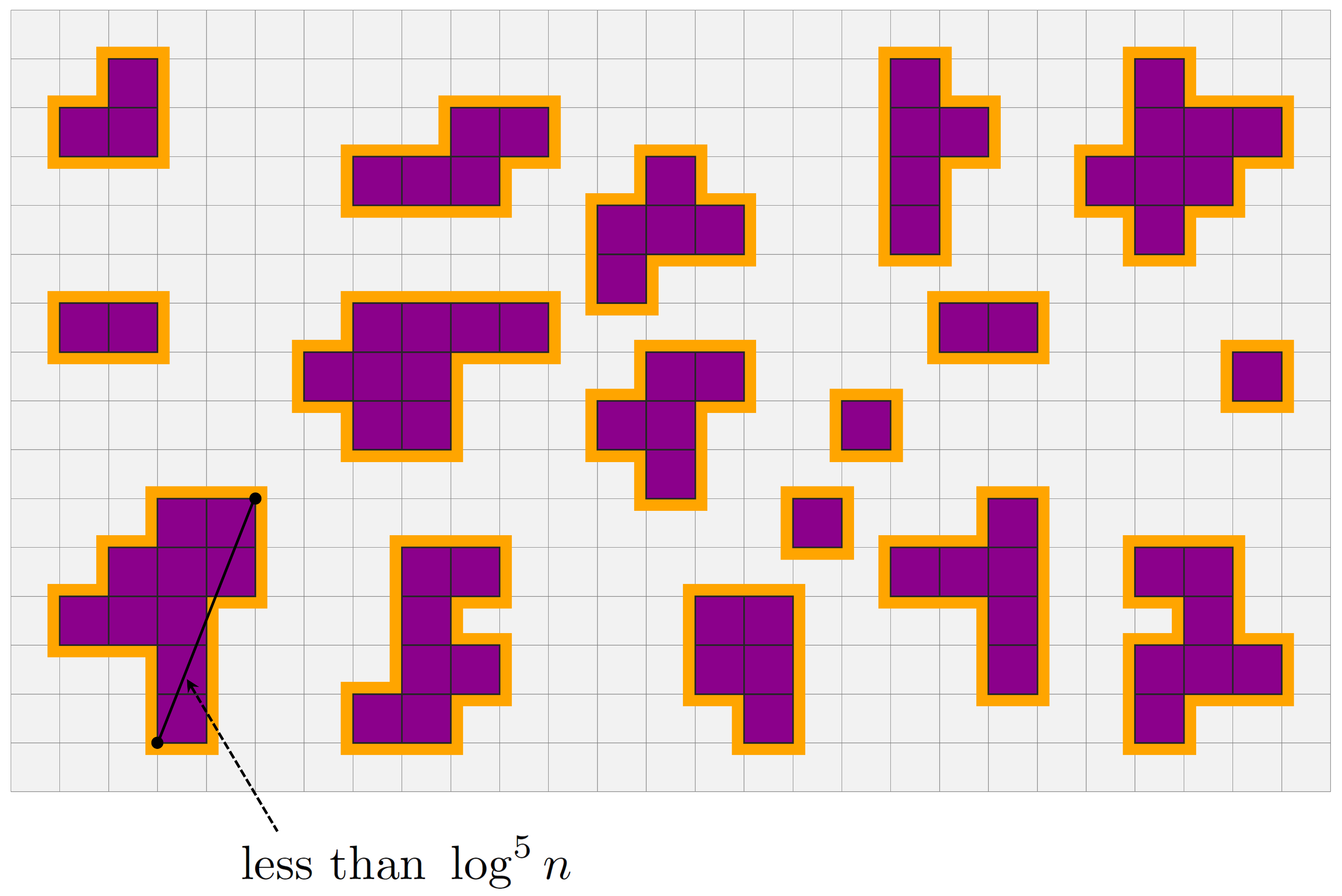}
\caption{{The figure illustrates the sparse update support. The purple regions (including the orange buffer around them) denote the sets $A_i$ (and $A_i^+$) as in Definitions \ref{def310} and \ref{mgood}}.}
\label{fig4}
\end{figure}

\begin{lem}\cite[Lemma 3.9]{LS1}
\label{lem66}There exists $C_{2}=C_{2}(p)>0$ such that, for all
$s\ge C_{2}\log\log n$,
\begin{equation}
\mathbb{P}[\Gamma_{\us}\in{\mathrm\spar}]\ge1-n^{-3d}\;.\label{e311}
\end{equation}
\end{lem}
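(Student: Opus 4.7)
The plan is to apply the information percolation framework developed in Section \ref{sec4} to each barrier box $C_v^+$ independently, and then verify that the resulting local contributions assemble into a sparse subset of $E_n$. Since $\mathcal{G}_s$ is built from the barrier dynamics on the disjoint boxes $\{C_v^+\}_{v \in V_n}$, the update support decomposes as $\Gamma_{\us} = \bigcup_{v \in V_n} \Gamma_{\us}^v$, where $\Gamma_{\us}^v \subset B_v^+$ is the pull-back (via $\mathcal{P}_v^{-1}$) of the update support of $X_s^v$ on $C_v^+$. Applying the information percolation construction of Section \ref{sec4} to $C_v^+$, the set $\Gamma_{\us}^v$ coincides with the union of the time-$\tau_1$ footprints $\mathscr{H}_A(\tau_1)$ over red clusters $A$ of the information percolation on $C_v^+$, and in particular $\Gamma_{\us}^v=\emptyset$ exactly when no red cluster survives back to time $\tau_1$.

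Call $v$ \emph{bad} if $\Gamma_{\us}^v \ne \emptyset$. The direct analog of Lemma \ref{lem46} for the finite box $C_v^+$ of side $\log^5 n + 2\log^4 n$ gives $\mathbb{P}[v \text{ bad}] \le C(\log n)^{c_1 d} e^{-\alpha s}$ for some absolute constant $c_1$, so choosing $C_2 = C_2(\alpha,d)$ large enough forces $\mathbb{P}[v \text{ bad}] \le (\log n)^{-30d}$ once $s = C_2\log\log n$. Since $\Gamma_{\us}^v$ depends only on updates inside $B_v^+$, and $B_v^+ \cap B_{v'}^+ = \emptyset$ whenever $\|v-v'\|_\infty \ge 2$ for large $n$, I partition $V_n$ into $3^d$ sub-lattices within each of which the bad events are mutually independent. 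A Chernoff bound within each sub-lattice and a union bound across the $3^d$ classes then give that the total number of bad boxes is at most $n^d/(\log n)^{15d}$ with probability at least $1 - n^{-3d}$.

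It remains to verify the three conditions of Definition \ref{def310}. For this I invoke Proposition \ref{p45} a second time, retaining the factor $e^{-\theta|\conn(A)|}$: summing the tail $\mathcal{P}_A \le Ce^{-(\theta|\conn(A)| + \alpha\tau_m)}$ over subsets $A \subset C_v^+$ anchored at a fixed edge with $|\conn(A)| \ge \log^2 n$, and then union bounding over anchors and over $v$, yields with probability at least $1 - n^{-3d}$ that every red cluster contributing to any $\Gamma_{\us}^v$ satisfies $|\conn(A)| \le \log^2 n$ and hence has spatial footprint of diameter at most $\log^2 n$. Defining $A_1,\ldots,A_K$ as the connected components of the $2\log^4 n$-neighborhood of $\Gamma_{\us}$ restricted back to $\Gamma_{\us}$, each $A_i$ then fits in a cube of side $\log^2 n + 4\log^4 n \ll \log^5 n$, distinct $A_i,A_j$ are separated by at least $4\log^4 n$ by construction, and $K$ is bounded by the number of bad boxes up to an $O(1)$ factor.

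The main obstacle I expect is the localization step of the third paragraph. Proposition \ref{p45} bounds the topological size $|\conn(A)|$ of the red cluster $A$ as it appears at time $\tau_m$, whereas the sparsity condition demands a spatial bound on the time-$\tau_1$ footprint $\mathscr{H}_A(\tau_1)$; this translation must pass through the branching-process domination of Section \ref{sec43}, using that each backward step expands the footprint only by a subcritical connected component of $\env_{i-1}\cup\env_i$ whose diameter has exponential tail by Proposition \ref{propdec}, so the cumulative spatial spread over the $m = O(\log\log n)$ backward steps is dominated by a sum of such local diameters. A secondary difficulty is that red clusters straddling the overlapping buffer regions of adjacent $B_v^+$ must be traced consistently through both barrier dynamics, which is what forces the $\log^4 n$ buffer in the definition of $B_v^+$ and what dictates the exponents appearing throughout.
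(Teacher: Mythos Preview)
Your approach differs substantially from the paper's. The paper does not invoke the information-percolation machinery of Section~\ref{sec4} here at all; instead it observes that the only model-dependent input required to run the argument of \cite[Lemma~3.9]{LS1} verbatim is the per-edge coupling estimate
\[
\sum_{e\in C_v^+}\mathbb{P}\bigl[X_s^{v,\mathrm{full}}(e)\ne X_s^{v,\mathrm{empty}}(e)\bigr]\le\log^{-10d}n,
\]
where the two processes are the barrier dynamics on the periodic box $C_v^+$ started from the full and empty configurations. This is immediate from part~(1) of Corollary~\ref{cor23} applied on the torus $C_v^+$ of side $\sim\log^5 n$: each summand is at most $e^{-Cs}$, there are $O(\log^{5d}n)$ terms, and taking $C_2$ large enough gives the bound. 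Everything else---the grouping of disagreeing edges into well-separated small-diameter components and the counting of such components---is imported from \cite{LS1} without change.

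Your route via Lemma~\ref{lem46} and Proposition~\ref{p45} can presumably be pushed through, but it is much heavier, and the obstacle you identify in your last paragraph is genuine: Proposition~\ref{p45} bounds $|\conn(A)|$ for the top-level edge set $A$, not the diameter of the time-$\tau_1$ footprint $\mathscr{H}_A(\tau_1)$, so an extra spatial-spread argument via the branching domination really would be required. Note also that your assertion that $\Gamma_{\us}^v$ \emph{coincides} with the union of red footprints is only a one-sided containment, since the history diagram furnishes a sufficient determining set rather than the minimal one; and the information percolation of Definition~\ref{def44} runs back only to $\tau_1$, not to time $0$, so one further backward step would still have to be handled. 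The paper sidesteps all of this by using the a~priori mixing bound of Corollary~\ref{cor23} (itself inherited from \cite{BS}) rather than information percolation, reserving the latter exclusively for the sharp $L^2$ estimate of Theorem~\ref{t41}.
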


\begin{proof}
 The only model-dependent part is the proof
of the following fact: For $t\ge C_{2}\log\log n$ with a large enough
$C_{2}$,
\begin{equation}
\sum_{e\in C_{v}^{+}}\mathbb{P}\left[X_{t}^{v,\textrm{full}}(e)\neq X_{t}^{v,\textrm{empty}}(e)\right]\le\log^{-10d}n\;,\label{e312}
\end{equation}
where $(X_{t}^{v,\textrm{full}})_{t\ge0}$ (resp. $(X_{t}^{v,\textrm{empty}})_{t\ge0})$
is the FK-dynamics on periodic lattice $C_{v}^{+}$ with full (resp.
empty) initial condition.
The proof of this fact in our setting follows from Corollary \ref{cor23}
which indicates that, for some     $C>0$,
\[
\sum_{e\in C_{v}^{+}}\mathbb{P}\left[X_{t}^{v,\textrm{full}}(e)
\neq X_{t}^{v,\textrm{empty}}(e)\right]\le|C_{v}^{+}|e^{-C\log\log n}\;.
\]
Hence, the bound \eqref{e312} follows if we take $C$ large enough. The
remaining part is identical to cited proofs and will not be repeated
here.
\end{proof}
By Lemmas \ref{lem64} and \ref{lem66}, we obtain the following result.
\begin{prop}
\label{p313}Suppose that $p$ is small enough and $\nu$ is a probability
distribution on $\Omega_{n}$ satisfying $\nu\preceq\per(\pini)$.
Then, for all $s\in[C_{0}\log\log n,\,\smax]$ where $C_{0}$ is the
constant appearing in Lemma \ref{lem66}, there exists a measure $\mathbb{Q}$
on $\spar$ such that,
\[
\left\Vert \mathbb{P}_{\nu}(X_{t+s}\in\cdot)-\mui\right\Vert _{\textrm{TV}}\le\int_{\spar}\left\Vert \mathbb{P}_{\nu}(X_{t}(\Gamma\in\cdot)-\mu_{\Gamma}\right\Vert _{\textrm{TV}}d\mathbb{Q}(\Gamma)+3n^{-3d}\;.
\]

\end{prop}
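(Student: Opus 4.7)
The plan is to obtain Proposition \ref{p313} as a direct combination of the two preceding lemmas, by splitting the integral in Lemma \ref{lem64} into the part where the update support is sparse and the part where it is not. Under the hypothesis $s \in [C_{0}\log\log n,\,\smax]$ (reading $C_0=C_2$, the constant of Lemma \ref{lem66}) and $\nu \preceq \per(\pini)$, both Lemmas \ref{lem64} and \ref{lem66} apply to the update sequence $\us=\upd[t,t+s]$ (stationarity in time of the underlying Poisson processes allows us to invoke Lemma \ref{lem66}, which is stated for $\upd[0,s]$, on the shifted interval $[t,t+s]$).

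First I would apply Lemma \ref{lem64} to get
\[
\|\mathbb{P}_{\nu}[X_{t+s}\in\cdot]-\mui\|_{\textrm{TV}} \le \int \|\mathbb{P}_{\nu}[X_t(\Gamma_{\us})\in\cdot]-\mu_{\Gamma_{\us}}\|_{\textrm{TV}}\, d\mathbb{P}(\us) + 2n^{-3d}.
\]
Next I would split the integral according to whether $\Gamma_{\us}\in\spar$ or not. On $\{\Gamma_{\us}\notin\spar\}$ the integrand is trivially bounded by $1$, so Lemma \ref{lem66} yields
\[
\int_{\{\Gamma_{\us}\notin\spar\}}\|\mathbb{P}_{\nu}[X_t(\Gamma_{\us})\in\cdot]-\mu_{\Gamma_{\us}}\|_{\textrm{TV}}\,d\mathbb{P}(\us) \le \mathbb{P}[\Gamma_{\us}\notin\spar] \le n^{-3d}.
\]

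Finally, on $\{\Gamma_{\us}\in\spar\}$ I would define the (sub-probability) measure $\mathbb{Q}$ on $\spar$ as the pushforward
\[
\mathbb{Q}(A) := \mathbb{P}\big[\Gamma_{\us}\in A,\ \Gamma_{\us}\in\spar\big], \quad A\subset \spar,
\]
so that by the change of variables
\[
\int_{\{\Gamma_{\us}\in\spar\}}\|\mathbb{P}_{\nu}[X_t(\Gamma_{\us})\in\cdot]-\mu_{\Gamma_{\us}}\|_{\textrm{TV}}\,d\mathbb{P}(\us)
= \int_{\spar}\|\mathbb{P}_{\nu}[X_t(\Gamma)\in\cdot]-\mu_{\Gamma}\|_{\textrm{TV}}\,d\mathbb{Q}(\Gamma).
\]
Combining the three displays yields the claimed inequality with the total error bound $2n^{-3d}+n^{-3d}=3n^{-3d}$. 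I do not expect any real obstacle: the genuinely nontrivial content is already contained in Lemmas \ref{lem64} and \ref{lem66}, and the present proposition is just a bookkeeping step that isolates the sparsity of $\Gamma_{\us}$ as an input to the mixing analysis of the next section.
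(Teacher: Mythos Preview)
Your proposal is correct and matches the paper's own argument: the paper simply states that Proposition~\ref{p313} follows by combining Lemmas~\ref{lem64} and~\ref{lem66}, which is exactly the splitting-and-pushforward step you spell out. Your observations about identifying $C_0$ with $C_2$ and invoking time-stationarity of the update sequence are the only minor points needed, and both are unproblematic.
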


\subsection{\label{sec63}Proof of Theorem \ref{t61}} Before jumping into the proof we will need some technical preparation. The next few results use coupling arguments to compare the actual chain to a product chain.
We start by defining a notion of good sets, and then introduce a generalized
version of barrier dynamics.
\begin{defn}
\label{mgood}A collection of disjoint subsets $A_{1},\,A_{2},\,\cdots,\,A_{K}$
of $\Omega_{n}$ are $m$-good for some $m\in\left[\log^{4}n,\,(1/2)\log^{5}n\right]$
if, each $A_{i}$ is contained in a box of size $2\log^{5}n$, and
the expanded sets $A_{i}^{+}$, $i\in\llbracket1,\,K\rrbracket$,
are disjoint where
\[
A_{i}^{+}=\{e\in E_{n}:d(e,\,A_{i})\le m\}\;.
\]
As a consequence of Lemma \ref{lem66}, the sets $A_i$ in the update support are $\log^4 n-$good (see Figure \ref{fig4}).

Let us take a box of size $r=3\log^{5}n$ containing $A_{i}^{+}$
and denote this box by $\widehat{A}_{i}^{+}$ (this is possible since
$m\le(1/2)\log^{5}n$). Take $K$ copies $\mathcal{L}_{1},\,\mathcal{L}_{2},\,\cdots,\mathcal{L}_{K}$
of the periodic lattice $\mathbb{Z}_{r}^{d}$, and embed each $\widehat{A}_{i}^{+}$
to $\mathcal{L}_{i}$ by a identification map $\mathcal{P}_{i}:\widehat{A}_{i}^{+}\rightarrow\mathcal{L}_{i}$.

For $i\in\llbracket1,\,K\rrbracket$, denote by $(Y_{t}^{(i)})_{t\ge0},$
the FK-dynamics on $\mathcal{L}_{i},$ whose update sequence and initial
condition are inherited from that of $\widehat{A}_{i}^{+}=\mathcal{P}_{i}^{-1}(\mathcal{L}_{i})$
of $(X_{t}(\widehat{A}_{i}^{+}))_{t\ge0}$. Let $\pi^{(i)}$ be the
random-cluster measure on $\mathcal{L}_{i}$ so that $\pi^{(i)}$
is the invariant measure of $Y_{t}^{(i)}$. Define the product spaces:
\[
\mathcal{L}=\prod_{i=1}^{K}\mathcal{L}_{i}\;,\;\pi=\prod_{i=1}^{K}\pi^{(i)},\;\text{and\;\;}Y_{t}^{*}=\prod_{i=1}^{K}Y_{t}^{(i)}\;,
\]
and let
\[
\Gamma=\bigcup_{i=1}^{K}A_{i}\subset\Omega_{n}\;\;\text{and\;\;}\Gamma^{*}=\bigcup_{i=1}^{K}\mathcal{P}(A_{i})\subset\mathcal{L}\;.
\]
By slight abuse of notations, we identify $A_i$ and $\mathcal{P}(A_i)$, for
$i\in \llbracket 1,\,K \rrbracket$ or $\Gamma$ and $\Gamma^*$ and simply write $\mathcal{P}(A_i)=A_i$ and
$\Gamma^* = \Gamma$.  With this identification, we can regard  $X_t(A_i)$ and $Y_t^*(A_i)$
 or $X_t(\Gamma)$ and $Y_t^*(\Gamma)$ as processes defined on the same space.
\end{defn}

{We first recall from Remark \ref{rm52}, that  we can couple $(X_{t})$ and
$(Y_{t}^{*})$.
 In the lemmas below where we record various coupling statements, we assume that the
collection $A_{1},\,A_{2},\,\cdots,\,A_{K}$ of subsets of $\Omega_{n}$
is $m$-good for some $m\in\left[\log^{4}n,\,(1/2)\log^{5}n\right]$.
\begin{lem}
\label{lemcou}Suppose that $p$ is small enough and the law of the
initial condition $X_{0}$ follows the law $\nu$ such that $\nu\preceq\per(\pini)$.
Then, we have
\[
\mathbb{P}\left[ X_{t}(\Gamma)=Y_{t}^{*}(\Gamma)\text{ for all }t\in[0,\,\smax]\,\right]\ge1-\frac{1}{n^{2d}}\;.
\]
\end{lem}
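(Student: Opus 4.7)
My plan is to reduce the claim to a single-box disagreement estimate, namely Lemma \ref{lem0311} (together with Remark \ref{rm52} and Remark \ref{rm53}), applied to each of the blocks $A_i$ separately, and then to union bound. The crucial structural fact is that $Y_t^{(i)}$ is, by construction, driven by the very same update sequence as $X_t$ on $\widehat{A}_i^{+}$, and the initial configurations agree on $\widehat{A}_i^{+}$ through the identification map $\mathcal{P}_i$.

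Fix $i\in\llbracket 1,\,K\rrbracket$ and define, on the ambient edge set $A_i^{+}$, the two censored processes $Z_t^{+}$ and $Z_t^{-}$ that evolve as FK-dynamics on $\{0,1\}^{A_i^{+}}$ with full and empty boundary conditions respectively, both initialized at the (common) restriction $X_0(A_i^{+})=Y_0^{(i)}(A_i^{+})$ and both driven by $\upd$ restricted to $A_i^{+}$. Since $m\ge\log^{4}n$, the buffer $A_i^{+}\setminus A_i$ has width at least $\log^{4}n$, and since the initial law $\nu$ is dominated by $\textrm{Perc}_n(\pini)$, its projection onto $A_i^{+}$ is dominated by the corresponding subcritical percolation. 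Thus Lemma \ref{lem0311}, combined with Remark \ref{rm53} (which allows the ambient lattice to be taken as small as needed and in particular allows the buffer to be of the present order), yields
\[
\mathbb{P}\bigl[\,Z_t^{+}(A_i)=Z_t^{-}(A_i)\text{ for all }t\in[0,\,\smax]\,\bigr]\ge 1-n^{-3d}.
\]

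Now I invoke Remark \ref{rm52} twice on the same pair $(Z_t^{+},Z_t^{-})$ to squeeze both $X_t(A_i)$ and $Y_t^{(i)}(A_i)$ between them. For the first, I view $X_t(A_i^{+})$ as the projection of the FK-dynamics on the ambient torus $\Lambda_n$ onto $A_i^{+}$ (third bullet of Remark \ref{rm52}). For the second, since $\mathcal{L}_i$ is itself a periodic square lattice of side $r$ containing $A_i^{+}$ and the buffer from $A_i$ to $\partial\widehat{A}_i^{+}$ is still of order $\log^{4}n$, I apply the same remark but with ambient lattice $\mathcal{L}_i$ in place of $\Lambda_n$, which is permitted by Remark \ref{rm53}; this identifies $Y_t^{(i)}(A_i^{+})$ as another admissible process on $A_i^{+}$. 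The update sequences and initial conditions driving $Z_t^{\pm}$ are, by construction, precisely those shared by $X_t$ and $Y_t^{(i)}$ on $A_i^{+}$, so the same pair $(Z_t^{+},Z_t^{-})$ serves both sandwiches. Consequently, on an event of probability at least $1-2n^{-3d}$, I obtain $X_t(A_i)=Z_t^{+}(A_i)=Z_t^{-}(A_i)=Y_t^{(i)}(A_i)$ simultaneously for every $t\in[0,\,\smax]$.

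Finally, since the blocks $A_i$ are $m$-good with disjoint enlargements, each contained in a box of side $2\log^{5}n$, the number $K$ of blocks is trivially bounded by $|E_n|\le dn^{d}$. A union bound over $i\in\llbracket 1,\,K\rrbracket$ then gives
\[
\mathbb{P}\bigl[\,X_t(\Gamma)=Y_t^{\ast}(\Gamma)\text{ for all }t\in[0,\,\smax]\,\bigr]\ge 1-2Kn^{-3d}\ge 1-n^{-2d}
\]
for all sufficiently large $n$, which is the claim. The only point that requires care is the compatibility of the two applications of Remark \ref{rm52}, i.e.\ ensuring that the driving data of $Z_t^{\pm}$ genuinely coincide with those of both $X_t$ and $Y_t^{(i)}$ on $A_i^{+}$; this is guaranteed by the definition of the barrier-type coupling through $\mathcal{P}_i$ and by the fact that both processes inherit their update sequence from the global $\upd$.
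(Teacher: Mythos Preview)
Your proposal is correct and follows essentially the same approach as the paper: reduce to a per-block estimate via Lemma \ref{lem0311} and Remark \ref{rm52}, then union bound over $K\le n^{d}$ blocks. Your write-up simply unpacks the sandwich argument that the paper leaves implicit in the phrase ``follows directly from Lemma \ref{lem0311}''; note in passing that since the same event $\{Z_t^{+}(A_i)=Z_t^{-}(A_i)\}$ forces both sandwiches simultaneously under the monotone coupling, a single $n^{-3d}$ per block suffices rather than $2n^{-3d}$.
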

\begin{proof}
Since $K\le n^{d}$, it suffices to show that, for all $i\in\llbracket1,\,K\rrbracket$,
\[
\mathbb{P}\left[X_{t}(A_{i})=Y_{t}^{*}(A_{i})\text{ for all }t\in[0,\,\smax]\,\right]\ge1-\frac{1}{n^{3d}}\;.
\]
This follows directly from Lemma \ref{lem0311}.
\end{proof}
Now we obtain  upper and lower bounds for the total-variation distance
of $(Y_{t}^{*})$ in the two lemmas below. Combined with the previous
coupling result, they yield bounds on the total-variation distance for
 $(X_{t})$.
\begin{lem}
\label{lempro}For all sufficiently small $p$, we have that
\[
\sup_{x_{0}\in\Omega_{n}}\left\Vert \mathbb{P}_{x_{0}}[Y_{t}^{*}(\Gamma)\in\cdot\,]-\pi_{\Gamma}\right\Vert _{\textrm{TV}}\le\frac{1}{2}\left[e^{K\mathbf{d}_{t}^{2}}-1\right]^{\frac{1}{2}},
\]
where $\pi_{\Gamma}$ represents the projection of $\pi$ onto
$\Gamma $.
\end{lem}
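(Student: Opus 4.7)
The plan is to exploit the fact that $Y_t^*$ is, by construction, a product of $K$ independent FK-dynamics, one on each copy $\mathcal{L}_i \cong \mathbb{Z}_r^d$. The projected law onto $\Gamma = \bigsqcup_i A_i$ is then the product of the projected laws onto $A_i \subset \mathcal{L}_i$, and the invariant measure $\pi_\Gamma$ is the product $\bigotimes_i \pi^{(i)}_{A_i}$. This reduces everything to a standard tensorization estimate for $L^2$-distances of product measures.

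First I would use the Cauchy--Schwarz bound
\[
\bigl\Vert \mathbb{P}_{x_0}[Y_t^*(\Gamma)\in\cdot] - \pi_\Gamma \bigr\Vert_{\textup{TV}} \;\le\; \tfrac12 \bigl\Vert \mathbb{P}_{x_0}[Y_t^*(\Gamma)\in\cdot] - \pi_\Gamma \bigr\Vert_{L^2(\pi_\Gamma)},
\]
which follows from the second equality in \eqref{l1norm} and Cauchy--Schwarz. Writing $\mu_i := \mathbb{P}_{x_0^{(i)}}[Y_t^{(i)}(A_i)\in\cdot]$ and $\nu_i := \pi^{(i)}_{A_i}$, the independence of the chains $Y_t^{(i)}$ gives $\mathbb{P}_{x_0}[Y_t^*(\Gamma)\in\cdot] = \bigotimes_{i=1}^K \mu_i$ and $\pi_\Gamma = \bigotimes_{i=1}^K \nu_i$. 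The key identity for product measures, obtained by direct expansion of squares, is
\[
\bigl\Vert \textstyle\bigotimes_i \mu_i - \bigotimes_i \nu_i \bigr\Vert_{L^2(\bigotimes_i \nu_i)}^{2} \;=\; \prod_{i=1}^{K}\bigl(1 + \Vert \mu_i - \nu_i \Vert_{L^2(\nu_i)}^{2}\bigr) - 1.
\]

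Next I would bound each factor by $\mathbf{d}_t^2$. By hypothesis $A_i$ is contained in a box of size $2\log^5 n$ inside $\mathcal{L}_i \cong \mathbb{Z}_r^d$; call this enclosing box $\Lambda_i$. A standard conditional-Jensen argument (writing the Radon--Nikodym derivative of the $A_i$-marginal as a $\nu^{(i)}_{\Lambda_i}$-conditional expectation of the $\Lambda_i$-marginal Radon--Nikodym derivative) shows that projecting to a sub-coordinate can only decrease the $L^2$-distance, so $\Vert \mu_i - \nu_i \Vert_{L^2(\nu_i)} \le \bigl\Vert \mathbb{P}_{x_0^{(i)}}[Y_t^{(i)}(\Lambda_i)\in\cdot] - \pi^{(i)}_{\Lambda_i}\bigr\Vert_{L^2(\pi^{(i)}_{\Lambda_i})}$. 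Since $Y_t^{(i)}$ is an FK-dynamics on $\mathbb{Z}_r^d$ and $\pi^{(i)} = \pi^\dagger$, by translation invariance of the torus this last quantity is bounded (taking the supremum over initial conditions) by $\mathbf{d}_t$ as defined in \eqref{emt}. Combining with $1+x \le e^x$ yields
\[
\bigl\Vert \textstyle\bigotimes_i \mu_i - \bigotimes_i \nu_i \bigr\Vert_{L^2(\bigotimes_i \nu_i)}^{2} \le (1+\mathbf{d}_t^2)^K - 1 \le e^{K\mathbf{d}_t^{2}} - 1,
\]
and plugging this into the Cauchy--Schwarz bound gives the statement of the lemma after taking the supremum over $x_0 \in \Omega_n$ (which induces the supremum over the $K$ product initial conditions on the $\mathcal{L}_i$).

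The only mildly delicate point is ensuring that the $L^2$ projection step is legitimate and that $\mathbf{d}_t$ really does uniformly bound $\Vert \mu_i - \nu_i \Vert_{L^2(\nu_i)}$ regardless of which $2\log^5 n$-sized box inside $\mathbb{Z}_r^d$ contains $A_i$; both are handled by conditional Jensen and translation invariance of the torus $\mathbb{Z}_r^d$ respectively. No new probabilistic input about the FK-dynamics is needed beyond the definition of $\mathbf{d}_t$ in \eqref{emt}.
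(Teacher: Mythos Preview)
Your proof is correct and follows essentially the same approach as the paper: the $L^1$--$L^2$ (Cauchy--Schwarz) bound, the standard $L^2$ tensorization for product chains (which the paper simply cites from \cite[Section 3.2]{LS1}), and the observation that each factor is bounded by $\mathbf{d}_t$ via the definition \eqref{emt}. You are slightly more explicit than the paper in spelling out the product identity $\prod_i(1+\|\mu_i-\nu_i\|_{L^2(\nu_i)}^2)-1$ and the projection/translation-invariance argument that reduces from $A_i$ to a box of size $2\log^5 n$, but this is exactly what the paper's one-line justification ``by the definition of $\mathbf{d}_t$ and by the fact that $A_i$ is a subset of box of size $2\log^5 n$'' is abbreviating.
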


\begin{notation}
\label{nots}In the statement of lemma, $\mathbb{P}_{x_{0}}[Y_{t}^{*}(\Gamma)\in\cdot\,]$
means that the starting configuration of $Y_{t}^{*}$ is inherited
from $x_{0}\in\Omega_{n}$ by the collection map $\prod_{i=1}^{K}\mathcal{P}^{(i)}:\prod_{i=1}^{K}\widehat{A}_{i}^{+}\rightarrow\prod_{i=1}^{K}\mathcal{L}_{i}$.
We define $\mathbb{P}_{\nu}[Y_{t}^{*}(\Gamma)\in\cdot\,]$ for a
probability distribution $\nu$  on $\Omega_n$ in the same manner.
\end{notation}

\begin{proof}
By the $L^{1}$-$L^{2}$ inequality we have
\begin{equation}
\left\Vert \mathbb{P}_{x_{0}}[Y_{t}^{*}(\Gamma )\in\cdot\,]-\pi_{\Gamma }\right\Vert _{\textrm{TV}}\le\frac{1}{2}\left\Vert \mathbb{P}_{x_{0}}[Y_{t}^{*}(\Gamma )\in\cdot\,]-\pi_{\Gamma }\right\Vert _{L^{2}(\pi_{\Gamma} )}\;.\label{e611}
\end{equation}
Denote by $\pi_{A_{i}}^{(i)}$ the  projection of $\pi^{(i)}$
onto $A_{i}$. Then, since $\pi_{\Gamma }=\prod_{i=1}^{K}\pi_{A_{i}}^{(i)}$,
by the bound of $L^{2}$-norm for product space (cf. \cite[Section 3.2]{LS1}),
we obtain that
\begin{equation}
\left\Vert \mathbb{P}_{x_{0}}[Y_{t}^{*}(\Gamma )\in\cdot\,]-\pi_{\Gamma }\right\Vert _{L^{2}(\pi_{\Gamma} )}\le\left[\exp  \left\{ \sum_{i=1}^{K}\left\Vert \mathbb{P}_{x_{0}}[Y_{t}^{(i)}(A_{i})\in\cdot\,]-\pi_{A_{i}}^{(i)}\right\Vert _{L^{2}(\pi_{i}^{*})}^{2}  \right\}-1\right] ^{1/2} \label{e612}.
\end{equation}
By the definition of $\mathbf{d}_{t}$ (see \eqref{emt}) and by the fact that $A_{i}$
is a subset of box of size $2\log^{5}n$, we can deduce from the definition
of $\mathbf{d}_{t}$ that
\begin{equation}
\left\Vert \mathbb{P}_{x_{0}}[Y_{t}^{(i)}(A_{i})\in\cdot\,]-\pi_{A_{i}}^{(i)}\right\Vert _{L^{2}(\pi_{i}^{*})}\le\mathbf{d}_{t}\;.\label{e613}
\end{equation}
We now conclude using \eqref{e611}, \eqref{e612} and \eqref{e613}.
\end{proof}

Recall that $\mu_{\Gamma}$ represents the projection of $\mui$ to $\Gamma$. Using spatial mixing properties, we conclude now that $\mu_\Gamma$ is close to $\pi_\Gamma$.
This follows from Lemma \ref{contour} which implies that the effect of the boundary condition does not reach beyond the buffer region $A_i^+ \setminus A_i $ for each $i$ (see Figure \ref{fig4}). Using this we prove that the total-variation distance between $\mu_\Gamma$ and $\pi_\Gamma$ is small.
\begin{lem}\label{lemmu}
It holds that
\begin{equation*}
 \left\Vert \mu_{\Gamma}-\pi_{\Gamma }\right\Vert _{\textrm{TV}}\le\frac{1}{n^{2d}}\;.
\end{equation*}
\end{lem}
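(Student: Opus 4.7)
I compare both $\mu_\Gamma$ and $\pi_\Gamma$ to a common product reference measure $\nu^\star = \bigotimes_{i=1}^K \nu^\star_{A_i}$, where $\nu^\star_{A_i}$ is the marginal on $A_i$ of the free-boundary RCM on an auxiliary box $\widetilde A_i$ chosen so that the $\widetilde A_i$'s are vertex-disjoint, pairwise separated by at least $\tfrac{4}{3}\log^4 n$, and each provides a buffer of width at least $\tfrac{1}{3}\log^4 n$ between $A_i$ and $\partial \widetilde A_i$ (this is possible by shrinking the expansion in the definition of $A_i^+$ by a constant factor and using the $m$-good hypothesis $d(A_i,A_j)\ge 2m+1$ with $m\ge \log^4 n$). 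The point is that every application below of Lemma \ref{contour} can be strengthened from its stated error $n^{-2d}$ to any desired inverse polynomial bound, in particular $n^{-3d}$, simply by observing that its proof yields the exponent $e^{-\gamma \log^4 n}$ (rather than $e^{-\gamma \log^2 n}$) under the enlarged separation. The claim will then follow by the triangle inequality once I show $\|\mu_\Gamma - \nu^\star\|_{\mathrm{TV}}, \|\pi_\Gamma - \nu^\star\|_{\mathrm{TV}} \le \tfrac12 n^{-2d}$.

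For $\mu_\Gamma \approx \nu^\star$, I sample $X\sim\mu$ and expose its configuration on $E_n \setminus \bigsqcup_i \widetilde A_i$, obtaining a random exterior $\rho$. The strengthened Lemma \ref{contour} shows that, except on an event of probability at most $n^{-3d}$, $\rho$ contains no open path joining two distinct $\widetilde A_i$'s. Conditional on such a good exterior, the cluster-count factor $q^{c(\cdot)}$ splits additively across the vertex-disjoint blocks, so the conditional law of $X(\bigsqcup_i \widetilde A_i)$ factorizes as an independent product of RCMs on each $\widetilde A_i$ with a (random) boundary condition inherited from $\rho$. By monotonicity each factor lies, in the sense of the monotone coupling, between the free and wired RCMs on $\widetilde A_i$; a second application of the strengthened Lemma \ref{contour} with $A=A_i$, $B = \partial \widetilde A_i$ shows these two extremal measures agree on $A_i$ except with probability $n^{-3d}$. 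Collecting these error events via a union bound over $K \le n^d(\log n)^{-12d}$ blocks closes this comparison.

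For $\pi_\Gamma \approx \nu^\star$ no disjoint-factorization event is needed, since $\pi$ is already a product of $\pi^{(i)}$'s across $i$. For each $i$ I expose $Y^{(i)} \sim \pi^{(i)}$ on $\mathcal L_i \setminus \mathcal P_i(\widetilde A_i)$; the conditional distribution on $\mathcal P_i(\widetilde A_i)$ is again sandwiched between the free and wired RCMs on $\widetilde A_i$, and so agrees with $\nu^\star_{A_i}$ on $A_i$ except with probability $n^{-3d}$, giving the desired bound after a union bound over $K$ blocks.

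\textbf{Main obstacle.} The key subtlety is establishing the exact factorization of $\mu$'s conditional law across the vertex-disjoint blocks $\widetilde A_i$ given the exterior configuration: the nonlocal factor $q^{c(\cdot)}$ is precisely what makes the RCM non-product, and it splits additively only when no two distinct blocks lie in a common open cluster of the exterior. Delivering this disjoint-cluster event with probability $1-n^{-3d}$, rather than the $1-n^{-2d}$ that Lemma \ref{contour} provides directly, is what forces the quantitative strengthening from the $\log^2 n$ buffer available in that lemma to the $\log^4 n$ buffer available here; this is the only place where the full strength of the separation hypothesis from the $m$-good definition is used.
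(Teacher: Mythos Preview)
Your argument is correct and rests on the same mechanism as the paper's: exponential decay of connectivity produces closed surfaces in the buffer annuli, after which the domain Markov property makes the marginal on each $A_i$ insensitive to whatever boundary data lies outside. The organization differs. The paper applies Lemma~\ref{contour} a single time with $A=\Gamma$ and $B=E_n\setminus\bigcup_i A_i^+$, obtaining all the closed surfaces at once with one error of $n^{-2d}$, and then defers to \cite{BS} for the standard monotone-coupling step (wired and free boundary conditions on each $A_i^+$ induce identical marginals on $A_i$ once a closed surface is present, and this applies equally to the $\mu$-conditional and to each $\pi^{(i)}$ on its torus $\mathcal L_i$). You instead route through an explicit product reference $\nu^\star$ on intermediate boxes $\widetilde A_i$, invoking the decay estimate once for the factorization of $\mu$'s conditional and once per block for the free/wired comparison, which is why you need the per-block error upgraded to $n^{-3d}$ to survive the union bound over $K$ blocks.

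Two small remarks. First, your strengthening of Lemma~\ref{contour} from $n^{-2d}$ to $n^{-3d}$ requires no enlarged buffer: the proof of that lemma already gives $e^{-\gamma c\log^2 n}\binom{n^d}{2}$, which beats every fixed polynomial, so the stated $n^{-2d}$ is just a convenient placeholder. Second, in your second invocation of Lemma~\ref{contour} you should take $B$ to be the complement of $\widetilde A_i$ in the ambient lattice (so that $\mu_{B^c}^+$ is the wired RCM on $\widetilde A_i$), not $B=\partial\widetilde A_i$. Your route is more self-contained and makes the factorization step explicit; the paper's is terser and avoids the intermediate boxes and the per-block union bound by absorbing everything into one application of Lemma~\ref{contour}.
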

\begin{proof}
We apply Lemma \ref{contour} with $A=\Gamma$ and $B=E_n\setminus \bigcup_{i=1}^K A_i^+ $. Recall the
measure $\mu_{B^c}^{+}$  and the configuration $X$ from Lemma \ref{contour}. The latter implies that,
with probability more than $1-n^{-2d}$, there exists a closed surface in $X(A_i^+ \setminus A_i)$ enclosing $A_i$
for all $i\in \llbracket 1,\,K\rrbracket$. This implies the Lemma by the domain Markov property of random cluster measure.
For details about this argument, see \cite[Proof of Claim 4.2]{BS}.
\end{proof}

\begin{lem}
\label{lempro2}Suppose that $p$ is small enough, $t\in[0,\,\smax]$
and the collection $A_{1},\,A_{2},\,\cdots,\,A_{K}$ of subsets of
$\Omega_{n}$ is $m$-good for some $m\in\left[\log^{4}n,\,(1/2)\log^{5}n\right]$.
Then under the notations of Definition \ref{mgood}, we have
\[
d(t)+\frac{2}{n^{2d}}\ge\sup_{x_{0}\in\Omega_{n}}\left\Vert \mathbb{P}_{x_{0}}[Y_{t}^{*}(\Gamma )\in\cdot\,]-\pi_{\Gamma }\right\Vert _{\textrm{TV}}\;,
\]
where $d(t)$ the total-variation distance at time $t$ was defined in Section \ref{sec13}.
\end{lem}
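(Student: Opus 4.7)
The plan is to apply the triangle inequality twice, inserting the distribution $\mathbb{P}_{x_0}[X_t(\Gamma) \in \cdot]$ of the real chain and the equilibrium $\mu_\Gamma$ as intermediate measures between $\mathbb{P}_{x_0}[Y_t^*(\Gamma) \in \cdot]$ and $\pi_\Gamma$:
\begin{align*}
\bigl\|\mathbb{P}_{x_0}[Y_t^*(\Gamma) \in \cdot] - \pi_\Gamma\bigr\|_{\textrm{TV}}
&\le \bigl\|\mathbb{P}_{x_0}[Y_t^*(\Gamma) \in \cdot] - \mathbb{P}_{x_0}[X_t(\Gamma) \in \cdot]\bigr\|_{\textrm{TV}} \\
&\quad + \bigl\|\mathbb{P}_{x_0}[X_t(\Gamma) \in \cdot] - \mu_\Gamma\bigr\|_{\textrm{TV}} + \bigl\|\mu_\Gamma - \pi_\Gamma\bigr\|_{\textrm{TV}}.
\end{align*}
The three terms will be bounded by $1/n^{2d}$, $d(t)$, and $1/n^{2d}$ respectively, summing to the claimed $d(t) + 2/n^{2d}$ after taking the supremum over $x_0$.

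The middle term is immediate: since projection onto $\Gamma$ does not increase total variation distance, it is at most $\|\mathbb{P}_{x_0}[X_t \in \cdot] - \mu\|_{\textrm{TV}} \le d(t)$ by the definition of $d(t)$. The third term is precisely the content of Lemma \ref{lemmu}. The first term is where the joint coupling of $X_t$ and $Y_t^*$ enters: initialize $Y_0^*$ from $x_0$ through the identification maps $\mathcal{P}_i$ and drive both chains by the same update sequence on $\bigsqcup_{i=1}^K \widehat{A}_i^+$, as described in Definition \ref{mgood}. Under this coupling, the coupling characterization of total variation distance gives that the first term is at most $\mathbb{P}[X_t(\Gamma) \neq Y_t^*(\Gamma)]$. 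Since $\Gamma = \bigsqcup_{i=1}^K A_i$ with $K \le n^d$, a union bound combined with Lemma \ref{lemcou} (applicable because $t \in [0, \smax]$) bounds this probability by $K \cdot n^{-3d} \le n^{-2d}$.

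The only delicate point is that Lemma \ref{lemcou}, and its underlying input Lemma \ref{lem0311}, is stated for initial laws stochastically dominated by $\per(\pini)$, whereas the present statement asks for the supremum over arbitrary $x_0 \in \Omega_n$. This is however inessential: the claimed inequality is trivial when $d(t) \ge 1 - 2/n^{2d}$, and the lemma will only be applied in contexts where $x_0$ already satisfies the sparsity condition (the preceding $\tini$ burn-in in the proof of Theorem \ref{t61} ensures $X_{\tini} \preceq \per(\pini)$ by Lemma \ref{lem032}). Alternatively, one can observe that the disagreement-propagation estimate behind Lemma \ref{lem0311} is driven by the environments $\env_i$, whose laws are controlled by Proposition \ref{penv} regardless of the starting configuration, so the required coupling bound extends to arbitrary $x_0$ up to negligible loss.
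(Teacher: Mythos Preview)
Your proposal is correct and follows essentially the same route as the paper: the paper's proof is exactly the triangle inequality you write, invoking the projection bound for the middle term, Lemma \ref{lemcou} for the first term, and Lemma \ref{lemmu} for the third. You go a step further than the paper by flagging the discrepancy between the hypothesis $\nu\preceq\per(\pini)$ in Lemma \ref{lemcou} and the supremum over arbitrary $x_0$ here; the paper applies Lemma \ref{lemcou} directly without comment. Your second workaround is not quite right as stated (the proof of Lemma \ref{lem0311} uses the enlarged percolations $\eful_i$, and $\eful_0=X_0\cup\eemp_0$ genuinely depends on $X_0$), but your first observation---that the lemma is only invoked downstream after the $\tini$ burn-in---is the correct resolution and is implicitly what the paper relies on.
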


\begin{proof}
Since projection does not
increase total-variation norm, we have
\begin{equation}\label{cw09}
  d(t)\ge\sup_{x_{0}\in\Omega_{n}}\left\Vert \mathbb{P}_{x_{0}}[ X_{t}(\Gamma)
\in\cdot\,]-\mu_{\Gamma}\right\Vert _{\textrm{TV}}\;.
\end{equation}
By Lemma \ref{lemcou}, we have
\begin{equation}\label{cw10}
   \sup_{x_{0}\in\Omega_{n}}\left\Vert \mathbb{P}_{x_{0}}[ X_{t}(\Gamma) \in\cdot\,]
   -\mathbb{P}_{x_{0}}[Y_{t}^{*}(\Gamma )\in\cdot\,]\,\right\Vert _{\textrm{TV}}\le\frac{1}{n^{2d}}\;.
\end{equation}
By combining \eqref{cw09}, \eqref{cw10}, and  Lemma \ref{lemmu}, the proof is completed.
\end{proof}

We are finally ready to finish the proof of Theorem \ref{t61}

\subsubsection{Proof of part (1): upper bound}
In view of Proposition \ref{p313}, it suffices to prove the following
proposition.
\begin{prop}
Suppose that $p$ is sufficiently small, $\Gamma\in\spar$, and $t\in[0,\,\smax]$.
Then, we have
\begin{equation}
\left\Vert \mathbb{P}_{\nu}[X_{t}(\Gamma)\in\cdot\,]-\mu_{\Gamma}\right\Vert _{\textrm{TV}}
\le\frac{1}{2}\left[\exp\left\{ \frac{n^{d}}{\log^{12d}n}\mathbf{d}_{t}^{2}\right\}
-1\right]^{1/2}+\frac{2}{n^{2d}}\;.\label{ep611}
\end{equation}
\end{prop}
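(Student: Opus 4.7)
The plan is to bound the left-hand side by a triangle inequality across two intermediate measures, chaining the true dynamics on $\Gamma$ with the product-chain approximation and then with the product of local random-cluster measures. Specifically, inserting $\mathbb{P}_\nu[Y_t^*(\Gamma)\in\cdot\,]$ and $\pi_\Gamma$ between $\mathbb{P}_\nu[X_t(\Gamma)\in\cdot\,]$ and $\mu_\Gamma$, we obtain
\begin{align*}
\left\Vert \mathbb{P}_\nu[X_t(\Gamma)\in\cdot\,]-\mu_\Gamma\right\Vert_{\textrm{TV}}
\;\le\; & \left\Vert \mathbb{P}_\nu[X_t(\Gamma)\in\cdot\,]-\mathbb{P}_\nu[Y_t^*(\Gamma)\in\cdot\,]\right\Vert_{\textrm{TV}} \\
& + \left\Vert \mathbb{P}_\nu[Y_t^*(\Gamma)\in\cdot\,]-\pi_\Gamma\right\Vert_{\textrm{TV}}
+ \left\Vert \pi_\Gamma-\mu_\Gamma\right\Vert_{\textrm{TV}}.
\end{align*}
Each of the three pieces is controlled by one of the previously established lemmas.

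First, since $\Gamma\in\spar$, Definition \ref{def310} furnishes a decomposition $\Gamma=\bigsqcup_{i=1}^{K}A_i$ with $K\le n^{d}/\log^{12d}n$, each $A_i$ contained in a box of side $2\log^{5}n$, and with $d(A_i,A_j)\ge 4\log^{4}n$ for $i\ne j$. Choosing $m=2\log^{4}n\in[\log^{4}n,(1/2)\log^{5}n]$, the collection $\{A_i\}_{i=1}^{K}$ is $m$-good in the sense of Definition \ref{mgood}, so the product chain $(Y_t^{*})$ and the product measure $\pi_\Gamma$ are well-defined relative to $\Gamma$.

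Next I would apply the three lemmas term by term. The coupling Lemma \ref{lemcou} (valid because $\nu\preceq\per(\pini)$, as inherited from the hypothesis of Theorem \ref{t61}, and because $t\in[0,\smax]$) bounds the first term by $1/n^{2d}$, after integrating the coupling success probability against $\nu$. The product-chain $L^{2}$-bound Lemma \ref{lempro}, combined with the convexity of total variation (so the supremum over starting configurations dominates the $\nu$-average by Jensen's inequality), bounds the second term by $\tfrac{1}{2}[e^{K\mathbf{d}_t^{2}}-1]^{1/2}$; using $K\le n^{d}/\log^{12d}n$ and monotonicity of this function in $K$ yields $\tfrac{1}{2}[\exp\{n^{d}\log^{-12d}n\cdot\mathbf{d}_t^{2}\}-1]^{1/2}$. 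Finally the spatial-mixing Lemma \ref{lemmu} bounds the third term by $1/n^{2d}$. Summing gives exactly the claimed inequality \eqref{ep611}.

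There is no real obstacle here: the proposition is a bookkeeping combination of previously established results, and the only subtlety is the reduction from $\sup_{x_0}$-bounds stated in Lemmas \ref{lemcou} and \ref{lempro} to bounds averaged against the initial distribution $\nu$, which is immediate from the convexity of the total variation norm. All the substantive work — the coupling on $[0,\smax]$, the $L^{2}$-tensorization, and the spatial-mixing comparison between $\pi_\Gamma$ and $\mu_\Gamma$ — has already been carried out.
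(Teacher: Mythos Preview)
Your proposal is correct and follows essentially the same approach as the paper's proof: the same triangle-inequality decomposition through $\mathbb{P}_\nu[Y_t^*(\Gamma)\in\cdot\,]$ and $\pi_\Gamma$, followed by applying Lemmas \ref{lemcou}, \ref{lempro}, and \ref{lemmu} to the three respective terms. The only cosmetic difference is that the paper takes $m=\log^4 n$ rather than your $m=2\log^4 n$ (which is slightly safer for guaranteeing the $A_i^+$ are disjoint, since the sparse-set condition only gives $d(A_i,A_j)\ge 4\log^4 n$).
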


\begin{proof}
Denote by $A_{1},\,A_{2},\,\dots,\,A_{K}$ the connected components
of $\Gamma$ in the sense of Definition \ref{def310}. Then, then $A_{1},\,A_{2},\,\dots,\,A_{K}$
are $m$-good with $m=\log^{4}n$. Now we recall the notations from
Definition \ref{mgood} and Lemma \ref{lempro}. We bound the total-variation
norm at the left-hand side of \eqref{ep611} by
\begin{equation}
\left\Vert \mathbb{P}_{\nu}[X_{t}(\Gamma)\in\cdot\,]-\mathbb{P}_{\nu}[Y_{t}^{*}(\Gamma )\in\cdot\,]\,\right\Vert _{\textrm{TV}}+\left\Vert \mathbb{P}_{\nu}[Y_{t}^{*}(\Gamma )\in\cdot\,]-\pi_{\Gamma }\right\Vert _{\textrm{TV}}+\left\Vert \pi_{\Gamma }-\mu_{\Gamma}\right\Vert _{\textrm{TV}}\;.\label{ep62}
\end{equation}
We recall Notation \ref{nots} for the notation $\mathbb{P}_{\nu}[Y_{t}^{*}(\Gamma)\in\cdot\,]$.
We now bound these three terms separately to complete the proof. For
the first term, by Lemma \ref{lemcou} we have
\begin{equation}
\left\Vert \mathbb{P}_{\nu}[X_{t}(\Gamma)\in\cdot\,]-\mathbb{P}_{\nu}[Y_{t}^{*}(\Gamma )\in\cdot\,]\,\right\Vert _{\textrm{TV}}\le\frac{1}{n^{2d}}\;.\label{ec60}
\end{equation}
By the Lemma \ref{lempro}, and the fact $K\le n^{d}/\log^{12d}n$,
the second term is bounded by
\begin{equation}
\left\Vert \mathbb{P}_{\nu}[Y_{t}^{*}(\Gamma)\in\cdot\,]-\pi_{\Gamma} \right\Vert _{\textrm{TV}}\le\frac{1}{2}\left[\exp\left\{ \frac{n^{d}}{\log^{12d}n}\mathbf{d}_{t}^{2}\right\} -1\right]^{1/2}\;.\label{ec61}
\end{equation}
Finally, the last term at \eqref{ep62} is at most $1/n^{2d}$ by Lemma \ref{lemmu}. Combining this with   \eqref{ep62}, \eqref{ec60}, and \eqref{ec61},
we can finish the proof.
\end{proof}

\subsubsection{Proof of Part (2): lower bound}
Given the above ingredients the proof of the lower bound  is almost verbatim from \cite[Section 3.3]{LS1} but nonetheless we include the proof in the appendix for completeness.

In the following section we finish the proof of Theorem \ref{tmain}.

\section{Proof of main result}\label{proofmain}
We keep the notation $r=3\log^{5}n$. The following lemma provides
a sharp bound on $\mathbf{d}_{t}$.
\begin{lem}\cite[Lemma 4.1]{LS1}
\label{lem71}For all small enough $p$, there exists a constant $C_{3}=C_{3}(p)>0$
such that
\begin{equation}
e^{-\lambda(r)(t+C_{3}\log\log n)}-n^{-2d}\le\mathbf{d}_{t}\le e^{-\lambda(r)(t-C_{3}\log\log n)}\label{e71}
\end{equation}
for all $t\in[C_{3}\log\log n,\,\smax]$.
\end{lem}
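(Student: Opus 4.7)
The plan is to prove the two inequalities separately, adapting the strategy of \cite[Lemma 4.1]{LS1} to the random-cluster context.

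The upper bound is the easier direction. First I would apply Proposition \ref{p46} (instantiated on the smaller torus $\mathbb{Z}_r^d$, cf. Remark \ref{rm62}) to obtain a burn-in time $t_\star = C\log r = O(\log\log n)$ after which the unprojected $L^2$-distance to $\pi^\dagger$ is at most $2$, uniformly in the starting configuration. Applying the spectral contraction \eqref{l2contrac} on $\mathbb{Z}_r^d$ from $t_\star$ up to $t$ yields an $L^2$-bound of $2e^{-\lambda(r)(t-t_\star)}$. Since $\chi^2$-divergence is non-increasing under projection onto $\Lambda\subset E_r$ (a standard consequence of Jensen's inequality), the same bound transfers to $\mathbf{d}_t$. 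Absorbing the constant $2$ into the exponent using that $\lambda(r)$ is uniformly positive by Corollary \ref{cor23} gives the upper bound for a suitable $C_3$.

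For the lower bound I plan to run a test-function argument using the top non-trivial eigenfunction. Let $f_1$ be the $L^2(\pi^\dagger)$-normalized eigenfunction of the generator of $X_t^\dagger$ corresponding to the spectral gap $\lambda(r)$, so that $\pi^\dagger(f_1)=0$ and $\mathbb{E}_{x_0^\dagger}[f_1(X_t^\dagger)] = e^{-\lambda(r)t} f_1(x_0^\dagger)$. Since $\|f_1\|_{L^2(\pi^\dagger)}=1$, one can choose $x_0^\dagger$ for which $|f_1(x_0^\dagger)|$ is bounded below by an absolute positive constant. Letting
$$g(x) \;:=\; \mathbb{E}_{\pi^\dagger}\bigl[\,f_1 \,\big\vert\, X(\Lambda) = x(\Lambda)\,\bigr]$$
be the conditional-expectation localization of $f_1$ onto $\Lambda$, Cauchy--Schwarz applied to the pairing between $\mathbb{P}_{x_0^\dagger}[X_t^\dagger(\Lambda)\in\cdot\,] - \pi_\Lambda^\dagger$ and $g$ yields
$$\mathbf{d}_t \;\ge\; \frac{\bigl|\mathbb{E}_{x_0^\dagger}[g(X_t^\dagger)] - \pi^\dagger(g)\bigr|}{\|g\|_{L^2(\pi_\Lambda^\dagger)}}.$$
Expanding the numerator as $e^{-\lambda(r)t} f_1(x_0^\dagger)$ plus an error controlled by $\|g-f_1\|_{L^2(\pi^\dagger)}$, it suffices to show this latter quantity is $\le n^{-3d}$; then absorbing the constant $c=|f_1(x_0^\dagger)|$ into the logarithmic correction (using the uniform upper bound on $\lambda(r)$) gives the desired bound with the additive $n^{-2d}$ slack.

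The main obstacle is therefore the localization estimate $\|g - f_1\|_{L^2(\pi^\dagger)} \le n^{-3d}$, which says that the top eigenfunction of the global FK-dynamics on $\mathbb{Z}_r^d$ depends on the configuration outside $\Lambda$ only polynomially weakly. In \cite{LS1} this step was streamlined by log-Sobolev estimates that are unavailable for the random-cluster model. The remedy I would use is to write $f_1 = e^{\lambda(r)t_0}P_{t_0}^\dagger f_1$ for a short time $t_0 = O(\log n)$ and then invoke the disagreement-propagation bound of Lemma \ref{lem0311} (Remark \ref{rm53}), applied with $A=\Lambda$ and its $\log^4 n$-fattening $\Lambda^+$ sitting safely inside $\mathbb{Z}_r^d$ since the buffer is $\gtrsim\log^5 n$. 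This shows that, except on an event of probability $\le n^{-3d}$, the configuration $X_{t_0}^\dagger(\Lambda)$ is a deterministic function of $X_0^\dagger(\Lambda^+)$. Combining this with a union bound and the uniform $L^\infty$ control on $P_{t_0}^\dagger f_1$ coming from its spectral description yields the required approximation $\|g - f_1\|_{L^2(\pi^\dagger)} \le n^{-3d}$, closing the argument.
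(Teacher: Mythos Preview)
Your upper bound is fine and matches the paper: both simply invoke Theorem~\ref{t41} (equivalently Proposition~\ref{p46}) on $\mathbb{Z}_r^d$, use that projection onto $\Lambda$ does not increase the $L^2$-distance, and absorb constants into the $\log\log n$ window.

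Your lower bound, however, has a genuine gap. The localization estimate $\|g-f_1\|_{L^2(\pi^\dagger)}\le n^{-3d}$ cannot be obtained from Lemma~\ref{lem0311} in the way you describe. Writing $f_1=e^{\lambda(r)t_0}P_{t_0}^\dagger f_1$ and invoking disagreement propagation with $A=\Lambda$ tells you that $X_{t_0}^\dagger(\Lambda)$ is (with high probability) a deterministic function of $X_0^\dagger(\Lambda^{+})$; it says nothing about $X_{t_0}^\dagger$ restricted to $E_r\setminus\Lambda$, and $f_1(X_{t_0}^\dagger)$ depends on the full configuration. Since $\Lambda$ has side $2\log^5 n$ inside a torus of side $3\log^5 n$, the complement carries a constant fraction of the edges, and by translation invariance of the torus there is no reason for the top eigenfunction to be approximately $\Lambda$-measurable. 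There is also a quantitative obstruction: $\|f_1\|_\infty$ can be as large as $(\pi^\dagger_{\min})^{-1/2}=\exp\bigl(\Theta(r^d)\bigr)=\exp\bigl(\Theta(\log^{5d}n)\bigr)$, which is super-polynomial in $n$, so multiplying a disagreement probability of $n^{-3d}$ by this $L^\infty$ bound does not give a useful error.

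The paper avoids the eigenfunction entirely and instead bootstraps: it combines the continuous-time spectral lower bound $e^{-\lambda(r)t}\le 2\max_{x_0^\dagger}\|\mathbb{P}_{x_0^\dagger}[X_t^\dagger\in\cdot\,]-\pi^\dagger\|_{\mathrm{TV}}$ with part~(1) of Theorem~\ref{t61} applied on $\mathbb{Z}_r^d$ itself (Remark~\ref{rm62} with $m=r$), which upper-bounds that same TV-distance by $[\exp\{r^d\mathbf{d}_t^2\}-1]^{1/2}+O(n^{-2d})$. Using the already-proved upper bound to ensure $r^d\mathbf{d}_t^2=o(1)$ for $t\ge C\log\log n$ linearizes the exponential and yields $e^{-\lambda(r)(t+O(\log\log n))}\le O(r^{d/2})\,\mathbf{d}_t+O(n^{-2d})$, which rearranges to the stated lower bound.
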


\begin{proof}
Since
\[
\mathbf{d}_{t}\le\max_{x_{0}^{\dagger}\in\Omega_{r}}\left\Vert \mathbb{P}_{x_{0}^{\dagger}} [X_{t}^{\dagger}\in\cdot\, ]-\pi^{\dagger}\right\Vert _{L^{2}(\pi^{\dagger})}\;,
\]
the upper bound part of \eqref{e71} is immediate from Theorem \ref{t41}.
We note from this bound that
\begin{equation}
r^{d/2}\mathbf{d}_{t}=o(1)\text{ for }t=C\log\log n\label{e72}
\end{equation}
with sufficiently large $C$. Here we implicitly used Corollary \ref{cor23}.
For the lower bound part, we first recall the bound
\[
e^{-\lambda(r)t}\le2\max_{x_{0}^{\dagger}\in\Omega_{r}}\left\Vert \mathbb{P}_{x_{0}^{\dagger}} [X_{t}^{\dagger}\in\cdot\, ]-\pi^{\dagger}\right\Vert _{\textrm{TV}}\;\;\text{for all }t\ge0\;,
\]
which is the continuous time version of Proposition \ref{propgap} (see \cite[Lemma 20.11]{LPW}).
By Lemma \ref{pro35} (in particular, \eqref{ep35}), we have
that
\[
e^{-\lambda(r)(t+\tini)}\le2\max_{\nu:\nu\in\perr}\left\Vert \mathbb{P}_{x_{0}^{\dagger}} [X_{t}^{\dagger}\in\cdot\, ]-\pi^{\dagger}\right\Vert _{\textrm{TV}}\;\;\text{for all }t\ge0\;.
\]
We now take $s=C_{1}\log\log n$ and  $t\in[C\log\log n,\,\smax]$, where $C_{1}$ and  $C$ are the constants appeared
in Theorem \ref{t61} and   in \eqref{e72}, respectively. Then, by the previous inequality
and part (1) of Theorem \ref{t61} for the lattice $\mathbb{Z}_{r}^{d}$
(cf. Remark \ref{rm62} with $m=r=3\log^5 n$), we have that
\begin{align*}
e^{-\lambda(r)(t+\tini+C_{1}\log\log n)} & \le2\max_{\nu:\nu\in\perr}\left\Vert \mathbb{P}_{x_{0}^{\dagger}} [X_{t+s}^{\dagger}\in\cdot\, ]-\pi^{\dagger}\right\Vert _{\textrm{TV}}\\
 & \le\left[\exp \{ r^{d}\mathbf{d}_{t}^{2}\} -1\right]^{\frac{1}{2}}+8n^{-2d}\le2r^{d/2}\mathbf{d}_{t}+8n^{-2d}\;,
\end{align*}
where the last inequality follows from \eqref{e72} and the elementary
inequality $e^{x}-1\le4x$ for $x\in[0,\,1]$. We can deduce the lower
bound from this computation.
\end{proof}
Given the above, the proof of Theorem \ref{tmain} involves two steps:
\begin{itemize}
\item Prove a version (Proposition \ref{p72}) with $\lambda_{\infty}$ replaced by $\lambda(r)$ where $r$ was chosen above.
\item Show that $\lambda(r)$ converges to $\lambda_{\infty}$ and have bounds on the convergence  rate (Proposition \ref{p73}).
\end{itemize}

Define
\[
t(n)=\frac{d}{2\lambda(r)}\log n\;\;\;\text{and\;\;\;}w(n)=\log\log n\;.
\]
\begin{prop}
\label{p72}For all small enough $p$, there exist two constants $c_{1}=c_{1}(p),\,c_{2}=c_{2}(p)$
such that
\begin{align}
\lim_{n\rightarrow\infty}\max_{x_{0}\in\Omega_{n}}\left\Vert \mathbb{P}_{x_{0}}\left[X_{t(n)-c_{1}w(n)}\in\cdot\,\right]-\mui\right\Vert _{\textrm{TV}} & =1\;,\label{e721}\\
\lim_{n\rightarrow\infty}\max_{x_{0}\in\Omega_{n}}\left\Vert \mathbb{P}_{x_{0}}\left[X_{t(n)+c_{2}w(n)}\in\cdot\,\right]-\mui\right\Vert _{\textrm{TV}} & =0\;.\label{e722}
\end{align}
\end{prop}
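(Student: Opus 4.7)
The plan is to combine the product-chain reduction (Theorem \ref{t61}) with the sharp two-sided bounds on $\mathbf{d}_t$ from Lemma \ref{lem71}. The window constants $c_1, c_2$ must be chosen large enough (as functions of $p$) to absorb the polylogarithmic factors $\log^{10d} n$ and $\log^{12d} n$ that appear in Theorem \ref{t61}; this is possible because $\lambda(r)$ is bounded below uniformly in $n$ by Corollary \ref{cor23}.

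For the upper bound \eqref{e722}, I would first use Lemma \ref{pro35} to reduce, at the cost of an $O(1)$ burn-in, to initial distributions $\nu \preceq \per(\pini)$. Then I would apply Theorem \ref{t61}(1) with $s := C_1 \log\log n$ and $u := t(n) + c_2 w(n) - \tini - s$. The upper estimate of Lemma \ref{lem71} gives
\[
\mathbf{d}_u^2 \le e^{-2\lambda(r)(u - C_3 \log\log n)} = O\bigl(n^{-d}(\log n)^{-2\lambda(r)(c_2 - C_1 - C_3)}\bigr),
\]
so that $\tfrac{n^d}{\log^{12d}n} \mathbf{d}_u^2 = O\bigl((\log n)^{-12d - 2\lambda(r)(c_2 - C_1 - C_3)}\bigr)$, which vanishes once $c_2$ exceeds an explicit constant (depending on $C_1, C_3, d, \lambda(r)$). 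The bracket in Theorem \ref{t61}(1) then tends to $0$, proving \eqref{e722}.

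For the lower bound \eqref{e721}, I would apply Theorem \ref{t61}(2) directly at $t := t(n) - c_1 w(n)$. The lower estimate of Lemma \ref{lem71} gives
\[
\mathbf{d}_t \ge e^{-\lambda(r)(t + C_3 \log\log n)} - n^{-2d} \ge \tfrac{1}{2} n^{-d/2}(\log n)^{\lambda(r)(c_1 - C_3)}
\]
for $c_1$ large enough that the first term dominates $n^{-2d}$. Consequently
\[
(n/\log^{10}n)^d \mathbf{d}_t^2 \ge \tfrac{1}{4}(\log n)^{2\lambda(r)(c_1 - C_3) - 10d},
\]
which diverges once $c_1 > C_3 + 5d/\lambda(r)$. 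Hence Theorem \ref{t61}(2) yields $\liminf_n \max_{\nu \preceq \per(\pini)} \|\mathbb{P}_\nu[X_t \in \cdot\,] - \mui\|_{\textrm{TV}} = 1$, and convexity of $\|\cdot - \mui\|_{\textrm{TV}}$ in the starting measure shows this maximum is dominated by $\max_{x_0 \in \Omega_n}$, establishing \eqref{e721}.

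The main difficulty is purely bookkeeping: one must verify that the losses of $\log^{10d} n$ (lower bound) and $\log^{12d} n$ (upper bound) in Theorem \ref{t61} can be absorbed inside a $\log\log n$ window. The critical input is the uniform lower bound on $\lambda(r)$ from Corollary \ref{cor23}, which keeps $c_1, c_2$ bounded in $n$; no additional dynamical estimates are needed beyond those already established in the preceding sections.
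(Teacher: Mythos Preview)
Your proposal is correct and follows essentially the same route as the paper: reduce to sparse initial conditions via Lemma \ref{pro35}, then feed the two-sided bounds on $\mathbf{d}_t$ from Lemma \ref{lem71} into the two parts of Theorem \ref{t61}, using the uniform lower bound on $\lambda(r)$ from Corollary \ref{cor23} to choose $c_1,c_2$ independent of $n$. The only cosmetic differences are that you track the $\tini$ shift and the convexity argument for passing from $\max_\nu$ to $\max_{x_0}$ explicitly (the paper absorbs both into the invocation of Lemma \ref{pro35}), and your threshold $c_1 > C_3 + 5d/\lambda(r)$ should be stated with the uniform lower bound $\lambda$ in place of $\lambda(r)$ to make $c_1$ genuinely independent of $n$---a point you correctly flag in your final paragraph.
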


\begin{proof}
By Lemma \ref{pro35}, it suffices to consider the initial condition
$\nu$ satisfying {$\nu\preceq\per(\pini)$}.
We recall the constant $C_{3}$ from the statement of Lemma \ref{lem71}.
First, by the lower bound in Lemma \ref{lem71} and by part (3) of
Corollary \ref{cor23},
\begin{align*}
\bigg(\frac{n}{\log^{10}n}\bigg)^{d}\mathbf{d}_{t(n)-c_{1}w(n)}^{2} & \ge\bigg(\frac{n}{\log^{10}n}\bigg)^{d}e^{-2\lambda(r)(t(n)-c_{1}w(n)+C_{3}\log\log n)}-n^{-d}\\
 & \ge(\log n)^{2\lambda(c_{1}-C_{3})-10d}-n^{-d}\;.
\end{align*}
Therefore, for $c_{1}>C_{3}+\frac{11d}{2\lambda}$, we have
\[
\lim_{n\rightarrow\infty}\bigg(\frac{n}{\log^{10}n}\bigg)^{d}\mathbf{d}_{t(n)-c_{1}w(n)}^{2}=+\infty\;,
\]
and thus by part (2) of Theorem \ref{t61}  we obtain \eqref{e721}.
Now we turn to \eqref{e722}. For $c\in(0,\,C_{3})$, by the upper
bound of Lemma \ref{lem71},
\[
\frac{n^{d}}{\log^{12d}n}\mathbf{d}_{t(n)+cw(n)}^{2}\le\frac{n^{d}}{\log^{12d}n}e^{-2\lambda(r)(t(n)+cw(n)-C_{2}\log\log n)}\le(\log n)^{\lambda(C_{2}-c)-12d}\;.
\]
By taking $c$ close enough to $C_{2}$ we obtain
\begin{equation}
\frac{n^{d}}{\log^{12d}n}\mathbf{d}_{t(n)+cw(n)}^{2}\le\frac{1}{\log^{11d}n}\;.\label{e723}
\end{equation}
Let $c_{2}=C_{1}+c$ where $C_{1}$ is the constant appeared in Theorem
\ref{t61}. Then, by part (1) of Theorem \ref{t61} (note that this is where the sparsity assumption on $\nu$ is used) and \eqref{e723},
\begin{align*}
\max_{\nu:\nu\preceq\per(\tini)}\left\Vert \mathbb{P}_{\nu}\left[X_{t(n)+c_{2}w(n)}\in\cdot\,\right]-\mui\right\Vert _{\textrm{TV}} & \le\frac{1}{2}\left[\exp\left\{ \frac{n^{d}}{\log^{12d}n}\mathbf{d}_{t(n)+cw(n)}^{2}\right\} -1\right]^{\frac{1}{2}}+\frac{4}{n^{2d}}\\
 & \le\frac{1}{2}\left[\exp  \frac{1}{\log^{11d}n} -1\right]^{\frac{1}{2}}+\frac{4}{n^{2d}}\;.
\end{align*}
This completes the proof of \eqref{e722}.
\end{proof}
Notice that since ${ \lambda(r)=\Theta(1)}$ (Corollary \ref{cor23}), we have $w(n)\ll t(n)$, and therefore
the previous proposition already demonstrates the cutoff
phenomenon provided that $p$ is small enough.
The next result shows  that the sequence $(\lambda(r))_{r\ge1}$ is a convergent
sequence.
\begin{prop}\cite[Lemma 4.3]{LS1}
\label{p73}There exists ${\lambda}_{\infty}={\lambda}_{\infty}(p)>0$ such
that
\[
|\lambda(r)-{\lambda}_{\infty}|\le r^{-1/4+o(1)}\;.
\]
\end{prop}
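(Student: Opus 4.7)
The plan is to show that $(\lambda(r))_{r \ge 1}$ is Cauchy with the quantitative rate $r^{-1/4 + o(1)}$; completeness of $\mathbb{R}$ then furnishes the limit $\lambda_\infty$ and the claimed bound. The strategy follows \cite[Lemma 4.3]{LS1} and rests on two ingredients: the variational (Dirichlet form) characterization
\[
\lambda(r) \;=\; \inf_{f \textrm{ non-constant}}\; \frac{\mathcal{E}_{r}(f,f)}{\mathrm{Var}_{\mu^{r}_{p,q}}(f)}\,,
\]
and the exponential decay of connectivity (Proposition \ref{propdec}) combined with the domain Markov property of the random-cluster measure.

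For the easier direction, I would take a near-minimizer $\varphi$ of the Rayleigh quotient on $\Omega_r = \{0,1\}^{E_r}$ and lift it to a test function $\widetilde\varphi$ on $\Omega_R$, for $R \gg r$, by reading off the configuration on a single interior $r$-box $B \subset \Lambda_R$ and ignoring the remainder. Lemma \ref{contour} applied to a buffer annulus $B^{+} \setminus B$ produces a closed separating surface with probability $1 - O(R^{-2d})$; conditioning on this interface and applying the domain Markov property shows that the projection of $\mu^{R}_{p,q}$ onto $B$ agrees with $\mu^{r}_{p,q}$ up to a negligible perturbation. Both the variance and the Dirichlet form of $\widetilde\varphi$ then transfer from $\Omega_R$ to $\Omega_r$ with only a boundary correction, yielding $\lambda(R) \le \lambda(r) + r^{-1/4 + o(1)}$.

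The reverse direction is the genuinely delicate one. Starting from a near-minimizer (or eigenfunction) $\psi$ on $\Omega_R$ with $R \gg r$, surround an interior $r$-box $B$ by a buffer of width $b$ and define $\widetilde\psi := \mathbb{E}[\psi \mid X(E_R \setminus B^{(b)})]$, the conditional expectation given the configuration outside the enlarged box. Exponential decay of correlations from Proposition \ref{propdec}, together with the separating-surface argument of Lemma \ref{contour}, yields $\|\psi - \widetilde\psi\|_{L^{2}(\mu^{R}_{p,q})} \le e^{-\gamma b}$ modulo an event of probability $O(R^{-2d})$. The Dirichlet form $\mathcal{E}_R(\widetilde\psi,\widetilde\psi)$ can then be bounded in terms of that of the FK-dynamics on a box of side $r + 2b$, and $\mathrm{Var}(\widetilde\psi) = (1 - o(1))\,\mathrm{Var}(\psi)$. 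Optimizing $b$ against the competing boundary/volume ratio $\sim b / r$ and the exponential localization error $e^{-\gamma b}$ delivers the rate $r^{-1/4 + o(1)}$.

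The principal obstacle is precisely this localization step. Unlike for the Ising Glauber dynamics treated in \cite{LS1}, the FK measure is globally dependent, and the conditional distribution on $B$ given the configuration on $E_R \setminus B^{(b)}$ depends on the exterior through connectivity events, not through a finite-range interaction. To decouple the interior from the exterior one exposes a closed separating interface in the buffer, available in the subcritical regime with failure probability at most $e^{-\Omega(b)}$ by Proposition \ref{propdec} and the contour argument of Lemma \ref{contour}; the random-cluster domain Markov property then renders the inside genuinely independent of the outside. This separating-interface construction is the only place where the argument departs substantively from the Ising template in \cite{LS1}; once the decoupling is in place, the Dirichlet form and variance comparisons required for both directions are routine and the rate follows from the optimization above.
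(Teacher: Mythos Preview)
Your approach is genuinely different from the paper's, and the ``harder direction'' as you describe it has a real gap.

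\textbf{What the paper does.} The paper (and \cite[Lemma~4.3]{LS1}) does \emph{not} use the Dirichlet-form/variational characterization at all. Instead it bootstraps off the cutoff result already established in Proposition~\ref{p72}. The point is that the entire machinery of Sections~\ref{reduction1}--\ref{proofmain} works equally well with $r=\log^{4+\delta}n$ for any $\delta>0$ (cf.\ Remark~\ref{rm53}). Running Proposition~\ref{p72} once with scale $r_{1}=\log^{4+\delta}n$ and once with any $r_{2}\in[r_{1},r_{1}^{2}]$ pins the \emph{same} mixing time between $\frac{d}{2\lambda(r_{1})}\log n\pm Cw(n)$ and $\frac{d}{2\lambda(r_{2})}\log n\pm Cw(n)$. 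Comparing these two two-sided bounds and using that $\lambda(\cdot)$ is bounded below (Corollary~\ref{cor23}) gives
\[
|\lambda(r_{1})-\lambda(r_{2})|\;\le\;C\,\frac{\log\log n}{\log n}\;\le\;r_{1}^{-1/(4+\delta)+o(1)}\,,
\]
and letting $\delta\downarrow 0$ yields the exponent $-1/4$. The Cauchy property and the limit $\lambda_{\infty}$ follow. Notice the exponent $1/4$ is an artifact of the relation $r\sim\log^{4+o(1)}n$, not of any buffer optimization.

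\textbf{Problems with your route.} First, your optimization is off: balancing $b/r$ against $e^{-\gamma b}$ gives $b\sim C\log r$ and an error of order $(\log r)/r$, not $r^{-1/4}$, so the stated rate does not follow from the mechanism you describe. Second, and more seriously, the localization step in your ``reverse direction'' is not justified. The conditional expectation $\widetilde\psi=\mathbb{E}[\psi\mid X(E_{R}\setminus B^{(b)})]$ is a function of the \emph{exterior} configuration, not of the interior box, so it is not a test function on $\Omega_{r}$; and there is no reason the spectral-gap eigenfunction on $\Omega_{R}$ should carry a non-vanishing fraction of its variance on any single $r$-box. Controlling $\mathcal{E}_{R}(\widetilde\psi,\widetilde\psi)$ by a Dirichlet form on a box of side $r+2b$ likewise does not follow from the separating-interface argument alone, since the FK transition rates depend on global connectivity. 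The paper sidesteps all of this by never touching Dirichlet forms: the spectral gaps $\lambda(r)$ enter only through the cutoff location, and the comparison is forced by the fact that the mixing time of $(X_{t})$ is intrinsic.
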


\begin{proof} We only provide the modified choice of parameters needed for our purpose.
A careful reading of the proof shows that  entire arguments presented
above are still in force if we replace $r=3\log^{5}n$ with $r=\log^{4+\delta}$
for any $\delta$. Of course the constants that we obtained above
must be modified to depend on $\delta$, and the time $\smax$ should
be defined as $\log^{1+\delta}n$ (cf. Remark \ref{rm53}). Taking $r_{1}=\log^{4+\delta}$
and $r_{2}\in[r_{1},\,r_{1}^{2}]$ and applying Proposition \ref{p72}
with $r=r_{1}$ and $r=r_{2}$, respectively, yields
\[
\frac{d}{2\lambda(r_{1})}\log n-Cw(n)\le\frac{d}{2\lambda(r_{2})}\log n+Cw(n)
\]
for some constant $C=C(p,\,\delta)$. Since $\lambda(\cdot)$ is bounded
below, we obtain
\[
\lambda(r_{1})-\lambda(r_{2})\le C\frac{\log\log n}{\log n}\le r_{1}^{-1/4+\delta}
\]
for all sufficiently large $n$. The rest of the arguments are exactly the same as \cite[Lemma 4.3]{LS1} and are omitted.

\end{proof}

\subsection{Proof of Theorem \ref{tmain}}
As mentioned before  we can combine Propositions \ref{p72} and \ref{p73} to deduce Theorem \ref{tmain}.
Define
\[
t^{*}(n)=\frac{d}{2{\lambda_{\infty}}}\log n\;.
\]
Thus we need to show that for all small enough $p$, there exist two constants $c_{1}=c_{1}(p),\,c_{2}=c_{2}(p)$
such that
\begin{align}
\lim_{n\rightarrow\infty}\max_{x_{0}\in\Omega_{n}}\left\Vert \mathbb{P}_{x_{0}}\left[X_{t^{*}(n)-c_{1}w(n)}\in\cdot\,\right]-\mui\right\Vert _{\textrm{TV}} & =1\;,\label{e741}\\
\lim_{n\rightarrow\infty}\max_{x_{0}\in\Omega_{n}}\left\Vert \mathbb{P}_{x_{0}}\left[X_{t^{*}(n)+c_{2}w(n)}\in\cdot\,\right]-\mui\right\Vert _{\textrm{TV}} & =0\;.\label{e742}
\end{align}
The proof is now immediate from
\[
\left|t^{*}(n)-t(n)\right|\le C|\lambda(r)-\lambda_{\infty}|\log n\le C\log^{-1/4}n\;.
\]
\qed
\subsection{Comparison to infinite volume dynamics} \label{infinitevolume}
It is quite natural to predict that $\lambda_{\infty}$ is in fact the spectral gap of the infinite volume FK-dynamics with the same parameters $p$ and $q.$ Defining the latter is not trivial but this has been carried out in \cite[Chapter 8]{gri}.
For the Ising model a similar result was shown in \cite{LS1} using the monotonicity of the underlying dynamics as well as Log-Sobolev inequalities. Even though the lack of monotonicity of the Potts model prevented the authors in \cite{LS2} to prove a similar conclusion, this was settled in  \cite[Section 6.2]{NS} using the Information Percolation machinery which also implies the same for SW dynamics. Furthermore in \cite{NS}, the authors remark that the argument relies on bounds on disagreement propagation and an infinite version of the exponential $L^2$-mixing rate  and hence holds in more generality for spin systems.

Thus in our context of the FK-dynamics to prove a similar result, given the disagreement propagation bounds,  stated in Section \ref{sec5},
 the only remaining step is to establish an analog of Theorem \ref{t41} for the infinite system by proving an analog of Proposition \ref{p46} in the same setting. The argument in \cite{NS} proceeds by defining  Information Percolation clusters for the infinite process.  We believe that this can be carried out in our setting as well, by suitable extensions of the arguments for finite systems presented in Section \ref{sec4}. However, we do not pursue verifying the precise details in the paper.

\section{Appendix}\label{appendix1} We provide the proofs that were omitted from the main article.
\begin{proof}[Proof of Lemma \ref{lem0311}]
We recall notations from Section \ref{sec31} and in particular $\smax$ from \eqref{esm} and write $\smax=L\Delta$
so that
\[
0=\tau_{0}<\tau_{1}<\cdots<\tau_{L}=\log^{2}n\;.
\]
We regard $(Z_{t}^{-})_{t\ge0}$ as a Markov chain on {$\Omega_{n}$}
such that the configuration outside of $A^{+}$ is empty.
Also consider
$(X_{t})_{t\ge0},$ the FK-dynamics on $\Omega_{n}$ starting from an initial condition which agrees with $(Z_{t}^{-})$ on $A^+$.
One can observe that under the monotone coupling,
\begin{equation}
Z_{t}^{-}\le X_{t}\le Z_{t}^{+}\text{ for all }t\ge0\;.\label{eb51}
\end{equation}
We recall the enlarged percolation $\eful_{i}$ from Table \ref{chart}, and denote by $\mathcal{E}_{i}$ the event
that there is no open path of length $\beta\log n$ in $\eful_{i}$
for some $\beta=\beta(p)>0$. Then, by Proposition \ref{propdec}
and the union bound,
\[
\mathbb{P}[\mathcal{E}_{i}]\le{|E_{n}| \choose 2}\exp\left\{ -\gamma\beta\log n\right\} <\frac{1}{n^{4d}}
\]
provided that $\beta$ is large enough.
Define $\mathcal{E}=\bigcup_{i=1}^{L}\mathcal{E}_{i}$.
Since $L=\smax/\Delta=\Omega(\log^{2}n)$, the union bound implies
that
\begin{equation}
\mathbb{P}[\mathcal{E}]\le L\frac{1}{n^{4d}}<\frac{1}{n^{3d}}\;.\label{conn2}
\end{equation}

We now claim that $\mathcal{E}^{c}$ implies that $Z_{t}^{+}(A)=Z_{t}^{-}(A)$
for all $t\in[0,\,\smax]$. Thus the claim along with
\eqref{conn2}, finishes the proof of the lemma. To prove the claim, define
$A_{i}$, $i\in\llbracket0,\,L\rrbracket$, inductively as $A_{L}=A$
and
\[
A_{i-1}=\{e\in E_{n}:d(e,\,A_{i})\le\smax\Delta\}\;,
\]
so that
\[
A^{+}=A_{0}\supset A_{1}\supset\cdots\supset A_{L}=A\;.
\]
For all $i\in\llbracket0,\,L-1\rrbracket$, we shall prove that $Z_{\tau_{i}}^{+}(A_{i})=Z_{\tau_{i}}^{-}(A_{i})$
implies $Z_{t}^{+}(A_{i+1})=Z_{t}^{-}(A_{i+1})$ for all $t\in(\tau_{i},\,\tau_{i+1}]$.
since then the proof of the claim is completed by the induction. Now it suffices to observe that there exists a closed surface of $\eful_i$ in $A_i \setminus A_{i+1}$ under $\mathcal{E}_i$ since the set
$
\bigcup_{e\in \partial A_i} \conn(e;\eful_i)
$
is disjoint to $A_{i+1}$ as there is no connected path of length $\Omega(\log^2 n)$ in $\eful_i$ (call this surface as $V_i$).
The proof now follows by noticing that the FK-dynamics for both $Z^+_t$ and $Z^-_t$ agree on the component of $E_n\setminus V_i$ (say $\tilde A_i$) containing $A_{i+1}$ (and hence on $A_{i+1}$) throughout $[\tau_i,\tau_{i+1}],$ since the starting configurations for both the chains agree on $\tilde A_i$ by induction and the dynamics has zero boundary condition throughout $[\tau_i,\tau_{i+1}]$.
\end{proof}

\begin{proof}[Proof of Theorem \ref{t61}, Part (2): lower bound]
Recall $r=3\log^{5}n$, and let us divide $\mathbb{Z}_{n}^{d}$ by
$K=\left\lfloor n/r\right\rfloor ^{d}$ square boxes $A_{1}^{+},\,A_{2}^{+},\,\cdots,\,A_{K}^{+}$
of size $r$ as we did in Section \ref{sec61}. Then, let $A_{i}$
be the box of size $2r/3$ which is concentric with $A_{i}^{+}$.
Then, the collection $A_{1},\,A_{2},\,\cdots,\,A_{K}$ is $m$-good
with $m=(1/2)\log^{5}n$. We recall the notations from Definition
\ref{mgood}.
By definition \eqref{emt} of $\mathbf{d}_{t}$, we can find $x_{0}^{*}=x_{0}^{*}(t)\in\Omega_{r}$
satisfying
$
\mathbf{d}_{t}= \Vert \mathbb{P}_{x_{0}^{*}} [X_{t}^{\dagger}(\Lambda)\in\cdot\, ]-
\pi_{\Lambda}^{\dagger} \Vert _{L^{2}(\pi_{\Lambda}^{\dagger})}.
$
Let $U_{i}$ be a configurations
on $B_{i}$ distributed according to $\pi_{A_{i}}^{(i)}$, where $\{U_{i},\,1\le i\le K\}$
is a collection of independent random variables. Define a sequence
of i.i.d. random variable $u_{i}$ as
\[
u_{i}=\frac{\mathbb{P} [Y_{t}^{(i)}(A_{i})=U_{i}\,|\,Y_{0}^{(i)}=x_{0}^{*} ]}{\pi_{B_{i}}^{(i)}(U_{i})}\;\;;\;i\in\llbracket1,\,K\rrbracket\;.
\]
The condition $Y_{0}^{(i)}=x_{0}^{*}$ means $X_{0}(\widehat{A}_{i}^{+})=x_{0}^{*}$.
By the definition of $u_{i}$ and $x_{0}^{*}$, one can readily check
that
\begin{equation}
\mathbb{E}u_{i}=1\text{ \;and\;\;}\textrm{Var}\,u_{i}=\mathbf{d}_{t}^{2}\label{euu1}\;.
\end{equation}
By the $L^{\infty}$-$L^{2}$ reduction for reversible Markov chains,
we have
\begin{align*}
\left\Vert u_{i}-1\right\Vert _{\infty} & =\left\Vert \mathbb{P}\left[Y_{t}^{(i)}(A_{i})\in\cdot\,|\,Y_{0}^{(i)}=x_{0}^{*}\right]-\pi_{A_{i}}^{(i)}\right\Vert _{L^{\infty}(\pi_{A_{i}}^{(i)})}\\
 & \le\left\Vert \mathbb{P}\left[Y_{t/2}^{(i)}(A_{i})\in\cdot\,|\,Y_{0}^{(i)}=x_{0}^{*}\right]-\pi_{A_{i}}^{(i)}\right\Vert _{L^{2}(\pi_{A_{i}}^{(i)})}\le\mathbf{d}_{t/2}\;.
\end{align*}
Hence, by  Theorem \ref{t41} we  obtain
\begin{equation}\label{euu2}
\|u_{i}-1\|_{\infty}\le e^{-c\log\log n}
\end{equation}
for  some $c >0.$
Then, by \eqref{euu1} and \eqref{euu2}, we have
\begin{equation}
\mathbb{E}|u_{i}-1|^{3}\le e^{-c\log\log n}\,\mathbf{d}_{t}^{2}=o(1)\,\mathbf{d}_{t}^{2}\;.\label{euu3}
\end{equation}
Given the above inputs, the rest of the proof follows by arguments identical to \cite[Section 3.3]{LS1} and is omitted.
\end{proof}

\bibliography{rs}
\bibliographystyle{plain}

\end{document}